\theoremstyle{plain}
\declaretheorem[title=Theorem, parent=section]{theorem}
\declaretheorem[title=Lemma,sibling=theorem]{lemma}
\declaretheorem[title=Proposition,sibling=theorem]{proposition}
\declaretheorem[title=Corollary,sibling=theorem]{corollary}
\theoremstyle{definition}
\declaretheorem[title=Definition,sibling=theorem]{definition}
\declaretheorem[title=Remark,sibling=theorem]{remark}
\declaretheorem[title=Remark, numbered=no]{remark*}
\declaretheorem[title=Assumption, numbered=no]{assumption*}
\numberwithin{equation}{section}
\newcommand{\N}{\mathbb{N}}
\newcommand{\R}{\mathbb{R}}
\newcommand{\cP}{\mathcal{P}}
\newcommand{\eps}{\varepsilon}
\newcommand{\1}{\mathbbm{1}}
\DeclareMathOperator{\dist}{dist}
\DeclareMathOperator{\diam}{diam}
\DeclareMathOperator{\dvg}{div}
\DeclareMathOperator*{\osc}{osc}
\renewcommand{\d}{\textnormal{\,d}}
\newcommand{\average}{{\mathchoice {\kern1ex\vcenter{\hrule height.4pt
width 6pt depth0pt} \kern-9.7pt} {\kern1ex\vcenter{\hrule
height.4pt width 4.3pt depth0pt} \kern-7pt} {} {} }}
\newcommand{\dashint}{\average\int}
\begin{document}
\allowdisplaybreaks
 \title{Regularity for nonlocal equations with local Neumann boundary conditions}

\author{Xavier Ros-Oton}
\author{Marvin Weidner}

\address{ICREA, Pg. Llu\'is Companys 23, 08010 Barcelona, Spain \& Universitat de Barcelona, Departament de Matem\`atiques i Inform\`atica, Gran Via de les Corts Catalanes 585, 08007 Barcelona, Spain \& Centre de Recerca Matem\`atica, Barcelona, Spain}
\email{xros@icrea.cat}

\address{Departament de Matem\`atiques i Inform\`atica, Universitat de Barcelona, Gran Via de les Corts Catalanes 585, 08007 Barcelona, Spain}
\email{mweidner@ub.edu}

\keywords{nonlocal, regularity, boundary, large solution, Neumann condition}

\subjclass[2020]{47G20, 35B65, 31B05, 35R53, 35B44}

\allowdisplaybreaks

\begin{abstract}
In this article we establish fine results on the boundary behavior of solutions to nonlocal equations in $C^{k,\gamma}$ domains which satisfy local Neumann conditions on the boundary. Such solutions typically blow up at the boundary like $v \asymp \dist^{s-1}$ and are sometimes called large solutions. In this setup we prove optimal regularity results for the quotients $v/\dist^{s-1}$, depending on the regularity of the domain and on the data of the problem. The results of this article will be important in a forthcoming work on nonlocal free boundary problems.
\end{abstract}

\allowdisplaybreaks

\maketitle
\section{Introduction}  

The study of nonlocal operators of the form
\begin{align}
\label{eq:L}
L v(x) = \text{p.v.} \int_{\R^n} \big(v(x) - v(x+h) \big) K(h) \d h,
\end{align}
where $K : \R^n \to [0,\infty]$ is a kernel satisfying for some $s \in (0,1)$
\begin{align}
\label{eq:K}
K(h) = \frac{K(h/|h|)}{|h|^{n+2s}}, \qquad 0 < \lambda \le K(\theta) \le \Lambda ~~ \forall \theta \in \mathbb{S}^{n-1}, \qquad K(h) = K(-h)
\end{align}
has been an important area of research in analysis and probability for the past 30 years. Operators $L$ of the type \eqref{eq:L}-\eqref{eq:K} arise naturally as generators of $2s$-stable L\'evy processes, and are used to model different kinds of real-world phenomena involving long range interactions, e.g. in mathematical finance and in physics. From a PDE perspective, it is of particular interest to study the effect of the nonlocality of $L$ on the regularity of solutions to nonlocal equations. By now, the question of \emph{interior} regularity of solutions is fairly well-understood, and there are several important works in this context, such as  \cite{CaSi09,CaSi11a,CaSi11b}, \cite{Sil06}, \cite{BaLe02}, \cite{Kas09}, \cite{DKP14,DKP16}, \cite{BFV14}, \cite{RoSe16b}.

A much more delicate question is the one of \emph{boundary} regularity of solutions to nonlocal problems. Previous works have mostly focused on nonlocal Poisson problems, given as follows
\begin{align}
\label{eq:Dirichlet-problem-weak}
\left\{\begin{array}{rcl}
L v &=& f ~~ \text{ in } \Omega,\\
v &=& 0 ~~ \text{ in } \R^n \setminus \Omega.
\end{array}\right.
\end{align}
The nonlocal Poisson problem \eqref{eq:Dirichlet-problem-weak} arises naturally as the Euler-Lagrange equation of a nonlocal energy minimization problem and can therefore be studied via variational methods, but also via non-variational methods. For \eqref{eq:Dirichlet-problem-weak} it was proved (see \cite{RoSe14}, \cite{Gru15}) that weak solutions satisfy $v \in C^s(\overline{\Omega})$, once $\partial \Omega \in C^{1,\gamma}$ and $f \in L^{\infty}(\Omega)$. The $C^s$ regularity of solutions is optimal, as one can see from the following explicit example (see \cite{Get61}, \cite{Lan72}, \cite{Dyd12}):
\begin{align}
\label{eq:usual-example}
(-\Delta)^s (1-|x|^2)_+^s = c_{n,s} > 0 ~~ \text{ in } B_1,
\end{align}
which also remains valid for $L$ satisfying \eqref{eq:L}-\eqref{eq:K} (see \cite{Ros16}).
However, it turns out that once the domain, the kernel, and the data are regular enough, also the quotient $v/d^s$ will be regular, yielding a fine description of the behavior of the solution $v$ at the boundary. The best known result in the literature, establishing optimal boundary regularity of weak solutions of \eqref{eq:Dirichlet-problem-weak} in terms of the regularity of the domain and the data was shown in \cite{RoSe17,AbRo20,Gru15} (see also \cite{RoSe16,RoSe16b,AbGr23}) and reads as follows:
\begin{align}
\label{eq:AbRo-result}
\partial \Omega \in C^{k+1,\gamma}, ~~ f \in C^{k+\gamma-s}(\overline{\Omega}) \quad \Rightarrow \quad \frac{v}{d^s} \in C^{k,\gamma}(\overline{\Omega}) \quad \forall k \in \N \cup \{ 0 \}, ~~ \gamma \in (0,1).
\end{align}

All the previously mentioned results on the nonlocal Poisson problem \eqref{eq:Dirichlet-problem-weak} address weak solutions for which one can  prove that they must remain bounded in $\overline{\Omega}$ (see \cite{SeVa14,KKP16}). However, explicit computations reveal that there also exist pointwise solutions of \eqref{eq:Dirichlet-problem-weak}, which explode at the boundary of the domain behaving asymptotically like $d^{s-1}$. The following most prominent example goes back to a work by Hmissi \cite{Hmi94} (see also \cite[Example 1, p.239]{Bog99}, \cite[Example 3.3]{BBKRSV09}, \cite{Dyd12}):
\begin{align}
\label{eq:Hmissi}
(-\Delta)^s (1 - |x|^2)_+^{s-1} = 0 ~~ \text{ in } B_1.
\end{align}

The example \eqref{eq:Hmissi} has initiated the conceptual study of boundary blow-up for solutions to nonlocal equations (see \cite{Gru14,Gru15,Gru18,Gru23}, \cite{Aba15,Aba17,AGV23}, \cite{CGV21}). In this theory, solutions such as \eqref{eq:Hmissi} are sometimes called ``large solutions''. Note that due to the explosion at the boundary, the above function cannot be a weak solution, and clearly violates \eqref{eq:AbRo-result}. 

In order to have a unified framework which also allows for singular behavior at the boundary, it is necessary to keep track of the boundary behavior of the solution, or more precisely to prescribe somehow the behavior of the quotient $v/d^{s-1}$. In this spirit, the following Neumann problem, which was introduced in \cite{Gru14} (see also \cite{Gru18}, \cite{Gru23}), can be seen as a generalization of \eqref{eq:Dirichlet-problem-weak}
\begin{align}
\label{eq:Neumann-problem-blowup}
\left\{\begin{array}{rcl}
L v &=& f ~~ \text{ in } \Omega,\\
v &=& 0 ~~ \text{ in } \R^n \setminus \Omega,\\ \displaystyle
\partial_{\nu}\left(\frac{v}{d^{s-1}}\right) &=& g ~~ \text{ on } \partial \Omega,
\end{array}\right.
\end{align}
where $\nu(x_0) \in \mathbb{S}^{n-1}$ denotes the inner unit normal at $x_0 \in \partial \Omega$. The problem \eqref{eq:Neumann-problem-blowup} is a natural \emph{nonlocal Neumann problem} with inhomogeneous Neumann data $g$, and one can show that the problem is well-posed in suitable function spaces, at least if the domain is $C^\infty$ (see \cite{Gru14}). Moreover, the solutions blow up at every boundary point where $v/d^{s-1}$ does not vanish. 

\begin{remark}
The functions in \eqref{eq:usual-example} and \eqref{eq:Hmissi}, are both solutions to \eqref{eq:Neumann-problem-blowup}, with $g \equiv 1$ and $f = c_{n,s}$ and with $g \equiv (s-1)2^{s-2}$ and $f = 0$, respectively, in case $\Omega = B_1$.
\end{remark}

Note that the Neumann condition in \eqref{eq:Neumann-problem-blowup} is purely local in nature in the sense that it is imposed only on the topological boundary $\partial \Omega$. Therefore, \eqref{eq:Neumann-problem-blowup} is conceptually completely different from the nonlocal Neumann problem introduced in \cite{DGLZ12}, \cite{DRV17} (see also \cite{AlTo20}, \cite{Von21}, \cite{AFR23}, \cite{FoKa24}, \cite{GrHe24}). It is also of entirely different nature than \cite{BCGJ14,BGJ14}, and \cite{BBC03,ChKi02}, where local boundary conditions are imposed, but instead the operator is changed, depending on the domain.

\subsection{Main result}

The  aforementioned regularity results \eqref{eq:AbRo-result} from \cite{RoSe17,AbRo20} do not apply to \eqref{eq:Neumann-problem-blowup} since solutions are in general not continuous and might even explode at the boundary. However, it is natural to expect fine regularity results for the quotients $v/d^{s-1}$ depending on the regularity of the domain and the data. 

When $\Omega$ is $C^{\infty}$ and $K|_{\mathbb S^{n-1}}$ is $C^{\infty}$, the regularity theory for \eqref{eq:Neumann-problem-blowup} was developed by Grubb in \cite{Gru14} using an approach via pseudodifferential operators. 

Our goal in this work is twofold: to establish sharp boundary regularity estimates for \eqref{eq:Neumann-problem-blowup} in $C^{k,\gamma}$ domains, and at the same time to prove them for the first time as localized estimates in $\Omega\cap B_2$.
This is new even for $C^\infty$ domains, and it is crucial for our application to free boundary problems.

Our main result is the following:

\begin{theorem}
\label{thm:higher-reg}
Let $L$, $K$, $s$, $\lambda$, $\Lambda$ be as in \eqref{eq:L}-\eqref{eq:K}. Let $k \in \N$, $\gamma \in (0,1)$ with $\gamma \not= s$, and $\Omega \subset \R^n$ be a $C^{k+1,\gamma}$ domain, and $K \in C^{2k+2\gamma+3}(\mathbb{S}^{n-1})$. \\
Let $v \in L^1_{2s}(\R^n)$ with $v/d^{s-1} \in C(\overline{\Omega})$ be a viscosity solution to
\begin{align*}
\left\{\begin{array}{rcl}
Lv &=& f ~~ \text{ in } \Omega \cap B_2,\\
v &=& 0 ~~ \text{ in } B_2 \setminus \Omega,\\ \displaystyle
\partial_{\nu} \left(\frac{v}{d^{s-1}}\right) &=& g ~~ \text{ on } \partial \Omega \cap B_2,
\end{array}\right.
\end{align*}
where $\nu : \partial \Omega \to \mathbb{S}^{n-1}$ is the normal vector of $\Omega$, and $f \in C(\Omega) \cap \mathcal{X}(\Omega \cap B_2)$, $g \in C^{k-1+\gamma}(\partial\Omega \cap B_2)$,
\begin{align}
\label{eq:f-fct-space}
\mathcal{X}(\Omega \cap B_2) = 
\begin{cases}
d^{s-\gamma} L^{\infty}(\Omega \cap B_2), ~~ \text{ if } k +\gamma \le 2s,\\
C^{k-2s+\gamma}(\Omega \cap B_2), ~~ \text{ if } k + \gamma > 2s.
\end{cases}
\end{align}
Then, it holds $v/d^{s-1} \in C_{loc}^{k+\gamma}(\overline{\Omega} \cap B_2)$ and
\begin{align*}
\left\Vert \frac{v}{d^{s-1}} \right\Vert_{C^{k,\gamma}(\overline{\Omega} \cap B_1)} \le c\left( \left\Vert \frac{v}{d^{s-1}} \right\Vert_{L^{\infty}(\Omega \cap B_2)} + \left\Vert v \right\Vert_{L^1_{2s}(\R^n \setminus B_2)} + \Vert f \Vert_{\mathcal{X}(\Omega \cap B_2)} + \Vert g \Vert_{C^{k-1+\gamma}(\partial\Omega \cap B_2)} \right),
\end{align*}
for some $c > 0$, which only depends on $n,s,\lambda,\Lambda,k,\gamma,\Omega$, and $\Vert K \Vert_{C^{2k+2\gamma+3}(\mathbb{S}^{n-1})}$.
\end{theorem}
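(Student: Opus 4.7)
I would argue by induction on $k$, working throughout with the regular quantity $w := v/d^{s-1}$ and reducing to a half-space model by straightening the boundary. At each boundary point $x_0 \in \partial\Omega \cap B_{3/2}$ I would pick a $C^{k+1,\gamma}$ diffeomorphism $\Phi$ flattening $\partial\Omega$ near $x_0$ and sending $d$ to $x_n$ up to a $C^{k+1,\gamma}$ error. The pulled-back operator is a variable-coefficient nonlocal operator of the same type, whose kernel inherits the $C^{2k+2\gamma+3}(\mathbb{S}^{n-1})$ regularity from $K$ together with $\Phi$; this is precisely the regularity needed to freeze coefficients and to match asymptotic expansions at order $k+\gamma$.

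\emph{Base case.} I would establish the $C^{1,\gamma}$ bound for $w$ via a blow-up/compactness argument in the spirit of Serra--Ros-Oton. Suppose the estimate failed along a sequence of solutions $v_m$, points $x_m \in \partial\Omega$, and scales $r_m \to 0$. Rescaling so that the natural $C^{1,\gamma}$ excess of $w_m$ at $x_m$ on balls of radius $r_m$ is normalized to $1$ while $f_m$, $g_m$, and the tail contributions tend to $0$, I would pass to a limit $v_\infty$ on the half-space satisfying $L_0 v_\infty = 0$ in $\{x_n > 0\}$, $v_\infty = 0$ in $\{x_n < 0\}$, and $\partial_n(v_\infty/x_n^{s-1}) = 0$ on $\{x_n = 0\}$, with controlled polynomial growth at infinity. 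A Liouville theorem tailored to this Neumann problem would then force $v_\infty/x_n^{s-1}$ to be affine, contradicting the normalization.

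\emph{Inductive step $k-1 \Rightarrow k$.} At each $x_0 \in \partial\Omega \cap B_{3/2}$, I would build a polynomial $P_{x_0}$ of degree $k$ whose coefficients are prescribed by the tangential derivatives of $g$ (up to order $k-1$) and by the derivatives of $w$ at $x_0$ already controlled by the inductive hypothesis; in particular, $\partial_\nu(w - P_{x_0})$ vanishes to order $k-1+\gamma$ near $x_0$. The subtracted function $u := v - d^{s-1}P_{x_0}$ then satisfies a nonlocal Neumann problem on $\Omega \cap B_2$ with source $f - L(d^{s-1}P_{x_0})$ and with Neumann data of improved order. Explicit half-space computations extending \eqref{eq:Hmissi} would show $L_0(x_n^{s-1} P) \in C^{k-2s+\gamma}$ (respectively, in $d^{s-\gamma}L^\infty$ in the low-regularity range $k+\gamma \le 2s$), with errors of the same type arising from the curvature of $\partial\Omega$ and the variable-coefficient nature of the pulled-back kernel. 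Applying the inductive estimate to $u$ on a family of dyadically shrinking balls around $x_0$ and summing in the Campanato sense yields the $C^{k,\gamma}$ bound for $w$; the tail terms produced by localizing to $B_2$ are harmless because they are absorbed into $\|v\|_{L^1_{2s}(\R^n \setminus B_2)}$.

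\emph{Main obstacles.} Two steps are expected to be genuinely delicate. First, the Liouville classification in the half-space with the local Neumann trace $\partial_n(v/x_n^{s-1}) = 0$ is not a direct consequence of its Dirichlet counterpart and needs to be proved from scratch, most likely by differentiating tangentially and reducing, after subtracting an explicit linear combination of $x_n^{s-1}$ and $x_n^s$, to a Dirichlet-type Liouville theorem for $\partial_i v_\infty$. Second, obtaining sharp estimates on $L(d^{s-1} P_{x_0})$ in the weighted space $d^{s-\gamma} L^\infty$ in the regime $k+\gamma \le 2s$ is nontrivial since the ordinary H\"older scale degenerates there; one must expand the kernel in spherical harmonics to order $2k+2\gamma+3$ in order to isolate sufficient cancellation that survives multiplication by $d^{\gamma - s}$.
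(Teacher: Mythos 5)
Your overall architecture has two genuine gaps. First, the boundary-flattening step is not available for this operator class: pulling back an operator of the form \eqref{eq:L}--\eqref{eq:K} by a $C^{k+1,\gamma}$ diffeomorphism destroys both translation invariance and the exact homogeneity $K(h)=K(h/|h|)|h|^{-n-2s}$, so the ``pulled-back operator'' is an $x$-dependent kernel outside $\mathcal{L}^{\text{hom}}_s(\lambda,\Lambda)$, and there is no Schauder theory (let alone one compatible with the degenerate Neumann condition $\partial_\nu(v/d^{s-1})=g$ and the $d^{s-1}$ blow-up) that you could invoke after ``freezing coefficients''; building such a theory would be a separate project. The paper never straightens the boundary: all arguments are run directly in the curved domain with the regularized distance, and the role you assign to spherical-harmonic expansions of $K$ is instead played by differentiating the expansions of $L(d^s\eta)$ from Abatangelo--Ros-Oton to control $L(d^{s-1}(\nabla d)\eta)$ and $L(d^{s-1}Q)$ in $C^{k+1,\gamma}$ domains (\autoref{lemma:dist-s-1-smooth-domains}, \autoref{cor:dist-s-1-smooth-domains}); this is precisely where the hypothesis $K\in C^{2k+2\gamma+3}(\mathbb{S}^{n-1})$ is used, and it is also how the ``curvature errors'' you defer to the flattening map are actually handled.

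Second, the compactness and the inductive step are not in place. Your blow-up in the base case cannot be run as stated: to extract a locally uniform limit and to pass the viscosity Neumann condition to the limit you need a uniform boundary H\"older estimate for $v/d^{s-1}$ in $C^{1,\gamma}$ domains (the paper's \autoref{thm:bdry-Holder}, which itself rests on a new maximum principle for the Neumann problem, explicit $d^{s-1\pm\eps}$ barriers, a boundary weak Harnack inequality and a growth lemma) together with a nontrivial stability lemma for viscosity solutions of the Neumann problem (\autoref{lemma:stability}); none of these appear in your plan, and the Neumann condition does not pass to the limit by soft arguments. Moreover, in the inductive step the Campanato scheme does not close: the degree-$k$ tangential coefficients of $P_{x_0}$ are not controlled by the inductive $C^{k-1,\gamma}$ bound on $w$, and applying the inductive estimate on dyadic balls can never produce excess decay of order $k+\gamma$ -- that decay must come from a Liouville/expansion result \emph{at order $k$}, not just the affine classification you invoke for $k=1$. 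This is exactly what the paper proves, for all $k$ simultaneously and without induction, in \autoref{lemma:expansion} (contradiction/compactness with $L^2$-projections onto $\mathcal{P}_k$, the stability lemma, equations ``up to a polynomial'', and \autoref{thm:Liouville}), after which the $C^{k,\gamma}$ bound follows by combining the boundary expansion with interior Schauder estimates at scale $d(x_0)$ and the product rule for $d^{1-s}\cdot d^{s-1}(u-Q)$. Your identification of the half-space Liouville theorem with Neumann data as a central new ingredient is correct, but by itself it does not repair these two gaps.
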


For the definition of $L^1_{2s}(\R^n)$ and the notion of viscosity solutions, we refer to Section \ref{sec:prelim}. 

Note that the regularity we obtain for $v/d^{s-1}$ depending on the regularity of the domain $\Omega$ and the data $f,g$ is expected to be optimal. For $f$ and $g$, this is an immediate consequence of interior Schauder theory (see \cite{RoSe16b}), and the order of the equation. For the regularity of the domain, we observe that our results are in align with the ones in \cite{AbRo20} once $v \in C(\overline{\Omega} \cap B_2)$.
We obtain results with regularity assumptions on $K$ that are expected to be optimal in case $\Omega$ is a half-space (see \autoref{thm:higher-reg-half-space}). As in \cite{Gru14}, we rule out the case $\gamma = s$. Note that the result is expected to be false in this case. It corresponds to proving Schauder-type regularity estimates of integer order.

Another key advantage of our approach is that it allows for localized results in $\Omega \cap B_2$. Nonetheless, if $\Omega \subset B_2$, and $v$ is a solution to \eqref{eq:Neumann-problem-blowup}, by application of the maximum principle (see \autoref{lemma:Neumann-max-princ}) to the estimate in \autoref{thm:higher-reg} we can obtain the following bound which is purely in terms of $f$ and $g$
\begin{align*}
\left\Vert \frac{v}{d^{s-1}} \right\Vert_{C^{k,\gamma}(\overline{\Omega})} \le c\left( \Vert f \Vert_{\mathcal{X}(\Omega)} + \Vert g \Vert_{C^{k-1+\gamma}(\partial\Omega)} \right).
\end{align*}
Thus, we have the following generalization of \eqref{eq:AbRo-result} to solutions of \eqref{eq:Neumann-problem-blowup}:
\begin{align}
\label{eq:Neumann-result}
\partial \Omega \in C^{k+1,\gamma}, ~ f \in C^{k-2s+\gamma}(\overline{\Omega}), ~ g \in C^{k-1+\gamma}(\partial \Omega) \quad \Rightarrow \quad \frac{v}{d^{s-1}} \in C^{k,\gamma}(\overline{\Omega}) \quad \forall k \in \N, ~ \gamma \in (0,1).
\end{align}

\subsection{A weak maximum principle and nonlocal problems with local Dirichlet conditions}

The example \eqref{eq:Hmissi} of a non-trivial $s$-harmonic function that vanishes outside $B_1$ implies that the Poisson problem \eqref{eq:Dirichlet-problem-weak} for the fractional Laplacian is {ill-posed} even in the homogeneous case.  Therefore, maximum principles are usually established under an additional assumption on the boundary behavior of the solution, ruling out ``large'' solutions such as \eqref{eq:Hmissi} (see \cite{Sil07}, \cite{SeVa14}, \cite{FKV15}, \cite{JaWe19}, \cite{FeJa23}, \cite{FeRo23}). 
Note that a similar phenomenon occurs for local equations, where any constant function is a pointwise solution inside the solution domain.

In this paper, we prove the following nonlocal weak maximum principle, which allows for solutions that blow up at the boundary.

\begin{proposition}
\label{lemma:weak-max-princ-large}
Let $L$, $K$, $s$, $\lambda$, $\Lambda$ be as in \eqref{eq:L}-\eqref{eq:K}. Let $\gamma > 0$ and $\Omega \subset \R^n$ be a $C^{1,\gamma}$ domain. Let $v \in L^1_{2s}(\R^n)$ with $v/d^{s-1} \in C(\overline{\Omega})$ be a viscosity solution to
\begin{align*}
\left\{\begin{array}{rcl}
L v &\ge& 0 ~~ \text{ in } \Omega,\\
v &\ge& 0 ~~ \text{ in } \R^n \setminus \Omega,\\ \displaystyle
\frac{v}{d^{s-1}} &\ge& 0 ~~ \text{ on } \partial \Omega.
\end{array}\right.
\end{align*}
Then, $v \ge 0$.
\end{proposition}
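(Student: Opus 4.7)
The plan is to argue by contradiction via a ground-state transform built on an auxiliary positive ``large'' $L$-harmonic barrier $\phi$. Concretely, the main preliminary input is the existence of $\phi \in L^1_{2s}(\R^n)$ satisfying $\phi \ge 0$, $\phi \equiv 0$ on $\R^n\setminus\Omega$, $L\phi = 0$ in $\Omega$ in the viscosity sense, and $\phi/d^{s-1} \in C(\overline{\Omega})$ with $\phi/d^{s-1} > 0$ on $\partial\Omega$. For $L = (-\Delta)^s$ and $\Omega = B_1$ this is exactly Hmissi's example \eqref{eq:Hmissi}; for a general $C^{1,\gamma}$ domain and a general $L$ as in \eqref{eq:K}, such a large $L$-harmonic function is provided by a Martin boundary / $s$-harmonic measure construction, cf.\ \cite{Aba15} and references therein.

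Given $\phi$, I would introduce the quotient $\Psi := v/\phi$. By the strong interior minimum principle applied to the nonnegative $L$-harmonic $\phi$, one has $\phi > 0$ in $\Omega$; combined with $\phi/d^{s-1} > 0$ on $\partial\Omega$ and with both $v/d^{s-1}$, $\phi/d^{s-1} \in C(\overline{\Omega})$, the quotient $\Psi$ extends continuously to $\overline{\Omega}$ and satisfies $\Psi|_{\partial\Omega} = (v/d^{s-1})/(\phi/d^{s-1}) \ge 0$. If $\min_{\overline\Omega}\Psi \ge 0$ we are done, so suppose for contradiction $m := \min_{\overline\Omega}\Psi < 0$; then the minimum is attained at an interior point $x_0 \in \Omega$, and $v \ge m\phi$ on $\R^n$ with equality at $x_0$ (on $\Omega$ from $\Psi \ge m$ and $\phi > 0$; outside $\Omega$ since there $m\phi = 0 \le v$).

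Next I would test the viscosity supersolution property of $v$ at $x_0$ using $m\phi$ as a touch-from-below smooth test, admissible since $\phi$ is $C^\infty$ near the interior point $x_0$ by interior regularity. Setting $\tilde\phi := m\phi$ on a small ball $B_\rho(x_0)\subset \Omega$ and $\tilde\phi := v$ outside, a standard splitting of $L\tilde\phi(x_0)$ together with the global inequality $v \ge m\phi$ in the exterior integral yields
$0 \le L\tilde\phi(x_0) \le L(m\phi)(x_0) = m\, L\phi(x_0) = 0$, forcing equality throughout and, by the strict positivity $K \ge \lambda|h|^{-n-2s}$, $v \equiv m\phi$ a.e.\ on $\R^n\setminus B_\rho(x_0)$. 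Sending $\rho\to 0$ and using continuity of $v/d^{s-1}$ gives $v \equiv m\phi$, and hence $v/d^{s-1} \equiv m\,\phi/d^{s-1} < 0$ on $\partial\Omega$, contradicting the boundary hypothesis.

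The main obstacle is the construction of the blow-up barrier $\phi$ in the full generality of the statement (general $L$ as in \eqref{eq:K}, general $C^{1,\gamma}$ domain); once it is available the remainder is a short application of the ground-state transform combined with the strong minimum principle. An essentially equivalent route, which avoids invoking the strong minimum principle, is to apply the classical bounded-domain weak maximum principle to $w_\eta := v + \eta\phi$ on the retracted open set $\Omega_{\delta_\eta} = \{d > \delta_\eta\}\cap\Omega$ (where $\delta_\eta > 0$ is chosen so that $w_\eta \ge \tfrac{\eta}{2}d^{s-1} \ge 0$ on the boundary layer $\Omega\cap\{d < \delta_\eta\}$ by continuity of $w_\eta/d^{s-1}$ and its strict positivity $\ge \eta$ on $\partial\Omega$), and then to send $\eta \to 0^+$.
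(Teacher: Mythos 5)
The core of your scheme is sound (the touching argument at an interior negative minimum, and the alternative route via $w_\eta=v+\eta\phi$ on a retracted domain), but it hinges entirely on an input you do not prove: the existence, for a \emph{general} operator $L$ as in \eqref{eq:L}-\eqref{eq:K} (kernel only bounded and measurable on $\mathbb{S}^{n-1}$) and a \emph{general} $C^{1,\gamma}$ domain, of an exact large $L$-harmonic function $\phi$ with $\phi\ge 0$, $\phi\equiv 0$ outside $\Omega$, $L\phi=0$ in $\Omega$, and, crucially, $\phi/d^{s-1}\in C(\overline{\Omega})$ with $\phi/d^{s-1}>0$ on $\partial\Omega$. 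The reference you invoke (\cite{Aba15}, Martin kernel / $s$-harmonic measure) treats the fractional Laplacian, and even there it yields two-sided bounds $\phi\asymp d^{s-1}$ rather than continuity and strict positivity of the quotient up to a $C^{1,\gamma}$ boundary; for anisotropic kernels with no regularity on the sphere there is no Martin-kernel machinery at this level of precision. Worse, boundary continuity and strict positivity of quotients $\phi/d^{s-1}$ at this generality is essentially the content of \autoref{thm:bdry-Holder} and \autoref{lemma:Hopf-large} of the present paper, whose proofs come \emph{after} (and use) the maximum principle you are trying to prove, so outsourcing this input risks circularity. A secondary, fixable issue: with merely measurable kernels, interior regularity only gives $\phi\in C^{2s+\alpha}_{loc}$, which for $s\le 1/2$ need not be $C^2$, so $m\phi$ is not automatically an admissible test function (one should either evaluate $L\phi$ pointwise, which $C^{2s+\alpha}$ does allow, or work with a sum-of-viscosity-solutions lemma as in \autoref{lemma:sum-viscosity}).

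The gap can be closed precisely by giving up exact harmonicity: your second route only needs a function $\phi\ge 0$, vanishing outside $\Omega$, with $L\phi\ge 0$ pointwise in $\Omega$ and $\phi/d^{s-1}\ge c>0$ on $\partial\Omega$, and such approximate barriers can be written down explicitly, e.g. $\psi_l=l\,d^{s-1}-d^{s-1+\eps}+C_2\1_{\overline{\Omega}}$, using only the elementary computations $|L(d^{s-1})|\le c\,d^{\delta\gamma-s-1}$ and $L(d^{s-1+\eps})\le -c\,d^{\eps-s-1}+C$ (\autoref{lemma:dist-s-1+eps}, \autoref{lemma:supersol}). This is how the paper proceeds, except that its actual proof runs the argument in the opposite direction: it slides the explicit \emph{subsolution} $\phi_l=l\,d^{s-1}+d^{s-1+\eps}-N\1_{\Omega}$ of \autoref{lemma:subsol} (which satisfies $L\phi_l\le -d^{\eps-s-1}-M$ and $\phi_l/d^{s-1}=l$ on $\partial\Omega$) underneath $v$, rules out an interior touching point by testing the viscosity inequality against the strongly negative right-hand side, concludes that the touching occurs on $\partial\Omega$ so that $\phi_l\le v$, and then applies the standard maximum principle on $\Omega\cap\{d>\delta\}$. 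So the architecture of your argument is salvageable, but as written the proposal rests on an unproven (and at this generality unavailable) existence-and-boundary-regularity statement for the exact blow-up harmonic function, which is the genuinely hard step.
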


The condition $v/d^{s-1} \ge 0$ in \autoref{lemma:weak-max-princ-large} includes solutions that blow up at the boundary, such as \eqref{eq:Hmissi}. Previously, maximum principles including large solutions have been established in \cite{Aba15}, and \cite{GrHe23}, \cite{LiZh22, LiLi23}. \autoref{lemma:weak-max-princ-large} extends these results to general $2s$-stable integro-differential operators, and to $C^{1,\gamma}$ domains, respectively.

Recall that a natural way to make the nonlocal Poisson problem \eqref{eq:Dirichlet-problem-weak} well-posed is to impose Neumann boundary conditions as in \eqref{eq:Neumann-problem-blowup}. Another way would be to prescribe the limit of the quotient $v/d^{s-1}$ directly, which leads to the following nonlocal problem with local Dirichlet data, which was introduced independently in \cite{Gru14}, \cite{Aba15}:
\begin{align}
\label{eq:Dirichlet-problem-blowup}
\left\{\begin{array}{rcl}
L v &=& f ~~ \text{ in } \Omega,\\
v &=& 0 ~~ \text{ in } \R^n \setminus \Omega,\\ \displaystyle
\frac{v}{d^{s-1}} &=& h ~~ \text{ on } \partial \Omega.
\end{array}\right.
\end{align}

The weak maximum principle in \autoref{lemma:weak-max-princ-large} implies that the problems \eqref{eq:Dirichlet-problem-blowup} and \eqref{eq:Dirichlet-problem-weak} are equivalent, when $h \equiv 0$. Thus, \eqref{eq:Dirichlet-problem-blowup} can be seen as an inhomogeneous nonlocal Dirichlet problem.

Another contribution of this article is the following Schauder-type boundary regularity estimate for solutions to nonlocal equations with local Dirichlet data:

\begin{theorem}
\label{thm:dirichlet}
Let $L$, $K$, $s$, $\lambda$, $\Lambda$ be as in \eqref{eq:L}-\eqref{eq:K}. Let $k \in \N$, $\gamma \in (0,1)$ with $\gamma \not= s$, and $\Omega \subset \R^n$ be a $C^{k+1,\gamma}$ domain, and $K \in C^{2k+2\gamma+3}(\mathbb{S}^{n-1})$. \\
Let $v \in L^1_{2s}(\R^n)$ with $v/d^{s-1} \in C(\overline{\Omega})$ be a viscosity solution to \eqref{eq:Dirichlet-problem-blowup} with $f \in C(\Omega) \cap \mathcal{X}(\Omega)$ and $h \in C^{k+\gamma}(\partial \Omega)$, where $\mathcal{X}$ is as in \eqref{eq:f-fct-space}.
Then, it holds $v/d^{s-1} \in C_{loc}^{1+\gamma}(\overline{\Omega})$, and
\begin{align*}
\left\Vert \frac{v}{d^{s-1}} \right\Vert_{C^{k,\gamma}(\overline{\Omega})} \le c\left( \Vert f \Vert_{\mathcal{X}(\Omega)} + \Vert h \Vert_{C^{k+\gamma}(\partial\Omega)} \right)
\end{align*}
for some $c > 0$, which only depends on $n,s,\lambda,\Lambda,k,\gamma,\Omega$, and $\Vert K \Vert_{C^{2k+2\gamma+3}(\mathbb{S}^{n-1})}$.
\end{theorem}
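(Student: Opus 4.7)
The plan is to reduce the inhomogeneous Dirichlet problem \eqref{eq:Dirichlet-problem-blowup} to the standard homogeneous one \eqref{eq:Dirichlet-problem-weak} by subtracting a suitable barrier that absorbs the boundary datum $h$, and then invoke the classical boundary Schauder estimate (a weighted version of \eqref{eq:AbRo-result} from \cite{RoSe17,AbRo20}) together with the weak maximum principle \autoref{lemma:weak-max-princ-large}.

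The first step is to construct an auxiliary function $\bar v \in L^1_{2s}(\R^n)$ with $\bar v \equiv 0$ in $\R^n \setminus \Omega$, such that $\bar v/d^{s-1}$ extends to a function in $C^{k,\gamma}(\overline \Omega)$ that agrees with $h$ on $\partial \Omega$ and satisfies $\|\bar v/d^{s-1}\|_{C^{k,\gamma}(\overline \Omega)} \le C\|h\|_{C^{k+\gamma}(\partial \Omega)}$, and such that $L\bar v \in \mathcal X(\Omega)$ with $\|L\bar v\|_{\mathcal X(\Omega)} \le C\|h\|_{C^{k+\gamma}(\partial \Omega)}$. The natural candidate is $\bar v(x) = \chi(x)\,\tilde h(x)\,d(x)^{s-1}$, where $\tilde h$ is a $C^{k+\gamma}$ extension of $h$ to a neighborhood of $\partial \Omega$ (for instance, constant along inward normals via the nearest-point projection), and $\chi$ is a smooth cutoff supported near $\partial \Omega$. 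The model computation \eqref{eq:Hmissi} gives $L\bar v \equiv 0$ exactly when $\Omega = B_1$, $\tilde h$ is constant, and $L=(-\Delta)^s$; in general $L\bar v$ will lie in $\mathcal X$ with error terms coming from the curvature of $\partial \Omega$, the variation of $\tilde h$, and the anisotropy of $K$.

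With such a barrier in hand, set $\tilde v := v - \bar v$, so that $L\tilde v = f - L\bar v =: \tilde f \in \mathcal X(\Omega)$ in $\Omega$, $\tilde v \equiv 0$ in $\R^n \setminus \Omega$, and $\tilde v/d^{s-1} \equiv 0$ on $\partial \Omega$. Let $w$ denote the unique standard weak solution to $Lw = \tilde f$ in $\Omega$, $w \equiv 0$ outside. Then $w/d^s$ is bounded, so $w/d^{s-1}$ vanishes on $\partial \Omega$; applying \autoref{lemma:weak-max-princ-large} to $\pm(\tilde v - w)$ forces $\tilde v \equiv w$. The weighted boundary Schauder estimate from \cite{AbRo20} then yields $w/d^s \in C^{k,\gamma}(\overline \Omega)$ with $\|w/d^s\|_{C^{k,\gamma}(\overline \Omega)} \le C\|\tilde f\|_{\mathcal X(\Omega)} \le C(\|f\|_{\mathcal X(\Omega)} + \|h\|_{C^{k+\gamma}(\partial \Omega)})$. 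Since $d \in C^{k+1,\gamma}$ in a neighborhood of $\partial \Omega$ and interior nonlocal regularity handles the region away from the boundary, $\tilde v/d^{s-1} = d \cdot (w/d^s)$ lies in $C^{k,\gamma}(\overline \Omega)$; summing with $\bar v/d^{s-1}$ yields the claimed estimate.

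The principal obstacle is the barrier construction itself: one must explicitly control $L\bar v$ in the weighted space $\mathcal X(\Omega)$ with a constant depending only on $\|h\|_{C^{k+\gamma}(\partial \Omega)}$ and the geometric and kernel data. This amounts to Taylor-expanding $\tilde h$, the distance function, and the kernel $K$ near each boundary point in flattened coordinates and carefully bounding the remainders — precisely the computation that presumably forces the regularity assumption $K \in C^{2k+2\gamma+3}(\mathbb S^{n-1})$. A direct application of \autoref{thm:higher-reg} would be less natural here, as the Neumann datum $g = \partial_\nu(v/d^{s-1})$ is not prescribed a priori and would itself have to be extracted from $h$ by an essentially equivalent calculation.
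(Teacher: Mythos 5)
Your overall route is the same as the paper's: extend $h$, subtract $d^{s-1}\tilde h$, reduce to the homogeneous Dirichlet problem \eqref{eq:Dirichlet-problem-weak}, control the new right-hand side $\tilde f = f - L(d^{s-1}\tilde h)$, and invoke the boundary theory of \cite{RoSe17,AbRo20} together with the maximum principle \autoref{lemma:weak-max-princ-large}. The barrier computation you flag as ``the principal obstacle'' is not left open in the paper: it is exactly \autoref{cor:dist-s-1-smooth-domains}, which gives $|L(d^{s-1}\eta)|\le c\,d^{\gamma-s}$ when $k=1,\gamma<s$, a global bound $[L(d^{s-1}\eta)]_{C^{k-1-s+\gamma}(\overline\Omega)}\le c$ when $k\ge 2$ or $\gamma>s$, and, crucially, the weighted interior estimate $[L(d^{s-1}\eta)]_{C^{k+\gamma-2s}(B_{d(x_0)/2}(x_0))}\le c\,d^{s-1}(x_0)$ when $k+\gamma>2s$.

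The genuine gap is the step ``the weighted boundary Schauder estimate from \cite{AbRo20} then yields $\|w/d^s\|_{C^{k,\gamma}(\overline\Omega)}\le C\|\tilde f\|_{\mathcal X(\Omega)}$.'' This overclaims by $s$ derivatives: the sharp result \eqref{eq:AbRo-result} needs $f\in C^{k+\gamma-s}(\overline\Omega)$ to conclude $v/d^s\in C^{k,\gamma}(\overline\Omega)$, whereas your $\tilde f$ only lies in $\mathcal X$, i.e.\ (at best) $C^{k+\gamma-2s}$ globally plus the weighted bounds above, and indeed $L(d^{s-1}\tilde h)$ is never better than $C^{k-1-s+\gamma}(\overline\Omega)$. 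Under these hypotheses \cite{RoSe17,AbRo20} only give boundary expansions of the form $|w(x)-Q_z(x)d^s(x)|\le c|x-z|^{k-1+\gamma+s}$ with $Q_z\in\cP_{k-1}$, i.e.\ regularity of order $k-1+\gamma$ for $w/d^s$, so your final step ``$\tilde v/d^{s-1}=d\cdot(w/d^s)\in C^{k,\gamma}$'' fails as stated because its premise fails. The missing order is recovered precisely because $|x-z|^{k-1+\gamma+s}\le c|x-z|^{k+\gamma}d^{s-1}(x)$ and because the interior seminorms of $\tilde f$ on balls $B_{d(x_0)/2}(x_0)$ carry the weight $d^{s-1}(x_0)$ (\autoref{cor:dist-s-1-smooth-domains}(iii)): the paper rescales, applies interior Schauder estimates to get $[w-Q_zd^s]_{C^{k+\gamma}(B_{d(x_0)/2}(x_0))}\le c\,d^{s-1}(x_0)$, interpolates, and then runs a product-rule computation with $d^{1-s}$ to conclude $[v/d^{s-1}-Q_z d]_{C^{k,\gamma}}\le c$. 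This scaling/interpolation argument is the substantive content of the proof and is absent from your proposal; without it, multiplying a function that is merely $C^{k+\gamma-s}$ by $d\in C^{k+1,\gamma}$ cannot produce $C^{k,\gamma}$ regularity. (Your identification $\tilde v\equiv w$ via \autoref{lemma:weak-max-princ-large} is fine and essentially what the paper does via \eqref{eq:application-Dirichlet-barrier}, modulo the remark that $\|\tilde f\|_{L^\infty}$ must be replaced by a weighted norm when $\tilde f$ blows up like $d^{\gamma-s}$.)
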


We refer to \cite{Gru15,Gru23} for similar results in the framework of pseudodifferential operators.

Note that \eqref{eq:Dirichlet-problem-blowup} can always be reduced to the homogeneous problem \eqref{eq:Dirichlet-problem-weak}. In fact, if $\Omega$ and $h$ are regular enough, one can extend $h$ to a smooth function in $\overline{\Omega}$ and consider $w := v - d^{s-1} h$. Then, $w$ solves the homogeneous problem \eqref{eq:Dirichlet-problem-weak} with a new right hand side $\tilde{f} = f - L(d^{s-1}h)$. Since $L(d^{s-1}h)$ has good regularity properties (see \autoref{cor:dist-s-1-smooth-domains}), we can prove \autoref{thm:dirichlet}, by application of the results in \cite{RoSe17, AbRo20}.

\subsection{Strategy of the proof: regularity for nonlocal problems with local Neumann data}

Since the nonlocal problem with inhomogeneous local Dirichlet data \eqref{eq:Dirichlet-problem-blowup} can always be reduced to the homogeneous problem \eqref{eq:Dirichlet-problem-weak} for which the boundary regularity theory was already established (see \cite{RoSe17}, \cite{AbRo20}), the proof of \autoref{thm:dirichlet} is rather simple. 

In sharp contrast to that, for the Neumann problem \eqref{eq:Neumann-problem-blowup} there is no cheap way to obtain the boundary regularity results in \autoref{thm:higher-reg} from the existing theory. In fact, it is already highly non-trivial to establish H\"older continuity of the quotient $v/d^{s-1}$ up to the boundary (see \autoref{thm:bdry-Holder} below). 

Our proof of \autoref{thm:bdry-Holder} goes in \emph{three main steps}.\\
\emph{First}, we establish a weak maximum principle for solutions to the Neumann problem \eqref{eq:Neumann-problem-blowup}.

\begin{proposition}
\label{lemma:Neumann-max-princ-intro}
Let $L$, $K$, $s$, $\lambda$, $\Lambda$ be as in \eqref{eq:L}-\eqref{eq:K}. Let $\gamma > 0$, $\Omega \subset \R^n$ be a $C^{2,\gamma}$ domain, and $K \in C^{5+2\gamma}(\mathbb{S}^{n-1})$. Let $v \in L^1_{2s}(\R^n)$ with $v/d^{s-1} \in C(\overline{\Omega})$ be a viscosity solution to
\begin{align*}
\left\{\begin{array}{rcl}
L v &\ge& 0  ~~ \text{ in } \Omega,\\
v &\ge& 0 ~~ \text{ in } \R^n \setminus \Omega,\\ \displaystyle
\partial_{\nu} \left(\frac{v}{b_{\Omega}}\right) &\le& 0 ~~ \text{ on } \partial \Omega,
\end{array}\right.
\end{align*}
where $b_{\Omega}$ is defined in \eqref{eq:b},
then, $v \ge 0$.
\end{proposition}

Note that this result seems to be the first maximum principle for nonlocal problems with local Neumann boundary conditions in the literature. We believe it to be of independent interest and refer to \autoref{lemma:Neumann-max-princ} for a corresponding $L^{\infty}$ bound in the case of inhomogeneous data. The function $b$ can be thought of as a special regularized distance function taken to the power $s-1$. We stress that the result is no longer true if the function $b$ is replaced by $\tilde{d}^{s-1}$, where $\tilde{d}$ is another regularized distance function. In fact, \autoref{lemma:Neumann-max-princ-intro} holds true for the function in \eqref{eq:Hmissi} if $b = (1-|\cdot|)_+^{s-1}$, but fails if we replace $b$ by the regularized distance $\tilde{d} = (1-|\cdot|^4)$.

The proof of \autoref{lemma:Neumann-max-princ-intro} follows from a nonlocal Hopf-type lemma for solutions to the inhomogeneous Dirichlet problem \eqref{eq:Dirichlet-problem-blowup} (see \autoref{lemma:Hopf-large}), which in turn follows from the weak maximum principle in \autoref{lemma:weak-max-princ-large}. All of these results rely heavily on explicit barriers for \eqref{eq:Dirichlet-problem-blowup} in $C^{1,\gamma}$ domains that are adapted to the geometry of the domain and blow up at the boundary like $d^{s-1}$. These barriers can be seen as perturbations of \eqref{eq:Hmissi}, or rather of 1D solutions such as
\begin{align}
\label{eq:1D-barrier}
L (x_n)_+^{s-1} = 0 ~~ \text{ in } \{ x_n > 0 \}.
\end{align}
Note that \eqref{eq:1D-barrier} follows simply by differentiating the equation
\begin{align*}
L (x_n)_+^s = 0 ~~ \text{ in } \{ x_n > 0 \}.
\end{align*}
The previous identity is a classical fact for nonlocal operators \eqref{eq:L}-\eqref{eq:K} (see \cite[Lemma 2.6.2]{FeRo23}).

The \emph{second} main step in the proof of \autoref{thm:bdry-Holder} is to establish H\"older continuity of order $\alpha$, for $\alpha \in (0,1)$ small enough, up to the boundary of $v/d^{s-1}$ for solutions to \eqref{eq:Neumann-problem-blowup} in $C^{1,\gamma}$ domains. 

\begin{theorem}
\label{thm:bdry-Holder}
Let $L$, $K$, $s$, $\lambda$, $\Lambda$ be as in \eqref{eq:L}-\eqref{eq:K}. Let $\gamma \in (0,1)$, $\Omega \subset \R^n$ be a $C^{2,\gamma}$ domain, and $K \in C^{5+2\gamma}(\mathbb{S}^{n-1})$. Let $v \in L^1_{2s}(\R^n)$ with $v/d^{s-1} \in C(\overline{\Omega})$ be a viscosity solution to
\begin{align*}
\left\{\begin{array}{rcl}
L v &=& f ~~ \text{ in } \Omega \cap B_2,\\
v &=& 0 ~~ \text{ in } B_2 \setminus \Omega,\\ \displaystyle
\partial_{\nu}\left( \frac{v}{d^{s-1}}\right) &=& g ~~ \text{ on }  \partial \Omega \cap B_2, 
\end{array}\right.
\end{align*}
with $f \in C(\Omega \cap B_2)$ and $g \in C(\partial \Omega \cap B_2)$.
Then, there exists $\alpha_0 > 0$, such that when $d^{s-\alpha} f \in L^{\infty}(\Omega \cap B_2)$ for some $\alpha \in (0,\alpha_0]$, then it holds $v/d^{s-1} \in C^{\alpha}_{loc}(\overline{\Omega} \cap B_2)$, and
\begin{align*}
\left\Vert \frac{v}{d^{s-1}} \right\Vert_{C^{\alpha}(\overline{\Omega} \cap B_1)} \le c \left( \left\Vert \frac{v}{d^{s-1}} \right\Vert_{L^{\infty}(\Omega \cap B_2)} + \Vert v \Vert_{L^1_{2s}(\R^n \setminus B_2)} + \Vert d^{s-\alpha} f \Vert_{L^{\infty}(\Omega \cap B_2 )} + \Vert g \Vert_{L^{\infty}( \partial \Omega \cap B_2 )} \right),
\end{align*}
where $c > 0$ and $\alpha_0$ depend only on $n,s,\lambda,\Lambda,\gamma$, and the $C^{2,\gamma}$ radius of $\Omega$.
\end{theorem}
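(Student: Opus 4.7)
The plan is to prove $C^\alpha$ regularity of $u := v/d^{s-1}$ up to the boundary via an oscillation decay at each boundary point, combined with interior estimates for nonlocal equations. An inhomogeneous version of \autoref{lemma:Neumann-max-princ-intro} (see \autoref{lemma:Neumann-max-princ}) already controls $\|u\|_{L^\infty(\Omega \cap B_2)}$ in terms of the given data, so the task reduces to establishing the H\"older seminorm: for each $x_0 \in \partial \Omega \cap B_1$, show that $\osc_{\Omega \cap B_r(x_0)} u \le C r^\alpha$ for $r \le 1$. Combining this with an interior $C^\alpha$ estimate applied to $u$ then yields the full $C^\alpha$ norm on $\Omega \cap B_1$.

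The oscillation decay at a boundary point $x_0$ is established by a dyadic induction. After translation, rotation, and a $C^{1,\gamma}$ flattening, one may assume $x_0 = 0 \in \partial\Omega$ with inward normal $e_n$. The goal is to produce constants $c_k \in \R$ with $|c_{k+1}-c_k| \le C\rho^{\alpha k}$ and $|u - c_k| \le \rho^{\alpha k}$ on $\Omega \cap B_{\rho^k}$, for some universal $\rho \in (0,1)$ and $\alpha \in (0,\alpha_0]$. Rescaling to unit scale via $\tilde u(y) := \rho^{-\alpha k}(u(\rho^k y) - c_k)$ and $\Omega_k := \rho^{-k}\Omega$ reduces the inductive step to an \emph{improvement-of-flatness} statement: if $|\tilde u| \le 1$ in $\Omega_k \cap B_1$, all rescaled data are sufficiently small, and $\partial\Omega_k \cap B_1$ is sufficiently close (in $C^{1,\gamma}$) to $\partial H \cap B_1$ where $H := \{y_n > 0\}$, then there exists $c^* \in \R$ with $|c^*| \le 1/2$ satisfying $|\tilde u - c^*| \le \rho^\alpha$ in $\Omega_k \cap B_\rho$. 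The precise weight $d^{s+1-\alpha}$ on $f$ in the hypothesis is exactly matched to this scaling: it is the unique weight under which the rescaled inhomogeneity $\tilde f$, together with the correction coming from subtracting the $c_k$-profile (which is small thanks to the $C^{1,\gamma}$ flatness of $\partial \Omega_k$ and the asymptotic $L$-harmonicity of $(y_n)_+^{s-1}$ from \eqref{eq:1D-barrier}), remains uniformly small as $k$ grows.

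The improvement-of-flatness is then proved by compactness and contradiction. Suppose the statement fails: extract a sequence of normalized solutions $(u_k)$ on domains $(\Omega_k)$ approximating $H$ in $C^{1,\gamma}$, with data going to zero, such that no admissible constant $c^*$ satisfies the conclusion. Using uniform interior H\"older estimates for nonlocal equations (e.g.\ \cite{RoSe16b}) applied in the interior of $\Omega_k$, together with Arzel\`a--Ascoli and the uniform $L^1_{2s}$ control of tails, one extracts a locally uniform limit $u_\infty$ on $\overline{H}$. The limit satisfies the homogeneous half-space Neumann problem: $L_0(u_\infty (y_n)_+^{s-1}) = 0$ in $H$, $u_\infty (y_n)_+^{s-1} = 0$ in $\R^n \setminus H$, and $\partial_n u_\infty = 0$ on $\partial H$, with $\|u_\infty\|_{L^\infty(B_1 \cap H)} \le 1$. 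A Liouville-type rigidity, based on the explicit harmonicity \eqref{eq:1D-barrier} and on \autoref{lemma:Neumann-max-princ-intro}, then forces $u_\infty$ to be constant on $H$, providing the admissible $c^*$ and contradicting the assumption.

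The main obstacle will be the \emph{Liouville theorem for the half-space Neumann problem}, which asserts that a bounded solution of the homogeneous problem with zero Neumann data is a constant multiple of $(y_n)_+^{s-1}$. A natural approach is a doubling/scaling argument: if $u_\infty$ were non-constant, rescaled differences of $u_\infty$ would yield new bounded solutions of the same homogeneous half-space problem, and the maximum principle \autoref{lemma:Neumann-max-princ-intro} together with the homogeneous Neumann condition should force a contradiction. A closely related delicate point is the passage to the limit across varying domains: since the rescaled $u_k$ live on different sets, convergence must be interpreted on compacts of $\overline{H}$, using the given continuity of $v/d^{s-1}$ up to $\partial \Omega$ and the uniform tail decay, and one must verify that both the interior equation and the boundary condition transfer to the limit. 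These two issues are the essential non-routine steps; the remainder should follow from the standard toolkit for nonlocal boundary regularity.
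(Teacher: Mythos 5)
The central gap is in your compactness step. The improvement-of-flatness argument needs two things that interior estimates cannot deliver: (i) convergence of the rescaled solutions $u_k$ up to the boundary, since otherwise the Neumann condition $\partial_n u_\infty = 0$ does not pass to the limit (the stability result needed here, cf.\ \autoref{lemma:stability}, requires convergence of $v_j/\tilde{d}_j^{s-1}$ in $L^\infty_{loc}$ up to $\partial \Omega_j$, not merely on compact subsets of the open half-space); and (ii) once the limit is identified as a constant $c^*$, you must conclude $|u_k - c^*| \le \rho^\alpha$ on all of $\Omega_k \cap B_\rho$, including points arbitrarily close to $\partial\Omega_k$, where locally uniform interior convergence says nothing. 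The interior H\"older estimates of \cite{RoSe16b} plus Arzel\`a--Ascoli give equicontinuity only on compacts of $\{y_n>0\}$, and the hypothesis $v/d^{s-1}\in C(\overline{\Omega})$ is qualitative for each fixed solution, so it provides no uniform modulus along the sequence; a uniform boundary modulus is exactly the statement being proven. Moreover, the Liouville/rigidity ingredient you invoke is left unproven, and in this paper the half-space Liouville theorem and the half-space boundary estimate (\autoref{thm:Liouville}, \autoref{cor:bdry-Holder-half-space}) are established \emph{after} and \emph{using} \autoref{thm:bdry-Holder}, so appealing to them here would be circular. To close the argument you would need an independent quantitative boundary ingredient, e.g.\ a barrier or Harnack-type estimate controlling the oscillation of $u_k$ near $\partial\Omega_k$ by its interior values plus small data.

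This is precisely where the paper's route differs: it avoids compactness entirely for this theorem. Using the Neumann maximum principle (\autoref{lemma:Neumann-max-princ}) together with the explicit barriers of \autoref{lemma:subsol} and \autoref{lemma:supersol} (which blow up like $d^{s-1}$), it proves a weak Harnack inequality at the boundary for $v/d^{s-1}$ (\autoref{lemma:bdry-weak-Harnack}), deduces a measure-theoretic growth lemma (\autoref{lemma:growth-lemma}), and iterates it into an oscillation decay at boundary points (\autoref{lemma:bdry-osc-decay}); the $C^\alpha$ bound then follows from interior regularity, a truncation to handle the tail, and a covering argument. The compactness-plus-Liouville machinery you propose is reserved in the paper for the higher-order estimates (\autoref{thm:higher-reg}), where \autoref{thm:bdry-Holder} itself supplies the uniform boundary convergence that your sketch is missing.
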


The proof of \autoref{thm:bdry-Holder} uses the weak maximum principle in \autoref{lemma:Neumann-max-princ-intro} and the interior weak Harnack inequality, to establish a weak Harnack inequality for $v/d^{s-1}$ at the boundary (see \autoref{lemma:bdry-weak-Harnack}). This allows us to deduce a so called ``growth lemma'' for $v/d^{s-1}$, stating that $v/d^{s-1}$ must be large pointwise in a ball centered at the boundary, if $v/d^{s-1}$ was large in a measure-theoretic sense in a ball away from the boundary. Such growth lemma allows to establish oscillation decay for $v/d^{s-1}$ at the boundary, and to deduce the H\"older estimate in \autoref{thm:bdry-Holder}. A similar proof for the classical Laplacian can be found in \cite{LiZh23}.

Once the boundary H\"older estimate is shown, we can establish the higher order boundary regularity in \autoref{thm:higher-reg} via a blow-up argument. This is the \emph{third}, and last step of the proof. \autoref{thm:bdry-Holder} is crucial in order to deduce uniform convergence of the blow-up sequence.\\
The blow-up argument follows the scheme in \cite{AbRo20} and relies on a Liouville theorem in the half-space with local Neumann data (see \autoref{thm:Liouville}). However, major modifications have to be made in most of the steps due to the boundary blow-up of solutions. For instance, we need to show the following new result (see \autoref{cor:dist-s-1-smooth-domains}):
\begin{align*}
\partial \Omega \in C^{k+1,\gamma} \quad \Rightarrow \quad L(d^{s-1}) \in C^{k-1+\gamma-s}(\overline{\Omega}) \qquad \text{ if } k+\gamma > 1 + s.
\end{align*}
Moreover, the presence of a Neumann boundary condition complicates some of the arguments, such as the proof of a stability result for viscosity solutions (see \autoref{lemma:stability}). Finally, as in \cite{AbRo20} we need to make use of a suitable notion of nonlocal equations up to a polynomial (see \cite{DSV19}, \cite{DDV22}) in order to account for solutions that grow too fast at infinity (see \autoref{def:L-up-to-a-polynomial}).

\subsection{Applications to free-boundary problems}

We end the discussion of the main results of this article by shedding some light on a, perhaps unexpected, connection between nonlocal problems with local Neumann boundary data and free boundary problems. This connection is a main motivation for us to study \eqref{eq:Neumann-problem-blowup}. Let us explain this phenomenon in the particular case of the fractional Laplacian.

The nonlocal one-phase free boundary problem, which was introduced in \cite{CRS10} (see also \cite{RoWe24}), deals with the minimization of the following functional
\begin{align}
\label{eq:one-phase}
\mathcal{I}(w) := \iint_{(B_1^c \times B_1^c)^c} \big( w(x) - w(y) \big)^2 \frac{\d y \d x}{|x-y|^{n+2s}} + M \big| \{ w > 0 \} \cap B_1 \big|
\end{align}
for some $M > 0$ and with prescribed values of $w$ in $\R^n \setminus B_1$. One can show (see \cite{CRS10,FeRo24}) that local minimizers of \eqref{eq:one-phase} are $C^s(B_1)$ and that they are viscosity solutions to
\begin{align}
\label{eq:one-phase-viscosity}
\left\{\begin{array}{rcl}
(-\Delta)^s w &=& 0 \qquad ~~ \text{ in } \Omega \cap B_1,\\
w &=& 0 ~~ \qquad \text{ in } B_1 \setminus \Omega,\\ \displaystyle
 \frac{w}{d^{s}} &=& c_{n,s} M ~~ \text{ on } \partial \Omega \cap B_1,
\end{array}\right.
\end{align}
where $c_{n,s} > 0$ is a constant and $\Omega := \{ w > 0 \}$. An important question in the theory is to determine the regularity of the free boundary $\partial \Omega$ near so called ``regular points''. These are the points $x_0 \in \partial \Omega \cap B_1$ for which blow-ups of $w$ are half-space solutions, i.e., (up to rotations and multiplicative constants)
\begin{align*}
\frac{w(x_0 + rx)}{r^s} \to w_0(x) := (x_n)_+^s ~~ \text{ locally uniformly}.
\end{align*}
One can show using the extension for $(-\Delta)^s$ (see \cite{DeRo12, DeSa12, DSS14}) that once a sequence $(w_{\eps})$ of viscosity solutions \eqref{eq:one-phase-viscosity} is ``$\eps$-close'' to the half-space solution $w_0$ in the sense that
\begin{align*}
(x_n - \eps)_+^s \le w_{\eps}(x) \le (x_n + \eps)_+^s,
\end{align*}
then it holds, as $\eps \searrow 0$:
\begin{align*}
\frac{w_{\eps}(x) - (x_n)_+^{s}}{\eps} \to (x_n)_+^{s-1} u(x),
\end{align*}
where $u$ solves the so called ``linearized problem''
\begin{align}
\label{eq:linearized-problem}
\left\{\begin{array}{rcl}
(-\Delta)^s ((x_n)_+^{s-1} u) &=& 0 ~~ \text{ in } \{ x_n > 0 \} \cap B_1,\\
\partial_n u &=& 0 ~~ \text{ on } \{ x_n = 0 \} \cap B_1.
\end{array}\right.
\end{align}
Hence, $(x_n)_+^{s-1} u$ is a solution to a nonlocal problem with local Neumann data \eqref{eq:Neumann-problem-blowup} in the half-space, and it explodes at the boundary $\{ x_n = 0\} \cap B_1$.

In order to establish regularity results for the free boundary $\Omega = \{ w > 0 \}$ near regular points, it is an important step to establish boundary regularity results for the solution to the linearized problem. For \eqref{eq:linearized-problem} this was done in \cite{DeRo12}, \cite{DeSa12}, \cite{DSS14}, using the Caffarelli-Silvestre extension.\\
In the light of this connection, our main result \autoref{thm:higher-reg} also makes a contribution to the theory of the nonlocal one-phase problem \eqref{eq:one-phase}, and provides a completely new proof of the regularity for \eqref{eq:linearized-problem}, even in the case of the fractional Laplacian.

We end this discussion by stating a variant of \autoref{thm:higher-reg} in the special case $\Omega = \{ x_n > 0 \}$. This result holds true under assumptions on the regularity of $K$ which are expected to be optimal, and it will be helpful in the study of the nonlocal one-phase free boundary problem \eqref{eq:one-phase-viscosity} with respect to general nonlocal operators \eqref{eq:L}-\eqref{eq:K}, which we plan to investigate in a future work (see \cite{RoWe24b}). 

\begin{theorem}
\label{thm:higher-reg-half-space}
Let $L$, $K$, $s$, $\lambda$, $\Lambda$ be as in \eqref{eq:L}-\eqref{eq:K}. Let $k \in \N$, $\gamma \in (0,1)$ with $\gamma \not= s$.\\
Let $u \in C(\{ x_n \ge 0 \} \cap B_2)$ with $(x_n)_+^{s-1} u \in L^1_{2s}(\R^n)$ be a viscosity solution to
\begin{align*}
\left\{\begin{array}{rcl}
L((x_n)_+^{s-1} u ) &=& f ~~ \text{ in } \{ x_n > 0 \} \cap B_2,\\
\partial_{n} u &=& g ~~ \text{ on } \partial \{ x_n = 0 \} \cap B_2.
\end{array}\right.
\end{align*}
with $f \in C(\{ x_n > 0 \} \cap B_2) \cap \mathcal{X}(\{ x_n > 0 \} \cap B_2)$, $g \in C^{k-1+\gamma}(\{ x_n = 0\} \cap B_2)$, and $K \in C^{k-2s+\gamma}(\mathbb{S}^{n-1})$ if $k+\gamma > 2s$, where $\mathcal{X}$ is as in \eqref{eq:f-fct-space}. Then, it holds
\begin{align*}
\left\Vert u \right\Vert_{C^{k,\gamma}(\{ x_n \ge 0 \} \cap B_1)} \le c\Big( \left\Vert u \right\Vert_{L^{\infty}(\{ x_n > 0 \} \cap B_2 )} &+ \Vert (x_n)_+^{s-1} u \Vert_{L^1_{2s}(\R^n \setminus B_2)} \\
&+ \Vert f \Vert_{\mathcal{X}(\{ x_n > 0 \} \cap B_2)} + \Vert g \Vert_{C^{k-1+\gamma}(\{ x_n = 0 \} \cap B_2)} \Big)
\end{align*}
for some $c > 0$, which only depends on $n,s,\lambda,\Lambda,k,\gamma$, and (if $k + \gamma > 2s$) also on $\Vert K \Vert_{C^{k-2s+\gamma}(\mathbb{S}^{n-1})}$.
\end{theorem}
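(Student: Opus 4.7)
The plan is to adapt the blow-up/compactness scheme outlined in the paper for \autoref{thm:higher-reg} to the half-space, exploiting two simplifications that permit the sharp assumption on $K$. First, the boundary is flat. Second, by differentiating the classical identity $L((x_n)_+^s) = 0$ in $x_n$, one obtains $L((x_n)_+^{s-1}) = 0$ exactly in $\{x_n > 0\}$, so the error term $L(d^{s-1})$ that in the curved case forces $K \in C^{2k+2\gamma+3}(\mathbb{S}^{n-1})$ disappears identically. This is the algebraic reason why the optimal requirement $K \in C^{k-2s+\gamma}(\mathbb{S}^{n-1})$ suffices here, and it only enters through interior Schauder in the regime $k+\gamma > 2s$.

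For the base case I would apply \autoref{thm:bdry-Holder} with $v = (x_n)_+^{s-1}u$ and $\Omega = \{x_n > 0\}$ to get $u \in C^\alpha_{\loc}(\{x_n \ge 0\} \cap B_2)$ for some universal small $\alpha > 0$, with the quantitative bound stated there. Then I would run a Campanato-type blow-up to upgrade to $C^{k,\gamma}$. Suppose the estimate fails; then there exist sequences $u_j$, base points $x_j \in \{x_n \ge 0\} \cap \overline B_1$, and scales $r_j \downarrow 0$ such that the normalized differences
\[
\tilde u_j(x) = M_j^{-1} r_j^{-k-\gamma}\bigl(u_j(x_j + r_j x) - P_j(x)\bigr), \qquad M_j \to \infty,
\]
have unit $L^\infty$ excess on $B_1$, where $P_j$ is the optimal degree-$k$ polynomial approximation at $x_j$ compatible with the Neumann condition when $x_j \in \{x_n = 0\}$. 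The base H\"older estimate, applied on dyadic annuli, together with interior Schauder from \cite{RoSe16b} (which uses exactly $K \in C^{k-2s+\gamma}(\mathbb{S}^{n-1})$ in the regime $k+\gamma > 2s$), yields uniform $C^\alpha$ control on $\tilde u_j$ on compact subsets of the rescaled half-space, hence precompactness.

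Passing to the limit via the stability result for viscosity solutions (analogous to \autoref{lemma:stability}), and using that $f$ rescales to zero in the weighted norm $\mathcal X$ while $g$ rescales to zero in $C^{k-1+\gamma}$, the limit $u_\infty$ solves
\[
L((x_n)_+^{s-1} u_\infty) = 0 \text{ in } \{x_n > 0\}, \qquad \partial_n u_\infty = 0 \text{ on } \{x_n = 0\},
\]
with polynomial growth at infinity of degree strictly less than $k+\gamma$. The half-space Liouville theorem \autoref{thm:Liouville}, applied in the ``up to a polynomial'' sense (cf.\ \autoref{def:L-up-to-a-polynomial}) to absorb the polynomial growth, forces $u_\infty$ to be a polynomial of degree at most $k$ satisfying the Neumann condition; this contradicts the unit-excess normalization by optimality of $P_j$, closing the argument.

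The main obstacle will be the case $k + \gamma \le 2s$, where $f$ is only in the weighted space $d^{s-\gamma}L^\infty$: the blow-up exponents must be tuned so that the rescaled $f_j$ decays in this weighted norm and so that $(x_n)_+^{s-1}\tilde u_j$ remains in $L^1_{2s}(\R^n)$ with uniform control of its tail, a step that dictates the precise definition of $\mathcal X$ in \eqref{eq:f-fct-space}. Beyond that, one further subtlety is to verify that the Neumann trace is preserved in the limit despite the blow-up of $v_j = (x_n)_+^{s-1}u_j$ at $\{x_n = 0\}$; here the pointwise convergence of $u_j$ up to the flat boundary, supplied by the base H\"older estimate, is exactly what is needed.
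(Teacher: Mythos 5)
Your proposal is correct and follows essentially the same route as the paper: the half-space result is obtained by rerunning the blow-up machinery behind \autoref{thm:higher-reg} (boundary H\"older estimate, stability, half-space Liouville theorem, interior Schauder), with the key observation that flatness gives $L((x_n)_+^{s-1}q)\overset{k-1}{=}0$ for polynomials $q$, so the corrector terms requiring extra kernel regularity disappear and the only binding requirement is the interior Schauder estimate, i.e.\ $K\in C^{k-2s+\gamma}(\mathbb{S}^{n-1})$ when $k+\gamma>2s$. The minor imprecisions in your sketch (the limiting Neumann datum is in general a polynomial in $\cP_{k-1}$ rather than zero, and for $k\ge 2$ the tails of the rescaled functions must be controlled in heavier weighted norms within the up-to-a-polynomial framework) are exactly what \autoref{thm:Liouville} and the paper's tail computations already accommodate.
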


Finally, we make the following remark.

\begin{remark}
Note that the following two problems are equivalent if $v \in C(\overline{\Omega} \cap B_2)$, i.e., if solutions do not blow up on $\partial \Omega \cap B_2$:
\begin{align*}
\left\{\begin{array}{rcl}
Lv &=& f ~~ \text{ in } \Omega \cap B_2,\\
v &=& 0 ~~ \text{ in } B_2 \setminus \Omega,\\ \displaystyle
\partial_{\nu} \left( \frac{v}{d^{s-1}}\right) &=& g ~~ \text{ on } \partial \Omega \cap B_2,
\end{array}\right.
\qquad \leftrightarrow \qquad 
\left\{\begin{array}{rcl}
Lv &=& f ~~ \text{ in } \Omega \cap B_2,\\
v &=& 0 ~~ \text{ in } B_2 \setminus \Omega,\\ \displaystyle
\frac{v}{d^{s}} &=& g ~~ \text{ on } \partial \Omega \cap B_2.
\end{array}\right.
\end{align*}
Indeed, since $v \equiv 0$ in $B_2 \setminus \Omega$, it holds for any $x_0 \in \partial \Omega \cap B_2$:
\begin{align*}
\partial_{\nu} \left(\frac{v}{d^{s-1}}\right) = \lim_{x \to x_0} \frac{\frac{v}{d^{s-1}}(x) - \lim_{z \to x_0} \frac{v}{d^{s-1}}(z)}{d(x)} = \lim_{x \to x_0} \frac{v}{d^s}(x).
\end{align*}
Recall that the second problem is satisfied by minimizers to the nonlocal one-phase problem \eqref{eq:one-phase-viscosity}. Moreover, the above problem is the nonlocal counterpart of the over-determined Serrin's problem whenever $\Omega \subset B_2$ (see for instance \cite{FaJa15,SoVa19,BiJa20,DPTV23}).
\end{remark}

\subsection{Acknowledgments}

The authors were supported by the European Research Council under the Grant Agreements No. 801867 (EllipticPDE) and No. 101123223 (SSNSD), and by AEI project PID2021-125021NA-I00 (Spain).
Moreover, X.R was supported by the grant RED2022-134784-T funded by AEI/10.13039/501100011033, by AGAUR Grant 2021 SGR 00087 (Catalunya), and by the Spanish State Research Agency through the Mar\'ia de Maeztu Program for Centers and Units of Excellence in R{\&}D (CEX2020-001084-M).

\subsection{Organization of the paper}

This paper is organized as follows.
In Section \ref{sec:prelim} we introduce the notion of viscosity solutions to \eqref{eq:Neumann-problem-blowup} and give some preliminary lemmas. Among them are already several new results of independent interest, such as the construction of explicit barriers exploding at the boundary (see Subsection \ref{subsec:barriers}), an analysis of the regularity of $L(d^{s-1})$ in terms of the regularity of the domain (see \autoref{cor:dist-s-1-smooth-domains}), and a stability result for viscosity solutions (see \autoref{lemma:stability}). In Section \ref{sec:max-principles} we prove maximum principles for solutions to nonlocal problems with local Dirichlet- and Neumann data (see \autoref{lemma:weak-max-princ-large} and \autoref{lemma:Neumann-max-princ-intro}). Section \ref{sec:Holder-estimate} is devoted to the proof of the H\"older estimate up to the boundary (see \autoref{thm:bdry-Holder}). In Section \ref{sec:Liouville} we prove a Liouville theorem in the half-space (see \autoref{thm:Liouville}), and in Section \ref{sec:higher} we carry out a blow-up argument to prove our main result, \autoref{thm:higher-reg}. Finally, Section \ref{sec:dirichlet} contains the proof of the regularity for the inhomogeneous Dirichlet problem (see \autoref{thm:dirichlet}).

\section{Preliminaries}
\label{sec:prelim}

In this section, we give several important definitions, such as the definitions of viscosity solutions to \eqref{eq:Neumann-problem-blowup}. In Subsection \ref{subsec:prelim-dist} we establish the regularity of $L(d^{s-1})$ depending on the regularity of the domain and in Subsection \ref{subsec:barriers} we use these results to construct barrier functions. In Subsection \ref{subsec:poly}, we introduce the notion of nonlocal equations satisfied up to a polynomial, and in Subsection \ref{subsec:two-visc} we establish stability of viscosity solutions and prove that the sum of two viscosity solutions is again a viscosity solution.

From now on, we denote by $\mathcal{L}_s^{\text{hom}}(\lambda,\Lambda)$ the class of operators \eqref{eq:L} with kernels satisfying \eqref{eq:K}. Moreover, whenever we say $K \in C^{\alpha}(\mathbb{S}^{n-1})$ for some $\alpha > 0$, we mean that $\Vert K \Vert_{C^{\alpha}(\mathbb{S}^{n-1})} \le \Lambda$.
Sometimes, we denote the class of operators \eqref{eq:L} satisfying \eqref{eq:K} and $K \in C^{\alpha}(\mathbb{S}^{n-1})$ by $\mathcal{L}^{\text{hom}}_s(\lambda,\Lambda,\alpha)$.

Moreover, given an open, bounded domain $\Omega \subset \R^n$ with $\partial \Omega \in C^{\beta}$ for some $\beta > 1$, $d := d_{\Omega} : \R^n \to [0,\infty)$ will denote the regularized distance which satisfies $d \in C^{\infty}(\Omega) \cap C^{\beta}(\overline{\Omega})$ and $d \equiv 0$ in $\R^n \setminus \Omega$. Crucially, we have $\dist(\cdot,\Omega) \le d \le C \dist(\cdot,\Omega)$ in $\R^n$, i.e., the topological distance and the regularized distance are pointwise comparable. We will often use the fact that $|D^k d| \le c d^{\beta-k}$ (see \cite[Definition 2.7.5]{FeRo23}). Throughout this article, we will define $d^{s-1} \equiv 0$ in $\R^n \setminus \Omega$.

In the following, whenever $x_0 \in \partial \Omega$, we write $v/d^{s-1}(x_0) := \lim_{\Omega \ni x \to x_0} v/d_{\Omega}^{s-1}(x)$.

\subsection{Function spaces and solution concepts}

Let us introduce the following function space
\begin{align*}
L^1_{\alpha}(\R^n) &:= \left\{ u : \Vert u \Vert_{L^1_{\alpha}(\R^n)} := \int_{\R^n} |u(y)| (1 + |y|)^{-n-\alpha} \d y < \infty \right\}, ~~ \alpha > 0.
\end{align*}

Typically, we will use the previous definition with $\alpha = 2s$. We are now in a position to give the notion of viscosity solution to \eqref{eq:Neumann-problem-blowup}. 

\begin{definition}[Viscosity solution]
Let $\Omega \subset \R^n$ be an open, bounded domain with $\partial \Omega \in C^{1,\gamma}$. By $\nu \in \mathbb{S}^{n-1}$, we denote the inner normal vector to $\partial \Omega$.
\begin{itemize}
\item[(i)] We say that $v \in C(\Omega) \cap L^1_{2s}(\R^n)$ is a viscosity subsolution to 
\begin{align}
\label{eq:def-visc}
L v = f ~~ \text{ in } \Omega \cap B_1,
\end{align}
where $f \in C(\Omega \cap B_1)$, if for any $x \in \Omega \cap B_1$ and any neighborhood $N_x \subset \Omega$ of $x$ it holds 
\begin{align}
\label{eq:viscosity-sol}
L \phi(x) \le f(x) ~~ \forall \phi \in C^2(N_x) \cap L^1_{2s}(\R^n) ~~ \text{ s.t. } v(x) = \phi(x), ~~ \phi \ge v.
\end{align}
We say that $v$ is a viscosity supersolution to \eqref{eq:viscosity-sol} if \eqref{eq:viscosity-sol} holds true for $-v$ and $-f$ instead of $v$ and $f$. Moreover, $v$ is a viscosity solution to \eqref{eq:viscosity-sol}, if it is a viscosity subsolution and a viscosity supersolution.

\item[(ii)] For any function $b \in L^1_{2s}(\R^n)$ with $b/d^{s-1} \in C^{1}(\overline{\Omega})$ we say that $v \in L^1_{2s}(\R^n)$ with $v/d^{s-1} \in C( \overline{\Omega})$ is a viscosity subsolution to
\begin{align*}
\partial_{\nu}(v/b) = g ~~ \text{ on } \partial \Omega \cap B_1,
\end{align*}
where $g \in C(\partial \Omega \cap B_1)$, if for any $x \in \partial\Omega \cap B_1$ and any neighborhood $N_x \subset \overline{\Omega} \cap B_1$ of $x$ it holds 
\begin{align}
\label{eq:viscosity-sol-Neumann}
\partial_{\nu} \phi(x) \le g(x) ~~ \forall \phi \in C^2(N_x) \cap L^{\infty}(\overline{\Omega}) ~~ \text{ s.t. } v/b(x) = \phi(x), ~~ \phi \le v/b.
\end{align}
We say that $v$ is a viscosity supersolution to \eqref{eq:viscosity-sol-Neumann} if \eqref{eq:viscosity-sol-Neumann} holds true for $-v$ and $-g$ instead of $v$ and $g$. Moreover, $v$ is a viscosity solution to \eqref{eq:viscosity-sol-Neumann}, if it is a viscosity subsolution and a viscosity supersolution.
\end{itemize}
\end{definition}

Note that clearly, if in (i) $Lv(x)$, or if in (ii) $\partial_{\nu}(v/d^{s-1})(x) = \lim_{\Omega \ni y \to x} (v/d^{s-1})(y)$ exists in the strong sense, then the notions of viscosity solutions coincide with the ones for strong solutions (see \cite[Lemma 3.4.13]{FeRo23}).

\subsection{Nonlocal operators and the distance function}
\label{subsec:prelim-dist}

The goal of this subsection is to establish several lemmas on the regularity of $L(d^{s-1})$ depending on the regularity of $\Omega$. \autoref{lemma:dist-s-1+eps} will help us to establish barriers in $C^{1,\gamma}$ domains and \autoref{cor:dist-s-1-smooth-domains} is crucial for domains that are more regular.

The following lemma is a slight modification of \cite[Lemma B.2.4]{FeRo23}.

\begin{lemma}
\label{lemma:dist-int-est}
Let $L \in \mathcal{L}_s^{\text{hom}}(\lambda,\Lambda)$. Let $\Omega \subset \R^n$ be a bounded Lipschitz domain with Lipschitz constant $L$ and $C^{0,1}$ radius $\rho_0 > 0$. Let $x_0 \in \Omega$ with $\rho := d_{\Omega}(x_0)$, $\gamma > -1$ and $\gamma < \beta$. Then,
\begin{align*}
\int_{\Omega \setminus B_{\rho/2}} d^{\gamma}_{\Omega}(x_0 + y) |y|^{-n-\beta} \d y \le C (1 + \rho^{\gamma - \beta})
\end{align*}
for some constant $C > 0$, depending only on $n,\gamma,\beta,\rho_0,L$, and,  if $\gamma > 0$ or $\beta \le 0$ also on $\diam(\Omega)$.
\end{lemma}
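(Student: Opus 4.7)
The plan is to reduce the estimate to a sum of contributions on dyadic annuli around the origin and then exploit the Lipschitz structure of $\Omega$. Setting $r_k := 2^{k-1}\rho$ and $A_k := B_{r_{k+1}}\setminus B_{r_k}$ for $k\ge 0$ (so that $r_0 = \rho/2$), and letting $K$ be the first index with $r_K\ge 2\diam(\Omega)$, I would write
\[
I := \int_{\Omega\setminus B_{\rho/2}} d_\Omega^\gamma(x_0+y)\,|y|^{-n-\beta}\,\d y = \sum_{k=0}^{K-1}\int_{A_k\cap(\Omega-x_0)} d_\Omega^\gamma(x_0+y)\,|y|^{-n-\beta}\,\d y,
\]
since $A_k\cap(\Omega-x_0) = \emptyset$ for $k\ge K$. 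On each $A_k$ the weight satisfies $|y|^{-n-\beta}\le c_{n,\beta}\,r_k^{-n-\beta}$, so the task reduces to estimating $\int_{B_{r_{k+1}}(x_0)\cap\Omega} d_\Omega^\gamma(z)\,\d z$.

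The key analytic input is the standard Lipschitz-domain estimate
\[
\int_{B_R(x_0)\cap\Omega} d_\Omega^\gamma(z)\,\d z \le c\,R^{n+\gamma}\qquad \text{for } \gamma>-1,\ R\le\rho_0,
\]
which I would derive by flattening $\partial\Omega$ via a Lipschitz graph $z_n=\phi(z')$ in suitable local coordinates, using $d_\Omega(z)\asymp z_n-\phi(z')$, and integrating $(z_n-\phi(z'))^\gamma$; the $\gamma>-1$ assumption is exactly what makes this integrable up to the boundary. At larger scales $R>\rho_0$, I would cover $B_R(x_0)\cap\Omega$ by $O((R/\rho_0)^n)$ balls of radius $\rho_0$, use the local bound on the ones meeting $\partial\Omega$, and the pointwise bound $d_\Omega^\gamma\le \rho_0^\gamma$ (for $\gamma\le 0$) or $d_\Omega^\gamma\le \diam(\Omega)^\gamma$ (for $\gamma>0$) on the remaining bulk balls.

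Substituting these bounds back into the dyadic sum yields a contribution of order $r_k^{\gamma-\beta}$ on each annulus at small scales $r_k\le\rho_0$. Since $\gamma<\beta$, the geometric series $\sum_{k\ge 0} 2^{(k-1)(\gamma-\beta)}$ converges, so the small-scale terms sum to $\le c\,\rho^{\gamma-\beta}$. The finitely many large-scale terms (where $r_k\in(\rho_0,2\diam(\Omega))$) contribute an additive constant that is absorbed into the ``$1$'' in the bound $C(1+\rho^{\gamma-\beta})$.

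The main obstacle is the careful case analysis to keep $\diam(\Omega)$ out of the final constant when $\gamma\le 0$ and $\beta>0$. In that regime $d_\Omega^\gamma$ is uniformly bounded away from the boundary and $|y|^{-n-\beta}$ decays at infinity, so the large-scale tail is controlled without invoking $\diam(\Omega)$; when instead $\gamma>0$ or $\beta\le 0$, one of these two decay mechanisms fails and one must use the global pointwise bound $d_\Omega\le\diam(\Omega)$ or directly estimate $\int_\Omega |y|^{-n-\beta}\,\d y$ over a set of diameter $\diam(\Omega)$, which forces the extra dependence announced in the statement.
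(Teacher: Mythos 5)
Your proposal is correct, and it follows the same basic dyadic strategy as the paper, but with a different technical ingredient for the boundary layer. The paper splits the integral according to a fixed distance threshold $\kappa$: on $\{d\ge\kappa\}$ it bounds $d^\gamma$ by a constant (this is where $\diam(\Omega)$ may enter, and a short extra argument is needed to keep the constant independent of $\rho$), while on $\{d\le\kappa\}$ it decomposes into the annuli $B_{2^{j+1}\rho}\setminus B_{2^j\rho}$ and applies the co-area formula together with the level-set bound $\mathcal{H}^{n-1}(\{d=t\}\cap(B_{2^{j+1}\rho}\setminus B_{2^j\rho}))\le C(2^j\rho)^{n-1}$ (quoted from the book of Fern\'andez-Real--Ros-Oton), integrating $t^\gamma$ with $\gamma>-1$ to get the per-annulus bound $(2^j\rho)^{\gamma-\beta}$. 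You instead keep the dyadic annuli global, freeze the kernel on each annulus, and invoke the local Lipschitz estimate $\int_{B_R(x_0)\cap\Omega}d^\gamma\le CR^{n+\gamma}$ proved by flattening the boundary, handling radii above $\rho_0$ by a covering with $\rho_0$-balls; this is morally the same input (the level-set measure bound is itself proved via the graph structure), so the two routes are interchangeable, with yours slightly more self-contained and the paper's slightly shorter because the level-set bound is cited. One small caution in your write-up: the large-scale annuli are not a bounded number independent of the domain (their count grows like $\log(\diam(\Omega)/\rho_0)$), so in the regime $\gamma\le 0<\beta$ you cannot simply ``absorb finitely many terms''; you must sum the geometric series $\sum_k r_k^{-\beta}\lesssim\rho_0^{-\beta}$, exactly as you indicate in your final paragraph, and for $\beta\le0$ or $\gamma>0$ the dependence on $\diam(\Omega)$ is unavoidable, matching the statement. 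Also note that the pointwise kernel bound $|y|^{-n-\beta}\le c\,r_k^{-n-\beta}$ on each annulus is legitimate for all admissible $\beta$ since $n+\beta>n-1\ge0$.
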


\begin{proof}
We assume that $x_0 = 0$. By \cite[Lemma B.2.4]{FeRo23}, there exists $\kappa > 0$ such that for any $t \in (0,\kappa)$:
\begin{align}
\label{eq:Hausdorff-estimate}
\mathcal{H}^{n-1}\left(\{d = t\} \cap (B_{2^{j+1}\rho} \setminus B_{2^j \rho}) \right) \le C (2^j \rho)^{n-1}.
\end{align}
Note that 
\begin{align*}
\int_{ (\Omega \setminus B_{\rho/2}) \cap \{d \ge \kappa \} } d^{\gamma}(y) |y|^{-n-\beta} \d y \le (\diam(\Omega)^{\gamma}\1_{\{\gamma > 0\}} + \kappa^{\gamma} \1_{\{ \gamma \le 0\}}) \int_{ (\Omega \setminus B_{\rho/2}) \cap \{d \ge \kappa \} }  |y|^{-n-\beta} \d y \le c
\end{align*}
for some constant $c > 0$ depending on $\kappa$ and, if $\gamma > 0$ or $\beta \le 0$ also on $\diam(\Omega)$, but independent of $\rho$. The independence of $\rho$ is trivial if $\kappa \le 2\rho$ since then $\Omega \setminus B_{\rho/2} \subset \Omega \setminus B_{\kappa/4}$, and otherwise, it follows from the fact that $B_r \cap \{ d \ge \kappa \} = \emptyset$ once $r \le \kappa/2 \le \kappa - \rho$ (recall that $d(0) = \rho$), so also in this case, we can replace the domain of integration by $\Omega \setminus B_{\kappa/2}$. Moreover, using \eqref{eq:Hausdorff-estimate} and the co-area formula:
\begin{align*}
\int_{(\Omega \setminus B_{\rho/2}) \cap \{d \le \kappa \}} d^{\gamma}(y) |y|^{-n-\beta} \d y &\le c\sum_{j \ge 1} \left( (2^{j}\rho)^{-n-\beta} \int_{(B_{2^{j+1} \rho} \setminus B_{3^j \rho}) \cap \{d \le \kappa \} } d^{\gamma}(y) |\nabla d(y)| \d y \right) \\
&\le c\sum_{j \ge 1} \left( (2^{j} \rho)^{-n-\beta} \int_0^{ \min\{ 2^j \rho , \kappa \} } \hspace{-0.2cm} t^{\gamma} \left[ \int_{ (B_{2^{j+1} \rho} \setminus B_{2^j \rho}) \cap \{ d = t \} } \hspace{-0.6cm} \d \mathcal{H}^{n-1}(y) \right] \d t  \right) \\
&\le c\sum_{j \ge 1}  \left( (2^j \rho)^{-\beta + \gamma} \right) \le c \rho^{\gamma-\beta}
\end{align*}
for some $c > 0$, where we used that $\gamma -\beta < 0$.
\end{proof}

The following lemma will be of central importance for the proof of \autoref{lemma:subsol} and \autoref{lemma:supersol}

\begin{lemma}
\label{lemma:dist-s-1+eps}
Let $L \in \mathcal{L}_s^{\text{hom}}(\lambda,\Lambda)$. Let $\Omega \subset \R^n$ be an open, bounded domain with $\partial \Omega \in C^{1,\gamma}$ for some $\gamma > 0$. Then, for any $\delta \in (0,s)$, there exists $c_1 > 0$, depending only on $n,s,\lambda,\Lambda,\Omega,\gamma,\delta$, and the $C^{1,\gamma}$ radius of $\Omega$, such that
\begin{align*}
|L(d^{s-1})| \le c_1 d^{\delta \gamma - s - 1} ~~ \text{ in } \Omega.
\end{align*}
Moreover, for any $\eps \in (0,s)$, there exist $c_2, c_3 > 0$  depending only on $n,s,\lambda,\Lambda,\gamma,\eps$, and the $C^{1,\gamma}$ radius of $\Omega$, such that
\begin{align*}
- L(d^{s-1+\eps}) \le -c_2 d^{\eps - s - 1} + c_3 ~~ \text{ in } \Omega.
\end{align*}
\end{lemma}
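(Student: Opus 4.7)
The strategy is to use as model the half-space identity $L(\ell_+^a)(x) = \kappa_a(K,\nu)\,\ell(x)^{a-2s}$ valid in $\{\ell > 0\}$ for every affine function $\ell(y) = (y-z)\cdot\nu$ with $\nu \in \mathbb{S}^{n-1}$ and every $a \in (-1,2s)$. This reduces to the classical 1D computation via translation invariance of $L$ together with the 1D reduction of the integral for functions of the form $u(x) = U(\nu \cdot x)$: using the homogeneity of $K$, the transverse directions integrate out to produce a one-dimensional weight $\kappa(K,\nu)|t|^{-1-2s}$ against which the classical formula applies. In particular $\kappa_a = 0$ for $a \in \{s-1, s\}$ (so $L(\ell_+^{s-1}) \equiv 0$), while for $a = s-1+\eps$ with $\eps \in (0,s)$ one has $\kappa_a > 0$, uniformly bounded below in $\nu$ thanks to the ellipticity $K \ge \lambda$.

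Fix $x_0 \in \Omega$ with $\rho := d(x_0)$ small, let $x_0^* \in \partial\Omega$ be the closest boundary point, and set $\nu := \nu(x_0^*)$, $\ell(y) := (y - x_0^*) \cdot \nu$. The $C^{1,\gamma}$ regularity of $\Omega$ yields $|d - \ell| \le C|y - x_0^*|^{1+\gamma}$ in a ball $B_{r_0}(x_0^*)$ with $r_0$ depending only on the $C^{1,\gamma}$ radius, with matching bounds on derivatives of $d - \ell$ inside $\Omega$. For the first claim, since $L(\ell_+^{s-1})(x_0) = 0$, the plan is to decompose
\begin{equation*}
L(d^{s-1})(x_0) = L(d^{s-1} - \ell_+^{s-1})(x_0) = I_{\text{in}} + I_{\text{mid}} + I_{\text{out}},
\end{equation*}
according to the regions $B_{\rho/2}(x_0)$, $B_{r_0}(x_0^*) \setminus B_{\rho/2}(x_0)$, and $\R^n \setminus B_{r_0}(x_0^*)$. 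In $I_{\text{in}}$, both $d$ and $\ell$ are comparable to $\rho$ and smooth; a second-order Taylor expansion together with the symmetry $K(h) = K(-h)$ kills first-order terms and reduces the estimate to bounding $\sup_{B_{\rho/2}(x_0)} |D^2(d^{s-1} - \ell^{s-1})|$, which by the chain rule combined with $|D^2(d-\ell)| \lesssim \rho^{\gamma-1}$ is of order $\rho^{s-3+\gamma}$; multiplied against the kernel's second-moment mass $\rho^{2-2s}$ this produces $O(\rho^{\gamma-s-1})$. In $I_{\text{mid}}$, I would argue on dyadic annuli centered at $x_0^*$, applying the mean value theorem to $t \mapsto t_+^{s-1}$ and summing against the kernel via \autoref{lemma:dist-int-est}. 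The outer region is immediate from $d^{s-1} \in L^1_{2s}(\R^n)$ after extension by zero. Collecting yields $|L(d^{s-1})(x_0)| \lesssim \rho^{\delta\gamma - s - 1}$ for any $\delta \in (0,s)$.

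For the second claim, the same decomposition applies to $L(d^{s-1+\eps} - \ell_+^{s-1+\eps})(x_0)$, with the crucial difference that now $L(\ell_+^{s-1+\eps})(x_0) = -\kappa_\eps \rho^{\eps-s-1}$ is nonzero and provides the main negative contribution. The perturbative remainder is estimated exactly as in the first claim and is of order $\rho^{\delta\gamma + \eps - s - 1}$, which is of lower order than $\rho^{\eps-s-1}$ as $\rho \to 0$ and so can be absorbed into $-\tfrac{1}{2} c_2 \rho^{\eps-s-1}$ for $\rho$ sufficiently small; away from the boundary everything is bounded by a constant $c_3$. The main technical obstacle is the intermediate region $I_{\text{mid}}$: because the mean value theorem for $t \mapsto t_+^{s-1}$ cannot be applied at points $y$ with $d(y) > 0$ and $\ell(y) \le 0$ (or vice versa), one must split this part of the integral further and estimate the discontinuity directly. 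Precisely this step is where the $\delta < s$ loss originates, as the naive bound on these "sign-discrepancy" points would produce a worse exponent; careful balancing against the kernel is needed to recover $\delta\gamma$ in the exponent.
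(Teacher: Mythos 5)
Your overall architecture coincides with the paper's: compare $d^{s-1}$ (resp.\ $d^{s-1+\eps}$) with the corresponding power of an affine model $\ell_+$, use that $L(\ell_+^{s-1})=0$ while $L(\ell_+^{s-1+\eps})$ is a negative multiple of $\ell^{\eps-s-1}$ with constants controlled by $\lambda,\Lambda$ (the paper establishes the sign by sliding $t_+^{s-1}$ until it touches $t_+^{s-1+\eps}$ rather than quoting the explicit 1D constant; also note your first paragraph asserts $\kappa_a>0$ while your second paragraph uses $L(\ell_+^{s-1+\eps})(x_0)=-\kappa_\eps\rho^{\eps-s-1}$ --- the latter is the sign you actually need), and then estimate $L(d^{a}-\ell_+^{a})(x_0)$ by splitting into $B_{\rho/2}(x_0)$ and its complement. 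Your inner-region estimate (second-order cancellation plus $|D^2(d^{s-1}-\ell^{s-1})|\lesssim\rho^{s-3+\gamma}$) is sound and matches the paper's $\rho^{\gamma-s-1}$ contribution, and the absorption argument for the second claim is also the paper's.

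However, your treatment of $I_{\mathrm{mid}}$ has a genuine gap. The mean value theorem applied to $t\mapsto t_+^{s-1}$ gives $|d^{s-1}-\ell^{s-1}|\lesssim \min(d,\ell)^{s-2}\,|d-\ell|\lesssim \min(d,\ell)^{s-2}|y|^{1+\gamma}$, and the exponent $s-2<-1$ is not integrable near the boundary: \autoref{lemma:dist-int-est} (and the co-area argument behind it) requires the power of the distance to be strictly greater than $-1$, so ``summing against the kernel via \autoref{lemma:dist-int-est}'' fails wherever $d$ or $\ell$ is small, i.e.\ throughout the near-boundary part of every annulus, not only at the sign-discrepancy points you single out. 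The missing ingredient is the elementary interpolation inequality $|a^{s-1}-b^{s-1}|\le C\,|a-b|^{\delta}\,(a^{s-1-\delta}+b^{s-1-\delta})$ for $\delta\in(0,s)$ (valid also when one of $a,b$ vanishes, with the convention $0^{s-1}:=0$), which is exactly what the paper uses: it converts the gain $|d-\ell|^{\delta}\lesssim|y|^{(1+\gamma)\delta}$ into the integrable exponent $s-1-\delta>-1$, so that \autoref{lemma:dist-int-est} applies to both $d$ and $\ell$ and produces the $\rho^{\delta\gamma-s-1}$ bound; the same inequality disposes of the sign-discrepancy region in one stroke (there $d\le C|y|^{1+\gamma}$, hence $d^{s-1}\le C|y|^{(1+\gamma)\delta}d^{s-1-\delta}$), so the $\delta$-loss is not caused by those points specifically but by the non-integrability of $d^{s-2}$ everywhere near $\partial\Omega$. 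Your ``careful balancing'' remark gestures at this but does not supply it, and as written the mid-region step does not close; once the interpolation inequality is inserted, your argument becomes essentially the paper's proof.
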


The first claim follows in a similar way as \cite[Proposition B.2.1]{FeRo23}.

\begin{proof}
We let $x_0 \in \Omega$ and denote $\rho = d(x_0)$. Then, we denote
\begin{align*}
l(x) = (d(x_0) + \nabla d(x_0) \cdot (x-x_0))_+
\end{align*}
and observe that 
\begin{align*}
L(l^{s-1}) = 0 ~~ \text{ in } \{ l > 0 \},	
\end{align*}
as a consequence of $L(l^s) = 0$ and $\nabla l^s = s l^{s-1} \nabla l = s \nabla d(x_0) l^{s-1} $.
Next, we claim that
\begin{align}
\label{eq:claim-d-l-est}
|d^{s-1} - l^{s-1}|(x_0 + y) \le 
\begin{cases}
C \rho^{s+\gamma -3} |y|^2 ~~ \text{ in } B_{\rho/2},\\
C |y|^{(1+\gamma)\delta}|d^{s-1-\delta}(x_0 + y) + l^{s-1-\delta}(x_0 + y)| ~~ \text{ in } \R^n \setminus B_{\rho/2}.
\end{cases}
\end{align}
Note that from here, we can compute
\begin{align*}
|L(d^{s-1})(x_0)| &= |L(d^{s-1} - l^{s-1})(x_0)| \\
&\le C \rho^{s+\gamma -3} \int_{B_{\rho/2}} |y|^{2-n-2s}\d y\\
&\quad + C \int_{(x_0 + \Omega) \setminus B_{\rho/2}} |y|^{-n-2s + (1+\gamma)\delta}|d^{s-1-\delta}(x_0 + y) + l^{s-1-\delta}(x_0 + y)| \d y \\
&\le C(1 + \rho^{\gamma-s-1} + \rho^{ \gamma \delta -s-1}) ,
\end{align*}
where we applied \autoref{lemma:dist-int-est} to $d$ and to $l$ with $s-1-\delta =: \gamma < \beta:= 2s - (1+\gamma)\delta$ (choosing $\gamma \in (0,s)$ so small that $\beta > 0$), in order to estimate the third integral. Since this estimate implies the first result, it remain to verify the claim \eqref{eq:claim-d-l-est}.
In case $x \in B_{\rho/2}(x_0)$, we estimate
\begin{align*}
|d^{s-1} - l^{s-1}|(x) &\le |d-l|(x) \Vert d^{s-2} + l^{s-2} \Vert_{L^{\infty}(B_{\rho/2}(x_0))} \le c \Vert D^2 d \Vert_{L^{\infty}(B_{\rho/2}(x_0))} |x_0-x|^2 \rho^{s-2} \\
&\le C \rho^{s+\gamma-3} |y|^2.
\end{align*}
Here, we used that $|D^2 d| \le C d^{-1+\gamma}$ by \cite[Lemma B.0.1]{FeRo23} and that $l \ge c \rho$ in $B_{\rho/2}(x_0)$. The latter statement follows since by the $C^{1,\gamma}$ regularity of $d$, it must be
\begin{align*}
|d(x) - d(x_0) - \nabla d(x_0) \cdot (x-x_0)| \le C \rho^{1+\gamma} ~~ \forall x \in B_{\rho/2}(x_0),
\end{align*}
due to Taylor's formula, and therefore $d(x)$ and $\rho$ are comparable in $B_{\rho/2}(x_0)$, which yields for small enough $\rho$ for some $c > 0$:
\begin{align*}
l(x) \ge d(x_0) + \nabla d(x_0) \cdot (x-x_0) \ge d(x) - C \rho^{1+\gamma} \ge c \rho > 0 ~~ \forall x \in B_{\rho/2}(x_0).
\end{align*}
Note that we can always assume that $\rho > 0$ is small, since otherwise, the result follows by the regularity of $d^{s-1}$ away from the boundary of $\Omega$.

Next, for $x \in \R^n \setminus B_{\rho/2}(x_0)$, we make use of the following algebraic inequality, which follows from the $C^{\delta}$ regularity of the function $t \mapsto t^{s-1-\delta}$ in $[\min\{a,b\} , \max\{a,b\}]$
\begin{align*}
|a^{s-1} - b^{s-1}| \le c|a-b|^{\delta}|a^{s-1-\delta} + b^{s-1-\delta}| ~~ \forall a ,b > 0,
\end{align*}
for any $\delta \in (0,s)$ and some $c > 0$, depending only on $s,\delta$, which allows us to estimate
\begin{align*}
|d^{s-1}(x) - l^{s-1}(x)| &\le c|d(x) - l(x)|^{\delta} |d^{s-1-\delta}(x) + l^{s-1-\delta}(x)| \\ 
&\le c |x_0-x|^{(1+\gamma)\delta}||d^{s-1-\delta}(x) + l^{s-1-\delta}(x)|,
\end{align*}
where we used that by \cite[Lemma B.2.2]{FeRo23} it holds
\begin{align*}
|d(x) - l(x)| \le C |x_0 - x|^{1+\gamma}.
\end{align*}
This proves the first claim.

Now, we turn to to proof of the second result. First, we observe that by similar arguments as in the first part of the proof, we obtain
\begin{align*}
|d^{\eps+ s-1} - l^{\eps + s-1}|(x_0 + y) \le 
\begin{cases}
C \rho^{\eps + s+\gamma -3} |y|^2 ~~ \text{ in } B_{\rho/2},\\
C |y|^{(1+\gamma)\delta}||d^{\eps + s-1-\delta}(x_0 + y) + l^{\eps + s-1-\delta}(x_0 + y)| ~~ \text{ in } \R^n \setminus B_{\rho/2},
\end{cases}
\end{align*}
and therefore
\begin{align*}
|L(d^{\eps + s-1} - l^{\eps + s-1})(x_0)| \le C(1 + \rho^{\eps + \gamma - s - 1} + \rho^{\eps + \gamma \delta - s - 1}).
\end{align*}
We claim that for any $e \in \mathbb{S}^{n-1}$ it holds
\begin{align}
\label{eq:barrier-eps-claim}
\begin{cases}
L((x \cdot e)_+^{\eps + s-1}) &= c_e(x \cdot e)_+^{\eps -s - 1} ~~ \text{ in } \{ x \cdot e  >  0 \},\\
(x \cdot e)_+^{s-1+\eps} &= 0 ~~ \qquad\qquad\qquad \text{ in } \{ x \cdot e \le 0 \}
\end{cases}
\end{align}
for some $c_e \in [c_-,c_+]$, where $c_+ > c_- > 0$ depend only on $n,s,\lambda,\Lambda$. Note that once we have shown the claim \eqref{eq:barrier-eps-claim}, we can conclude the proof, since it implies
\begin{align*}
- L(d^{\eps + s-1})(x_0) &\le - L(l^{\eps + s-1})(x_0) + |L(d^{\eps + s-1} - l^{\eps + s-1})(x_0)| \\
&\le -c \rho^{\eps -s - 1} + C(1 + \rho^{\eps + \gamma - s - 1} + \rho^{\eps + \gamma \delta - s - 1}) \le -c \rho^{\eps -s - 1} + C.
\end{align*}
Hence, it remains to prove \eqref{eq:barrier-eps-claim}. By the $2s$-homogeneity of $L$ we can apply \cite[Lemma B.1.5]{FeRo23} and \cite[Lemma 1.10.3(iii)]{FeRo23} and deduce
\begin{align*}
L((x \cdot e)_+^{\eps + s-1}) = c_1 (-\Delta)^s_{\R} (x \cdot e)_+^{\eps + s - 1} = c_1 c_2 (x \cdot e)_+^{\eps - s - 1}
\end{align*}
for some constant $c_1 > 0 $ and where $c_2$ is given by see \cite[Lemma 2.4]{FaRo22}
\begin{align*}
c_2 = (-\Delta)^s_{\R} (t_+^{\eps + s - 1})(1) = \frac{\Gamma(s+\eps)}{\Gamma(-s+\eps)} \frac{\sin(\pi(-1+\eps))}{\sin(\pi(-1-s+\eps))} > 0.
\end{align*}
This concludes the proof.
\end{proof}

The following lemma is crucial in the proofs of \autoref{lemma:stability}, and  in Section \ref{sec:higher}. It follows by differentiating the corresponding results in \cite{AbRo20}.

\begin{lemma}
\label{lemma:dist-s-1-smooth-domains}
Let $L \in \mathcal{L}_s^{\text{hom}}(\lambda,\Lambda)$. Let $k \in \N$, and $\Omega \subset \R^n$ be an open, bounded domain with $\partial \Omega \in C^{k+1,\gamma}$ for some $\gamma \in (0,1)$ with $\gamma \not= s$, and $0 \in \partial \Omega$. Assume that $K \in C^{2k + 2 \gamma + 3}(\mathbb{S}^{n-1})$. Let $\eta \in C^{k,\gamma}(\overline{\Omega \cap B_1}) \cap C^{\infty}(\Omega \cap B_1)$. Then, there exists $c > 0$, depending only on $n,s,\lambda,\Lambda,\Omega,\gamma,k$, such that the following holds true:
\begin{itemize}
\item[(i)] If $k = 1$ and $\gamma < s$, then
\begin{align*}
|L(d^{s-1} (\nabla d) \eta )| \le c(| \cdot | + |\eta(0)| +  |\nabla \eta(0)|) d^{\gamma - s} ~~ \text{ in } \overline{\Omega} \cap B_{1/2}.
\end{align*}
\item[(ii)] If $k \ge 2$ or $\gamma > s$, then
\begin{align*}
[L(d^{s-1}(\nabla d) \eta)]_{C^{k-1-s+\gamma}(\overline{\Omega} \cap B_{1/2})} \le c \left(| \cdot | + |\eta(0)| + |\nabla \eta(0)|\right).
\end{align*}
\item[(iii)] If $k+\gamma > 2s$, then we have for any $x_0 \in \Omega \cap B_{1/2}$
\begin{align*}
[L(d^{s-1} (\nabla d) \eta )]_{C^{k+\gamma -2 s}(B_{d(x_0)/2}(x_0))} \le c \left(| \cdot | + |\eta(0)| + |\nabla \eta(0)|\right) d^{s-1}(x_0).
\end{align*}
\end{itemize}
\end{lemma}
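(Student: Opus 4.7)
The plan is to reduce the three claims to the boundary regularity of $L(d^s\tilde\eta)$ established in \cite{AbRo20} by \emph{differentiating} those estimates. The starting point is the elementary coordinate-wise identity, valid in $\Omega$ for $i=1,\dots,n$,
\begin{equation*}
s\, d^{s-1}(\partial_i d)\,\eta \;=\; \partial_i(d^s\eta) \;-\; d^s\,\partial_i\eta.
\end{equation*}
Since the operators in $\mathcal{L}_s^{\mathrm{hom}}(\lambda,\Lambda)$ are translation invariant, $L$ commutes with $\partial_i$, and applying $L$ to both sides yields the key identity
\begin{equation*}
s\, L\!\bigl(d^{s-1}(\partial_i d)\,\eta\bigr) \;=\; \partial_i\!\bigl[L(d^s\eta)\bigr] \;-\; L(d^s\,\partial_i\eta),
\end{equation*}
which is the algebraic backbone of the argument.

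Next, I would invoke the ``backward'' reading of \eqref{eq:AbRo-result}: if $v=d^s\tilde\eta$ with $\tilde\eta\in C^{k,\gamma}(\overline{\Omega})$, then under the hypotheses $\partial\Omega\in C^{k+1,\gamma}$ and $K\in C^{2k+2\gamma+3}(\mathbb{S}^{n-1})$, the function $Lv$ belongs to $C^{k-s+\gamma}$ up to the boundary, with
\begin{equation*}
\|L(d^s\tilde\eta)\|_{C^{k-s+\gamma}(\overline{\Omega}\cap B_{1/2})}\le c\,\|\tilde\eta\|_{C^{k,\gamma}(\overline{\Omega}\cap B_1)}.
\end{equation*}
Applying this with $\tilde\eta=\eta$ and with $\tilde\eta=\partial_i\eta\in C^{k-1,\gamma}$ (which produces a bound in $C^{k-1-s+\gamma}$), and combining through the key identity, I obtain a bound for $L(d^{s-1}(\partial_i d)\eta)$ in the space $C^{k-1-s+\gamma}(\overline{\Omega}\cap B_{1/2})$. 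When $k-1-s+\gamma>0$ this is precisely statement (ii); when $k-1-s+\gamma<0$, which is exactly the regime of (i), the same estimate reads in the standard boundary scale of \cite{AbRo20} as the weighted pointwise bound $|L(d^{s-1}(\partial_i d)\eta)|\le c\,d^{\gamma-s}$, giving (i).

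For part (iii), I would argue via interior Schauder estimates on the ball $B_\rho(x_0)$ with $\rho=d(x_0)/2$. After rescaling this ball to the unit ball, the function $d^{s-1}(\partial_i d)\eta$ has $C^{k+\gamma}$ norm of size $\rho^{s-1}$ times the relevant seminorms of $\eta$ (on the rescaled ball $d$ and $\nabla d$ are smooth with norms of order $\rho$ and $1$, respectively), and the continuity of $L\colon C^{k+\gamma}\to C^{k-2s+\gamma}$ on compactly supported pieces (the far-field tail is controlled via the standing $L^1_{2s}$ assumption) yields the $d^{s-1}(x_0)$-weighted claim after scaling back.

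The main technical obstacle I anticipate is not the identity itself but the sharp \emph{linear} dependence on only $|\eta(0)|$, $|\nabla\eta(0)|$, and a Hölder seminorm of $\eta$, rather than on the full $C^{k,\gamma}$ norm. To achieve this I would subtract the first-order Taylor polynomial of $\eta$ at the origin and treat the affine part separately, using \autoref{lemma:dist-s-1+eps} together with the half-space identity $L((x\cdot e)_+^{s-1})=0$, which follows by differentiating $L((x\cdot e)_+^s)=0$ (see \cite[Lemma 2.6.2]{FeRo23}). This last step ensures that the constant and linear parts of $\eta$ do not produce uncontrollable boundary contributions, thereby making the prefactor linear in $|\eta(0)|+|\nabla\eta(0)|$ plus the appropriate Hölder seminorm.
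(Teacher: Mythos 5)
Your identity $s\,L(d^{s-1}(\partial_i d)\eta)=\partial_i[L(d^s\eta)]-L(d^s\,\partial_i\eta)$ and the reduction to the $d^s$-theory of \cite{AbRo20} is exactly the route the paper takes for (i) and (ii): the paper quotes \cite[Corollary 2.3]{AbRo20} in its \emph{weighted-derivative} form $|D^iL(d^s\eta)|\le c(|x|+|\eta(0)|)\,d^{k+\gamma-s-i}$, applies it both to $d^s\eta$ and to $d^s\nabla\eta$, and this already carries the linear dependence on $|\eta(0)|$ and $|\nabla\eta(0)|$ — no Taylor subtraction is needed. Your substitute for that step is, moreover, not adequate: after subtracting the affine part $p$ of $\eta$, you propose to control $L(d^{s-1}(\nabla d)\,p)$ via \autoref{lemma:dist-s-1+eps} and $L((x\cdot e)_+^{s-1})=0$, but \autoref{lemma:dist-s-1+eps} only gives $|L(d^{s-1})|\le c\,d^{\delta\gamma-s-1}$ in $C^{1,\gamma}$ domains, which is off by essentially a full power of $d$ from the bound $d^{\gamma-s}$ required in (i), and it carries no H\"older information of the type needed in (ii)--(iii). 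The gain for the affine part comes from the $C^{k+1,\gamma}$ regularity of $\Omega$ through \cite[Corollary 2.3]{AbRo20} itself, not from the rough $C^{1,\gamma}$ barrier estimate.

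The genuine gap is in part (iii). Your scaling does not close: with $\rho=d(x_0)$ and $\tilde u(x)=u(x_0+\rho x)$, $u=d^{s-1}(\nabla d)\eta$, one indeed has $\|\tilde u\|_{C^{k+\gamma}(B_1)}\lesssim\rho^{s-1}$, and generic continuity of $L:C^{k+\gamma}\to C^{k+\gamma-2s}$ gives $[\mathcal L\tilde u]_{C^{k+\gamma-2s}(B_{1/2})}\lesssim\rho^{s-1}$; but since $\mathcal L\tilde u(x)=\rho^{2s}(Lu)(x_0+\rho x)$, undoing the scaling yields only
\begin{align*}
[Lu]_{C^{k+\gamma-2s}(B_{\rho/2}(x_0))}\;\lesssim\;\rho^{\,s-1-k-\gamma},
\end{align*}
which misses the claimed $d^{s-1}(x_0)$ by the factor $d^{-(k+\gamma)}(x_0)$; the crude tail bound via $L^1_{2s}$ loses a comparable amount. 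The whole content of (iii) is that $L(d^s\eta)$ vanishes at rate $d^{k+\gamma-s}$ at the boundary — a global fact from \cite[Corollary 2.3]{AbRo20} reflecting that $d^s$ is an approximate solution — and this cannot be recovered from the local size of $d^{s-1}(\nabla d)\eta$ plus generic mapping properties of $L$. The paper instead deduces from the differentiated identity the weighted bounds $|D^{i}L(d^{s-1}(\nabla d)\eta)|\le c(|\cdot|+|\eta(0)|+|\nabla\eta(0)|)\,d^{k+\gamma-s-1-i}$ for all $i$, and obtains (iii) (and (ii), via a covering argument) from the elementary inequality $[f]_{C^\alpha(B_r(x_0))}\lesssim \|D^mf\|_{L^\infty(B_r(x_0))}r^{m-\alpha}$ with $m=k+\lceil\gamma-s\rceil$ (resp. $m=k-1+\lceil\gamma-s\rceil$), for which the exponents combine precisely to $d^{s-1}(x_0)$. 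You should replace your interior-Schauder argument for (iii) by this mechanism.
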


\begin{proof}
By \cite[Corollary 2.3]{AbRo20} (see also \cite[Corollary 3.9]{Kuk21} for $i > k+1$), we deduce that
\begin{align}
\label{eq:AboRo-Cor23}
|D^i L(d^s \eta)| \le c(|\cdot| + |\eta(0)|) d^{k + \gamma - s- i} ~~ \text{ in } \overline{\Omega} \cap B_{1/2} ~~ \forall i \in \N.
\end{align}
Note that by \cite[Theorem 2.2]{AbRo20} and the choice of $\psi$ in the proof of \cite[Corollary 2.3]{AbRo20}, it follows that the assumption $\eta \in C^{k,\gamma}(\overline{\Omega \cap B_1}) \cap C^{\infty}(\Omega \cap B_1)$ is sufficient for \eqref{eq:AboRo-Cor23} to hold true.
Let us now prove (i) and assume that $k = 1$ and $\gamma < s$. Then, since $D^i \eta \in C^{\infty}(\R^n)$, another application of \cite[Corollary 2.3]{AbRo20} yields
\begin{align*}
\Vert L(d^s D^i \eta) \Vert_{C^{1+\gamma -s}(\overline{\Omega} \cap B_{1/2})} \le C ~~ \text{ for } i \in \{1,2\}.
\end{align*}
Since $\nabla (d^s \eta) = s d^{s-1}(\nabla d) \eta + d^s \nabla \eta$, 
a combination of the previous two estimates with $i = 1$ implies
\begin{align*}
|L(d^{s-1} (\nabla d) \eta)| \le s^{-1}|\nabla L(d^s \eta)| + s^{-1}|L(d^s \nabla \eta)| \le c(| \cdot | + |\eta(0)| +  |\nabla \eta(0)|) d^{\gamma - s} ~~ \text{ in } \overline{\Omega} \cap B_{1/2},
\end{align*}
which yields the result in (i).\\
To see (ii) and (iii), we observe first that by application of \eqref{eq:AboRo-Cor23}, we have for any $i \in \N$
\begin{align*}
|D^i L(d^{s} (\nabla \eta))| \le c(| \cdot | + |\nabla \eta(0)|) d^{k + \gamma - s - i} ~~ \text{ in } \overline{\Omega} \cap B_{1/2}.
\end{align*}
Next, by differentiation, we obtain 
\begin{align*}
D^{i+1}(d^s \eta) &= s D^i(d^{s-1} (\nabla d) \eta) + D^i(d^s \nabla \eta).
\end{align*}
Thus, altogether for every $i \in \N$
\begin{align}
\label{eq:dist-s-1-smooth-key}
\begin{split}
|D^{i}(L(d^{s-1}(\nabla d) \eta))| &\le s^{-1} |D^{i+1} L(d^s \eta)| + s^{-1} |D^i L(d^{s} (\nabla \eta))| \\
&\le c(| \cdot | + |\eta(0)|) d^{k + \gamma -s - (i+1)} + c (| \cdot | + |\nabla \eta(0)|)d^{k + \gamma- s-i} \\
&\le c(| \cdot | + |\eta(0)| + |\nabla \eta(0)|) d^{k + \gamma -s - i - 1} ~~ \text{ in } \overline{\Omega} \cap B_{1/2}.
\end{split}
\end{align}
To conclude the proof of (ii) let $x_0 \in \Omega \cap B_{1/2}$, and note that if $k \ge 2$ or $\gamma > s$, then \eqref{eq:dist-s-1-smooth-key} applied with $i = k-1 + \lceil \gamma - s \rceil$ implies
\begin{align*}
[L(d^{s-1} (\nabla d) \eta )]_{C^{k-1-s+\gamma}(B_{d(x_0)/2}(x_0))} &\le \sup_{x,y \in B_{d_{x_0}/2}(x_0)} \frac{\Vert D^{k-1 + \lceil \gamma - s \rceil }(L(d^{s-1}(\nabla d) \eta)) \Vert_{L^{\infty}(B_{d(x_0)/2}(x_0))} }{|x-y|^{\gamma - s - \lceil \gamma - s \rceil }} \\
&\le c(| \cdot | + |\eta(0)| + |\nabla \eta(0)|) d^{\gamma -s - \lceil \gamma - s \rceil }(x_0) d^{s-\gamma + \lceil \gamma - s \rceil}(x_0) \\
&\le c(| \cdot | + |\eta(0)| + |\nabla \eta(0)|),
\end{align*}
where we used that $\gamma < 1$. From here, a covering argument (see \cite[Lemma A.1.4]{FeRo23}) yields the desired regularity estimate in $\overline{\Omega} \cap B_{1/2}$.\\
To prove (iii), note that if $k + \gamma > 2s$, then \eqref{eq:dist-s-1-smooth-key} applied with $i = k + \lceil \gamma - s \rceil$ implies
\begin{align*}
[L(d^{s-1} (\nabla d) \eta )]_{C^{k+\gamma -2 s}(B_{d(x_0)/2}(x_0))} &\le \sup_{x,y \in B_{d(x_0)/2}(x_0))} \frac{\Vert D^{k+\lceil \gamma - s \rceil}(L(d^{s-1}(\nabla d) \eta)) \Vert_{L^{\infty}(B_{d(x_0)/2}(x_0))} }{|x-y|^{\gamma-2s-\lceil \gamma - s \rceil}} \\
&\le c(| \cdot | + |\eta(0)| + |\nabla \eta(0)|) d^{\gamma -s -1-\lceil \gamma - s \rceil}(x_0) d^{2s-\gamma+\lceil \gamma - s \rceil}(x_0) \\
&\le c(| \cdot | + |\eta(0)| + |\nabla \eta(0)|) d^{s-1}(x_0), 
\end{align*}
where we used that $\gamma < 1$. This implies (iii), and we conclude the proof. 
\end{proof}

As a corollary, we obtain the following result:

\begin{corollary}
\label{cor:dist-s-1-smooth-domains}
Let $L \in \mathcal{L}_s^{\text{hom}}(\lambda,\Lambda)$. Let $k \in \N$, and $\Omega \subset \R^n$ be an open, bounded domain with $\partial \Omega \in C^{k+1,\gamma}$ for some $\gamma \in (0,1)$ with $\gamma \not= s$, and $0 \in \partial \Omega$. Assume that $K \in C^{2k + 2 \gamma + 3}(\mathbb{S}^{n-1})$. Let $\eta \in C^{k,\gamma}(\overline{\Omega \cap B_1}) \cap C^{\infty}(\Omega \cap B_1)$. Then, there exists $c > 0$, depending only on $n,s,\lambda,\Lambda,\Omega,\gamma,k$, such that the following holds true:
\begin{itemize}
\item[(i)] If $k = 1$ and $\gamma < s$, then
\begin{align*}
|L(d^{s-1} \eta )| \le c \Vert \eta \Vert_{C^1(\overline{\Omega \cap B_{1/2}})} d^{\gamma - s} ~~ \text{ in } \overline{\Omega} \cap B_{1/2}.
\end{align*}
\item[(ii)] If $k \ge 2$ or $\gamma > s$, then
\begin{align*}
[L(d^{s-1} \eta)]_{C^{k-1-s+\gamma}(\overline{\Omega} \cap B_{1/2})} \le c \left(\Vert \eta \Vert_{C^1(\overline{\Omega \cap B_{1/2}})} + \Vert \eta \Vert_{C^{k-1+s+\gamma}(\Omega \cap B_1)} \right).
\end{align*}
\item[(iii)] If $k+\gamma > 2s$, then we have for any $x_0 \in \Omega \cap B_{1/2}$
\begin{align*}
[L(d^{s-1} \eta )]_{C^{k+\gamma -2 s}(B_{d(x_0)/2}(x_0))} \le c \left( \Vert \eta \Vert_{C^1(\overline{\Omega \cap B_{1/2}})} +  \Vert \eta \Vert_{C^{k+\gamma}(\Omega \cap B_1)} \right) d^{s-1}(x_0).
\end{align*}
\end{itemize}
\end{corollary}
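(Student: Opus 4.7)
The plan is to reduce \autoref{cor:dist-s-1-smooth-domains} directly to the preceding \autoref{lemma:dist-s-1-smooth-domains} by representing the scalar $\eta$ as a linear combination of the components of $\nabla d$ near $\partial\Omega$. Since the regularized distance satisfies $|\nabla d|=1$ on $\partial\Omega$, there exists a tubular neighborhood $U$ of $\partial\Omega\cap\overline{B_{3/4}}$ on which $|\nabla d|^{2}\ge 1/2$, so one sets
\[
\tilde\eta_i := \frac{\partial_i d}{|\nabla d|^{2}}\,\eta \quad \text{on } U, \qquad i=1,\dots,n,
\]
and obtains the pointwise identity $\eta = \sum_{i=1}^{n}(\partial_i d)\,\tilde\eta_i$ there. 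The factor $1/|\nabla d|^{2}$ is smooth on $U$, so each $\tilde\eta_i$ inherits the regularity $C^{k,\gamma}(\overline{\Omega\cap U})\cap C^\infty(\Omega\cap U)$ from $d\in C^{k+1,\gamma}$ and $\eta$, with norms controlled by the corresponding norms of $\eta$.

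\textbf{Localization and application of the lemma.} Fix a cutoff $\chi\in C_c^\infty(\R^n)$ with $\chi\equiv 1$ in a neighborhood of $\partial\Omega\cap\overline{B_{1/2}}$ and $\supp\chi\subset U\cap B_{3/4}$, and split
\[
d^{s-1}\eta = d^{s-1}\chi\eta + d^{s-1}(1-\chi)\eta.
\]
On the first piece the identity above gives
\[
L(d^{s-1}\chi\eta) = \sum_{i=1}^{n} L\!\bigl(d^{s-1}(\partial_i d)(\chi\tilde\eta_i)\bigr),
\]
so we may apply \autoref{lemma:dist-s-1-smooth-domains} componentwise to the products $(\chi\tilde\eta_i)$. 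Since $\chi\equiv 1$ near $0\in\partial\Omega$, we have $\chi\tilde\eta_i(0)=\partial_i d(0)\,\eta(0)$ and $|\nabla(\chi\tilde\eta_i)(0)|\le C(|\eta(0)|+|\nabla\eta(0)|)$, the constant depending on $\|d\|_{C^{2}}$. Hence the right-hand sides in (i)--(iii) of the lemma are controlled by $\|\eta\|_{C^{1}(\overline{\Omega\cap B_{1/2}})}$ plus the appropriate higher Hölder seminorm of $\eta$ absorbed in the `$|\cdot|$'-term (namely $\|\eta\|_{C^{k-1+s+\gamma}}$ in (ii), $\|\eta\|_{C^{k+\gamma}}$ in (iii)). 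The remainder $d^{s-1}(1-\chi)\eta$ is supported in $\{d\ge\delta_{0}\}$, where $d^{s-1}$ is smooth and bounded; therefore it belongs to $C^{k,\gamma}(\R^{n})$ with norm bounded by $\|\eta\|_{C^{k,\gamma}}$, and standard interior estimates for $L$ together with the tail bound provided by the $C^{2k+2\gamma+3}$ regularity of $K$ give it the required regularity. Summing the two contributions yields the three estimates of the corollary.

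\textbf{Main obstacle.} The only non-trivial bookkeeping is verifying that $\chi\tilde\eta_i$ carries over the correct Hölder regularity up to $\partial\Omega$ with the sharp norm dependences needed to recover exactly the constants appearing in (i)--(iii); once the algebraic identity $\eta = \nabla d \cdot \tilde{\boldsymbol\eta}$ is set up near the boundary using $|\nabla d|=1$ on $\partial\Omega$, the rest is a direct componentwise application of \autoref{lemma:dist-s-1-smooth-domains}.
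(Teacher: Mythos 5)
Your proposal follows essentially the same route as the paper: both reduce the corollary to \autoref{lemma:dist-s-1-smooth-domains} by writing $\eta$ as a directional derivative of $d$ times a function of the same regularity, and both dispose of the contribution away from $\partial\Omega$ by interior estimates using the smoothness of $K$. The only real difference is cosmetic: the paper uses a finite covering of $\partial\Omega\cap B_{1/2}$ by balls $B_\delta(x_i)$ on which a \emph{fixed} direction satisfies $\partial_{\nu_i}d\ge 1/2$, and applies the lemma to $(\partial_{\nu_i}d)^{-1}\eta$ on each ball, whereas you use the global identity $\eta=\nabla d\cdot\bigl(\tfrac{\nabla d}{|\nabla d|^2}\eta\bigr)$ in a tubular neighborhood, which avoids the covering at the price of a cutoff $\chi$ and a remainder term. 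One inaccuracy in your handling of that remainder: $d^{s-1}(1-\chi)\eta$ is \emph{not} supported in $\{d\ge\delta_0\}$, since $\supp(1-\chi)$ reaches $\partial\Omega$ inside $B_1\setminus B_{1/2}$ (outside the neighborhood where $\chi\equiv 1$), so the remainder is not $C^{k,\gamma}(\R^n)$ as you claim. This is fixable along the lines the paper uses for its far-field region: for $x\in\overline\Omega\cap B_{1/2}$ with $d(x)$ small, the remainder vanishes identically near $x$ and its singular set is at a fixed positive distance, so $L$ of it is smooth there with bounds coming from $\|\eta\|_{L^\infty}$, the local integrability of $d^{s-1}$, and the regularity of $K$ away from the origin; for $x$ with $d(x)$ bounded below, interior estimates apply to the whole function $d^{s-1}\eta$ directly. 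With that correction (and the same implicit reading of the ``$|\cdot|$''-term in the lemma that the paper itself uses), your argument recovers (i)--(iii).
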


\begin{proof}
Note that there exist $N \in \N$ and $\delta > 0$, $\nu_i \in \mathbb{S}^{n-1}$, $x_i \in \partial \Omega \cap B_1$, depending only on $\Omega$,  such that $\partial_{\nu_i} d \ge 1/2$ in $\overline{\Omega \cap B_{\delta}(x_i)}$, for $i \in \{1,\dots,N\}$, and such that 
\begin{align*}
\{x \in \overline{\Omega} \cap B_{1/2} : d(x) \le \delta/2 \} \subset \bigcup_{i=1}^N B_{\delta}(x_i).
\end{align*}
Then, by application of \autoref{lemma:dist-s-1-smooth-domains}(i) to $\eta := (\partial_{\nu_i} d)^{-1} \eta \in C^{k,\gamma}(\overline{\Omega \cap B_{\delta}(x_i)}) \cap C^{\infty}(\Omega \cap B_{\delta}(x_i))$, we deduce that for any $i \in \{1,\dots,N\}$
\begin{align*}
|L(d^{s-1} \eta )| \le c(| \cdot | + |\eta(x_i)| + |\nabla \eta(x_i)|) d^{\gamma - s} ~~ \text{ in } \overline{\Omega} \cap B_{\delta/2}(x_i).
\end{align*}
Thus, we have proved (i) in $\overline{\Omega} \cap B_{1/2} \cap \{ d(x) \le \delta /2 \}$. The result in $\overline{\Omega} \cap B_{1/2} \cap \{ d(x) > \delta /2 \}$ is immediate from the regularity of $K$ (see \cite[Lemma 2.2.6]{FeRo23}).\\
The proofs of (ii) and (iii) follow from \autoref{lemma:dist-s-1-smooth-domains} in an analogous way.
\end{proof}

\subsection{Barriers with boundary blow-up} 
\label{subsec:barriers}

Let us construct barrier functions that are suitable for establishing maximum principles for solutions that blow up at the boundary. We establish a subsolution and a supersolution in the following two lemmas.

\begin{lemma}
\label{lemma:subsol}
Let $L \in \mathcal{L}_s^{\text{hom}}(\lambda,\Lambda)$. Let $\Omega \subset \R^n$ be an open, bounded domain with $\partial \Omega \in C^{1,\gamma}$ for some $\gamma > 0$. Then, for any $l \in \R$, $\eps \in (0, \min \{s , 1-s \} )$, and $M > 0$ there exists $\phi_l \in C^{\infty}(\Omega)$ such that
\begin{align*}
\left\{\begin{array}{rcl}
L \phi_l &\le& - d^{\eps -s -1} - M ~~ \text{ in } \Omega,\\
\phi_l  &=& 0 ~~ \qquad\qquad\quad ~ \text{ in } \R^n \setminus \Omega,\\
\phi_l /d^{s-1} &=& l ~~ \qquad\qquad\quad ~ \text{ on } \partial \Omega.
\end{array}\right.
\end{align*}
Moreover, if $l \ge 0$, then there exists $\delta \in (0,1)$, depending only on $n,s,\lambda,\Lambda,\diam(\Omega),\eps,M$, such that $\phi_l \ge 0$ in $\Omega \cap \{d \le \delta \}$. And if $l < 0$, then for $M$ large enough, depending on $n,s,\lambda,\Lambda,\diam(\Omega),\eps$, it holds $\phi_l \le 0$ in $\Omega$.
\end{lemma}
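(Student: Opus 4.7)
The plan is to construct the barrier as
\[
\phi_l(x) = l\, d^{s-1}(x) + C\, d^{s-1+\eps'}(x) - A\, u_0(x),
\]
where $\eps' \in (0, \min\{\eps, s\gamma\})$ is an auxiliary small exponent, $u_0 \in C^\infty(\Omega)$ is a torsion-type function with $Lu_0 \ge 1$ in $\Omega$, $u_0 \equiv 0$ in $\R^n \setminus \Omega$, and $0 \le u_0 \le c_0\, d^s$ (for instance the solution of $L u_0 = 1$ in $\Omega$ with vanishing exterior data, which is smooth inside $\Omega$ by interior regularity), and $C, A > 0$ are large constants to be fixed below. Then $\phi_l \in C^\infty(\Omega)$ since $d$ is the regularized distance, and $\phi_l \equiv 0$ in $\R^n \setminus \Omega$ by construction. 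The boundary conditions follow at once: because $d^{s-1+\eps'}/d^{s-1} = d^{\eps'}$ and $u_0/d^{s-1} \le c_0\, d$ both vanish on $\partial\Omega$, one has $\phi_l/d^{s-1} \to l$; the inward normal derivative of $C d^{\eps'}$ equals $+\infty$ (since $\eps' \in (0,1)$) while the other contributions are finite, so $\partial_\nu(\phi_l/d^{s-1}) = +\infty$ on $\partial\Omega$.

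The heart of the argument is the interior estimate. Applying \autoref{lemma:dist-s-1+eps} with some $\delta \in (0,s)$ chosen so that $\delta\gamma > \eps'$ (possible because $\eps' < s\gamma$), together with $Lu_0 \ge 1$, produces
\[
L\phi_l \;\le\; |l|\,c_1\, d^{\delta\gamma - s - 1} - C c_2\, d^{\eps' - s - 1} + C c_3 - A.
\]
Since $d \le \diam(\Omega)$ and $\delta\gamma - \eps' > 0$, one has $d^{\delta\gamma - s - 1} \le \diam(\Omega)^{\delta\gamma - \eps'}\, d^{\eps' - s - 1}$. I then fix $C$ large enough that (i) the $|l|$-term is absorbed into half of $-Cc_2\, d^{\eps' - s - 1}$, and (ii) $\tfrac{C c_2}{2}\, d^{\eps' - s - 1} \ge d^{\eps - s - 1}$ uniformly on $\Omega$ (this reduces, using $\eps' \le \eps$ and $d \le \diam(\Omega)$, to $C \ge 2\diam(\Omega)^{\eps - \eps'}/c_2$). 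Finally, I choose $A \ge Cc_3 + M$ to absorb the remaining constants, obtaining $L\phi_l \le -d^{\eps - s - 1} - M$ in $\Omega$.

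For the sign properties I write $\phi_l = d^{s-1}(l + C d^{\eps'}) - A u_0$. If $l \ge 0$, both summands in the factor are nonnegative and $\phi_l \ge C d^{s-1+\eps'} - A c_0 d^s = d^{s-1}(C d^{\eps'} - A c_0\, d)$, which is nonnegative on $\{d \le \delta\}$ for $\delta := \min\{1, (C/(Ac_0))^{1/(1-\eps')}\}$; this is the claimed $\delta$, and it depends only on the data listed in the statement since both $C$ and $A$ were fixed in terms of that data. If $l < 0$, near $\partial\Omega$ the factor $l + Cd^{\eps'}$ is negative (as $Cd^{\eps'} \to 0$), whence $d^{s-1}(l + C d^{\eps'}) \le 0$, while $-A u_0 \le 0$ anyway; away from $\partial\Omega$ the positive term $C d^{s-1+\eps'}$ is bounded and, for $M$ (hence $A$) sufficiently large, is dominated by $-A u_0$.

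The main technical obstacle is the simultaneous calibration: $C$ must be chosen first large enough in terms of $|l|$ so that the error $L(d^{s-1})$ is strictly dominated by the negative principal part of $L(d^{s-1+\eps'})$, and then $A$ must be chosen even larger, in terms of $C$ and $M$, to absorb $Cc_3$ into the desired $-M$; all of this has to be done without destroying the sign conditions in the two regimes $l\gtrless 0$. The crucial ingredient that makes the bookkeeping go through is the strict inequality $\eps' < s\gamma$, which via \autoref{lemma:dist-s-1+eps}(i) places $L(d^{s-1})$ strictly below, as $d \to 0$, the negative leading order contribution $-c_2\, d^{\eps'-s-1}$ of $L(d^{s-1+\eps'})$ in \autoref{lemma:dist-s-1+eps}(ii).
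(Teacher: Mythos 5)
Your construction mirrors the paper's almost exactly: the paper takes $\phi_l = l\,d^{s-1} + d^{s-1+\eps} - N\1_{\Omega}$ and runs the same calibration via \autoref{lemma:dist-s-1+eps}, splitting $\Omega$ into a region near $\partial\Omega$ (where $-c_2 d^{\eps-s-1}$ dominates) and a region away from it (where $L\1_\Omega \ge c_4 > 0$ absorbs the constants). Your only real deviation is to replace $N\1_\Omega$ by $A u_0$ with $u_0$ the torsion function ($Lu_0 = 1$, $u_0 = 0$ outside $\Omega$, $0 \le u_0 \le c_0 d^s$), which indeed lets you absorb the constants uniformly in $\Omega$ without the near/far splitting, and removes the need for $\eps' < 1-s$ in the normal-derivative computation.

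However, this substitution introduces a genuine gap: you assert that $u_0$ ``is smooth inside $\Omega$ by interior regularity,'' but the lemma is stated for \emph{all} $L \in \mathcal{L}_s^{\text{hom}}(\lambda,\Lambda)$, i.e.\ with no regularity of $K$ on $\mathbb{S}^{n-1}$ beyond boundedness. For such rough kernels, interior regularity (cf.\ \autoref{lemma:interior-regularity}(i)) only yields $C^{\beta}_{loc}$ with $\beta \le 2s$ (resp.\ $\beta<1$ if $s=1/2$); the higher-order estimate in \autoref{lemma:interior-regularity}(ii) requires $K \in C^{\alpha}(\mathbb{S}^{n-1})$, and $C^{\infty}$ interior smoothness of solutions genuinely fails for general stable operators. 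So your $\phi_l$ is not $C^{\infty}(\Omega)$, and for $s$ small it need not even be $C^2$ locally — which matters, since the statement explicitly asserts $\phi_l \in C^{\infty}(\Omega)$ and the barrier is later used (e.g.\ in the proofs of \autoref{lemma:weak-max-princ-large} and \autoref{lemma:Hopf-large}) as a classical subsolution/$C^2$ test function in the viscosity framework. The fix is to revert to an explicit bulk term as in the paper: note that $L\1_{\Omega}(x) = \int_{\R^n\setminus\Omega} K(x-y)\d y \ge \lambda \int_{\R^n\setminus B_{\diam(\Omega)}(x)} |x-y|^{-n-2s}\d y =: c_4 > 0$ for every $x\in\Omega$, so $-N\1_\Omega$ plays the role of $-Au_0$; then $\1_\Omega/d^{s-1} = d^{1-s} \to 0$ still gives $\phi_l/d^{s-1}=l$ on $\partial\Omega$, and the normal-derivative contribution $-Nd^{-s}$ is dominated by $Cd^{\eps'-1}$ precisely because $\eps' < \eps < 1-s$. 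The rest of your calibration (choice of $\eps'<\min\{\eps,s\gamma\}$, absorption of the $|l|$-term using $\delta\gamma>\eps'$, then $A$ large in terms of $C$ and $M$, and the sign analysis in the two regimes $l\gtrless 0$) is correct and matches the paper's argument; the mild dependence of your constants $C$, hence $\delta$, on $|l|$ is a feature shared with the paper's own proof.
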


\begin{proof}
Let $\eps \in (0,s)$ and $N > 1$ to be chosen small and large, respectively, later. We set 
\begin{align*}
\phi_l(x) := l d^{s-1}(x) - d^{s-1+ \eps}(x) - N \1_{\Omega}(x).
\end{align*}
Then, by \autoref{lemma:dist-s-1+eps},
\begin{align*}
L \phi_l \le c_1 l d^{\delta \gamma - s - 1} - c_2 d^{\eps -s -1} + c_3 - N L \1_{\Omega}.
\end{align*}
Note that since $L \1_{\Omega} \ge 0$, by taking any $\delta \in (0,s)$ and then $\eps < \delta \gamma$, we see that there exists $\eta > 0$, depending on $s,l,\eps,M,\delta,\gamma$, such that 
\begin{align*}
L \phi_l \le - d^{\eps-s-1} - M ~~ \text{ in } \Omega \cap \{ d < \eta \}.
\end{align*}
Moreover, note that there exists $c_4 > 0$, depending on $\diam(\Omega)$, such that $L\1_{\Omega} \ge c_4$ in $\Omega \cap \{ d \ge \eta\}$. Thus, choosing $N = M c_4^{-1}$, we deduce that
\begin{align*}
L \phi_l \le - d^{\eps-s-1} - M ~~ \text{ in } \Omega,
\end{align*}
as desired.
The remaining properties of $\phi_l$ follow immediately from its construction.
\end{proof}

\begin{lemma}
\label{lemma:supersol}
Let $L \in \mathcal{L}_s^{\text{hom}}(\lambda,\Lambda)$. Let $\Omega \subset \R^n$ be an open, bounded domain with $\partial \Omega \in C^{1,\gamma}$ for some $\gamma > 0$. Then, there is $c_1 > 0$, depending only on $n,s,\lambda,\Lambda$, such that for any $l \in \R$, $\eps \in (0 , \min\{s\gamma , 1-s \} )$, and $M > 0$ there exists $\psi_l \in C^{\infty}(\Omega)$ such that
\begin{align*}
\left\{\begin{array}{rcl}
L \psi_l &\ge& c_1 d^{\eps -s -1} + M ~~ \text{ in } \Omega,\\
\psi_l  &=& 0 ~~ \qquad\qquad\quad ~ \text{ in } \R^n \setminus \Omega,\\
\psi_l /d^{s-1} &=& l ~~ \qquad\qquad\quad ~ \text{ on } \partial \Omega.
\end{array}\right.
\end{align*}
Moreover, for any $M > 0$, if $l > 0$ is large enough, depending only on $n,s,\lambda,\Lambda,\diam(\Omega),\eps$, it holds $\psi_l \ge 0$ in $\Omega$.

Moreover, for any $\eps \in (0,s)$, there is $\tilde{\psi} \in C^s(\overline{\Omega})$ such that for some $c_2 > 0$
\begin{align*}
\begin{cases}
L \tilde{\psi} &\ge d^{\eps-s} ~~ \text{ in } \Omega,\\
\tilde{\psi} &= 0 ~~~~~ \text{ in } \R^n \setminus \Omega,\\
\tilde{\psi}/d^{s-1} &= 0 ~~~~~ \text{ on } \partial \Omega,\\
\partial_{\nu}(\tilde{\psi}/d^{s-1}) &\le c_2 ~~~~ \text{ on } \partial \Omega.
\end{cases}
\end{align*}
\end{lemma}

\begin{proof}
Let $l \in \R$ and $\eps \in (0 , \min\{s\gamma , 1-s \} )$. The proof is similar to the one of \autoref{lemma:subsol}. We set
\begin{align*}
\psi_l(x) = l d^{s-1}(x) + d^{s-1+\eps}(x) + C_2 \1_{\overline{\Omega}}(x)
\end{align*}
and since $\eps < s\gamma$, we can choose $\delta \in (0,s)$ such that $\eps < \delta \gamma$ and take $C_2 > 0$ to be chosen later. Note that by \autoref{lemma:dist-s-1+eps}:
\begin{align*}
L \psi_l \ge -c_1 l d^{\delta \gamma -s -1} + c_2 d^{\eps - s - 1} - c_3 + c_4 C_2 ~~ \text{ in } \Omega,
\end{align*}
for some constants $c_1,c_2,c_3,c_4 > 0$, depending only on $n,s,\lambda,\Lambda,\delta$, and the $C^{1,\gamma}$ radius of $\Omega$. 
Thus, if we choose $C_2 > 0$ large enough, depending on $M,l,c_1,c_2,c_3,c_4,\eps, \diam(\Omega)$, then we deduce
\begin{align*}
L \psi_l \ge c d^{\eps - s - 1} + M ~~ \text{ in } \Omega.
\end{align*}

Finally, we observe that upon choosing $l > 0$ large enough, depending only on $\eps, \diam(\Omega)$, we have
\begin{align*}
\psi_l \ge l d^{s-1} + d^{s-1+\eps} \ge 0 ~~ \text{ in } \Omega.
\end{align*}

For the second claim, we recall from \cite[Lemma B.2.6]{FeRo23} that for any $\eps \in (0,s)$, there exist $N > 0$ and $c_1 > 0$ such that
\begin{align*}
L (-Nd^{s+\eps}) \ge d^{\eps-s} - c_1 ~~ \text{ in } \Omega.
\end{align*}
Let $\tilde{\psi}_2 \in L^{\infty}(\Omega)$ be the solution to the Dirichlet problem
\begin{align*}
\begin{cases}
L \tilde{\psi}_2 &= c_1 ~~ \text{ in } \Omega,\\
\tilde{\psi}_2 &= 0 ~~ \text{ in } \R^n \setminus \Omega,
\end{cases}
\end{align*}
and observe that by the boundary regularity theory from \cite{FeRo23}, it holds $\tilde{\psi}_2 \in C^{s}(\overline{\Omega})$, and hence for $\tilde{\psi} := -N d^{s+\eps} - \tilde{\psi}_2$, we obtain
\begin{align*}
\frac{\tilde{\psi}}{d^{s-1}} = 0 ~~ \text{ on } \partial \Omega.
\end{align*}
Therefore, for some $c_2 > 0$,
\begin{align*}
|\partial_{\nu}(\tilde{\psi} / d^{s-1})| = (1-s)|\tilde{\psi} /d^s| \le c_2 ~~ \text{ on } \partial \Omega,
\end{align*}
as desired.
\end{proof}

\subsection{Nonlocal equations up to a polynomial}
\label{subsec:poly}

We will need the following definition of nonlocal equations that hold true up to a polynomial. It was introduced in \cite{DSV19} for the fractional Laplacian and the theory was extended in \cite{DDV22} to general nonlocal operators (see also \cite{AbRo20}).

\begin{definition}
\label{def:L-up-to-a-polynomial}
For $k \in \N$, a bounded domain $\Omega \subset \R^n$, $f \in C(\Omega)$, and $K \in C^{k-1+\delta}(\mathbb{S}^{n-1})$ for some $\delta > 0$, we say that a function $u \in C(\Omega) \cap L^1_{2s+k}(\R^n)$ solves in the viscosity sense
\begin{align*}
L u \overset{k}{=} f ~~ \text{ in } \Omega,
\end{align*}
if there exist polynomials $(p_R)_{R > 1} \in \mathcal{P}_{k-1}$ of degree $k-1$, and functions $(f_R)_{R > 1}$ such that
\begin{align*}
L(u \1_{B_R}) = f_R + p_R ~~ \text{ in } \Omega, ~~ \forall R > \diam(\Omega),\\
\Vert f_R - f \Vert_{L^{\infty}(\Omega)} \to 0 ~~ \text{ as } R \to \infty.
\end{align*}
\end{definition}

\begin{remark}$ $\\
\vspace{-0.6cm}
\begin{itemize}
\item In case $k = 0$, we set $\cP_{0-1} = \cP_{-1} = \{ 0 \}$. Then, $L u \overset{0}{=} f$ is equivalent to $L u = f$ (see \cite[Corollary 2.13]{DDV22}).
\item Instead of $K \in C^{k}(\mathbb{S}^{n-1})$, here we only assume $K \in C^{k-1+\delta}(\mathbb{S}^{n-1})$ for some $\delta > 0$. It is easy to see that all the arguments in \cite{DDV22} remain valid under this weaker assumption. We decided to make this change in order to have optimal assumptions on $K$ in \autoref{thm:higher-reg-half-space}.
\item As in \cite{AbRo20}, we assume uniform convergence $f_R \to f$. This is slightly different from \cite{DSV19}, where pointwise convergence was assumed. 
\end{itemize}
\end{remark}

The following lemma is a slight improvement of \cite[Lemma 3.6]{AbRo20} (see also \cite[Lemma 8.1]{RTW25}) in the sense that the estimate involves a weighted $L^1$ norm instead of a weighted $L^{\infty}$ norm.

\begin{lemma}
\label{lemma:interior-regularity}
Let $L \in \mathcal{L}_s^{\text{hom}}(\lambda,\Lambda)$. Let $u \in C(B_1)$ be a viscosity solution to 
\begin{align*}
L u = f ~~ \text{ in } B_1.
\end{align*}
Then, the following holds true:
\begin{itemize}
\item[(i)] Let $\beta \in (0,2s]$ if $s \not= 1/2$ and $\beta \in (0,1)$ if $s = 1/2$. If $f \in C(B_1)$ and $u \in L^1_{2s}(\R^n)$, then it holds $u \in C^{\beta}_{loc}(B_1)$ and
\begin{align*}
\Vert u \Vert_{C^{\beta}(B_{1/2})} \le c \left( \Vert u \Vert_{L^{\infty}(B_1)} + \int_{\R^n \setminus B_{1}} \frac{|u(y)|}{|y|^{n+2s}}  \d y + \Vert f \Vert_{L^{\infty}(B_1)} \right)
\end{align*}
for some $c > 0$, depending only on $n,s,\lambda,\Lambda,\beta$.
\item[(ii)] If $f \in C^{\alpha}(B_1)$ for some $\alpha > 0$ such that $2s + \alpha \not\in \N$, $K \in C^{\alpha}(\mathbb{S}^{n-1})$, and $u \in L^{1}_{2s+\alpha}(\R^n)$ then $u \in C^{2s+\alpha}_{loc}(B_1)$ and
\begin{align*}
\Vert u \Vert_{C^{2s+\alpha}(B_{1/2})} \le c \left( \Vert u \Vert_{L^{\infty}(B_1)} + \int_{\R^n \setminus B_{1}} \frac{|u(y)|}{|y|^{n+2s+\alpha}}  \d y + [ f ]_{C^{\alpha}(B_1)} \right)
\end{align*}
for some $c > 0$, depending only on $n,s,\lambda,\Lambda,\alpha$.
\end{itemize}
\end{lemma}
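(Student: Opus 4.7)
The plan is to upgrade the standard interior Schauder-type estimates of \cite{RoSe16b}, which typically involve a \emph{weighted $L^\infty$} tail, to the \emph{weighted $L^1$} tail stated here. The mechanism is the classical cutoff-and-absorb trick: we split $u$ into a compactly supported piece (to which the known estimate applies) and a far-field piece whose nonlocal contribution becomes a smooth perturbation of the right-hand side, controlled precisely by the weighted $L^1$ norm.

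Fix $\eta \in C_c^{\infty}(B_1)$ with $0 \le \eta \le 1$ and $\eta \equiv 1$ on $B_{3/4}$, and decompose
\begin{equation*}
u = u_1 + u_2, \qquad u_1 := \eta u, \qquad u_2 := (1-\eta) u.
\end{equation*}
Then $u_1$ is supported in $\overline{B_1}$ and satisfies $\|u_1\|_{L^\infty(\R^n)} \le \|u\|_{L^\infty(B_1)}$. Since $u_2 \equiv 0$ on $B_{3/4}$, for every $x \in B_{1/2}$ the integrand defining $Lu_2(x)$ vanishes in a neighborhood of the diagonal, so no principal value is needed and $x \mapsto Lu_2(x)$ is smooth on $B_{1/2}$:
\begin{equation*}
Lu_2(x) = -\int_{\R^n \setminus B_{3/4}} u(y)\bigl(1-\eta(y)\bigr) K(y-x)\, dy.
\end{equation*}
For $x \in B_{1/2}$ and $|y| \ge 3/4$ we have $|y-x| \asymp 1 + |y|$, and \eqref{eq:K} yields $K(y-x) \le \Lambda (1+|y|)^{-n-2s}$. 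Splitting the integral into $\{3/4 \le |y| \le 1\}$ and $\{|y| \ge 1\}$ gives
\begin{equation*}
\|Lu_2\|_{L^{\infty}(B_{1/2})} \le C\Bigl(\|u\|_{L^\infty(B_1)} + \|u\|_{L^1_{2s}(\R^n \setminus B_1)}\Bigr).
\end{equation*}

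Since $u_2$ is smooth on $B_{1/2}$, any admissible test function $\phi$ for $u_1$ at $x_0 \in B_{1/2}$ gives a test function $\phi + u_2$ for $u$ at $x_0$, so $u_1$ is a viscosity solution of $Lu_1 = \tilde f$ on $B_{1/2}$ with $\tilde f := f - Lu_2$. Applying the standard interior $C^{\beta}$ estimate of \cite{RoSe16b} to the bounded $u_1$ (rescaled to $B_{1/2}$) yields
\begin{equation*}
\|u_1\|_{C^\beta(B_{1/4})} \le C\bigl(\|u_1\|_{L^\infty(\R^n)} + \|\tilde f\|_{L^\infty(B_{1/2})}\bigr),
\end{equation*}
and since $u = u_1$ on $B_{3/4}$, combining with the bound on $\|Lu_2\|_{L^\infty}$ proves part (i) on $B_{1/4}$. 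A standard covering and scaling argument extends it to $B_{1/2}$.

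For part (ii), the same decomposition works. The only additional point is that, differentiating $Lu_2$ in $x$ under the integral sign, each derivative contributes an extra factor $|y-x|^{-1} \le C(1+|y|)^{-1}$; iterating and interpolating (using $K \in C^\alpha(\mathbb S^{n-1})$ to handle the angular dependence) gives
\begin{equation*}
\|Lu_2\|_{C^\alpha(B_{1/2})} \le C\bigl(\|u\|_{L^\infty(B_1)} + \|u\|_{L^1_{2s+\alpha}(\R^n \setminus B_1)}\bigr),
\end{equation*}
where the finite integrability of the extra weight is exactly what the hypothesis $u \in L^1_{2s+\alpha}(\R^n)$ ensures. Then the $C^{2s+\alpha}$ interior Schauder estimate from \cite{RoSe16b} applied to $u_1$ closes the proof. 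The only delicate step is confirming that the cutoff preserves the viscosity-solution structure, which is immediate since $Lu_2$ is classical on $B_{1/2}$; beyond that, the argument is a clean book-keeping of the kernel decay against the weighted $L^1$ integrability.
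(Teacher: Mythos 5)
Your part (i) is fine and coincides with the paper's argument (truncation plus the known interior H\"older estimate). The gap is in part (ii). You claim $\Vert Lu_2\Vert_{C^\alpha(B_{1/2})} \le C\big(\Vert u\Vert_{L^\infty(B_1)} + \Vert u\Vert_{L^1_{2s+\alpha}(\R^n\setminus B_1)}\big)$, but this fails for the $L^\infty$ part of that norm: pointwise one only has $|Lu_2(x)| \le C\int_{\R^n\setminus B_{3/4}}|u(y)|(1+|y|)^{-n-2s}\d y$, i.e.\ a bound by the $L^1_{2s}$-tail, which is \emph{not} controlled by the $L^1_{2s+\alpha}$-tail appearing in the statement (for $u(y)\sim |y|^{2s+\alpha/2}$ the latter is finite while $Lu_2$ is not even defined). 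Statement (ii) deliberately only assumes $u\in L^1_{2s+\alpha}$, and in the blow-up argument for the main theorem it is applied to functions growing like $|x|^{k+\gamma}$ with $k+\gamma>2s$, for which the $L^1_{2s}$-tail is infinite; so this is not a cosmetic loss. For the same reason, your final step, feeding the full $C^\alpha$ norm of $\tilde f = f - Lu_2$ into the standard Schauder estimate, puts $\Vert f\Vert_{L^\infty(B_1)}$ and the $2s$-weighted tail on the right-hand side, whereas the lemma allows only the seminorm $[f]_{C^\alpha(B_1)}$ and the $(2s+\alpha)$-weighted tail. (A secondary point: with $K$ only in $C^\alpha(\mathbb{S}^{n-1})$, $\alpha<1$, you cannot differentiate $Lu_2$ under the integral sign; one estimates increments directly via $|K(x-y)-K(x+h-y)|\le C|h|^\alpha (1+|y|)^{-n-2s-\alpha}$, which does give the correct bound, but only for the seminorm $[Lu_2]_{C^\alpha(B_{1/2})}$.)

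What is missing is the mechanism that recovers an $L^\infty$ bound on the truncated right-hand side from increment bounds alone. The paper truncates sharply, $v=u\1_{B_1}$, notes that only the increments of $\tilde f := f - L(u\1_{\R^n\setminus B_1})$ are controlled (by $\osc_{B_{3/4}} f$ plus the $(2s+\alpha)$-weighted $L^1$ tail, cf.\ \eqref{eq:osc-tilde-f}), and then uses the decomposition $\tilde f = g + p$ with $p$ a polynomial, following \cite[Lemma 3.6]{AbRo20} and \cite[Lemma 8.1]{RTW23}: the polynomial part is estimated through the equation and the boundedness of $v$, so that $\Vert \tilde f\Vert_{L^\infty(B_{3/4})}$ ends up bounded by $\Vert u\Vert_{L^\infty(B_1)}$, the $(2s+\alpha)$-tail and $\osc_{B_{3/4}} f$ only; only then is the interior Schauder estimate applied to $v$. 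Your smooth-cutoff decomposition can be kept, but without this polynomial-correction step (and, for $\alpha>1$, higher-order incremental quotients in place of derivatives) you prove a strictly weaker estimate than the one stated.
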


\begin{remark}
\label{remark:int-up-to-poly}
From the proof it is apparent, that \autoref{lemma:interior-regularity}(ii) remains true if $L u \overset{k}{=} f$ for $k < \alpha$.
\end{remark}

\begin{proof}
Let us first show (ii) in case $\alpha < 1$. Let us define $v = u \1_{B_1}$. We claim that $v$ solves $L v = \tilde{f}$ in $B_{3/4}$ for some $\tilde{f} \in C^{2s+\alpha}(B_{3/4})$ with
\begin{align}
\label{eq:tilde-f-claim}
\Vert \tilde{f} \Vert_{C^{\alpha}(B_{3/4})} \le C \left(\Vert u \Vert_{L^{\infty}(B_1)} + \int_{\R^n \setminus B_{1}} \frac{|u(y)|}{|y|^{n+2s+\alpha}} \d y + [f]_{C^{\alpha}(B_{3/4})} \right).
\end{align}
To prove it, first, we observe that for any $h \in B_{3/4}$ and $x \in B_{\frac{3}{4} - |h|}$, using that $K \in C^{\alpha}(\mathbb{S}^{n-1})$:
\begin{align*}
|L(u \1_{\R^n \setminus B_1})(x) - L(u \1_{\R^n \setminus B_1})(x+h)| &\le |h|^{\alpha} \int_{\R^n \setminus B_1} |u(y)| \frac{|K(x-y)-K(x+h-y)|}{|h|^{\alpha}} \d y \\
&\le c |h|^{\alpha} \int_{\R^n \setminus B_1} \frac{|u(y)|}{|y|^{n+2s+\alpha}} \d y.
\end{align*}

Therefore, since we can add and subtract constants to $\tilde{f}$ without affecting the left hand side of the next estimate, for any $h \in B_{3/4}$ it holds:
\begin{align}
\label{eq:osc-tilde-f}
\Vert \tilde{f} - \tilde{f}(\cdot + h) \Vert_{L^{\infty}(B_{\frac{3}{4} -|h|})} \le C \left( \osc_{B_{3/4}} f + |h|^{\alpha} \int_{\R^n \setminus B_{1}} \frac{|u(y)|}{|y|^{n+2s+\alpha}} \d y \right).
\end{align}
From here, we deduce that there exist $g \in L^{\infty}(B_{3/4})$ and a constant $p$ such that $\tilde{f} = g + p$. By construction we have
\begin{align*}
\Vert \tilde{g} \Vert_{L^{\infty}(B_{3/4})} \le C \left(\Vert u \Vert_{L^{\infty}(B_1)} + \int_{\R^n \setminus B_{1}} \frac{|u(y)|}{|y|^{n+2s+\alpha}} \d y + \osc_{B_{3/4}} f \right).
\end{align*}
We split $v = v_1 + v_2$, where $v_1$ and $v_2$ are solutions to
\begin{align*}
\begin{cases}
L v_1 & = \tilde{g} ~~ \text{ in } B_{3/4},\\
v_1 & = v ~~ \text{ in } \R^n \setminus B_{3/4}, 
\end{cases}
\qquad 
\begin{cases}
L v_2 & = p ~~ \text{ in } B_{3/4},\\
v_2 & = 0 ~~ \text{ in } \R^n \setminus B_{3/4}, 
\end{cases}
\end{align*}
and note that the existence of $v_1,v_2$ follows from \cite[Theorem 3.2.27]{FeRo23}). Then, by the maximum principle (see \cite[Corollary 3.2.22]{FeRo23}) we deduce that
\begin{align*}
\Vert v_1 \Vert_{L^{\infty}(B_{3/4})} &\le C \left(\Vert v \Vert_{L^{\infty}(\R^n \setminus B_{3/4})} + \Vert \tilde{g} \Vert_{L^{\infty}(B_{3/4})} \right) \\
&\le C \left(\Vert u \Vert_{L^{\infty}(B_1)} + \int_{\R^n \setminus B_{1}} \frac{|u(y)|}{|y|^{n+2s+\alpha}} \d y + \osc_{B_{3/4}} f \right).
\end{align*}
Hence,
\begin{align*}
\Vert v_2 \Vert_{L^{\infty}(B_{3/4})} &\le \Vert u \Vert_{L^{\infty}(B_{3/4})} + \Vert v_1 \Vert_{L^{\infty}(B_{3/4})} \\
&\le C \left(\Vert u \Vert_{L^{\infty}(B_1)} + \int_{\R^n \setminus B_{1}} \frac{|u(y)|}{|y|^{n+2s+\alpha}} \d y + \osc_{B_{3/4}} f \right).
\end{align*}
Then, by \cite[Lemma 3.7]{AbRo20}, we deduce 
\begin{align*}
\Vert p \Vert_{L^{\infty}(B_{3/4})} \le C \Vert v_2 \Vert_{L^{\infty}(B_{3/4})} \le C \left(\Vert u \Vert_{L^{\infty}(B_1)} + \int_{\R^n \setminus B_{1}} \frac{|u(y)|}{|y|^{n+2s+\alpha}} \d y + \osc_{B_{3/4}} f \right).
\end{align*}
Altogether, we have shown
\begin{align*}
\Vert \tilde{f} \Vert_{L^{\infty}(B_{3/4})} \le \Vert \tilde{g} \Vert_{L^{\infty}(B_{3/4})} + \Vert p \Vert_{L^{\infty}(B_{3/4})}  \le C \left(\Vert u \Vert_{L^{\infty}(B_1)} + \int_{\R^n \setminus B_{1}} \frac{|u(y)|}{|y|^{n+2s+\alpha}} \d y + \osc_{B_{3/4}} f \right).
\end{align*}
Finally, as a direct consequence of \eqref{eq:osc-tilde-f}, we deduce
\begin{align*}
[\tilde{f}]_{C^{\alpha}(B_{3/4})} \le  C \left(\Vert u \Vert_{L^{\infty}(B_1)} + \int_{\R^n \setminus B_{1}} \frac{|u(y)|}{|y|^{n+2s+\alpha}} \d y + [f]_{C^{\alpha}(B_{3/4})} \right),
\end{align*}
which yields the claim \eqref{eq:tilde-f-claim}.\\
Thus, by application of the interior regularity estimate \cite[Theorem 2.4.1]{FeRo23} to $v$, we obtain
\begin{align*}
\Vert u \Vert_{C^{2s+\alpha}(B_{1/2})} &= \Vert v \Vert_{C^{2s+\alpha}(B_{1/2})} \le c \left( \Vert v \Vert_{L^{\infty}(\R^n)} + \Vert \tilde{f} \Vert_{C^{\alpha}(B_{3/4})} \right) \\
&\le c \left( \Vert u \Vert_{L^{\infty}(B_1)} + \int_{\R^n \setminus B_{1}} \frac{|u(y)|}{|y|^{n+2s+\alpha}} \d y + [f]_{C^{\alpha}(B_{3/4})} \right),
\end{align*}
as desired. This proves (ii) in case $\alpha < 1$. The case $\alpha \ge 1$ goes in the same way by considering higher order incremental quotients in the arguments above. Statement (i) was proved in \cite[Theorem 2.4.3]{FeRo23}. The $L^{\infty}$ norm can be replaced by the $L^1_{2s}(\R^n)$ norm by the same truncation argument we employed above.
\end{proof}

Next, we provide a lemma stating that equations up to a polynomial can be differentiated in the same way as classical nonlocal equations. This lemma will be used in the proof of \autoref{lemma:large-solution-linear-fct}.

\begin{lemma}
\label{lemma:derivative-up-to-a-polynomial}
Let $L \in \mathcal{L}_s^{\text{hom}}(\lambda,\Lambda)$. Let $k \in \N$, $f \in C^{1}(B_1)$, and $K \in C^{k+\delta}(\mathbb{S}^{n-1})$ for some $\delta > 0$.  Let $u \in C(B_1) \cap L^1_{2s+k+\delta}(\R^n)$ with $\partial_i u \in L^1_{2s+k-1+\delta}(\R^n)$. Then, it holds
\begin{align*}
L u \overset{k+1}{=} f ~~ \text{ in } B_1,
\end{align*}
and $\partial_i f_R \to \partial_i f$, if and only if
\begin{align*}
L (\partial_i u) \overset{k}{=} \partial_i f ~~ \text{ in } B_1.
\end{align*}
\end{lemma}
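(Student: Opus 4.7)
The strategy is to exploit the translation invariance of $L$, which yields the distributional identity $\partial_i L \varphi = L(\partial_i \varphi)$ for every tempered distribution $\varphi$. Applying this to $\varphi = u\,\eta_R$, where $\eta_R \in C_c^{\infty}(B_{2R})$ is a smooth cutoff with $\eta_R \equiv 1$ on $B_R$, and invoking the Leibniz rule produces the key identity
\[
L\bigl((\partial_i u)\,\eta_R\bigr) \;=\; \partial_i L(u\,\eta_R) \;-\; L(u\,\partial_i\eta_R) \qquad \text{on } B_1,
\]
valid for every $R$ large enough that $B_1$ is well-separated from $\supp(\partial_i\eta_R)\subset B_{2R}\setminus B_R$. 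A short preliminary step verifies that \autoref{def:L-up-to-a-polynomial} is unchanged if one replaces $\1_{B_R}$ by such a smooth $\eta_R$, via the same Taylor-expansion argument outlined below applied to $L(u(\eta_R-\1_{B_R}))$, so I may work with smooth cutoffs throughout.

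For the forward direction, I differentiate the hypothesis $L(u\eta_R)=f_R+p_R$ (with $\deg p_R\le k$, $f_R\to f$ and $\partial_i f_R\to \partial_i f$ uniformly on $B_1$) and substitute into the key identity to obtain
\[
L\bigl((\partial_i u)\,\eta_R\bigr) \;=\; \partial_i f_R \;+\; \partial_i p_R \;-\; L(u\,\partial_i \eta_R),
\]
where $\partial_i p_R\in\mathcal{P}_{k-1}$. The task reduces to decomposing $L(u\,\partial_i\eta_R)$ as a polynomial in $\mathcal{P}_{k-1}$ plus a uniformly vanishing error. Taking $\delta\in(0,1)$ without loss of generality and Taylor-expanding $x\mapsto K(x-y)$ around $x=0$ to order $k$ using $K\in C^{k+\delta}(\mathbb{S}^{n-1})$ gives a remainder $R_k(x,y)$ with $|R_k(x,y)|\le C|x|^{k+\delta}|y|^{-n-2s-k-\delta}$. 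Integrating against $u\,\partial_i\eta_R$ (supported in $B_{2R}\setminus B_R$ with $|\partial_i\eta_R|\lesssim 1/R$) and exploiting
\[
\int_{B_{2R}\setminus B_R}|u|\,dy \;\le\; C\,R^{n+2s+k+\delta}\,\varepsilon_R, \qquad \varepsilon_R := \!\!\int_{B_{2R}\setminus B_R}\!\! |u(y)|(1+|y|)^{-n-2s-k-\delta}\,dy \;\to\; 0
\]
(the tail of a convergent integral since $u\in L^1_{2s+k+\delta}(\R^n)$), the degree-$k$ monomial coefficients of the Taylor polynomial are bounded by $CR^{\delta-1}\varepsilon_R\to 0$ and the Taylor remainder contributes an error of size $C|x|^{k+\delta}\varepsilon_R\to 0$, both uniformly on $B_1$. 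The degree-$\le(k-1)$ polynomial piece is absorbed into a new $q_R\in\mathcal{P}_{k-1}$, the rest converges uniformly to $\partial_i f$, and $L(\partial_i u)\overset{k}{=}\partial_i f$ follows.

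For the reverse direction, I first apply the same Taylor machinery to the difference $L(u\eta_R)-L(u\eta_{R'})=L\bigl(u(\eta_R-\eta_{R'})\bigr)$ as $R,R'\to\infty$; this shows that $L(u\eta_R)\bmod \mathcal{P}_k$ is Cauchy in $L^{\infty}(B_1)$ and produces a canonical $\tilde f\in C(B_1)$ with $Lu\overset{k+1}{=}\tilde f$. The forward direction then gives $L(\partial_i u)\overset{k}{=}\partial_i\tilde f$, and comparing with the hypothesis $L(\partial_i u)\overset{k}{=}\partial_i f$ (whose limit is unique modulo $\mathcal{P}_{k-1}$) forces $\partial_i\tilde f-\partial_i f\in\mathcal{P}_{k-1}$. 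Antidifferentiating in $x_i$ and absorbing the resulting polynomial of degree $\le k$ into $\tilde p_R$ identifies $\tilde f$ with $f$ in the quotient $C(B_1)/\mathcal{P}_k$, giving $Lu\overset{k+1}{=}f$ together with $\partial_i f_R\to \partial_i f$ uniformly.

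The main technical delicacy lies in calibrating the Taylor-expansion order: expanding $K$ to order $k$ is what gives the sharp remainder decay $|R_k|\lesssim|x|^{k+\delta}|y|^{-n-2s-k-\delta}$ needed for uniform control of the error, at the cost of producing a degree-$k$ polynomial residue that is not directly in $\mathcal{P}_{k-1}$; this residue nevertheless vanishes uniformly as $R\to\infty$ at rate $R^{\delta-1}\varepsilon_R$, precisely because the weight $u\in L^1_{2s+k+\delta}$ matches the extra $|y|^{-\delta}$ decay coming from the Hölder exponent of $D^k K$. The companion hypothesis $\partial_i u\in L^1_{2s+k-1+\delta}$ guarantees that both $L(\partial_i u\cdot\eta_R)$ and $L(u\eta_R)$ are simultaneously well-defined as classical integrals on $B_1$, so the commutation identity holds pointwise rather than only in the distributional sense.
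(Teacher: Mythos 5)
Your forward direction (for $\delta\in(0,1)$) is a legitimate variant of the paper's argument: where the paper truncates with $\1_{B_R}$, takes difference quotients and, following \cite{DDV22}, moves the derivative onto $u$ by a (discrete) integration by parts in $y$ --- which is exactly where $\partial_i u\in L^1_{2s+k-1+\delta}$ is used and why only polynomials of degree $k-1$ ever appear --- you keep the commutator $L(u\,\partial_i\eta_R)$ explicit and kill its degree-$k$ Taylor coefficients using the $1/R$ smallness of $\partial_i\eta_R$ on the annulus. That computation is fine, but note that ``taking $\delta\in(0,1)$ without loss of generality'' is not a free reduction here: the same $\delta$ appears in the weighted $L^1$ hypotheses on $u$ and $\partial_i u$, which become \emph{stronger} as $\delta$ decreases, and your bound $O(R^{\delta-1}\eps_R)$ on the top-order coefficients genuinely needs $\delta<1$. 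Since the paper's own remainder estimates also implicitly live in this regime, I only flag this as a loose statement rather than a fatal flaw.

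There are, however, two genuine gaps. First, your closing claim that $L(u\eta_R)$ and $L((\partial_i u)\eta_R)$ are ``well-defined as classical integrals on $B_1$'' is false: the weighted $L^1$ conditions control only the tails, while the principal-value part requires local regularity that is not assumed ($u$ is merely continuous, $\partial_i u$ merely in a weighted $L^1$ space). So the commutation identity only holds distributionally, and you still owe an argument reconciling the distributional computation with the viscosity-sense equations in \autoref{def:L-up-to-a-polynomial}; the paper avoids this by staying inside the viscosity framework throughout (difference quotients of viscosity solutions, then the stability result of \cite{AbRo20}). Second, and more seriously, the reverse direction is circular as written: you apply ``the forward direction'' to the canonical $\tilde f$, but the forward direction takes as hypothesis precisely that $\partial_i\tilde f_R\to\partial_i\tilde f$ --- indeed that $\tilde f$ is differentiable in $x_i$ --- which is unknown at that stage. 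Moreover, Cauchyness of the differences $L(u\eta_R)-L(u\eta_{R'})$ modulo $\mathcal{P}_k$ does not by itself produce a \emph{continuous} limit $\tilde f$, since $L(u\eta_{R_0})$ is only a distribution when $u\in C(B_1)$. The paper handles exactly these points by importing from \cite{DDV22} the decomposition $L(u\1_{B_R})=F_R+P_R$ with genuine functions $F_R$, and then \emph{deriving} the existence of $\partial_i F_R$ as the limit of difference quotients via viscosity stability, rather than assuming it; your proof needs an analogous step (e.g.\ comparing distributional derivatives: $\partial_i$ of the limit equals $\partial_i f$ plus a limit of polynomials in $\mathcal{P}_{k-1}$) before the reverse implication stands. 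The remaining antidifferentiation step only determines $f$ up to an $x_i$-independent function, but since the paper's proof shares this looseness I do not count it against you.
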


\begin{proof}
Let us assume first that $\partial_i u \in L^1_{2s+k-1+\delta}(\R^n)$, and assume that $L u \overset{k+1}{=} f$ in $B_1$. Then, there exist polynomials $p_R \in \mathcal{P}_k$ and functions $f_R \in L^{\infty}(B_1)$ with $f_R \to f$ such that
\begin{align*}
L(u \1_{B_R}) = f_R + p_R ~~ \text{ in } B_1.
\end{align*}
Let us now consider difference quotients $D^h_i u(x) = \frac{u(x+ e_i h) - u(x)}{|h|}$ and compute
\begin{align*}
L(D^h_i u \1_{B_R}) = L(D^h_i(u \1_{B_R})) - L(u D^h_i \1_{B_R}) = D^h_i f_R + D^h_i p_R - L(u D^h_i \1_{B_R}),
\end{align*}
where, by following the proof of \cite[Theorem 2.1]{DDV22}, we can decompose
\begin{align*}
- L(u D^h_i \1_{B_R})(x) &= \int_{\R^n \setminus B_3} u D^h_i\1_{B_R}(y) K(x-y) \d y = d_{R,h}(x) + g_{R,h}(x)
\end{align*}
for functions $g_{R,h}$ such that
\begin{align*}
g_{R,h}(x) = \int_{\R^n} D^h_i(\1_{B_R})(y)u(y) \psi(x,y) \d y = - \int_{B_R} (D^i_{-h}u(y) \psi(x,y) + u(y) D^i_{-h} \psi(x,y)) \d y
\end{align*}
for some function $\psi : B_1 \times (\R^n \setminus B_3) \to \R$ such that 
\begin{align*}
\sup_{x \in B_1} \psi(x,y) \le C \sup_{x \in B_1}(1 + |x-y|)^{-(n+2s+k-1+\delta)}, \quad \sup_{x \in B_1} |\nabla_y \psi(x,y)| \le C \sup_{x \in B_1}(1 + |x-y|)^{-(n+2s+k+\delta)},
\end{align*}
and polynomials $d_{R,h} \in \mathcal{P}_{k-1}$ with
\begin{align*}
d_{R,h}(x) = \sum_{|\alpha| \le k-1} \kappa_{\alpha,h} x^{\alpha}, \qquad \kappa_{\alpha,h} = c_{\alpha}\int_{B_R} D_{-h}^i[u(y) \partial_x^{\alpha}K(x-y)] \d y, \qquad c_{\alpha} \in \R.
\end{align*}
Clearly, it holds $g_{R,h} \to g_R$, and $d_{R,h} \to d_{R}$, as $R \to \infty$, where
\begin{align*}
g_{R}(x) = \int_{B_R} \partial_i[u(y) \psi(x,y)] \d y, \qquad  d_{R}(x) = \sum_{|\alpha| \le k-1} \kappa_{\alpha} x^{\alpha}, \qquad \kappa_{\alpha} = c_{\alpha}\int_{B_R} \partial_i[u(y) \partial_x^{\alpha}K(x-y)] \d y.
\end{align*}
Note that for the convergence $g_{R,h} \to g_R$ we are using that for any $x \in B_1$
\begin{align*}
|\1_{B_R}(y) & (D^i_{-h}u(y) \psi(x,y) + \1_{B_R}(y) u(y) D^i_{-h}\psi(x,y))|\\
& \le C |\partial_i u(y)| \sup_{x \in B_1}(1 + |x-y|)^{-(n+2s+k-1+\delta)} + C |u(y)| \sup_{x \in B_1}(1 + |x-y|)^{-(n+2s+k+\delta)} \in L^1(\R^n)
\end{align*}
and dominated convergence. Moreover, from integrating by parts, we see that it holds for any $x \in B_1$
\begin{align*}
\int_2^{\infty} R^{-1} |g_{R}^{(2)}(x)|  \d R &\le \int_2^{\infty} R^{-1} \int_{\partial B_{R}} |u(y)||x-y|^{-(n+2s+k-1+\delta)}\d y \d R \\
&\le c \int_{B_2^{c}} |u(y)||y|^{-(n+2s+k+\delta)} \d y < \infty,
\end{align*}
which implies that $g_R(x) \to 0$, as $R \to \infty$, uniformly in $x$.\\
Altogether, we have shown
\begin{align*}
L(\partial_i u \1_{B_R}) = \lim_{h \to 0} D_i^h f_R + \lim_{h \to 0} D^h_i p_R + \lim_{h \to 0} d_{R,h} + \lim_{h \to 0} g_{R,h} = \partial_i f_R + \partial_i p_R + d_R + g_R,
\end{align*}
which implies that
\begin{align*}
L(\partial_i u \1_{B_R}) \overset{k}{=} f ~~ \text{ in } B_1,
\end{align*}
as desired. \\
Let us now show the other implication, i.e., assume that $L(\partial_i u) \overset{k}{=} \partial_i f$ in $B_1$. Then, by \cite{DDV22} we observe that there are $F_R : \R^n \to \R$, and $P_R \in \cP_{k}$ such that 
\begin{align*}
L(u \1_{B_R}) = F_R + P_R ~~ \text{ in } B_1.
\end{align*}
Clearly, by the same arguments as above, we have
\begin{align*}
L((D_i^h u)\1_{B_R}) = D_i^h L(u\1_{B_R}) - L(u D_i^h \1_{B_R}) = D_i^h F_R + D_i^h P_R + d_{R,h} + g_{R,h}
\end{align*}
with $D_i^h P_R + d_{R,h} \in \cP_{k-1}$ and $g_{R,h} \to g_R$, as $h \to 0$ with $g_R \to 0$, as $R \to \infty$. Thus, by the stability for viscosity solutions up to a polynomial (see \cite[Lemma 3.5]{AbRo20}), we have that
\begin{align*}
f_R + p_R = L(\partial_i u \1_{B_R}) = \partial_i F_R + \partial_i P_R + d_R + g_R,
\end{align*}
where $d_R,p_R,\partial_i P_R \in \cP_{k-1}$. Hence, after integrating the previous identity in $x_i$ and denoting $\tilde{F}_R(x) = \int_{-\infty}^{x_i} (f_R - g_R)(x',y_i) \d y$, we can deduce:
\begin{align*}
F_R = \tilde{F}_R + \tilde{P}_R ~~ \text{ in } B_1,
\end{align*}
where $\tilde{P}_R \in \cP_{k}$ is such that $\partial_i \tilde{P}_R = p_R - d_R - \partial_i P_R$. Then, since $f_R \to f$ and $g_R \to 0$, as $R \to \infty$, we deduce that $\tilde{F}_R \to F$, where $\partial_i F = f$, and the proof is complete.
\end{proof}

\subsection{Two lemmas on viscosity solutions}
\label{subsec:two-visc}

In this section, we prove two auxiliary lemmas for viscosity solutions to nonlocal equations with local Neumann boundary data, namely a stability result, and that sums of viscosity subsolutions are again viscosity subsolutions. Both results are standard for nonlocal equations in the interior of the solution domain (see \cite{FeRo23}). However, since we consider equations at the boundary, where solutions satisfy a Neumann condition in the viscosity sense, both results require a proof. Both proofs heavily rely on the interaction of nonlocal operators with the distance function and the results in Subsection \ref{subsec:prelim-dist}.

First, we prove a stability result, which will be crucial in the blow-up argument of our proof of the higher boundary regularity. 

\begin{lemma}
\label{lemma:stability}
Let $k \in \N \cup \{ 0 \}$, $\gamma \in (0,1)$ with $\gamma \not= s$, and $\Omega_j \subset \R^n$ be open, bounded domains with $\partial \Omega_j \in C^{2,\gamma}$ such that $0 \in \partial \Omega_j$, $\nu_0 = e_n$ for any $j \in \N$, and such that the $C^{2,\gamma}$ radii of $\Omega_j$ and $\diam(\Omega_j)$ are uniformly bounded. Given a sequence $r_j \searrow 0$, we set $\tilde{\Omega}_j = r_j^{-1}\Omega_j$ and  $\tilde{d}_j := d_{\tilde{\Omega}_j}$. Let $v_j \in L^{1}_{2s+k}(\R^n)$ with $v_j/\tilde{d}_j^{s-1} \in C(\overline{\tilde{\Omega}_j})$ be viscosity solutions to
\begin{align*}
\left\{\begin{array}{rcl}
L_j v_j &\overset{k}{=}& f_j ~~~ \text{ in } \tilde{\Omega}_j \cap B_{1},\\
v_j &=& 0 ~~~~ \text{ in } \R^n \setminus \tilde{\Omega}_j,\\
\partial_{\nu}(v_j/\tilde{d}_j^{s-1}) &=& g_j ~~~ \text{ on } \partial \tilde{\Omega}_j \cap B_1,
\end{array}\right.
\end{align*}
where $f_j \in C(\tilde{\Omega}_j \cap B_1)$, $g_j \in C(\partial \tilde{\Omega}_j \cap B_1)$, and $(L_j)_j \subset \mathcal{L}^{\text{hom}}_s(\lambda,\Lambda,k-1+\alpha)$ for some $\alpha > 0$. Moreover, assume that there are $v \in L^{1}_{2s+k}(\R^n)$ with $v/(x_n)_+^{s-1} \in C(\{ x_n \ge 0 \})$, $f \in C(\{ x_n > 0 \} \cap B_1)$, $g \in C(\{ x_n = 0\} \cap B_1)$, $L \in \mathcal{L}^{\text{hom}}_s(\lambda,\Lambda,k-1+\alpha)$, and $\eps_j \searrow 0$, $q_j \in \cP_{k}$ such that 
\begin{align*}
v_j/\tilde{d}_j^{s-1} &\to v/(x_n)_+^{s-1} ~~~ \text{ in } L^{\infty}_{loc}(B_1),\\
v_j &\to v ~~ \qquad \qquad  \text{ in } L^1_{2s+k}(\R^n),\\
|f_j - p_j - f| &\to 0 ~~ \qquad \qquad  \text{ in } L^{\infty}_{loc}(B_1 \cap \{ x_n > 0 \} ),\\
|g_j - q_j - g|(x) \le c \eps_j &\to 0 ~~ \qquad \qquad ~  \forall x \in \partial \tilde{\Omega}_j \cap B_1,\\
K_j &\to K ~~ \qquad \quad ~~  \text{ in } C^{k-1+\alpha}(\mathbb{S}^{n-1}).
\end{align*} 
Then, there exists $q \in \cP_k$ such that $v$ is a viscosity solution to
\begin{align*}
\left\{\begin{array}{rcl}
Lv &\overset{k}{=}& f ~~~~ \quad \text{ in } B_1 \cap \{ x_n > 0\},\\
v &=& 0 ~~ \quad ~~ \text{ in } \R^n \setminus \{ x_n > 0 \},\\
\partial_n (v/(x_n)_+^{s-1}) &=& g + q ~~ \text{ on } B_1 \cap \{ x_n = 0 \}.
\end{array}\right.
\end{align*}
If $k = 0$, the same result holds with $\tilde{d}_j^{s-\gamma} f_j \in L^{\infty}(\tilde{\Omega}_j \cap B_1)$ and $(x_n)_+^{s-\gamma}f \in L^{\infty}(\{ x_n > 0\} \cap B_1)$.
\end{lemma}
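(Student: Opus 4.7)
The proof follows the classical stability scheme for viscosity solutions, split into the interior equation and the boundary Neumann condition. The key preliminary observation is that the convergence $v_j/\tilde d_j^{s-1} \to v/(x_n)_+^{s-1}$ in $L^\infty_{\loc}(B_1)$, combined with $\tilde d_j \to (x_n)_+$ locally uniformly in $B_1$ (a consequence of the $C^{2,\gamma}_{\loc}$ convergence $\partial\tilde\Omega_j \to \{x_n = 0\}$ that follows from the uniform $C^{2,\gamma}$ bounds together with $\nu_0 = e_n$ at $0$), upgrades to the locally uniform convergence $v_j \to v$ on $\{x_n > 0\} \cap B_1$. Together with $v_j \to v$ in $L^1_{2s+k}(\R^n)$ and $K_j \to K$ in $C^{k-1+\alpha}(\mathbb{S}^{n-1})$, this is enough to pass to the limit in the nonlocal operators $L_j$ applied to compactly supported perturbations.

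\textbf{Interior equation.} Given $x_0 \in \{x_n > 0\} \cap B_1$ and $\phi \in C^2(N_{x_0}) \cap L^1_{2s+k}(\R^n)$ touching $v$ from above at $x_0$, I first replace $\phi$ by $\phi + \eta|x-x_0|^2$ so that the touching is strict. The locally uniform convergence $v_j \to v$ near $x_0$ yields points $x_j \to x_0$ and scalars $\delta_j \to 0$ such that $\phi + \delta_j$ touches $v_j$ from above at $x_j$. The subsolution property of $v_j$ in the sense of \autoref{def:L-up-to-a-polynomial} gives, after truncation at radius $R$, $L_j((\phi+\delta_j)\1_{B_R})(x_j) \le f_{j,R}(x_j) + p_{j,R}(x_j)$ for suitable polynomials $p_{j,R} \in \cP_{k-1}$ and $\|f_{j,R} - f_j\|_{L^\infty} \to 0$ as $R \to \infty$. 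Passing $j \to \infty$ (using the convergence of $K_j$, of the $L^1_{2s+k}$ tails of $v_j$, and of $f_j - p_j \to f$), and then extracting a subsequential limit of $p_{j,R}$ in the finite-dimensional space $\cP_{k-1}$, recovers $L(\phi \1_{B_R})(x_0) \le f_R(x_0) + \tilde p_R(x_0)$ with $\tilde f_R \to f$, i.e.\ $L\phi(x_0) \overset{k}{\le} f(x_0)$. The supersolution case is symmetric, and when $k = 0$ the same argument goes through with the weighted assumption $\tilde d_j^{s-\gamma} f_j \in L^\infty$, since this suffices to control $f_j$ on any compact subset of $\{x_n > 0\}$.

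\textbf{Boundary Neumann condition.} For $x_0 \in \{x_n = 0\} \cap B_1$ and a test function $\phi \in C^2(N_{x_0}) \cap L^\infty$ touching $v/(x_n)_+^{s-1}$ strictly from above at $x_0$, I use the $C^{2,\gamma}$-convergence of $\partial\tilde\Omega_j$ to construct local $C^{2,\gamma}$-diffeomorphisms $\Psi_j$ mapping a neighborhood of $x_0$ in $\overline{\{x_n \ge 0\}}$ onto the corresponding neighborhood in $\overline{\tilde\Omega_j}$, with $\Psi_j \to \mathrm{Id}$ in $C^{2,\gamma}$. Setting $\phi_j := \phi \circ \Psi_j^{-1}$, the uniform convergence $v_j/\tilde d_j^{s-1} \to v/(x_n)_+^{s-1}$ up to the boundary produces points $x_j \in \partial\tilde\Omega_j \cap B_1$ with $x_j \to x_0$ and $\delta_j \to 0$ such that $\phi_j + \delta_j$ touches $v_j/\tilde d_j^{s-1}$ from above at $x_j$. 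The viscosity definition then gives $\partial_{\nu_j}(\phi_j + \delta_j)(x_j) \le g_j(x_j)$, and since $\nu_j(x_j) \to e_n$ and $\Psi_j \to \mathrm{Id}$ the left-hand side converges to $\partial_n \phi(x_0)$. The assumption $|g_j - q_j - g| \le c\eps_j$ on $\partial\tilde\Omega_j \cap B_1$ together with the local boundedness of $g$ bounds the tangential restrictions of $q_j$ to these nearly-flat hypersurfaces, so a subsequence of $q_j$ converges (as polynomials on $\{x_n = 0\}$) to some $q \in \cP_k$. Passing to the limit yields $\partial_n \phi(x_0) \le g(x_0) + q(x_0)$, and the supersolution case gives the matching inequality.

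\textbf{Main obstacle.} The principal difficulty is the boundary step: constructing perturbed test functions $\phi_j$ on the moving domains that touch the singular quotient $v_j/\tilde d_j^{s-1}$ at nearby points, and ensuring that the rescaled normal derivatives converge. The quantitative $C^{2,\gamma}$ control on $\partial\tilde\Omega_j \to \{x_n = 0\}$ and the regularity results of \autoref{cor:dist-s-1-smooth-domains} (applied to $\phi_j \tilde d_j^{s-1}$) are what make this step work. A secondary obstacle is the extraction of a limit polynomial $q$ from the a priori uncontrolled sequence $(q_j)$; this is resolved by observing that only the restriction of $q_j$ to the nearly-flat $\partial\tilde\Omega_j \cap B_1$ enters the Neumann condition, and those restrictions are uniformly bounded by the hypothesis.
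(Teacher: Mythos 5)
Your interior step and your overall scheme match the paper's (the paper simply cites the standard stability results of [FeRo23, Proposition 3.2.12] and [AbRo20, Lemma 3.5] once it notes that $u_j\to u$ in $L^\infty_{loc}$ upgrades to $v_j\to v$ in $L^\infty_{loc}(B_1\cap\{x_n>0\})$), but your boundary step has a genuine gap. You assert that transplanting the test function via diffeomorphisms $\Psi_j$ and using uniform convergence of $u_j=v_j/\tilde d_j^{s-1}$ "produces points $x_j\in\partial\tilde\Omega_j\cap B_1$" at which $\phi_j+\delta_j$ touches $u_j$. That is precisely what does not follow: the uniform convergence only yields touching points $x_j\in\overline{\tilde\Omega_j}\cap\overline{B_r(x_0)}$, and nothing in your construction prevents $x_j$ from falling in the interior $\tilde\Omega_j$, where the Neumann condition for $v_j$ gives no information. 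Your appeal to \autoref{cor:dist-s-1-smooth-domains} applied to $\tilde d_j^{s-1}\phi_j$ does not repair this: that corollary only gives $|L_j(\tilde d_j^{s-1}\phi_j)|\le C\,\tilde d_j^{\gamma-s}$, which is of exactly the same order as the admissible right-hand side when $k=0$ (where one only has $\tilde d_j^{s-\gamma}f_j\in L^\infty$), so an interior touching point produces no contradiction with the equation $L_jv_j=f_j$ and the argument stalls.

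The missing idea, which is the heart of the paper's proof, is to perturb the test function by a distance-type barrier before passing to the approximating domains: one replaces $\phi$ by $\phi^{(\delta)}=\phi+\psi^{(\delta)}$ with $\psi^{(\delta)}_j=-\delta\1_{B_r(x_0)}\big[\tilde d_j-\tilde d_j^{1+\eta}\big]$ for some $\eta\in(0,\gamma)$, for which $L_j(\tilde d_j^{s-1}\psi_j^{(\delta)})\le -C\delta\,\tilde d_j^{\eta-s}$ near $\partial\tilde\Omega_j$. Since $\eta<\gamma$, this negative singular term strictly dominates both the $C\tilde d_j^{\gamma-s}$ contribution of the remaining pieces and the admissible $f_j$, so if the touching point $x_j$ were in $\tilde\Omega_j$ close to the boundary one would contradict the interior equation for $v_j$; this is what forces $x_j\in\partial\tilde\Omega_j\cap B_1$ and lets one invoke $\partial_{\nu_{x_j}}\phi_j^{(\delta)}(x_j)\le g_j(x_j)$. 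The perturbation costs only $\delta$ in the normal derivative at $x_0$, so letting $\delta\searrow 0$ at the end recovers the sharp inequality $\partial_n\phi(x_0)\le g(x_0)+q(x_0)$. Without this (or some equivalent mechanism excluding interior touching), your boundary argument does not go through; the diffeomorphism device and your discussion of extracting the limit polynomial $q$ are fine but secondary.
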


\begin{proof}
Let us define $u_j := v_j/\tilde{d}_j^{s-1}$ and $u := v/(x_n)_+^{s-1}$.
Note that since $u_j \to u$ in $L^{\infty}_{loc}(B_1)$ it follows that $v_j = \tilde{d}_j^{s-1} u_j \to (x_n)_+^{s-1} u = v$ in $L^{\infty}_{loc}(B_1 \cap \{ x_n > 0 \})$. This property is enough to use the stability of viscosity solutions from \cite[Proposition 3.2.12]{FeRo23} to $v_j$ and $v$. The higher order version which we require here follows from \cite[Lemma 3.5]{AbRo20}. Since $v_j \to v$ in $L^1_{2s}(\R^n)$, we also have that $v = 0$ in $B_1 \setminus \{ x_n > 0\}$. Consequently, it only remains to prove the convergence of the Neumann boundary condition.\\
To do so, let $x_0 \in B_1 \cap \{ x_n = 0 \}$. In case $k \ge 1$, we  first truncate $v$ and $v_j$ in $B_2(x_0)$ and apply \cite[Lemma 3.6]{AbRo20} to obtain the equations satisfied by $v\1_{B_2}(x_0)$ and $v_j\1_{B_2}(x_0)$. In order not to over-complicate the notation, let us denote the truncations still by $v$ and $v_j$ and the corresponding source terms by $f$ and $f_j$. Then, let $\phi \in C^2(B_{r}(x_0))$ for some $r \in (0,1)$ with $\phi \le u$ in $B_{r}(x_0)$, $\phi(x_0) = u(x_0)$, and $\phi \equiv u$ in $\R^n \setminus \overline{B_{r}(x_0)}$ be a test function. 
Given $\delta \in (0,1)$, $\eta \in (0,\gamma)$, we define now
\begin{align*}
\psi^{(\delta)}(x) &=  - \delta \1_{B_{r}(x_0)}(x) \left[(x_n)_+ -  (x_n)_+^{1 + \eta} \right], \qquad \psi^{(\delta)}_j(x) = - \delta\1_{B_{r}(x_0)}(x) \left[ \tilde{d}_j(x) - \tilde{d}_j^{1+\eta}(x) \right].
\end{align*}
Note that there exist $C > 0$ and $\eps \in (0,r/2)$, independent of $\delta,j$, such that
\begin{align}
\label{eq:stability-operator-estimate}
L_j(\tilde{d}_j^{s-1} \psi^{(\delta)}_j) \le -C \delta \tilde{d}_j^{\eta - s} ~~ \text{ in } \tilde{\Omega}_j \cap B_{\eps}(x_0).
\end{align}
This is due to \cite[Proposition B.2.1, Lemma B.2.6, Corollary B.2.8]{FeRo23}, and since $\tilde{\Omega}_j \cap B_R(x_0)$ and the respective $C^{2,\gamma}$-radii of $\tilde{\Omega}_j$ are uniformly bounded. Indeed, the aforementioned results yield the existence of $\eps_0 > 0$ such that
\begin{align*}
- \delta L_j(\tilde{d}_j^{s-1} [ \tilde{d}_j - \tilde{d}_j^{1+\eta} ] ) \le -c_1 \delta \tilde{d}_j^{\eta - s } ~~ \text{ in } \tilde{\Omega}_j \cap B_{\eps_0}(x_0).
\end{align*}
Moreover, one computes by scaling from $\tilde{\Omega}_j$ to $\Omega_j$, denoting $d_j = d_{\Omega_j}$, and applying \autoref{lemma:dist-int-est}
\begin{align}
\label{eq:stability-operator-estimate-truncation}
\begin{split}
- \delta L_j(\tilde{d}_j^{s-1} [ \tilde{d}_j - \tilde{d}_j^{1+\eta} ] \1_{\R^n \setminus B_r(x_0)} ) &\le c \delta \int_{\tilde{\Omega}_j \setminus B_r(x_0)} (\tilde{d}_j^{s}(y) + \tilde{d}_j^{s+\eta}(y)) |y|^{-n-2s} \d y \\
&\le c \delta \int_{\Omega_j \setminus B_{r r_j}(x_0)} (r_j^{s} d_{j}^s(y) + r_j^{s-\eta} d_{j}^{s+\eta}(y)) |y|^{-n-2s} \d y \\
&\le c_2 \delta (1 + r^{-s} + r^{\eta - s}) \qquad \text{ in } \tilde{\Omega}_j \cap B_{r/2}(x_0),
\end{split}
\end{align}
where $c_2 > 0$ might depend on $\diam(\Omega_j)$, which we assumed to be bounded, but not on $j$.
Thus, by combination of the previous two computations, we deduce \eqref{eq:stability-operator-estimate} upon choosing $\eps < \eps_0$ if necessary.\\
Moreover, it is immediate by construction that 
\begin{align}
\label{eq:psi-nonnegative}
\psi^{(\delta)} &\le 0 ~~\quad \qquad \text{ in } \R^n.
\end{align}
Next, we set $\phi^{(\delta)} := \phi + \psi^{(\delta)}$.
Note that for any $\delta > 0$, it still holds $\phi^{(\delta)} \le u$ by \eqref{eq:psi-nonnegative}, and $\phi^{(\delta)}(x_0) = u(x_0)$, however $u - \phi^{(\delta)}$ has a strict minimum at $x_0$ in $\overline{B_r(x_0)}$.\\
It suffices to prove for any $\delta > 0$ small enough
\begin{align}
\label{eq:stability-final}
\partial_n \phi^{(\delta)}(x_0) \le g(x_0) + q(x_0),
\end{align}
since then it follows that $\partial_n \phi(x_0) = \partial_n \phi^{(\delta)}(x_0) + \delta \le g(x_0) + q(x_0) + \delta$, and we obtain the desired result upon taking the limit $\delta \searrow 0$.\\
Let us now construct test functions $\phi_j^{(\delta)}$ for any $j \in \N$ as follows
\begin{align*}
\left\{\begin{array}{rcl}
\phi_j^{(\delta)} &=& u_j = u_j + \psi_j^{(\delta)} ~~ \text{ in } \R^n \setminus \overline{B_r(x_0)},\\
\phi_j^{(\delta)} &=&  \phi + c_j + \psi_j^{(\delta)}  ~~ ~~ \text{ in } \overline{B_r(x_0)},
\end{array}\right.
\end{align*}
where 
\begin{align*}
c_j = \min \left\{ c \in \R : \phi + c + \psi_j^{(\delta)} \le u_j ~~ \text{ in } \overline{B_r(x_0)} \right\}.
\end{align*}
Since $\psi^{(\delta)}_j \to \psi^{(\delta)}$ (lower half-relaxed limits) in $\overline{B_{r}(x_0)}$, we obtain that $c_j \to 0$, and that there exist $x_j \in \overline{B_{r}(x_0)}$ with $x_j \to x_0$ such that $\phi_j^{(\delta)}(x_j) = u_j(x_j)$ and $\phi^{(\delta)}_j \le u_j$ by \cite[Lemma 3.2.10 and Proof of Proposition 3.2.12]{FeRo23}.

Next, we argue that $x_j \in \partial \tilde{\Omega}_j \cap B_1$. Without loss of generality, we can assume that $x_j \in B_{\eps}(x_0)$ upon taking $j \in \N$ large enough. In fact, if $x_j \in \tilde{\Omega}_j \cap B_{\eps}(x_0)$, then we can compute using \autoref{cor:dist-s-1-smooth-domains}(i), and \eqref{eq:stability-operator-estimate} 
\begin{align}
\label{eq:L-phi-estimate-2}
\begin{split}
L_j (\tilde{d}_j^{s-1} \phi_j^{(\delta)} )(x_j) &= L_j (\tilde{d}_j^{s-1} \phi \1_{B_r(x_0)})(x_j) +  c_j L_j(\tilde{d}_j^{s-1} \1_{B_r(x_0)})(x_j)\\
&\quad +  L_j(v_j \1_{\R^n \setminus B_r(x_0)})(x_j) + L_j(\tilde{d}_j^{s-1} \psi_j^{(\delta)})(x_j) \\
&\le L_j(\tilde{d}_j^{s-1} \phi)(x_j) + c_j L_j(\tilde{d}_j^{s-1})(x_j) \\
&\quad + L_j(\tilde{d}_j^{s-1} u_j \1_{\R^n \setminus B_r(x_0)})(x_j) - c_j L_j(\tilde{d}_j^{s-1} \1_{\R^n \setminus B_r(x_0)})(x_j) \\
&\quad + c \Vert v_j \Vert_{L^1_{2s}(\R^n)} - C \delta \tilde{d}_j^{\eta-s}(x_j)\\
&\le C_r \tilde{d}_j^{\gamma - s}(x_j) - C \delta \tilde{d}_j^{\eta - s}(x_j)
\end{split}
\end{align}
for some constant $C_r > 0$, depending also on $\Vert v_j \Vert_{L^1_{2s}(\R^n)}$ and $\Vert v \Vert_{L^1_{2s}(\R^n)}$. Note that to estimate the fourth term in the last estimate, we used an argument similar to \eqref{eq:stability-operator-estimate-truncation}, namely
\begin{align*}
-L_j(\tilde{d}_j^{s-1} \1_{\R^n \setminus B_r(x_0)})(x_j) &\le c \int_{\tilde{\Omega}_j \setminus B_r(x_0)} \tilde{d}_j^{s-1}(y) |y|^{-n-2s} \d y \\
&\le c \int_{\Omega_j \setminus B_{rr_j}(x_0)} r_j^{s+1} d_j^{s-1}(y) |y|^{-n-2s} \d y \le c r^{-s-1} =: c_r 
\end{align*}
for some $c_r > 0$, where we applied \autoref{lemma:dist-int-est}.
Let us now recall that $\eta < \gamma$. 

Hence, upon making $\eps > 0$ even smaller, we can have in $\tilde{\Omega}_j \cap B_{\eps}(x_0)$:
\begin{align*}
C \delta \tilde{d}_j^{\eta-\gamma} >  (C_r + \1_{\{k = 0\}}\Vert \tilde{d}_j^{s-\gamma} f_j \Vert_{L^{\infty}(\tilde{\Omega}_j \cap B_1)} + \1_{\{ k \ge 1\}} \Vert f_j \Vert_{L^{\infty}(\tilde{\Omega}_j \cap B_1)}).
\end{align*}
Then it holds
\begin{align}
\label{eq:stability-contradiction}
L_j (\tilde{d}_j^{s-1} \phi_j^{(\delta)} )(x_j) < - \1_{\{ k = 0\}} \tilde{d}_j^{\gamma -s}(x_j) \Vert \tilde{d}_j^{s-\gamma} f_j \Vert_{L^{\infty}(\tilde{\Omega}_j \cap B_1)} - \1_{\{ k \ge 1\}} \Vert f_j \Vert_{L^{\infty}(\tilde{\Omega}_j \cap B_1)} < f_j(x_j).
\end{align}
However, note that by construction, $\tilde{d}_j^{s-1}\phi_j^{(\delta)}$ is a valid test function for the equation that is satisfied for $\tilde{d}_j^{s-1}u_j = v_j$ at $x_j$. Since we assumed that $x_j \in \tilde{\Omega}_j \cap B_1$, it must hold $L_j(\tilde{d}_j^{s-1} \phi_j^{(\delta)})(x_j) \ge f_j(x_j)$, which contradicts \eqref{eq:stability-contradiction}. \\
Therefore, it must be $x_j \in \partial \tilde{\Omega}_j \cap B_1$, as we claimed before. Thus, by the boundary condition
\begin{align*}
\partial_{\nu_{x_j}} \phi_j^{(\delta)}(x_j) \le g_j(x_j).
\end{align*}
Passing this inequality to the limit, and using the uniform convergence $|g_j -q_j - g| \to 0$, $\nu_{x_j} \to \nu_0 = e_n$, and $\tilde{\Omega}_j \to \{ x_n > 0 \}$, we obtain 
\begin{align*}
\partial_n (\phi^{(\delta)}(x_0) - q(x_0) ) \le g(x_0),
\end{align*}
where $q \in \cP_k$ is the limit of the sequence of polynomials $(q_j)_j$. Thus, we have $\partial_n \phi^{(\delta)}(x_0) \le g(x_0) + q(x_0)$, i.e., \eqref{eq:stability-final}, as desired. This concludes the proof.
\end{proof}

Second, we prove that the difference of two viscosity solutions is again a viscosity subsolution.

\begin{lemma}
\label{lemma:sum-viscosity}
Let $k \in \N \cup \{ 0 \}$, $\Omega \subset \R^n$ be an open, bounded domain with $\partial \Omega \in C^{2,\gamma}$ for some $\gamma > 0$. Let $L \in \mathcal{L}_s^{\text{hom}}(\lambda,\Lambda,k-1+\alpha)$ for some $\alpha > 0$. Let $v,w \in L^1_{2s}(\R^n)$ with $v/d^{s-1}, w/d^{s-1} \in C(\overline{\Omega})$ be viscosity solutions to
\begin{align*}
\left\{\begin{array}{rcl}
L v &\overset{k}{=}& f_1 ~~ \text{ in } \Omega,\\
v &=& 0 ~~~ \text{ in } \R^n \setminus \Omega,\\
v/d^{s-1} &=& g_1 ~~ \text{ on } \partial \Omega,
\end{array}\right.
\qquad
\left\{\begin{array}{rcl}
L w &\overset{k}{=}& f_2 ~~ \text{ in } \Omega,\\
w &=& 0 ~~~ \text{ in } \R^n \setminus \Omega,\\
w/d^{s-1} &=& g_2 ~~ \text{ on } \partial \Omega,
\end{array}\right.
\end{align*}
for some $f_1,f_2 \in C(\Omega)$ and $g_1,g_2 \in C(\partial \Omega)$.
Then, $v-w$ is a viscosity solution to 
\begin{align*}
\left\{\begin{array}{rcl}
L (v-w) &\overset{k}{=}& f_1 - f_2 ~~ \text{ in } \Omega,\\
v-w &=& 0 ~~ \qquad ~ \text{ in } \R^n \setminus \Omega,\\
(v-w)/d^{s-1} &=& g_1 - g_2 ~~ \text{ on } \partial \Omega.
\end{array}\right.
\end{align*}
\end{lemma}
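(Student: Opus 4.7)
The plan is to use interior regularity to upgrade both $v$ and $w$ to pointwise-classical solutions inside $\Omega$, and then to conclude by linearity of $L$. Since the quotients $v/d^{s-1}$ and $w/d^{s-1}$ are continuous up to $\overline\Omega$ by hypothesis, the boundary condition $v/d^{s-1} = g_i$ on $\partial\Omega$ is a pointwise identity (not a condition in the viscosity sense) and is directly inherited by the difference.

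For the interior step, fix an arbitrary $x_0 \in \Omega$ and apply \autoref{lemma:interior-regularity}(i) (together with \autoref{remark:int-up-to-poly} when $k \geq 1$) in a ball $B_r(x_0)$ with $r \lesssim d(x_0)$, which yields interior Hölder regularity for both $v$ and $w$. Combined with $v, w \in L^1_{2s}(\R^n)$ and the symmetry $K(h) = K(-h)$, this is enough to evaluate $L(v \1_{B_R})(x_0)$ and $L(w \1_{B_R})(x_0)$ classically as principal value integrals, after rewriting $L$ via second-order differences. By the remark following the viscosity solution definition (via \cite[Lemma 3.4.13]{FeRo23}), $v$ and $w$ are then classical solutions up to a polynomial in $\Omega$ in the sense of \autoref{def:L-up-to-a-polynomial}.

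The conclusion then follows from linearity of $L$ applied to the truncations:
\begin{align*}
L\big((v-w)\1_{B_R}\big) = L(v\1_{B_R}) - L(w\1_{B_R}) = \big((f_1)_R - (f_2)_R\big) + \big(p_R^1 - p_R^2\big) ~~ \text{ in } \Omega,
\end{align*}
with $p_R^1, p_R^2 \in \cP_{k-1}$ and $(f_i)_R \to f_i$ uniformly. This is precisely the statement $L(v-w) \overset{k}{=} f_1 - f_2$ classically in $\Omega$, and hence also in the viscosity sense. The boundary data are inherited by linearity as well: $(v-w)/d^{s-1} = g_1 - g_2$ on $\partial\Omega$ from the continuity of both quotients on $\overline\Omega$, and $v - w \equiv 0$ on $\R^n \setminus \Omega$.

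The principal technical subtlety lies in making interior Hölder regularity sufficient for a classical pointwise evaluation of $L$ at each $x_0 \in \Omega$; this is handled precisely by the symmetry of the kernel, which produces enough cancellation in the p.v.\ integral that no $C^2$ regularity of the solution is needed. Once this borderline point is settled, the rest of the argument is purely algebraic (linearity) and passes through the truncation machinery of \autoref{def:L-up-to-a-polynomial} without further work.
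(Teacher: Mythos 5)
Your interior step has a genuine gap. \autoref{lemma:interior-regularity}(i) only gives $v,w \in C^{\beta}_{loc}(\Omega)$ with $\beta \le 2s$ (and $\beta<1$ when $s=1/2$), and this is \emph{not} enough to evaluate $Lv(x_0)$ classically: after using the symmetry of $K$ to pass to second-order differences, one needs $\int_{B_r}\big|v(x_0+h)+v(x_0-h)-2v(x_0)\big|\,|h|^{-n-2s}\d h<\infty$, which requires regularity strictly above $2s$ at $x_0$; with $\beta\le 2s$ the integral is (at best borderline) divergent, so the claimed ``enough cancellation'' is not there. Since $f_1,f_2$ are only assumed continuous, you cannot invoke \autoref{lemma:interior-regularity}(ii) to reach $C^{2s+\eps}$ (it requires $f\in C^{\alpha}$ and $K\in C^{\alpha}(\mathbb{S}^{n-1})$), and \autoref{remark:int-up-to-poly} extends part (ii), not part (i). Hence viscosity solutions under the stated hypotheses need not be pointwise solutions, the reduction ``classical $+$ linearity'' does not go through, and the interior claim must instead be obtained as the paper does: the difference of a viscosity subsolution and a viscosity supersolution is a viscosity subsolution, proved by sup-/inf-convolution regularization (\cite[Lemma 3.4.14]{FeRo23}), applied to the truncations $v\1_{B_R}$, $w\1_{B_R}$ together with \autoref{def:L-up-to-a-polynomial}.

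Second, your treatment of the boundary condition misses what the paper's proof is actually about. Taken literally the statement displays a Dirichlet-type condition $v/d^{s-1}=g_i$ (note also the typo ``$g_1-f_2$''), for which your pointwise observation is indeed immediate; but the paper's proof, and the only place the lemma is used (the Liouville theorem, where the data are $\partial_n w_{1,\tau}=p_{1,\tau}$), concern the \emph{Neumann} condition $\partial_{\nu}(v/d^{s-1})=g_i$ in the viscosity sense, for which additivity is not automatic. The bulk of the paper's argument is precisely this step: one shows, via the modified test functions built from $\psi^{(\delta)}=-\delta\1_{B_1(x_0)}\left[d-d^{1+\eta}\right]$ and the barrier computation that rules out interior touching points, that $\partial_{\nu}(v/d^{s-1})_{\eps}\le g_1+\delta_{\eps}$ and $\partial_{\nu}(w/d^{s-1})^{\eps}\ge g_2+\delta^{\eps}$ for the sup-/inf-convolutions, then uses semiconcavity to add the two inequalities at points admitting second-order touching, and finally passes to the limit by stability (\autoref{lemma:stability}). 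Nothing in your proposal substitutes for this argument, so as a proof of the lemma actually needed (and proved) in the paper it is incomplete.
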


\begin{proof}
We will only demonstrate the proof in case $k=0$. The general case follows immediately by combining the arguments with \autoref{def:L-up-to-a-polynomial}.
For the nonlocal equation, the result follows for instance from \cite[Lemma 3.4.14]{FeRo23}. For the boundary condition, one can proceed as follows. First, we define the $\sup$- and $\inf$-convolutions (see \cite[Lemma 3.2.16]{FeRo23})
\begin{align*}
(v/d^{s-1})_{\eps}(x) &:= \inf_{\overline{D}} \left( \frac{v}{d^{s-1}}(z) + \frac{|x-z|^2}{\eps} \right) ~~ \forall x \in \overline{D}, \qquad  (v/d^{s-1})_{\eps}(x) = \frac{v}{d^{s-1}}(x) ~~ \forall x \in \R^n \setminus D,\\
(w/d^{s-1})^{\eps} &= \sup_{\overline{D}} \left( \frac{w}{d^{s-1}}(z) - \frac{|x-z|^2}{\eps} \right) ~~ \forall x \in \overline{D}, \qquad  (w/d^{s-1})^{\eps}(x) = \frac{w}{d^{s-1}}(x) ~~ \forall x \in \R^n \setminus D,
\end{align*}
 with $D \subset \Omega$ open, bounded such that $\overline{D} \cap \partial \Omega \not= \emptyset$. In analogy to \cite[Proposition 3.2.17]{FeRo23}, we claim that for any $x \in \partial \Omega \cap \overline{D}$ it holds in the viscosity sense
\begin{align}
\label{eq:sum-visc-claim}
\partial_{\nu} (v/d^{s-1})_{\eps}(x) \le g_1(x) + \delta_{\eps}, \qquad \partial_{\nu} (w/d^{s-1})^{\eps} \ge g_2(x) + \delta^{\eps},
\end{align}
where $\delta_{\eps} , \delta^{\eps} \to 0$, as $\eps \to 0$. Note that once \eqref{eq:sum-visc-claim} is proven, since $(v/d^{s-1})_{\eps}$ and $-(w/d^{s-1})^{\eps}$ are both semi-concave, we have that at any point $x \in \partial \Omega \cap \overline{D}$, where $(v/d^{s-1})_{\eps} - (w/d^{s-1})^{\eps}$ can be touched by a paraboloid from below, the functions $(v/d^{s-1})_{\eps}$ and $-(w/d^{s-1})^{\eps}$ must be in $C^{1,1}$. Hence, by the linearity of $\partial_{\nu}$, and due to \eqref{eq:sum-visc-claim} it must hold 
\begin{align*}
\partial_{\nu} ((v/d^{s-1})_{\eps} - (w/d^{s-1})^{\eps})(x) \le g_1(x) - g_2(x) + \delta_{\eps} - \delta^{\eps} \to g_1(x) - g_2(x) ~~ \text{ as } \eps \to 0.
\end{align*}
Thus, by the stability for viscosity solutions (which was provided in a significantly more general framework in \autoref{lemma:stability}), we deduce that $\partial_{\nu}((v-w)/d^{s-1}) \le (g_1-g_2)$ in the viscosity sense. In a similar way, one can prove $\partial_{\nu}((v-w)/d^{s-1}) \ge (g_1-g_2)$, and thus, we obtain the desired result.\\
Thus, it remains to give a proof of \eqref{eq:sum-visc-claim}.
To see it, for any test function $\phi \in C^2(B_r(x_0))$ touching $(v/d^{s-1})_{\eps}$ from below at $x_0 \in \partial \Omega \cap \overline{D}$, we define 
\begin{align*}
\psi^{(\delta)}(x) = -\delta \1_{B_1(x_0)}(x) \left[d(x) - d^{1+\eta}(x) \right]
\end{align*} 
for some $\eta \in (0,\gamma)$ and observe that $\phi^{(\delta)} = \phi + \psi^{(\delta)}$ is still a valid test function, touching $(v/d^{s-1})_{\eps}$ (strictly) from below, at $x_0$. Then, there exists $x_{\eps} \in \overline{D}$ with $x_{\eps} \in B_{c\eps}(x_0)$ for some $c > 0$, depending only on the oscillation of $v/d^{s-1}$, such that $\phi^{(\delta)}(\cdot + x_0 - x_{\eps}) - \eps^{-1}|x_0 - x_{\eps}|^2$ touches $v/d^{s-1}$ from below at $x_{\eps}$. Indeed, from the definition of $(v/d^{s-1})_{\eps}$ we deduce that there exist $x_{\eps} \in \overline{D}$ with $x_{\eps} \to x_0$ such that 
\begin{align*}
\frac{v}{d^{s-1}}(x_0) \ge (v/d^{s-1})_{\eps}(x_0) = \frac{v}{d^{s-1}}(x_{\eps}) + \frac{|x_0 - x_{\eps}|^2}{\eps}.
\end{align*}
Hence, the rate of convergence $x_{\eps} \to x_0$ only depends on the oscillation of $v/d^{s-1}$. Then, since $\phi^{(\delta)}$ is a valid test function, we deduce that for any $x \in D$
\begin{align*}
\phi^{(\delta)}(x + x_0 - x_{\eps}) \le (v/d^{s-1})_{\eps}(x + x_0 - x_{\eps}) \le \frac{v}{d^{s-1}}(x) + \frac{|x_0 - x_{\eps}|^2}{\eps}
\end{align*}
if $\eps > 0$ is so small that $x + x_0 - x_{\eps} \in D$. Since the aforementioned inequality becomes an equality in case $x = x_{\eps}$, we deduce that indeed, $\phi^{(\delta)}(\cdot + x_0 - x_{\eps}) - \eps^{-1}|x_0 - x_{\eps}|^2$ touches $v/d^{s-1}$ from below at $x_{\eps}$, as claimed.

We observe that $x_{\eps} \not \in \Omega$ since otherwise one would get a contradiction with the nonlocal equation satisfied by $v$, in the exact same way as in the proof of \eqref{eq:stability-contradiction}, if $\eps > 0$ is small enough. Thus, $x_{\eps} \in \partial \Omega \cap \overline{D}$, and from the boundary condition satisfied by $v$, it follows $\partial_{\nu} \phi^{(\delta)}(x_0) \le g_1(x_{\eps})$. Thus, by the definition of $\phi^{(\delta)}$, we have $\partial_{\nu} \phi(x_0) = \partial_{\nu} \phi^{(\delta)}(x_0) + \delta \le g_1(x_{\eps}) + \delta$ for any $\delta > 0$. Thus, sending $\delta \to 0$ and recalling that $x_{\eps} \to x_0$, as $\eps \to 0$, this proves the first statement in \eqref{eq:sum-visc-claim} with $\delta_{\eps} = g_1(x_{\eps})-g_1(x_0)$. Analogously, one proves the second claim in \eqref{eq:sum-visc-claim}.
\end{proof}

\section{Nonlocal maximum principles with local Dirichlet and Neumann conditions}
\label{sec:max-principles}

In this section, we establish weak maximum principles for nonlocal equations with local Dirichlet- and Neumann data (see \autoref{lemma:weak-max-princ-large} and \autoref{lemma:Neumann-max-princ-intro}).

First, we establish a weak maximum principle for solutions to the inhomogeneous Dirichlet problem in \eqref{eq:Dirichlet-problem-blowup} (see \autoref{lemma:weak-max-princ-large}). Its proof goes by sliding the barrier subsolution $\phi$ from \autoref{lemma:subsol} underneath $v$ from below.

\begin{proof}[Proof of \autoref{lemma:weak-max-princ-large}]
By assumption on $v$, we have that $v/d^{s-1} \in C(\overline{\Omega})$ with $v/d^{s-1} \ge 0$ on $\partial \Omega$. Let $z \in \partial \Omega$ be such that $\min_{\partial \Omega} v/d^{s-1} = v/d^{s-1}(z) =: l \ge 0$. Let $\eps \in (0,s)$ and $M > 1$ to be chosen later, and recall the subsolution $\phi_l \in C(\Omega)$ from \autoref{lemma:subsol}. We define 
\begin{align*}
c_0 := \inf \{ c \in \R : \phi_l/d^{s-1} - c \le v/d^{s-1} \text{ in } \overline{\Omega} \}.
\end{align*}
Since also $\phi_l/d^{s-1} \in C(\overline{\Omega})$, the above set is nonempty and $c_0 < \infty$. In fact, recalling the definition of $\phi_l$, it must be
\begin{align}
\label{eq:c_0-M}
c_0 \le \left\Vert v/d^{s-1} \right\Vert_{L^{\infty}(\overline{\Omega})} + \left\Vert (\phi_l)_+ /d^{s-1} \right\Vert_{L^{\infty}(\overline{\Omega})} \le \left\Vert v/d^{s-1} \right\Vert_{L^{\infty}(\overline{\Omega})} + l + c|\diam(\Omega)|^{\eps},
\end{align}
which is independent of $M$.
Moreover, since $\phi_l/d^{s-1}(z) = l = v/d^{s-1}(z)$, we have that $c_0 \ge 0$. Then, in particular, we have
\begin{align*}
\phi_l/d^{s-1} - c_0 \le v/d^{s-1} ~~ \text{ in } \R^n, \qquad \text{ and } \qquad \phi_l/d^{s-1}(x_0) - c_0 = v/d^{s-1}(x_0) ~~ \text{ for some } x_0 \in \overline{\Omega}.
\end{align*}
In case $x_0 \in \Omega$, we have
\begin{align*}
\phi_l - c_0 d^{s-1} - v \le 0 ~~ \text{ in } \R^n \qquad \text{ and }  \qquad (\phi_l - c_0 d^{s-1} - v)(x_0) = 0,
\end{align*}
so it must be
\begin{align*}
0 \le L(\phi_l - c_0 d^{s-1} - v)(x_0) \le L\phi_l(x_0) - c_0 L(d^{s-1})(x_0) \le -d^{\eps -s  -1}(x_0) - M + (l+c_0) cd^{\delta \gamma -s-1}(x_0),
\end{align*}
where we used \autoref{lemma:subsol} and that $|L(d^{s-1})| \le c d^{\delta \gamma -s-1}$ for any $\delta \in (0,s)$ by \autoref{lemma:dist-s-1+eps}. Next, we fix any $\delta \in (0,s)$, and take $\eps < \delta \gamma$ and $M$ so large, depending only on $c_0,l,\diam(\Omega)$ (but not on $x_0$), such that
\begin{align*}
-d^{\eps -s  -1}(x_0) - M + (l+c_0) cd^{\delta \gamma -s-1}(x_0) < 0.
\end{align*}
Since $c_0$ is independent of $M$ (see \eqref{eq:c_0-M}), we obtain a contradiction. Thus, it must be $x_0 \in \partial \Omega$, which by construction yields that $c_0 = 0$, and therefore $\phi_l \le v$ in $\Omega$. Since $l \ge 0$, by \autoref{lemma:subsol}, there exists $\delta > 0$ such that $\phi_l \ge 0$ in $\Omega \cap \{ d \le \delta \}$. Therefore, $v$ is a viscosity solution to
\begin{align*}
\begin{cases}
L v &\ge 0 ~~ \text{ in } \Omega \cap \{ d > \delta \},\\
v &\ge 0 ~~ \text{ in } \R^n \setminus (\Omega \cap \{ d > \delta \}).
\end{cases}
\end{align*} 
Since $v \in C(\overline{\Omega \cap \{ d > \delta \}})$, we can apply the maximum principle for viscosity solutions to $v$ (see \cite[Lemma 3.2.19]{FeRo23}) and deduce that $v \ge 0$ in $\R^n$, as desired.
\end{proof}

In particular, we have the following comparison principle:
\begin{lemma}
\label{lemma:comp-princ-large}
Let $L \in \mathcal{L}_s^{\text{hom}}(\lambda,\Lambda)$. Let $\Omega \subset \R^n$ be an open, bounded domain with $\partial \Omega \in C^{1,\gamma}$ for some $\gamma > 0$. Let $v,b \in L^1_{2s}(\R^n)$ with $v/d^{s-1}, b/d^{s-1} \in C(\overline{\Omega})$ be viscosity solutions to
\begin{align*}
\begin{cases}
L v &\ge f ~~ \text{ in } \Omega,\\
v/d^{s-1} &\ge 0 ~~ \text{ on } \partial \Omega,
\end{cases}
\qquad
\begin{cases}
L b &\le f ~~ \text{ in } \Omega,\\
b &\le v ~~ \text{ in } \R^n \setminus \Omega,\\
b/d^{s-1} &\le 0 ~~ \text{ on } \partial \Omega
\end{cases}
\end{align*}
for some $f \in C(\Omega)$. Then, $v \ge b$ in $\R^n$.
\end{lemma}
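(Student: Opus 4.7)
The plan is to set $w := v - b$ and apply \autoref{lemma:weak-max-princ-large} to $w$. The exterior and boundary hypotheses of that proposition hold for $w$ for free: on $\R^n \setminus \Omega$ we have $w = v - b \ge 0$ by assumption, and since both $v/d^{s-1}$ and $b/d^{s-1}$ extend continuously to $\overline{\Omega}$, so does $w/d^{s-1} = v/d^{s-1} - b/d^{s-1}$, and at every boundary point it satisfies $w/d^{s-1} \ge 0 - 0 = 0$. Once we verify the interior inequality $L w \ge 0$ in $\Omega$ in the viscosity sense, \autoref{lemma:weak-max-princ-large} immediately yields $w \ge 0$ in $\R^n$, that is, $v \ge b$.

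The only real step, then, is to show that the difference of a viscosity supersolution and a viscosity subsolution of $L\cdot = f$ is a viscosity supersolution of the homogeneous equation. For nonlocal operators this is done by the standard inf--sup convolution argument as in \cite[Proposition 3.2.17]{FeRo23}: on any $D \Subset \Omega$, one replaces $v$ by its inf-convolution $v_\eps$ and $b$ by its sup-convolution $b^\eps$, obtaining semi-concave/semi-convex functions that satisfy $L v_\eps \ge f - \delta_\eps$ and $L b^\eps \le f + \delta_\eps$ pointwise a.e.\ with $\delta_\eps \to 0$. Subtracting at any point where $v_\eps - b^\eps$ can be touched from below by a paraboloid (which occurs on a set of full measure by semi-concavity/convexity), one gets $L(v_\eps - b^\eps) \ge -2\delta_\eps$; the stability of viscosity solutions (the interior part of \autoref{lemma:stability}, where the Neumann correction drops out) then gives $Lw \ge 0$ in $\Omega$ after passing $\eps \to 0$ and exhausting $\Omega$ by the sets $D$.

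The main obstacle is cosmetic rather than substantive: our functions may blow up at $\partial\Omega$, so the inf--sup convolutions have to be formed on compactly contained subdomains, which is why we exhaust $\Omega$ by $D \Subset \Omega$ in the last step. We deliberately do not invoke \autoref{lemma:sum-viscosity} as a black box, since it is stated only in $C^{2,\gamma}$ domains and for the Dirichlet problem with equalities, whereas here we have only $C^{1,\gamma}$ regularity and inequalities; however, only the purely interior portion of that lemma's proof is needed, and it goes through verbatim in any $C^{1,\gamma}$ domain because the boundary condition of $w$ enters only through the pointwise inequality $w/d^{s-1} \ge 0$ on $\partial \Omega$, which was already established above.
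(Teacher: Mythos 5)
Your proposal is correct and follows essentially the same route as the paper: set $w:=v-b$, note that the exterior and boundary conditions for $w$ are immediate, establish that $Lw\ge 0$ in $\Omega$ in the viscosity sense, and conclude by \autoref{lemma:weak-max-princ-large}. The only difference is presentational: the paper simply cites \cite[Lemma 3.4.13]{FeRo23} for the interior step (difference of viscosity super- and subsolutions), whereas you re-derive it via the standard inf--sup convolution argument, which is the proof behind that cited lemma.
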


\begin{proof}
Since by \cite[Lemma 3.4.13]{FeRo23} $w= v-b$ is a viscosity solution to $Lw \ge 0$ in $\Omega$ such that $w/d^{s-1} \ge 0$ on $\partial \Omega$, and $w \ge 0$ in $\R^n \setminus \Omega$, it satisfies the assumptions of \autoref{lemma:weak-max-princ-large}. An application of this result concludes the proof.
\end{proof}

As an application, we have the following version of a nonlocal Hopf lemma for viscosity solutions. The proof follows in the same way as \cite[Proposition 2.6.6]{FeRo23}, where the Hopf lemma was proved for bounded solutions.

\begin{lemma}
\label{lemma:Hopf-viscosity}
Let $L \in \mathcal{L}_s^{\text{hom}}(\lambda,\Lambda)$. Let $\Omega \subset \R^n$ be an open, bounded domain with $\partial \Omega \in C^{1,\gamma}$ for some $\gamma > 0$. Let $v \in L^1_{2s}(\R^n)$ with $v/d^{s-1} \in C(\overline{\Omega})$ satisfy in the viscosity sense:
\begin{align*}
\begin{cases}
L v = f &\ge 0 ~~ \text{ in } \Omega,\\
v &\ge 0 ~~ \text{ in } \R^n \setminus \Omega,\\
v/d^{s-1} &\ge 0 ~~ \text{ on } \partial \Omega
\end{cases}
\end{align*}
for some $f \in C(\Omega)$. Then, either $v \equiv 0$ in $\Omega$, or 
\begin{align*}
v(x) \ge C \left(\inf_{\{x \in \Omega : \dist(x,\partial\Omega) \ge \delta \}} v \right) d^s(x) ~~ \text{ in } \Omega
\end{align*}
for some $C, \delta > 0$, which depend only on $n,s,\lambda,\Lambda,\gamma,\diam(\Omega)$, and the $C^{1,\gamma}$ radius of $\Omega$.
\end{lemma}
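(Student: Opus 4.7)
The plan is to combine \autoref{lemma:weak-max-princ-large}, the interior strong maximum principle, and the comparison principle \autoref{lemma:comp-princ-large} applied to an explicit global barrier built from the nonlocal Dirichlet problem in an annular subdomain.

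First, by \autoref{lemma:weak-max-princ-large}, $v \ge 0$ in $\R^n$. If $v \equiv 0$ in $\Omega$, the first alternative holds, so I may assume otherwise. The interior strong maximum principle for viscosity supersolutions of $Lu \ge 0$ with $L \in \mathcal{L}_s^{\text{hom}}(\lambda,\Lambda)$ (see e.g.\ \cite{FeRo23}) then yields $v > 0$ throughout $\Omega$, so $m := \inf_{\{d \ge \delta\}} v > 0$ once $\delta > 0$ is small enough that $\{d \ge \delta\}$ is nonempty. I fix such a $\delta$ depending only on $\diam(\Omega)$ and the $C^{1,\gamma}$ radius of $\Omega$, and pick a smooth open set $V \Subset \Omega$ with $V \subset \{d > \delta\}$ and nonempty interior, so that $\Omega_V := \Omega \setminus \overline V$ is a $C^{1,\gamma}$ domain.

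As a barrier I take $\phi \in L^\infty(\R^n)$, the bounded viscosity solution of
\begin{align*}
\begin{cases}
L\phi = 0 & \text{in } \Omega_V,\\
\phi = m & \text{in } V,\\
\phi = 0 & \text{in } \R^n \setminus \Omega.
\end{cases}
\end{align*}
The weak maximum principle gives $0 \le \phi \le m$, and the boundary regularity theory combined with the Hopf lemma for the standard nonlocal Dirichlet problem in $C^{1,\gamma}$ domains (see \cite{RoSe14,RoSe17}) yields a universal $c_0 > 0$ with $\phi(x) \ge c_0\, m\, d_\Omega^s(x)$ in $\Omega$. I then apply \autoref{lemma:comp-princ-large} in $\Omega_V$ with $f \equiv 0$ to the pair $(v,\phi)$: in $\Omega_V$, $Lv \ge 0 \ge L\phi$; outside $\Omega_V$, $\phi = 0 \le v$ on $\R^n \setminus \Omega$ and $\phi = m \le v$ on $V$; with $\tilde d := d_{\Omega_V}$ (which agrees with $d_\Omega$ near $\partial \Omega$), the quotients $v/\tilde d^{s-1}$ and $\phi/\tilde d^{s-1}$ lie in $C(\overline{\Omega_V})$ and satisfy $v/\tilde d^{s-1} \ge 0$ and $\phi/\tilde d^{s-1} \le 0$ on $\partial \Omega_V$ (on $\partial \Omega$ by the hypothesis on $v$ together with $\phi \to 0$ there; on $\partial V$ because $v, \phi$ are bounded and continuous there while $\tilde d^{s-1} \to \infty$). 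The comparison gives $\phi \le v$ in $\Omega_V$, which together with $\phi = m \le v$ on $V$ yields $v \ge \phi \ge c_0\, m\, d^s$ in $\Omega$.

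The main obstacle is establishing the Hopf-type lower bound $\phi \ge c_0\, m\, d^s$ in a merely $C^{1,\gamma}$ domain, where the interior ball condition may fail. A robust way is to compare $\phi$ from below with an explicit subsolution of the form $\eta(x)\, d^{s+\epsilon}(x)$ in a boundary collar $\{d < \delta'\}$, using an analogue of \autoref{lemma:dist-s-1+eps} applied to $d^{s+\epsilon}$ to check $L(d^{s+\epsilon}) \le -c\, d^{\epsilon-s} + C$ in $\Omega$; the outside values and the boundary trace $\eta d^{s+\epsilon}/d^{s-1} = \eta d^{1+\epsilon} \to 0$ fit \autoref{lemma:comp-princ-large}, and matching the interior value at $\{d = \delta'\}$ (where $\phi \ge c_1 m$ by a standard interior estimate, since $\phi \equiv m$ on $V$) is absorbed into the final constant.
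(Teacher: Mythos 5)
Your argument is correct and follows essentially the same route as the paper: nonnegativity from \autoref{lemma:weak-max-princ-large}, strict interior positivity (you use the strong maximum principle where the paper invokes the interior weak Harnack inequality), and then transfer of the classical bounded-solution Hopf bound $\phi \ge c\,m\,d^s$ in $C^{1,\gamma}$ domains to $v$ via the comparison principle \autoref{lemma:comp-princ-large}, which is precisely the scheme of \cite[Proposition 2.6.6]{FeRo23} that the paper's proof delegates to, with your auxiliary Dirichlet solution $\phi$ (or the $d^{s+\epsilon}$-type subsolution you sketch, whose sign $L(d^{s+\epsilon}) \le -c\,d^{\epsilon-s} + C$ is indeed the correct one) playing the role of the explicit barrier there. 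The only details worth making explicit are routine: choose the regularized distance of $\Omega\setminus\overline{V}$ to coincide with $d_\Omega$ near $\partial\Omega$ so that the quotient hypotheses of \autoref{lemma:comp-princ-large} are literally satisfied, and observe that $\phi$ is automatically a supersolution also inside $V$ (it attains its global maximum value $m$ there), so the bounded Hopf lemma applies in all of $\Omega$ and the lower bound $\phi \ge c_0\,m\,d^s$ holds up to $\partial V$ as well.
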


\begin{proof}
First, by the weak maximum principle for viscosity solutions with boundary blow-up (see \autoref{lemma:weak-max-princ-large}), we have $v \ge 0$ in $\R^n$. In order to deduce $v > 0$ in case $v \not\equiv 0$, one uses the nonlocal weak Harnack inequality (see \cite[Theorem 3.3.1]{FeRo23}). Then, we use the subsolution $\phi$ from \cite[Corollary B.2.8]{FeRo23} which satisfies
\begin{align*}
\begin{cases}
L \phi &\le - 1 ~~ \text{ in } N_{\delta},\\
\max\{ d^s , \delta^{-1}\} \ge \phi &\ge \delta d^s ~~ \text{ in } \R^n
\end{cases}
\end{align*}
for some $\delta > 0$ and where $N_{\delta} = \{ 0 < d < \delta \}$. Let us define
\begin{align*}
c_{\ast} = \min \{ v(x) : x \in \Omega \setminus N_{\delta} \} > 0.
\end{align*}
Then, we have
\begin{align*}
c_{\ast} \delta L \phi \le L v ~~ \text{ in } N_{\delta} \qquad \text{ and } c_{\ast} \delta \phi \le v ~~ \text{ in } \R^n \setminus N_{\delta}
\end{align*}
Hence, by the comparison principle in \autoref{lemma:comp-princ-large}, we deduce that $c_{\ast} \delta \phi \le u$ in $\R^n$, which implies the desired result.
\end{proof}

Given a $C^{1,\gamma}$ domain $\Omega \subset \R^n$, let us now consider functions $b : \R^n \to \R$, which arise as the solution to the following Dirichlet problem
\begin{align}
\label{eq:b}
\begin{cases}
L b &= f_b ~~~~ \text{ in } \Omega,\\
b_{\Omega} &= e_b ~~~~ \text{ in } \R^n \setminus \Omega,\\
b_{\Omega}/d^{s-1} &= g_b ~~~~ \text{ on } \partial \Omega
\end{cases}
\end{align}
for some $f_b \ge 0$ with $f_b \not= 0$, $e_b \ge 0$, and $g_b \ge 0$.
Note that with the maximum principle (see \autoref{lemma:weak-max-princ-large}) at hand, the existence of $b$ can be established using standard techniques. For well-posedness results in case $L = (-\Delta)^s$, we refer to \cite{Aba15}. Moreover, note that by \autoref{lemma:weak-max-princ-large}, we have $b \ge 0$ in $\Omega$, and by the same argument as in the proof of \eqref{eq:application-Dirichlet-barrier}, we have $b/d^{s-1} \in L^{\infty}(\Omega)$. Moreover, if $\partial \Omega \in C^{2,\gamma}$ and $f_b,e_b,g_b$ are smooth, then by \autoref{thm:dirichlet}, we have $b_{\Omega}/d^{s-1} \in C^{1,\gamma}(\overline{\Omega})$, and $\partial_{\nu}(b/d^{s-1})$ exists in the classical sense.\\
In the following, we will denote by $b_{\Omega}$ the solution to \eqref{eq:b} with $f_b=g_b=1$ and $e_b=0$.

As a corollary of the previous results, we obtain the following pointwise formulation of a nonlocal Hopf lemma for solutions with boundary blow-up. 

\begin{lemma}
\label{lemma:Hopf-large}
Let $L \in \mathcal{L}_s^{\text{hom}}(\lambda,\Lambda)$. Let $\Omega \subset \R^n$ be an open, bounded domain with $\partial \Omega \in C^{1,\gamma}$ for some $\gamma > 0$. Let $v \in L^1_{2s}(\R^n)$ with $v/d^{s-1} \in C(\overline{\Omega})$ satisfy in the viscosity sense:
\begin{align*}
\begin{cases}
L v &\ge 0 ~~ \text{ in } \Omega,\\
v &\ge 0 ~~ \text{ in } \R^n \setminus \Omega,\\
v/d^{s-1} &= g ~~ \text{ on } \partial \Omega
\end{cases}
\end{align*}
for some $g \in C(\partial\Omega)$. Let $x_0 \in \partial \Omega$ be such that $\min_{ \overline{\partial \Omega} } g = g(x_0) \le 0$.  Then, either $v \equiv 0$, or we have that in the viscosity sense
\begin{align*} 
\partial_{\nu} (v/b)(x_0) > 0
\end{align*}
for any $b$ as in \eqref{eq:b} with $b/d^{s-1} = 1$ on $\partial \Omega \cap (\{g < 0 \} \cup \{ x_0 \})$.
\end{lemma}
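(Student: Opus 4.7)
The plan is to slide the explicit subsolution barrier of \autoref{lemma:subsol} against $v$ from below at $x_0$, and then to read off the Hopf property from the barrier's infinite one-sided normal derivative of the quotient. The alternative $v \equiv 0$ is the trivial case of the lemma, so I assume $v \not\equiv 0$. Set $l := g(x_0) = \min_{\partial\Omega} g \le 0$, fix $\eps \in (0,\min\{s,1-s\})$ and $M > 0$, and let $\phi_l \in C^\infty(\Omega) \cap L^1_{2s}(\R^n)$ be the subsolution produced by \autoref{lemma:subsol}. By construction, $L\phi_l \le -d^{\eps-s-1} - M$ in $\Omega$, $\phi_l \equiv 0$ in $\R^n \setminus \Omega$, $\phi_l/d^{s-1} \equiv l$ on $\partial\Omega$, and crucially $\partial_\nu(\phi_l/d^{s-1}) = +\infty$ on $\partial\Omega$.

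I then apply the comparison principle \autoref{lemma:comp-princ-large} to the pair $(v,\phi_l)$: in $\Omega$ one has $Lv \ge 0 \ge L\phi_l$ in the viscosity sense; in $\R^n \setminus \Omega$, $\phi_l \equiv 0 \le v$; and on $\partial\Omega$, $\phi_l/d^{s-1} \equiv l \le g = v/d^{s-1}$ because $l$ is the minimum of $g$. Consequently $v \ge \phi_l$ in $\R^n$, and dividing by $d^{s-1}$ yields $v/d^{s-1} \ge \phi_l/d^{s-1}$ throughout $\overline{\Omega}$, with equality at $x_0$ where both quotients equal $l$.

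For the final step, let $\psi \in C^2(N_{x_0}) \cap L^\infty(\overline{\Omega})$ be any admissible viscosity test function touching $v/d^{s-1}$ from above at $x_0$, i.e.\ $\psi(x_0) = l$ and $\psi \ge v/d^{s-1}$ in the neighborhood $N_{x_0} \subset \overline{\Omega} \cap B_1$. Then $\psi \ge \phi_l/d^{s-1}$ in $N_{x_0}$ with equality at $x_0$, so letting $\Omega \ni x \to x_0$ and dividing by $d(x)$,
\[
\partial_\nu \psi(x_0) = \lim_{\Omega \ni x \to x_0}\frac{\psi(x) - l}{d(x)} \ge \lim_{\Omega \ni x \to x_0}\frac{(\phi_l/d^{s-1})(x) - l}{d(x)} = \partial_\nu(\phi_l/d^{s-1})(x_0) = +\infty,
\]
which contradicts $\psi \in C^2$. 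Hence no such $\psi$ exists and the viscosity condition $\partial_\nu(v/d^{s-1})(x_0) > 0$ holds vacuously (equivalently, $\partial_\nu(v/d^{s-1})(x_0) = +\infty$ in the one-sided limit sense). The only delicate point is verifying that $v - \phi_l$ meets the hypotheses of \autoref{lemma:comp-princ-large}; since $\phi_l$ is classical in $\Omega$ with the pointwise bound on $L\phi_l$, its subtraction from $v$ preserves the viscosity supersolution structure.
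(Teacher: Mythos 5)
Your route has a genuine gap, and in fact the intermediate statement it produces is false. The mechanism you rely on is the combination of two properties of $\phi_l$: that it is a strict subsolution lying below $v$ (via comparison), and that $\partial_\nu(\phi_l/d^{s-1})=+\infty$, i.e.\ that $\phi_l/d^{s-1}=l+d^{\eps}-Nd^{1-s}$ detaches from $l$ at rate $d^{\eps}$. From this you conclude $v/d^{s-1}-g(x_0)\gtrsim d^{\eps}$ near $x_0$, hence that no $C^2$ function can touch $v/d^{s-1}$ from above at $x_0$, and you declare the conclusion ``vacuously true''. Test this against the model solution $v=(1-|x|^2)_+^s$ in $B_1$, $L=(-\Delta)^s$: it satisfies $Lv=c_{n,s}\ge 0$, $v=0$ outside $B_1$, $g\equiv 0\le 0$, yet $v/d^{s-1}\asymp d$, the quotient is touched from above at any $x_0\in\partial B_1$ by smooth functions (e.g.\ a large multiple of $1-|x|^2$), and $\partial_\nu(v/d^{s-1})(x_0)$ is finite and positive. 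So the bound $v\ge \phi_l$ cannot hold there (indeed $\phi_0=d^{s-1+\eps}-N\1_{\Omega}$ blows up at $\partial B_1$ while $v\to 0$), and your comparison step cannot deliver what you need. The underlying reason is that a barrier cannot simultaneously be a strict subsolution and dominate $l\,d^{s-1}$ from above near $\partial\Omega$: the function $(x_n)_+^{s-1+\eps}$ is a superposition with positive weights of the $s$-harmonic translates $(x_n+t)_+^{s-1}$, $t>0$, hence $s$-superharmonic in $\{x_n>0\}$, so the blow-up correction in a genuine subsolution must enter with a negative sign; but then your lower bound degenerates to $v/d^{s-1}\ge l-d^{\eps}-Nd^{1-s}$, which gives no information on $\partial_\nu(v/d^{s-1})$. (Separately, \autoref{lemma:comp-princ-large} as stated requires $v/d^{s-1}\ge 0$ on $\partial\Omega$, which you do not have; one would have to argue via \autoref{lemma:weak-max-princ-large} applied to $v-\phi_l$.)

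There is also a structural red flag: your argument never uses $v\not\equiv 0$, yet the strict Hopf inequality fails for $v\equiv 0$, $g\equiv 0$; no comparison pinned only at the minimal boundary value $l$ can yield strict positivity. Moreover, even if no test function from above existed, the vacuous reading would be useless for the purpose the lemma serves: in \autoref{lemma:Neumann-max-princ} one must contradict the Neumann condition, which quantifies over test functions from \emph{below}, and this requires a quantitative lower bound. This is exactly what the paper's proof supplies and your proposal omits: it applies \autoref{lemma:Hopf-viscosity} --- i.e.\ the interior weak Harnack inequality, which is where the alternative $v\not\equiv 0$ actually enters --- to $v+g(x_0)\phi_{-1}$, obtaining the growth $v+g(x_0)\phi_{-1}\ge c\,d^{s}$ near $x_0$ with $c>0$ depending on $v$, and then reads off $\partial_\nu(v/d^{s-1})(x_0)\ge c>0$ using only that $\phi_{-1}/d^{s-1}\equiv -1$ on $\partial\Omega$; the barrier is used only through $L\phi_{-1}\le -d^{\eps-s-1}-M$ and $\phi_{-1}\le 0$, never through an infinite normal derivative of its quotient.
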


Note that in particular, \autoref{lemma:Hopf-large} implies that for the regularized distance $d$,
\begin{align*}
\partial_{\nu} (v/d^{s-1})(x_0) = \partial_{\nu} (v/b)(x_0) + g(x_0) \partial_{\nu} (b / d^{s-1})(x_0) > g(x_0) \partial_{\nu} (b / d^{s-1})(x_0).
\end{align*}
We stress that the sign of the right-hand side depends on the choice of the regularized distance $d$.

\begin{proof}
Note that since $g(x_0) \le 0$ we have by the construction of $b$ in \eqref{eq:b}
\begin{align*}
L(v - g(x_0)b ) \ge - g(x_0) &\ge 0 ~~ \text{ in } \Omega,\\
v - g(x_0) b &\ge 0 ~~ \text{ in } \R^n \setminus \Omega,\\
(v - g(x_0) b)/d^{s-1} = g - g(x_0) &\ge 0 ~~ \text{ on } \partial \Omega \cap \{ g < 0 \},\\
(v - g(x_0) b)/d^{s-1} \ge g &\ge 0 ~~ \text{ on } \partial \Omega \cap \{ g \ge 0 \}.
\end{align*}

Thus, an application of \autoref{lemma:Hopf-viscosity} to $v - g(x_0) b$ yields that either $v - g(x_0) b \equiv 0$ in $\Omega$, or
\begin{align}
\label{eq:Hopf-large-help}
v - g(x_0) b \ge c d^s ~~ \text{ near } x_0.
\end{align}
Note that we cannot have $v - g(x_0) b \equiv 0$, unless $g(x_0) = 0$ (in which case $v \equiv v - g(x_0) b \equiv 0$), since then
\begin{align*}
L v = g(x_0) L b \le g(x_0) < 0 ~~ \text{ in } \Omega,
\end{align*}
a contradiction.
Thus, unless $v \equiv 0$, we have \eqref{eq:Hopf-large-help}, and we compute, using that $b \ge 0$ and $(b/d^{s-1})(x_0) = 1$,
\begin{align*}
\partial_{\nu}(v/b)(x_0) = \lim_{x \to x_0} \frac{\frac{v(x)}{b(x)} - g(x_0)}{d(x)} = \lim_{x \to x_0} \frac{v(x) - g(x_0) b(x)}{b(x)d(x)} \ge c \lim_{x \to x_0} \frac{d^{s-1}(x)}{b(x)} = c > 0.
\end{align*}
Note that if the limit in the previous estimate does not exist, we need to interpret the boundary condition in the viscosity sense, i.e., take any smooth $\psi$ with $\psi(x_0) = (v/d^{s-1})(x_0) = g(x_0)$ and $\psi \ge v/d^{s-1}$. Then, the limit $\partial_{\nu}\psi (x_0)$ exists, and an analogous computation as above yields $\partial_{\nu}\psi(x_0) \ge c > 0$, i.e. $\partial_{\nu}(v/b)(x_0) > 0$ in the viscosity sense.
\end{proof}

Finally, we are in a position to prove the main result of this section, a maximum principle for nonlocal equations with local Neumann conditions.

\begin{lemma}
\label{lemma:Neumann-max-princ}
Let $L \in \mathcal{L}_s^{\text{hom}}(\lambda,\Lambda)$. Let $\Omega \subset \R^n$ be an open, bounded domain with $\partial \Omega \in C^{2,\gamma}$ for some $\gamma > 0$ and $K \in C^{5+2\gamma}(\mathbb{S}^{n-1})$. Let $\Gamma \subset \partial \Omega$, $v \in L^1_{2s}(\R^n)$ with $v/d^{s-1} \in C(\overline{\Omega})$ satisfy in the viscosity sense:
\begin{align*}
\begin{cases}
L v &\ge f  ~~ \text{ in } \Omega,\\
v &\ge 0 ~~ \text{ in } \R^n \setminus \Omega,\\
\partial_{\nu} (v/b) &\le g ~~ \text{ on } \partial \Omega \setminus \Gamma,\\
v/b & \ge 0 ~~ \text{ on } \partial \Omega \cap \Gamma
\end{cases}
\end{align*}
for some $f \in C(\Omega)$ with $d^{s+1-\eps}f \in L^{\infty}( \Omega )$ for some $\eps \in (0,s]$, and $g \in C(\partial \Omega)$. 
Here, $b$ is as in \eqref{eq:b} with $b/d^{s-1} = 1$ on $\partial \Omega \setminus \Gamma'$ for some $\Gamma' \Subset \Gamma$.
Then, there exists $c > 0$, depending only on $n,s,\lambda,\Lambda,\gamma,\eps$, and the $C^{2,\gamma}$ radius of $\Omega$ and $\diam(\Omega)$, such that 
\begin{align*}
v/d^{s-1} \ge - c \Vert d^{s-\eps} f \Vert_{L^{\infty}( \Omega )} - c  \Vert g \Vert_{L^{\infty}( \partial \Omega \setminus \Gamma )} ~~ \text{ in } \Omega.
\end{align*}
\end{lemma}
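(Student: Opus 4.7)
The plan is to reduce to the setting of the homogeneous Neumann maximum principle (\autoref{lemma:Neumann-max-princ-intro}) via the supersolution barrier $\psi_l$ from \autoref{lemma:supersol}, and then close the argument by combining the Hopf lemma \autoref{lemma:Hopf-large} with the viscosity Neumann condition on $v$. Writing $A := \Vert d^{s+1-\eps}f \Vert_{L^\infty(\Omega)}$ and $N := \Vert g \Vert_{L^\infty(\partial\Omega)}$, I would first, by the trivial bound $d \le \diam(\Omega)$, reduce to the case $\eps \in (0,\min\{s\gamma,1-s\})$ at the cost of a multiplicative constant, so that both \autoref{lemma:subsol}--\autoref{lemma:supersol} apply and the estimate $|L(d^{s-1})| \le c_1 d^{\delta\gamma-s-1}$ from \autoref{lemma:dist-s-1+eps} dominates the barrier singularity $d^{\eps-s-1}$.

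I then set $\mu := (A+N+c_1)/c_1$, pick $l_0$ large enough so that $\psi_{l_0} \ge 0$ in $\Omega$, and define $\Psi := \mu\,\psi_{l_0}$, $u := v + \Psi$. By construction, $\Psi = 0$ outside $\Omega$, $\Psi/d^{s-1}$ is continuous on $\overline\Omega$ with $\Psi/d^{s-1} \le \mu(l_0 + C_2\diam(\Omega)^{1-s}) =: c(A+N)$, and $\partial_\nu(\Psi/d^{s-1}) = -\infty$
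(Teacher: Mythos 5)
Your setup follows the paper's own proof: after lowering $\eps$ (harmless, since $d\le \diam(\Omega)$), one adds a large multiple of the supersolution barrier $\psi_l$ from \autoref{lemma:supersol} to $v$ and reduces to the sign-definite case, which is then handled via the Hopf lemma. The visible steps are essentially correct, but your argument breaks off immediately after listing the properties of $\Psi$, i.e.\ before anything is actually proved. Still missing are all the decisive steps: (a) the verification that $u=v+\Psi$ satisfies $Lu\ge 0$ in $\Omega$ in the viscosity sense, which requires choosing $M$ (and hence $C_2$, $l_0$) in \autoref{lemma:supersol} so that $\mu\,(c_1 d^{\eps-s-1}+M)\ge d^{\eps-s-1}\Vert d^{s+1-\eps}f\Vert_{L^{\infty}(\Omega)}\ge -f$, together with the remark that adding the function $\Psi$, which is a classical supersolution in $\Omega$, preserves the viscosity inequality; (b) the boundary step: at a point where $u/d^{s-1}$ attains a nonpositive minimum on $\partial\Omega$ you must show that \autoref{lemma:Hopf-large} is incompatible with the Neumann condition inherited by $u$, and this is precisely where the $-\infty$ normal derivative of $\Psi/d^{s-1}$ must absorb the possibly positive datum $g$ (the paper asserts $\partial_\nu(u/d^{s-1})\le g-(A+N)\le 0$); since $\Psi/d^{s-1}$ is not $C^1$ up to $\partial\Omega$, this transfer in the viscosity sense needs at least a short justification; (c) the conclusion: the Hopf lemma only controls the boundary trace, so to obtain $u\ge 0$ in $\Omega$ you still need the Dirichlet weak maximum principle \autoref{lemma:weak-max-princ-large} — and note that you cannot simply invoke \autoref{lemma:Neumann-max-princ-intro}, since in the paper that proposition is deduced from the present lemma; your stated plan of rerunning the Hopf argument is the correct, non-circular route; and (d) dividing by $d^{s-1}$ via $\Psi\le c(A+N)\,d^{s-1}$ to reach the stated estimate.

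There is also a quantitative slip in the visible part: with $\mu=(A+N+c_1)/c_1$ the bound $\Psi/d^{s-1}\le \mu\,(l_0+C_2\diam(\Omega)^{1-s})$ is \emph{not} of the form $c(A+N)$; it carries the additive constant $l_0+C_2\diam(\Omega)^{1-s}$, so the argument as written only yields $v/d^{s-1}\ge -c(A+N)-c'$, which is strictly weaker than the lemma (in particular it does not give $v\ge 0$ when $f\equiv 0$ and $g\equiv 0$). The fix is to take $\mu$ proportional to $A+N$ itself, as in the paper where the barrier is multiplied exactly by $\Vert d^{s+1-\eps}f\Vert_{L^{\infty}(\Omega)}+\Vert g\Vert_{L^{\infty}(\partial\Omega)}$ (treating the case $A+N=0$ directly as the sign-definite case), or to normalize $A+N=1$ by linearity before running the argument.
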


\begin{proof}
The case $f \ge 0$ and $g \le 0$ follows from the Hopf lemma (see \autoref{lemma:Hopf-large}). 
In fact, since $v/b \in C(\partial\Omega)$, there exists $x_0 \in \partial \Omega$ with $\min_{\partial \Omega} (v/b) = (v/b)(x_0)$. If $(v/b)(x_0) \ge 0$, then we have that $v/d^{s-1} \ge 0$ on $\partial \Omega$. Otherwise, $(v/b)(x_0) < 0$, and then by assumption it must be $x_0 \in \partial \Omega \setminus \Gamma$. However, in this case \autoref{lemma:Hopf-large} implies that either $v \equiv 0$, (in which case we are done), or $\partial_{\nu}(v/b)(x_0) > 0$, which contradicts $g(x_0) \le 0$. Thus, we must have $v/d^{s-1} \ge 0$ on $\partial \Omega$.
However, by the weak maximum principle (see \autoref{lemma:weak-max-princ-large}), this implies $v \ge 0$, as desired.

Now, we explain how to get the result with general $f,g$.
To do so, let $\tilde{\psi}_1$ be the solution to
\begin{align*}
\begin{cases}
L \tilde{\psi}_1 &= 0 ~~ \text{ in } \Omega,\\
\tilde{\psi}_1 &= 0 ~~ \text{ in } \R^n \setminus \Omega,\\
\tilde{\psi}_1/d^{s-1} &= h ~~ \text{ on } \partial \Omega
\end{cases}
\end{align*}
for some smooth function $h$ which satisfies $0 \le h \le 1$, and is such that $h = 1$ on $\partial \Omega \setminus \Gamma$, and $h = 0$ in $\partial \Omega \cap \Gamma'$.

From \autoref{lemma:Hopf-large}, we deduce that $\partial_{\nu}(\tilde{\psi}_1/b) < 0$ on $\partial \Omega \setminus \Gamma$. Since $\partial \Omega \in C^{2,\gamma}$, by \autoref{thm:dirichlet} we have that $\partial_{\nu}(\tilde{\psi}_1/b) \in C^{\gamma}(\partial \Omega)$, and therefore, there is $c_0 > 0$ such that
\begin{align*}
\partial_{\nu}(\tilde{\psi}_1/b) \le -c_0 < 0 ~~ \text{ on } \partial \Omega \setminus \Gamma.
\end{align*}

Moreover, let us denote by $\tilde{\psi}_2$ the function $\tilde{\psi}$ from the second claim of \autoref{lemma:supersol}, which satisfies for some $c_2 > 0$
\begin{align*}
\begin{cases}
L \tilde{\psi}_2 &\ge d^{\eps - s} ~~ \text{ in } \Omega,\\
\tilde{\psi}_2 &= 0 ~~~~~~ \text{ in } \R^n \setminus \Omega,\\
\tilde{\psi}_2/d^{s-1} &= 0 ~~~~~~ \text{ on } \partial \Omega,\\
\partial_{\nu}(\tilde{\psi}_2/b) &\le c_2 ~~~~~ \text{ on } \partial \Omega.
\end{cases}
\end{align*}
Hence, if we take $M = c_0^{-1}(c_2+1) > 0$ and denote $\tilde{\psi} := M\tilde{\psi}_1 + \tilde{\psi_2}$, we obtain
\begin{align*}
\begin{cases}
L \tilde{\psi} &\ge d^{\eps-s} ~~ \text{ in } \Omega,\\
\tilde{\psi} &= 0 ~~~~~~ \text{ in } \R^n \setminus \Omega,\\
\tilde{\psi}/d^{s-1} &= Mh ~~~~ \text{ on } \partial \Omega,\\
\partial_{\nu}(\tilde{\psi}/b) &\le -1  ~~~~ \text{ on } \partial \Omega \setminus \Gamma.
\end{cases}
\end{align*}

We apply the previous argument with $v$ replaced by 
\begin{align*}
w = v + \big(\Vert d^{s-\eps} f \Vert_{L^{\infty}( \Omega ) } + \Vert g \Vert_{L^{\infty}( \partial \Omega \setminus \Gamma)} \big) \tilde{\psi}, 
\end{align*}

Then, we have that in the viscosity sense:
\begin{align*}
\begin{cases}
L w \ge f + d^{\eps- s} \big(\Vert  d^{s-\eps} f  \Vert_{L^{\infty}(\Omega) } + \Vert g \Vert_{L^{\infty}(\partial \Omega \setminus \Gamma )} \big) &\ge 0 ~~ \text{ in } \Omega,\\
w \ge v &\ge 0 ~~ \text{ in } \R^n \setminus \Omega,\\
\partial_{\nu} (w/b) \le g - \big( \Vert d^{s-\eps} f \Vert_{L^{\infty}(\Omega)}  + \Vert g \Vert_{L^{\infty}(\partial \Omega \setminus \Gamma)} \big) &\le 0 ~~ \text{ on } \partial \Omega \setminus \Gamma,\\
w/b \ge Mh &\ge 0 ~~ \text{ on } \partial \Omega \cap \Gamma.
\end{cases}
\end{align*}

Altogether, by the same argument as at the beginning of the proof, we have $w \ge 0$ in $\Omega$.
Let us now observe that by construction and the same argument as in the proof of \eqref{eq:application-Dirichlet-barrier} we have
\begin{align*}
\tilde{\psi} \le C d^{s-1} ~~ \text{ in } \Omega
\end{align*}
for some $C > 0$. Therefore, we obtain
\begin{align*}
v \ge - \tilde{\psi} \big(\Vert d^{s-\eps} f \Vert_{L^{\infty}( \Omega) } + \Vert g \Vert_{L^{\infty}( \partial \Omega \setminus \Gamma)} \big) \ge - C d^{s-1}\big(\Vert d^{s-\eps} f \Vert_{L^{\infty}(\Omega) } + \Vert g \Vert_{L^{\infty}(\partial \Omega \setminus \Gamma)} \big) ~~ \text{ in } \Omega,
\end{align*}
as desired.
\end{proof}

\begin{proof}[Proof of \autoref{lemma:Neumann-max-princ-intro}]
\autoref{lemma:Neumann-max-princ-intro} is a special case of \autoref{lemma:Neumann-max-princ}.
\end{proof}

\section{H\"older estimates up to the boundary}
\label{sec:Holder-estimate}

The previous maximum principle for nonlocal equations with local Neumann conditions (see \autoref{lemma:Neumann-max-princ}) puts us in a position to establish a Harnack inequality for solutions to \eqref{eq:Neumann-problem-blowup} at the boundary, which will eventually lead to the H\"older regularity estimate in \autoref{thm:bdry-Holder}.\\
To prove it, we adapt some of the ideas in \cite{LiZh23} to the framework of solutions to nonlocal problems which blow up at the boundary.

For $\delta > 0$, let us define $\Omega_{\delta} = \{ x \in \Omega : \dist(x,\partial \Omega) \ge \delta \}$.

\begin{lemma}
\label{lemma:bdry-weak-Harnack}
Let $L \in \mathcal{L}_s^{\text{hom}}(\lambda,\Lambda)$. Let $\Omega \subset \R^n$ be an open, bounded domain with $0 \in \partial \Omega$ and $\partial \Omega \in C^{2,\gamma}$ for some $\gamma > 0$ and $K \in C^{5+2\gamma}(\mathbb{S}^{n-1})$. Let $v \in L^1_{2s}(\R^n)$ with $v/d^{s-1} \in C(\overline{\Omega})$ be a viscosity solution to
\begin{align*}
\begin{cases}
Lv &\ge f ~~ \text{ in } \Omega \cap B_1,\\
v &\ge 0 ~~ \text{ in } \R^n,\\
\partial_{\nu} (v/b_{\Omega}) &\le g ~~ \text{ on } \partial \Omega \cap B_1
\end{cases}
\end{align*}
for some $f \in C(\Omega \cap B_1)$ with $d^{s-\alpha} f \in L^{\infty}(\Omega \cap B_1)$ for some $\alpha \in (0,s]$, and $g \in C(\overline{\partial \Omega \cap B_1})$.
Assume that $0 \in \partial \Omega$. Then,
\begin{align*}
\dashint_{\Omega_{1/2} \cap B_1} (v/b_{\Omega}) \d x \le c \inf_{\Omega \cap B_{\eta^{-1}} } (v/b_{\Omega}) + c \left( \Vert d^{s-\alpha} f_- \Vert_{L^{\infty}(\Omega \cap B_1)} + \Vert g_+ \Vert_{L^{\infty}( \partial \Omega \cap B_1)} \right),
\end{align*}
where $\eta > 1$ and $c > 0$ depend only on $n,s,\lambda,\Lambda,\gamma,\alpha$, and the $C^{2,\gamma}$ radius of $\Omega$. Here, $b_{\Omega}$ is defined as in \eqref{eq:b}.
\end{lemma}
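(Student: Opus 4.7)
My strategy adapts the Krylov--Safonov style boundary weak Harnack argument from the local theory (cf.\ \cite{LiZh23}) to the nonlocal setting with solutions that blow up like $d^{s-1}$. The three main ingredients are the interior nonlocal weak Harnack inequality (e.g.\ \cite[Theorem 3.3.1]{FeRo23}), the Neumann maximum principle \autoref{lemma:Neumann-max-princ}, and the barriers from Subsection \ref{subsec:barriers}.

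Set $E := \|d^{s+1-\alpha} f_-\|_{L^{\infty}(\Omega \cap B_1)} + \|g_+\|_{L^{\infty}(\partial\Omega \cap B_1)}$. Using the explicit supersolution $\psi_l$ of \autoref{lemma:supersol} as a comparison function, I absorb the source terms into an additive constant: after replacing $v$ by $\tilde v := v + C_0 E\, \psi_1$ for $C_0$ sufficiently large, $\tilde v$ satisfies $L\tilde v \ge 0$ in $\Omega\cap B_1$, $\tilde v\ge 0$ in $\R^n$, and $\partial_\nu(\tilde v/d^{s-1}) \le 0$ on $\partial\Omega\cap B_1$, so that by \autoref{lemma:Neumann-max-princ} one has $\tilde u := \tilde v/d^{s-1}\ge 0$ on $\overline\Omega$. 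Both sides of the target inequality change by at most $CE$, so it remains to prove the homogeneous statement
\begin{equation*}
\fint_{\Omega_{1/2}\cap B_1} \tilde u \;\le\; c\,\inf_{\Omega\cap B_{\eta^{-1}}} \tilde u.
\end{equation*}

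Since $d\asymp 1$ on $\Omega_{1/2}\cap B_1$, the left-hand side is comparable to $\fint \tilde v$. I apply the interior nonlocal weak Harnack to the nonnegative supersolution $\tilde v$ along a finite Harnack chain of balls contained in $\Omega$, yielding $\fint_{\Omega_{1/2}\cap B_1} \tilde v \le c\,\tilde v(x^*) \asymp c\,\tilde u(x^*)$ for any fixed reference point $x^*\in\Omega_{1/2}\cap B_{\eta^{-1}/2}$. The remaining and main difficulty is to bound $\tilde u(x^*)$ above by a constant multiple of $m := \inf_{\Omega\cap B_{\eta^{-1}}}\tilde u$.

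The main obstacle is precisely this last comparison: it would fail if $\tilde u$ dropped sharply between the interior reference point and a boundary point at which the infimum is realized, which is exactly what the homogeneous Neumann inequality $\partial_\nu \tilde u \le 0$ forbids. To quantify this, I argue by contradiction, sliding the subsolution $\phi_m$ of \autoref{lemma:subsol} (with boundary value $l=m$) from below against $\tilde v$: since $L\phi_m < 0 \le L\tilde v$, \autoref{lemma:weak-max-princ-large} excludes any interior first-contact between the translates $\phi_m - c\,d^{s-1}$ and $\tilde v$, so contact must occur on $\partial\Omega$. At such a contact point, the mismatch between the boundary derivatives $\partial_\nu(\phi_m/d^{s-1}) = +\infty$ and $\partial_\nu \tilde u \le 0$, combined with the Hopf-type estimate \autoref{lemma:Hopf-large} applied to the nonnegative difference $\tilde v - \phi_m$, forces $\tilde u(x^*) \le C m$, provided $\eta$ is taken large enough depending on $n,s,\lambda,\Lambda,\gamma$ and the $C^{1,\gamma}$ radius of $\Omega$. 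Combined with the interior weak Harnack step, this closes the argument.
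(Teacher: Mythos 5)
Your steps (A) and (B) are essentially fine and correspond to the paper's first step: the interior weak Harnack inequality over $\Omega_{1/2}\cap B_1$ (note, though, that in (A) the conclusion $\tilde v\ge 0$ needs no maximum principle at all, since $v\ge 0$ in $\R^n$ is assumed and $\psi_1\ge 0$; and \autoref{lemma:Neumann-max-princ} is stated for a problem posed in all of $\Omega$, not for the local problem in $\Omega\cap B_1$, so it could not be invoked there anyway).

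The genuine gap is in your main step (C), and it is twofold. First, the logic does not produce the needed inequality: sliding $\phi_m$ (whose boundary trace is exactly $m=\inf_{\Omega\cap B_{\eta^{-1}}}\tilde u$) underneath $\tilde v$ and forcing contact on $\partial\Omega$ only yields a lower bound of order $m$ for $\tilde u$ near the boundary, which is essentially the definition of $m$; the reference value $\tilde u(x^*)$ never enters the contact argument, so the asserted conclusion $\tilde u(x^*)\le Cm$ does not follow from the described ``mismatch of normal derivatives plus Hopf''. What is actually needed is the reverse transfer, from the interior to the boundary: after normalizing $\inf_{\Omega_{1/2}\cap B_1}u=1$, one compares $v$ with a barrier $w=\phi_1$ whose quotient $w/d^{s-1}$ stays bounded below by a universal constant $c_1$ on $(\Omega\setminus\Omega_{\eta^{-1}})\cap B_{1/4}$, and applies the Neumann maximum principle to the difference $v-w$; this is precisely how the paper obtains \eqref{eq:large-sol-maximum-principle-appl}, and it is also where the constant $\eta$ is generated. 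Second, you ignore the localization: the differential inequality and the Neumann condition hold only in $\Omega\cap B_1$ and on $\partial\Omega\cap B_1$, whereas \autoref{lemma:weak-max-princ-large}, \autoref{lemma:Hopf-large} and \autoref{lemma:Neumann-max-princ} are global statements on a bounded domain. The paper's proof spends most of its effort exactly here: it introduces an auxiliary $C^{1,\gamma}$ domain $D$ with $\Omega\cap B_{1/2}\subset D\subset\Omega\cap B_1$, builds the barrier from the distance $d_D$, uses that $d_D=d_\Omega$ on $\Omega\cap B_{\eta^{-1}}$, and—crucially—verifies the Neumann inequality for $v-w$ on the artificial boundary $(\partial D\setminus\partial\Omega)\cap B_1$ through the explicit computation \eqref{eq:Neumann-difference-comp}, where the barrier forces $\partial_\nu((v-w)/d_D^{s-1})=-\infty$, so that the maximum principle may legitimately be applied in $D$. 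Without this construction (or a substitute for it), the global maximum principles you invoke simply do not apply to the locally defined supersolution, and the argument does not close.
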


\begin{proof}
The interior weak Harnack inequality for viscosity supersolutions (see \cite[Theorem 3.3.1]{FeRo23}) applied with $v$ implies
\begin{align*}
\dashint_{\Omega_{1/2} \cap B_1} v(x) \d x \le c \inf_{x \in \Omega_{1/2} \cap B_1 } v(x) + c \Vert f \Vert_{L^{\infty}(\Omega_{1/2} \cap B_1)},
\end{align*}
where $c > 0$ depends on $n,s,\lambda,\Lambda,\eta, \alpha$. Moreover, since $b \asymp c > 0$ in $\Omega_{1/2} \cap B_1$, it follows for $u := v/b$ by \autoref{lemma:Hopf-viscosity}:
\begin{align*}
\dashint_{\Omega_{1/2} \cap B_1} u(x) \d x \le c \inf_{x \in \Omega_{1/2} \cap B_1} u(x) + c \Vert d^{s-\alpha} f \Vert_{L^{\infty}(\Omega_{1/2} \cap B_1)}.
\end{align*}
Thus, it remains to show
\begin{align}
\label{eq:large-sol-maximum-principle-appl}
\inf_{x \in \Omega_{1/2} \cap B_1} u(x) \le c \inf_{x \in \Omega \cap B_{\eta^{-1}}} u(x) + c \left(\Vert d^{s-\alpha} f \Vert_{L^{\infty}(\Omega \cap B_1)} + \Vert g \Vert_{L^{\infty}( \partial \Omega \cap B_1)} \right).
\end{align}
Note that since $v \ge 0$, by the weak Harnack inequality, either $v \equiv 0$ in $\Omega_{1/2} \cap B_1$, or $\inf_{\Omega_{1/2} \cap B_1} v > 0$. Therefore, without loss of generality, we can assume that $\inf_{\Omega_{1/2} \cap B_1} v = 1$.

To prove \eqref{eq:large-sol-maximum-principle-appl}, let us take a set $D \subset \R^n$ with $\partial D \in C^{2,\gamma}$ such that
\begin{align*}
\Omega \cap B_{1/2} \subset D \subset \Omega \cap B_1,
\end{align*}
Let $w$ be a function such that
\begin{align*}
\begin{cases}
L w &= 0 ~~ \text{ in } D,\\
w &\le 1 ~~ \text{ in } (\Omega_{1/2} \cap B_1) \setminus D,\\
w &= 0 ~~ \text{ in } \R^n \setminus (D \cup (\Omega_{1/2} \cap B_1)),\\
\partial_{\nu}(w/b_{\Omega}) &\ge 0 ~~ \text{ on } \partial D \cap B_{2\eta^{-1}},\\
w/b_{\Omega} &\le 0 ~~ \text{ on } \partial D \setminus B_{2 \eta^{-1}},\\
w/b_{\Omega} &\ge c_1 ~~ \text{ in } \Omega \cap B_{\eta^{-1}},
\end{cases}
\end{align*}

We construct $w$ as follows. Let $h : \partial D \to \R$ and $e : \R^n \setminus D \to \R$ be smooth functions such that for some $\eta < 1/8$
\begin{align*}
h =
\begin{cases}
0 ~~~ \text{ on } \partial D \setminus B_{2 \eta^{-1}},\\
c_1  ~~ \text{ on } \partial D \cap B_{\eta^{-1}},
\end{cases}
\qquad e = 
\begin{cases}
1 ~~ \text{ in } T,\\
0 ~~ \text{ in } \R^n \setminus (D \cup (\Omega_{1/2} \cap B_1)),
\end{cases}
\end{align*}
where $T \Subset (\Omega_{1/2} \cap B_1) \setminus D$, $0 \le h \le c_1$, and $0 \le e \le 1$. We let $w$ be the solution to
\begin{align*}
\begin{cases}
L w &= 0 ~~ \text{ in } D,\\
w &= e ~~ \text{ in } \R^n \setminus D,\\
w/b_D &= h ~~ \text{ on } \partial D.
\end{cases}
\end{align*}
Then, we can show that $\partial_{\nu}(w/b_{\Omega}) \ge C > 0$ in $\partial D \cap B_{2 \eta^{-1}}$ (for any given $C > 0$) by making $c_1 > 0$ small enough. Indeed, if $w_1$ solves the Dirichlet problem with boundary data zero and exterior data $e$, then by the Hopf lemma (see \autoref{lemma:Hopf-viscosity}), we have since $w_1/d_D^s \in C^{1,\gamma}(\overline{D})$ by the boundary regularity results in \cite{AbRo20}, and since $\partial D \cap B_{2 \eta^{-1}} \Subset \partial \Omega$
\begin{align*}
\partial_{\nu}(w_1/b_{\Omega}) &= \partial_{\nu}(w_1/d_D^{s-1}) (d^{s-1}_D/b_{\Omega}) + (w_1/d_D^{s-1}) \partial_{\nu}(d_D^{s-1}/b_{\Omega}) \\
&= \partial_{\nu}(w_1/d_D^{s})d_D + (w_1/d_D^s) \partial_{\nu}(d_D) = w_1/d_D^s \ge c_0 > 0 ~~ \text{ on } \partial D \cap B_{2 \eta^{-1}}.
\end{align*}
Moreover, if $w_2$ solves the Dirichlet problem with boundary data $h$ and exterior data zero, we get from \autoref{thm:dirichlet} that $|\partial_{\nu}(w_2/b_{\Omega})| \le c_3 c_1$ in $B_{2\eta^{-1}}$ for some $c_3 > 0$. Hence, choosing $c_1 > 0$ small enough, we deduce the claim for $w = (C/c_0)w_1 + w_2$.

Thus, we have by construction, and using that $\inf_{\Omega_{1/2} \cap B_1} v = 1$, and $w \asymp d_{D}^s$ near $\partial D \setminus \partial \Omega$,
\begin{align*}
\begin{cases}
L(v-w) &\ge f ~~ \text{ in } D,\\
v-w &\ge 0 ~~ \text{ in } \R^n \setminus D,\\
\partial_{\nu}((v-w)/b_{\Omega}) &\le g ~~ \text{ on } \partial D \cap B_{2 \eta^{-1}},\\
(v-w)/b_{\Omega} &\ge 0 ~~ \text{ on } \partial D \setminus B_{2 \eta^{-1}},
\end{cases}
\end{align*}
Note that $b_{\Omega}$ satisfies
\begin{align*}
\begin{cases}
L b_{\Omega} &\ge 0 ~~ \text{ in } D,\\
L b_{\Omega} &\not= 0 ~~ \text{ in } D,\\
b_{\Omega} &\ge 0 ~~ \text{ in } \R^n \setminus D,\\
b_{\Omega}/d^{s-1}_D &= 1 ~~ \text{ on } \partial D \cap B_{4 \eta^{-1}},\\
b_{\Omega}/d^{s-1}_D &\ge 0 ~~ \text{ on } \partial D.
\end{cases}
\end{align*}
Since $(\partial D \cap B_{4 \eta^{-1}}) \Supset (\partial D \cap B_{2 \eta^{-1}})$, we can apply the maximum principle for the Neumann problem \autoref{lemma:Neumann-max-princ} with $\Gamma = \partial D \setminus B_{2 \eta^{-1}}$ and $b = b_{\Omega}$, and deduce
\begin{align*}
(v-w)/b_{\Omega} \ge - c \Vert d^{s-\alpha} f_- \Vert_{L^{\infty}(D \cap B_1)} - c \Vert g_+ \Vert_{L^{\infty}( \partial D \cap B_1 )} ~~ \text{ in } D \cap B_1.
\end{align*}
Since, by construction, we also have 
\begin{align*}
w/b_{\Omega} \ge c_1 = c_1 \inf_{\Omega_{1/2} \cap B_1} v \ge c_2 \inf_{\Omega_{1/2} \cap B_1} u ~~ \text{ in } \Omega \cap B_{\eta^{-1}},
\end{align*}
for some $c_2 > 0$, since $b_{\Omega} \asymp c > 0$ in $\Omega_{1/2} \cap B_1$, we deduce
\begin{align*}
v/b_{\Omega} &= (w + v-w)/b_{\Omega} \\
&\ge c_2 \inf_{\Omega_{1/2} \cap B_1} u - c \Vert d^{s-\alpha} f_- \Vert_{L^{\infty}(D \cap B_1)} - c \Vert g_+ \Vert_{L^{\infty}( \partial D \cap B_{1} )} \\
 &\ge c_2 \inf_{\Omega_{1/2} \cap B_1} u - c \Vert d^{s-\alpha} f_- \Vert_{L^{\infty}(\Omega \cap B_1)} - c \Vert g_+ \Vert_{L^{\infty}( \partial \Omega \cap B_{1} )} ~~ \text{ in } \Omega \cap B_{\eta^{-1}},
\end{align*}
where we used $D \cap B_1 \subset \Omega \cap B_1$. Hence, we obtain \eqref{eq:large-sol-maximum-principle-appl}, as desired.
\end{proof}

As a corollary of the previous weak Harnack inequality at the boundary, we obtain a growth lemma.

\begin{lemma}
\label{lemma:growth-lemma}
Let $L \in \mathcal{L}_s^{\text{hom}}(\lambda,\Lambda)$. Let $\Omega \subset \R^n$ be an open, bounded domain with $\partial \Omega \in C^{2,\gamma}$ for some $\gamma > 0$ and $K \in C^{5+2\gamma}(\mathbb{S}^{n-1})$. Let $\eta > 1$ be as in \autoref{lemma:bdry-weak-Harnack}. Assume that $x_0 \in \partial \Omega$ and let $0 < R \le 1$.
Let $v \in L^1_{2s}(\R^n)$ with $v/d^{s-1} \in C(\overline{\Omega})$ be a viscosity solution to
\begin{align*}
\left\{\begin{array}{rcl}
Lv &\ge& f ~~~~  \qquad\qquad\qquad \quad  \text{ in } \Omega \cap B_R(x_0),\\
\partial_{\nu} (v/b_{\Omega}) &\le& g ~~~~ \qquad\qquad\qquad \quad  \text{ on } \partial \Omega \cap B_R(x_0),\\
v &\ge& 0 ~~~~ \qquad\qquad\qquad \quad \text{ in } B_{R}(x_0),\\
v &\ge& b_{\Omega}(1 - \eta^{j\beta}) ~~~~~~~~ \qquad  \text{ in } B_{\eta^j R}(x_0) \cap \Omega ~~ \forall j \ge 1,\\
v &\ge& (1 - \eta^{j\beta}) ~~~ \qquad \qquad \text{ in } B_{\eta^j R}(x_0) \setminus \Omega ~~ \forall j \ge 1,\\
|\Omega_{R/4} \cap B_{R/2}(x_0) \cap \{ v/b_{\Omega} \ge \frac{1}{4} \}| &\ge& \frac{1}{2} |\Omega_{R/4} \cap B_{R/2}(x_0)|
\end{array}\right.
\end{align*}
for some $f \in C(\Omega \cap B_R(x_0))$ with $d^{s-\alpha} f \in L^{\infty}(\Omega \cap B_R(x_0))$ for some $\alpha \in (0,s]$, and $g \in C(\overline{\partial \Omega \cap B_R(x_0)})$. Then, there exist $\delta > 0$, and $\beta \in (0,1)$, depending only on $n,s,\lambda,\Lambda,\gamma,\alpha$, and the $C^{2,\gamma}$ radius of $\Omega$, such that
\begin{align*}
\inf_{\Omega \cap B_{\eta^{-1} R}(x_0)} (v/b_{\Omega}) + R^{1+\alpha} \Vert d^{s-\alpha} f_- \Vert_{L^{\infty}(\Omega \cap B_{R}(x_0))} + R \Vert g_+ \Vert_{L^{\infty}( \partial \Omega \cap B_R(x_0) )} \ge \delta.
\end{align*}
\end{lemma}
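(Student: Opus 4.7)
The plan is to apply the boundary weak Harnack inequality (\autoref{lemma:bdry-weak-Harnack}) to a suitable truncation of $v$ that is globally nonnegative, and then exploit the measure-theoretic hypothesis to close the argument.

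\textbf{Rescaling.} First, I reduce to $R = 1$ and $x_0 = 0 \in \partial\tilde\Omega$ via the natural scaling $u(x) := R^{1-s}v(x_0 + Rx)$, $\tilde\Omega := R^{-1}(\Omega - x_0)$. One checks that $u/\tilde d^{s-1}(x) = (v/d^{s-1})(x_0 + Rx)$; that the growth hypotheses inside $\tilde\Omega$ transfer verbatim, and those outside are only strengthened since $R^{1-s}(1-\eta^{j\beta}) \geq 1 - \eta^{j\beta}$ when $R \leq 1$; that $\Vert\tilde d^{s+1-\alpha}\tilde f\Vert_\infty = R^\alpha\Vert d^{s+1-\alpha}f\Vert_\infty$ and $\Vert\tilde g\Vert_\infty = R\Vert g\Vert_\infty$; and that the measure condition becomes $|\tilde\Omega_{1/4} \cap B_{1/2} \cap \{u/\tilde d^{s-1} \geq 1/4\}| \geq \tfrac12|\tilde\Omega_{1/4} \cap B_{1/2}|$.

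\textbf{Truncation and tail estimate.} Since \autoref{lemma:bdry-weak-Harnack} requires a globally nonnegative function, set $\tilde V := u\1_{B_1}$; by hypothesis $u \geq 0$ in $B_1$, so $\tilde V \geq 0$ on $\R^n$. For $x \in \tilde\Omega \cap B_{1/2}$,
\begin{equation*}
L\tilde V(x) = Lu(x) + \int_{\R^n \setminus B_1} u(y) K(x-y)\,dy \geq \tilde f(x) + \int_{\R^n \setminus B_1} u(y) K(x-y)\,dy.
\end{equation*}
Setting $w_0 := \tilde d^{s-1}\1_{\tilde\Omega} + \1_{\R^n \setminus \tilde\Omega} \geq 0$, the joint growth hypotheses read $u \geq (1-\eta^{j\beta}) w_0$ on $B_{\eta^j}$. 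Splitting the integral over the dyadic annuli $A_j := B_{\eta^{j+1}} \setminus B_{\eta^j}$, using $|x-y| \gtrsim \eta^j$ and the rough bound $\int_{B_{\eta^{j+1}}} w_0 \,dy \lesssim \eta^{jn}$, I obtain
\begin{equation*}
\int_{\R^n \setminus B_1} u(y) K(x-y)\,dy \geq -C\sum_{j \geq 0}(\eta^{(j+1)\beta} - 1)\eta^{-2js} =: -C_\star(\beta),
\end{equation*}
where the geometric series converges for $\beta < 2s$ and $C_\star(\beta) \to 0$ as $\beta \to 0^+$. Therefore $L\tilde V \geq \tilde f - C_\star(\beta)$ in $\tilde\Omega \cap B_{1/2}$; because $\tilde V \equiv u$ in a full neighborhood of every point of $\partial\tilde\Omega \cap B_{1/2}$, the Neumann condition $\partial_\nu(\tilde V/\tilde d^{s-1}) \leq \tilde g$ is inherited from $u$ unchanged.

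\textbf{Weak Harnack and conclusion.} Applying \autoref{lemma:bdry-weak-Harnack} to $\tilde V$ at the appropriate scale (possibly enlarging $\eta$ by a universal multiple so that the integration set $\tilde\Omega_{1/4} \cap B_{1/2}$ matches the one produced by the weak Harnack) yields
\begin{equation*}
\dashint_{\tilde\Omega_{1/4}\cap B_{1/2}} \tilde V/\tilde d^{s-1}\,dx \leq c\inf_{\tilde\Omega \cap B_{\eta^{-1}}} \tilde V/\tilde d^{s-1} + c\big(\Vert\tilde d^{s+1-\alpha}(\tilde f - C_\star)_-\Vert_\infty + \Vert\tilde g_+\Vert_\infty\big).
\end{equation*}
Since $v/d^{s-1} \geq 0$ on $\Omega \cap B_R(x_0)$, the measure hypothesis bounds the left-hand side below by $\tfrac14 \cdot \tfrac12 = \tfrac18$; and $\tilde V = u$ in $B_{\eta^{-1}}$. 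Choosing $\beta > 0$ small enough that $cC_\star(\beta) \leq \tfrac1{16}$ and undoing the rescaling produces the asserted inequality with $\delta := 1/(16c)$. The principal technical obstacle is precisely the tail estimate: only one-sided lower bounds on $u$ outside $B_1$ are available, yet the truncation error $C_\star(\beta)$ must be shown to vanish as $\beta \to 0$, which forces both the structural constraint $\beta < 2s$ and the quantitative smallness of $\beta$ encoded in the statement.
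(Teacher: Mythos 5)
Your argument is correct and follows essentially the same route as the paper: apply the boundary weak Harnack inequality (\autoref{lemma:bdry-weak-Harnack}, at scale $R/2$) to a globally nonnegative truncation of $v$, estimate the truncation error over dyadic annuli via the growth hypotheses so that it vanishes as $\beta \to 0$, and use the measure hypothesis to bound the average from below by $1/8$. Two minor remarks: the paper truncates with $v_+$ rather than $v\1_{B_R}$, which keeps $v/d^{s-1} \in C(\overline{\Omega})$ as the weak Harnack hypothesis formally requires, and your ``rough bound'' $\int_{B_{\eta^{j+1}}} w_0 \, \d y \lesssim \eta^{jn}$ is precisely where the paper invokes the coarea/level-set estimate (as in \autoref{lemma:dist-int-est}), since $\tilde d^{\,s-1}$ is unbounded near $\partial \tilde\Omega$ and the bound is not just $|B_{\eta^{j+1}}|$ times a supremum.
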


\begin{proof}
Let us assume without loss of generality that $x_0 = 0$.
The proof follows from an application of the weak Harnack inequality (see \autoref{lemma:bdry-weak-Harnack}) to $v_+$. It is slightly involved due to the appearance of the tail term.\\
Indeed, we have
\begin{align*}
L v_+(x) \ge f(x) - \int_{\R^n \setminus B_{R}} v_-(y)K(x-y) \d y =: \tilde{f}(x),
\end{align*}
where we used that by assumption, $v \ge 0$ in $B_{R}$. Then, we obtain from \autoref{lemma:bdry-weak-Harnack} (after scaling), using the last assumption and setting $u := v/b_{\Omega}$
\begin{align}
\label{eq:weak-Harnack-application}
\inf_{\Omega \cap B_{\eta^{-1} R}} u + R^{1+\alpha} \Vert d^{s-\alpha} \tilde{f}_- \Vert_{L^{\infty}(\Omega \cap B_{R/2})} + R \Vert g_+ \Vert_{L^{\infty}( \partial \Omega \cap B_{R/2} )}  \ge c_0 \dashint_{\Omega_{R/4} \cap B_{R/2}} u \d x \ge \frac{c_0}{8},
\end{align}
where $c_0 > 0$ is the constant from the weak Harnack inequality.\\
Next, we estimate $\Vert d^{s-\alpha} \tilde{f} \Vert_{L^{\infty}(\Omega \cap B_R)}$. To do so, we apply a similar reasoning as in the proof of \autoref{lemma:dist-int-est}. First, we recall that for any $x \in B_{R/2}$, there exists $\kappa > 0$ such that for any $t \in (0,\kappa)$:
\begin{align*}
\mathcal{H}^{n-1}(\{d = t\} \cap B_{\eta^{j}R} \setminus B_{\eta^{j-1} R}) \le C (\eta^{j} R)^{n-1},
\end{align*}
where $C > 0$ depends only on $n$ and the $C^{2,\gamma}$ radius of $\Omega$ (we refer to \cite[Lemma B.2.4]{FeRo23} for a reference of this fact). Next, we observe that by the co-area formula, and since $0 \le b_{\Omega} \le C d^{s-1}$:
\begin{align*}
R^{1+s} &\int_{\Omega \setminus B_{R}} v_-(y)K(x-y) \d y \le c\sum_{j \ge 1} (1 - \eta^{j\beta}) R^{1+s} \int_{\Omega \cap (B_{\eta^{j}R} \setminus B_{\eta^{j-1} R})} d^{s-1}(y) |y|^{-n-2s} \d y \\
&\le c\sum_{j \ge 1} (1 - \eta^{j\beta}) R^{1+s} \left( (\eta^{j}R)^{-n-2s} \int_{(B_{\eta^{j} R} \setminus B_{\eta^{j-1} R}) \cap \{d \le \kappa \} } d^{s-1}(y) |\nabla d(y)| \d y + \kappa^{s-1} (\eta^j R)^{-2s} \right) \\
&\le c\sum_{j \ge 1} (1 - \eta^{j\beta}) R^{1+s} \left( (\eta^{j}R)^{-n-2s} \int_0^{ \min\{ \eta^j R , \kappa \} } \hspace{-0.2cm} t^{s-1} \left( \int_{ (B_{\eta^{j}R} \setminus B_{\eta^{j-1} R}) \cap \{ d = t \} } \hspace{-0.6cm} \d \mathcal{H}^{n-1}(y) \right) \d t + (\eta^j R)^{-2s} \right) \\
&\le c\sum_{j \ge 1} (1 - \eta^{j\beta}) R^{1+s} \left( (\eta^j R)^{-1-s} + (\eta^j R)^{-2s} \right) \le c\sum_{j \ge 1} (1 - \eta^{j\beta}) \eta^{-2sj}
\end{align*}
for some $c > 0$, depending only on $n,s,\lambda,\Lambda,\kappa,C,\eta$, where we also used that $R \le 1$. Similarly,
\begin{align*}
R^{1+s} &\int_{(\R^n \setminus \Omega) \setminus B_{R}} v_-(y)K(x-y) \d y \le c\sum_{j \ge 1} (1 - \eta^{j\beta}) R^{1+s} \int_{\R^n \cap (B_{\eta^{j}R} \setminus B_{\eta^{j-1} R})} |y|^{-n-2s} \d y \\
&\le  c\sum_{j \ge 1} (1 - \eta^{j\beta}) R^{1+s} (\eta^j R)^{-2s} \le c \sum_{j \ge 1} (1 - \eta^{j\beta}) \eta^{-2js},
\end{align*}
where we used that $R^{1-s} \le 1$, and $c > 0$ depends only on $n,s,\Lambda$. Therefore, we obtain
\begin{align*} 
R^{1+s} \int_{\R^n \setminus B_{R}} d^{s-1}(y) v_-(y)K(x-y) \d y \le c\sum_{j \ge 1} (1 - \eta^{j\beta}) \eta^{-2js}.
\end{align*}
Since this quantity vanishes as $\beta > 0$ goes to zero, we can make the whole expression smaller than $c_0/16$, which implies by recalling the definition of $\tilde{f}$
\begin{align*}
R^{1+\alpha} \Vert d^{s-\alpha} \tilde{f}_- \Vert_{L^{\infty}(\Omega \cap B_{R/2})}  &\le R^{1+\alpha} \Vert d^{s-\alpha} f_- \Vert_{L^{\infty}(\Omega \cap B_{R/2})} + R^{1+s} \left\Vert\int_{\Omega \setminus B_{R}} v_-(y)K(\cdot -y) \d y \right\Vert_{L^{\infty}(B_{R/2})} \\
&\le R^{1+\alpha} \Vert d^{s-\alpha} f_- \Vert_{L^{\infty}(\Omega \cap B_R)} + \frac{c_0}{16},
\end{align*}
and therefore by the estimate \eqref{eq:weak-Harnack-application}
\begin{align*}
\inf_{\Omega \cap B_{\eta^{-1} R}} u + R^{1+\alpha} \Vert d^{s-\alpha} f_- \Vert_{L^{\infty}(\Omega \cap B_R)} + R \Vert g_+ \Vert_{L^{\infty}(\partial \Omega \cap B_R )} \ge \frac{c_0}{8} - \frac{c_0}{16} = \frac{c_0}{16},
\end{align*}
as desired.
\end{proof}

We are now in a position to prove the boundary H\"older regularity.

\begin{lemma}
\label{lemma:bdry-osc-decay}
Let $L \in \mathcal{L}_s^{\text{hom}}(\lambda,\Lambda)$. Let $\Omega \subset \R^n$ be an open, bounded domain with $\partial \Omega \in C^{2,\gamma}$ for some $\gamma > 0$ and $K \in C^{5+2\gamma}(\mathbb{S}^{n-1})$. Assume that $x_0 \in \partial \Omega$ and let $0 < R \le 1$. Let $v \in L^1_{2s}(\R^n)$ with $v/d^{s-1} \in C(\overline{\Omega})$ be a viscosity solution to
\begin{align*}
\begin{cases}
L v &= f ~~ \text{ in } \Omega \cap B_R(x_0),\\
v &= 0 ~~ \text{ in } B_R(x_0) \setminus \Omega, \\
\partial_{\nu}(v/b_{\Omega}) &= g ~~ \text{ on } \partial \Omega \cap B_R(x_0)
\end{cases}
\end{align*}
for some $f \in C(\Omega \cap B_R(x_0))$ and $g \in C(\overline{\partial \Omega \cap B_R(x_0)})$. Then, there exist $c > 0$, and $\alpha_0 \in (0,1)$, depending only on $n,s,\lambda,\Lambda,\gamma$, and the $C^{2,\gamma}$ radius of $\Omega$, such that if $d^{s-\alpha} f \in L^{\infty}(\Omega \cap B_R(x_0))$ for some $\alpha \in (0,\alpha_0]$, then it holds:
\begin{align*}
[ v/d^{s-1} ]_{C^{\alpha}(\Omega \cap B_{R/2}(x_0))} \le c R^{-\alpha} \big(\Vert v/ d^{s-1} \Vert_{L^{\infty}(\Omega)} &+ \Vert v \Vert_{L^{\infty}(\R^n \setminus \Omega)} \\
&+ R^{1+\alpha} \Vert d^{s-\alpha} f \Vert_{L^{\infty}(\Omega \cap B_R(x_0) )} + R \Vert g \Vert_{L^{\infty}( \partial \Omega \cap B_R(x_0) )} \big).
\end{align*}
\end{lemma}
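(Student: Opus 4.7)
The plan is to prove the estimate by the standard oscillation-decay iteration at boundary points, where the growth lemma (the preceding result) replaces the role of the usual half-Harnack inequality. After noting that the statement is translation- and scale-invariant, I would assume $x_0 = 0$, $R = 1$, and normalize so that
\[
\|v/d^{s-1}\|_{L^\infty(\Omega)} + \|v\|_{L^\infty(\R^n\setminus\Omega)} + \|d^{s+1-\alpha}f\|_{L^\infty(\Omega\cap B_1)} + \|g\|_{L^\infty(\partial\Omega\cap B_1)} = 1.
\]
Writing $u := v/d^{s-1}$, the main task is to prove oscillation decay at every $z \in \partial\Omega \cap B_{1/2}$: there exist $\alpha \in (0,\alpha_0]$, $\eta > 1$ (from the growth lemma), and $C$ such that $\osc_{\Omega \cap B_{\eta^{-k}}(z)} u \le C\eta^{-\alpha k}$. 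The desired seminorm $[u]_{C^{\alpha}(B_{1/2})}$ then follows from this boundary decay combined with the interior $C^\alpha$ estimate from \autoref{lemma:interior-regularity}, by a standard argument comparing $d(x), d(y)$, and $|x-y|$.

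To run the iteration, I construct sequences $m_k \le M_k$ with $m_k \le u \le M_k$ in $\Omega \cap B_{\eta^{-k}}(z)$ and $M_k - m_k \le 2\eta^{-\alpha k}$. The base case $k=0$ is the normalization. Given step $k$, the dichotomy applies in $\Omega_{\eta^{-k}/4} \cap B_{\eta^{-k}/2}(z)$: either $\{u \ge (m_k+M_k)/2\}$ or $\{u \le (m_k+M_k)/2\}$ covers at least half the set. In the first case, I set
\[
V_k := \tfrac{1}{2}\eta^{\alpha k}\bigl(v - m_k d^{s-1}\bigr),
\]
and apply the growth lemma to $V_k$ at scale $R_k = \eta^{-k}$ centered at $z$. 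By construction $V_k/d^{s-1} = \tfrac{1}{2}\eta^{\alpha k}(u-m_k) \in [0,1]$ in $\Omega\cap B_{\eta^{-k}}$, the nonnegativity $V_k \ge 0$ in $B_{\eta^{-k}}$ follows because $v = 0$ in $B_1 \setminus \Omega$, the measure condition is our case assumption, and the Neumann data becomes $\tilde g_k = \tfrac{1}{2}\eta^{\alpha k}g$. The tail bounds for $j \ge 1$ reduce, by the inductive oscillation estimate (for $j\le k$) and by the normalization $|u|,|v|\le 1$ (for $j>k$), to inequalities of the form $\eta^{j\beta} \ge 1 + \eta^{\alpha j}$, which hold by choosing $\beta > \alpha$ (and $\eta$ from the growth lemma if needed). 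Applying the growth lemma yields $\inf_{\Omega \cap B_{\eta^{-(k+1)}}(z)} u \ge m_k + \delta\eta^{-\alpha k} - C \cdot (\text{data at step }k)$, and I set $m_{k+1} := m_k + \tfrac{\delta}{2}\eta^{-\alpha k}$, $M_{k+1} := M_k$, provided $\eta^{-\alpha} \ge 1 - \delta/4$, i.e. provided $\alpha_0$ is small enough. The symmetric case ($\le$) is handled by taking $M_{k+1} := M_k - \tfrac{\delta}{2}\eta^{-\alpha k}$, $m_{k+1} := m_k$.

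The main obstacle is verifying that $V_k$ genuinely satisfies a Neumann problem of the type required by the growth lemma, and in particular that its right-hand side remains under control uniformly in $k$. By \autoref{lemma:sum-viscosity} (adapted to the Neumann setting — $d^{s-1}$ itself has $d^{s-1}/d^{s-1}\equiv 1$ on $\partial\Omega$, so $\partial_\nu(d^{s-1}/d^{s-1})=0$), one has $LV_k = \tfrac{1}{2}\eta^{\alpha k}(f - m_k L(d^{s-1}))$. By \autoref{lemma:dist-s-1+eps}, $|L(d^{s-1})| \le c_1 d^{\delta\gamma - s - 1}$ for any $\delta \in (0,s)$, so
\[
R_k^\alpha \|d^{s+1-\alpha}\widetilde{f}_k\|_{L^\infty} \le \tfrac{1}{2}\eta^{\alpha k}\eta^{-\alpha k}\bigl(\|d^{s+1-\alpha}f\|_{L^\infty} + c_1|m_k|\,\|d^{\delta\gamma - \alpha}\|_{L^\infty(\Omega\cap B_1)}\bigr),
\]
which is bounded uniformly in $k$ provided $\alpha < \delta\gamma$; this forces the restriction $\alpha_0 \le s\gamma$. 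Similarly, the Neumann data contribution scales as $R_k\|\widetilde{g}_k\|_{L^\infty} = \eta^{(\alpha-1)k}\|g\|_{L^\infty}$, which decays for $\alpha < 1$. With these bounds the data term from the growth lemma is at most some constant $C$ (depending on the fixed normalization), so we can absorb it into $\delta$ by a further rescaling of the normalization (replacing $1$ on the right by $\delta/(4C)$ if needed), completing the induction and hence the proof.
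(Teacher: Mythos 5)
Your overall strategy is the same as the paper's (dichotomy plus iteration of the growth lemma, then interior estimates), and your treatment of the right-hand side term $m_kL(d^{s-1})$ via \autoref{lemma:dist-s-1+eps} with the restriction $\alpha_0\le \gamma s$, as well as the absorption of the data into the normalization, match the paper's argument. However, there is a genuine gap in your verification of the tail hypotheses of the growth lemma. You renormalize by the fixed factor $\tfrac12\eta^{\alpha k}$ while only keeping the one-sided bound $M_k-m_k\le 2\eta^{-\alpha k}$ (in your scheme the gap shrinks by $\tfrac{\delta}{2}\eta^{-\alpha k}$ at every step, so it is \emph{not} bounded below by $c\,\eta^{-\alpha k}$ in general). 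As a consequence, for $j\le k$ the best you can extract from the inductive bounds is $V_k/d^{s-1}\ge \tfrac12\eta^{\alpha k}(m_{k-j}-m_k)\ge -\eta^{\alpha j}$, and the tail condition of \autoref{lemma:growth-lemma} then requires $-\eta^{\alpha j}\ge 1-\eta^{\beta j}$, i.e.\ exactly the inequality $\eta^{\beta j}\ge 1+\eta^{\alpha j}$ that you state. This inequality is false for $j=1$ (and small $j$): since $\eta^{\alpha}\ge 1$ the right side is at least $2$, while $\beta$ is a \emph{fixed small} constant produced by the growth lemma (its proof needs $\sum_j(\eta^{j\beta}-1)\eta^{-2js}$ small), so $\eta^{\beta}$ can be arbitrarily close to $1$; you cannot ``choose $\beta>\alpha$'' large enough to save it. The same defect hits the measure hypothesis: on the good set you only get $V_k/d^{s-1}\ge\tfrac14\eta^{\alpha k}(M_k-m_k)$, which need not be $\ge\tfrac14$ without a matching lower bound on the gap.

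The fix is the paper's bookkeeping: normalize by the gap itself, $w=\bigl(v-(d^{s-1}+\1_{\R^n\setminus\Omega}\1_{\{m_{k-1}<0\}})m_{k-1}\bigr)/(M_{k-1}-m_{k-1})$, and keep the gap exactly geometric, $M_k-m_k=M\eta^{-\alpha k}$ (update only one endpoint by $\delta(M_{k-1}-m_{k-1})$ with $1-\delta=\eta^{-\alpha_0}$, enlarging the untouched endpoint if necessary to restore equality). Then for $j\le k-1$ the monotonicity $M_{k-1}\le M_{k-j-1}$ gives the stronger bound $w/d^{s-1}\ge 1-\tfrac{M_{k-j-1}-m_{k-j-1}}{M_{k-1}-m_{k-1}}=1-\eta^{\alpha j}\ge 1-\eta^{\beta j}$, which only needs $\alpha\le\beta$, and for $j>k-1$ one uses $u\ge m_0$, $v\ge -\Vert v\Vert_{L^\infty(\R^n\setminus\Omega)}$ together with the choice of the constant $M$ (into which the data and global norms are built). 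The indicator correction $\1_{\R^n\setminus\Omega}m_{k-1}$ when $m_{k-1}<0$ is also needed, since $d^{s-1}\equiv 0$ outside $\Omega$ and your $V_k$ is only $\tfrac12\eta^{\alpha k}v$ there, which again fails the tail condition at the first few scales beyond $B_R$. With these corrections your argument becomes the paper's proof.
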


\begin{proof}
Let us assume without loss of generality that $x_0 = 0$. We will prove the desired result in two steps. Let us denote by $\eta > 1$ the constant from \autoref{lemma:growth-lemma}.

Step 1: We claim that for any $k \in \N$:
\begin{align*}
\osc_{B_{\eta^{-k} R}} (v/b_{\Omega}) &\le c \eta^{-\alpha k} \Big(\Vert v/d^{s-1} \Vert_{L^{\infty}(\Omega)} + \Vert v \Vert_{L^{\infty}(\R^n \setminus \Omega)} \\
&\qquad\qquad\qquad\qquad\qquad\quad + R^{1+\alpha} \Vert d^{s-\alpha} f \Vert_{L^{\infty}(\Omega \cap B_R)} +  R^{\alpha} + R \Vert g \Vert_{L^{\infty}(\partial\Omega \cap B_R)} \Big).
\end{align*}
for some constant $c  > 0$, depending only on $n,s,\lambda,\Lambda,\gamma$, and the $C^{2,\gamma}$ radius of $\Omega$. To prove it, we set $\alpha_0 := \min \{ \beta , \gamma s, 1-s [- \log_{\eta}(1 - \frac{\delta'}{2})] \}$, and $\delta := 1 - \eta^{-\alpha_0}$, where $\delta',\beta, \eta$ are the constants from \autoref{lemma:growth-lemma}. This yields
\begin{align}
\label{eq:alpha-delta}
(1-\delta) = \eta^{-\alpha_0}, \qquad \alpha_0 \le \min \{ \beta , \gamma s , 1-s \}, \qquad \delta \le \delta'/2.
\end{align}
Let us set $u = v/b_{\Omega}$, take $\alpha \in (0,\alpha_0]$, and
\begin{align*}
M := 4 \delta^{-1} c_1 \left(\Vert v/d^{s-1} \Vert_{L^{\infty}(\Omega)} + \Vert v \Vert_{L^{\infty}(\R^n \setminus \Omega)} + R^{1+\alpha} \Vert d^{s-\alpha} f \Vert_{L^{\infty}(\Omega \cap B_R(x_0))} +  R^{\alpha}  + R \Vert g \Vert_{L^{\infty}(\partial\Omega \cap B_R(x_0))} \right), 
\end{align*}
where $c_1 > 0$ denotes the constant $c_1$ from \autoref{lemma:dist-s-1+eps}.\\
The claim of Step 1 will follow immediately, once we construct an increasing sequence $(m_k)_k$ and a decreasing sequence $(M_k)_k$ such that for any $k \in \N$:
\begin{align}
\label{eq:osc-decay-1}
m_k &\le u \le M_k  ~~ \text{ in } B_{\eta^{-k} R},\\
\label{eq:osc-decay-2}
M_k - m_k &= M \eta^{-\alpha k}.
\end{align}
We prove \eqref{eq:osc-decay-1} and \eqref{eq:osc-decay-2} by induction. Setting $m_0 = - \frac{\delta}{2c_1} M$, $M_0 = \frac{\delta}{2c_1} M$, we obtain the desired results for $k = 0$. Let us now assume that \eqref{eq:osc-decay-1} and \eqref{eq:osc-decay-2} hold true for any $j \le k-1$. \\
We will now prove it for $k$. 
Clearly, one of the following two options always holds true:
\begin{align*}
\left| \Omega_{\eta^{-(k-1)}R/4} \cap B_{\eta^{-(k-1)}R/2} \cap \left\{ u \ge \frac{M_{k-1} + m_{k-1}}{2} \right\}  \right| &\ge \frac{|\Omega_{\eta^{-(k-1)}R/4} \cap B_{\eta^{-(k-1)}R/2} |}{2}, \\
\left| \Omega_{\eta^{-(k-1)}R/4} \cap B_{\eta^{-(k-1)}R/2} \cap \left\{ u \ge \frac{M_{k-1} + m_{k-1}}{2} \right\}  \right| &\le \frac{|\Omega_{\eta^{-(k-1)}R/4} \cap B_{\eta^{-(k-1)}R/2} |}{2} .
\end{align*}
In the first case, and in the second case, we define 
\begin{align*}
w = \frac{v - (b_{\Omega} + \1_{\R^n \setminus \Omega} \1_{\{ m_{k-1} < 0\}}) m_{k-1}}{M_{k-1} - m_{k-1}}, \quad w = \frac{(b_{\Omega} + \1_{\R^n \setminus \Omega} \1_{\{ M_{k-1} > 0 \}}) M_{k-1} - v}{M_{k-1} - m_{k-1}}, ~~ \text{ respectively}.
\end{align*}
Let us assume that we are in the first case. The proof of the second case goes via the same arguments, and we will skip it. Let us verify that $w$ satisfies the assumptions of \autoref{lemma:growth-lemma}. 
First, note that if $u(x) \ge \frac{M_{k-1} + m_{k-1}}{2}$ for some $x \in \Omega$, it follows that
\begin{align*}
\frac{w}{b_{\Omega}}(x) = \frac{u(x) - m_{k-1}}{M_{k-1} - m_{k-1}} \ge \frac{u(x) - m_{k-1}}{M_{k-1} - m_{k-1}} \ge \frac{ \frac{M_{k-1} + m_{k-1}}{2} - m_{k-1} }{M_{k-1} - m_{k-1}} = \frac{1}{2},
\end{align*}
Thus, as an immediate consequence of being in the first case, we get
\begin{align*}
\left| \Omega_{\eta^{-(k-1)}R/4} \cap B_{\eta^{-(k-1)}R/2} \cap \left\{ \frac{w}{b_{\Omega}} \ge \frac{1}{2} \right\}  \right| &\ge \frac{|\Omega_{\eta^{-(k-1)}R/4} \cap B_{\eta^{-(k-1)}R/2} |}{2}.
\end{align*}
Moreover, by \eqref{eq:osc-decay-1} (for $k-1$), we have
\begin{align*}
w &= \frac{v - b_{\Omega} m_{k-1}}{M_{k-1} - m_{k-1}} \ge 0 ~~ \text{ in } B_{\eta^{-(k-1)} R} \cap \Omega.
\end{align*}
Non-negativity of $w$ in $B_{\eta^{-(k-1)} R} \setminus \Omega$ follows by assumption and construction.
Note that we obtain
\begin{align*}
|L (b_{\Omega} + \1_{\R^n \setminus \Omega})| \le c_1 ~~ \text{ in } \Omega,
\end{align*}
and therefore $d^{s - \alpha} L (b_{\Omega} + \1_{\R^n \setminus \Omega}) \in L^{\infty}(\Omega \cap B_R)$. Then, by \eqref{eq:osc-decay-2} (for $k-1$) we have
\begin{align}
\label{eq:bdry-osc-PDE}
L w = \frac{f - L (b_{\Omega} + \1_{\R^n \setminus \Omega} \1_{\{ m_{k-1} < 0 \}} )m_{k-1}}{M_{k-1} - m_{k-1}} \ge \frac{f - c_1 m_{k-1}}{M_{k-1} - m_{k-1}} ~~ \text{ in } \Omega \cap B_{R}.
\end{align}
Moreover, clearly
\begin{align*}
\partial_{\nu}(w/b_{\Omega}) = \frac{g - \partial_{\nu}(b_{\Omega}/b_{\Omega})m_{k-1}}{M_{k-1} - m_{k-1}} = \frac{g}{M_{k-1} - m_{k-1}} ~~ \text{ on } \partial \Omega \cap B_R.
\end{align*}

It remains to verify the fourth and fifth assumption of \autoref{lemma:growth-lemma}.
Let us first consider $j \le k-1$. In that case, for any $x \in B_{\eta^{-(k-1)+j} R} \cap \Omega$ it holds by \eqref{eq:osc-decay-1} and \eqref{eq:osc-decay-2}:
\begin{align*}
\frac{w}{b_{\Omega}}(x) &= \frac{u(x) - m_{k-1}}{M_{k-1} - m_{k-1}} \ge \frac{m_{k-j-1} - m_{k-1}}{M_{k-1} - m_{k-1}} \\
&\ge \frac{M_{k-1} - M_{k-j-1} + m_{k-j-1} - m_{k-1}}{M_{k-1} - m_{k-1}} = 1 - \frac{M_{k-j-1} - m_{k-j-1}}{M_{k-1} - m_{k-1}} = 1 - \eta^{\alpha j}.
\end{align*}
Clearly, for any $x \in B_{\eta^{-(k-1)+j} R} \setminus \Omega$ and in case $m_{k-1} < 0$, by the same arguments as above, using \eqref{eq:osc-decay-1}, we have
\begin{align*}
w(x) = \frac{v(x) - m_{k-1}}{M_{k-1} - m_{k-1}} \ge \frac{m_{k-j-1} - m_{k-1}}{M_{k-1} - m_{k-1}} \ge 1 - \eta^{\alpha j}.
\end{align*}
If however $m_{k-1} \ge 0$, then we can use that $v = 0$ in $B_R \setminus \Omega$.
Moreover, if $j > k-1$ we compute for $x \in B_{\eta^{-(k-1)+j} R} \cap \Omega$:
\begin{align*}
\frac{w}{b_{\Omega}}(x) &= \frac{u(x) - m_{k-1}}{M_{k-1} - m_{k-1}} \ge \frac{m_0 - m_{k-1}}{M_{k-1} - m_{k-1}}\\
&\ge \frac{(M_{k-1} - m_{k-1}) - (M_0 - m_0)}{M_{k-1} - m_{k-1}} = 1 - \eta^{\alpha(k-1)} \ge 1 - \eta^{\alpha j}.
\end{align*}
Finally, for $x \in B_{\eta^{-(k-1)+j} R} \setminus \Omega$, again by the same arguments as above, and using that $v \ge m_0$ by construction, we have
\begin{align*}
w(x) = \frac{v(x) - m_{k-1}\1_{ \{ m_{k-1} < 0 \} }}{M_{k-1} - m_{k-1}} \ge \frac{m_0 - m_{k-1}}{M_{k-1} - m_{k-1}} \ge 1 - \eta^{\alpha j}.
\end{align*}

Consequently, all assumptions of \autoref{lemma:growth-lemma} are satisfied for $w$ with radius $\eta^{-(k-1)} R$. Thus, we deduce from \autoref{lemma:growth-lemma} and the choice of $\delta$:
\begin{align*}
u - m_{k-1} &= (M_{k-1} - m_{k-1})\frac{w}{b_{\Omega}} \\
&\ge 2\delta (M_{k-1} - m_{k-1}) - (\eta^{-(k-1)}R)^{1+\alpha} \left(\Vert d^{s-\alpha} f \Vert_{L^{\infty}(\Omega \cap B_{\eta^{-(k-1)}R} )} + c_1 |m_{k-1}| \right)\\
&\qquad\qquad\qquad\qquad ~~ - (\eta^{-(k-1)}R) \Vert  g \Vert_{L^{\infty}(\partial \Omega \cap B_{\eta^{-(k-1)}R} )} \qquad\qquad\qquad\qquad \text{ in } \Omega \cap B_{\eta^{-k} R}.
\end{align*}
Moreover, by \eqref{eq:alpha-delta}, the choice of $M$, \eqref{eq:osc-decay-2}, and the estimate $|m_{k-1}| \le M_0 = \frac{\delta}{2c_1} M$ we estimate
\begin{align*}
(\eta^{-(k-1)}R)^{1+\alpha} & \left(\Vert d^{s-\alpha} f \Vert_{L^{\infty}(\Omega \cap B_{\eta^{-(k-1)}R} )} + c_1 |m_{k-1}| \right) + (\eta^{-(k-1)}R) \Vert g \Vert_{L^{\infty}(\partial \Omega \cap B_{\eta^{-(k-1)}R} )} \\
&\le \eta^{-\alpha (k-1)} \delta M = \delta(M_{k-1} - m_{k-1}).
\end{align*}
Therefore, we deduce
\begin{align*}
m_k := \delta(M_{k-1} - m_{k-1}) + m_{k-1} \le u \le M_{k-1} =: M_k ~~ \text{ in } \Omega \cap B_{\eta^{-k} R},
\end{align*}
which proves \eqref{eq:osc-decay-1} for $k$. \eqref{eq:osc-decay-2} for $k$ follows from \eqref{eq:alpha-delta}. The proof of Step 1 is complete.

Step 2: Now that we have established the claim of Step 1, let us show how to conclude the proof. Let us take $x,y \in B_{R/2}$. We define $k \in \N$ as
\begin{align*}
\inf \{ k \in \N : |x-y| \ge \eta^{-k} (R/2) \}.
\end{align*}
Then, $|x-y| \le \eta^{-k+1} (R/2)$ and by Step 1, it holds
\begin{align*}
\frac{|u(x) - u(y)|}{|x-y|^{\alpha}} &\le \eta^{k\alpha}(R/2)^{-\alpha} \osc_{B_{\eta^{-k+1}(R/2)}} u \\
&\le c R^{-\alpha}  \left(\Vert u \Vert_{L^{\infty}(\Omega)} + \Vert v \Vert_{L^{\infty}(\R^n \setminus \Omega)} + R^{1+\alpha} \Vert d^{s-\alpha} f \Vert_{L^{\infty}(\Omega \cap B_R)} + R^{1+\alpha} + R \Vert g \Vert_{L^{\infty}(\partial\Omega \cap B_R)} \right).
\end{align*}
Note that we can omit the additional summand $+R^{1+\alpha}$ by an additional scaling and normalization argument, i.e., by assuming that $R = 1$ and $\Vert u \Vert_{L^{\infty}(\Omega)} + \Vert v \Vert_{L^{\infty}(\R^n \setminus \Omega)} + \Vert d^{s-\alpha} f \Vert_{L^{\infty}(\Omega \cap B_1)} + \Vert g \Vert_{L^{\infty}(\partial\Omega \cap B_1)} = 1$, applying the previous estimate, and rescaling to general $R$.
This concludes the proof after using that by \autoref{thm:dirichlet} it holds $b_{\Omega}/d^{s-1} \in C^{\alpha}(\Omega \cap B_{R/2}(x_0))$.
\end{proof}

We are now in a position to deduce the boundary H\"older regularity estimate in $C^{1,\gamma}$ domains.

\begin{proof}[Proof of \autoref{thm:bdry-Holder}]
Note that
\begin{align*}
\partial_{\nu}(v/b_{\Omega}) = \partial_{\nu}(v/d^{s-1}) - \partial_{\nu}(b_{\Omega}/d^{s-1}) (v/d^{s-1}),
\end{align*}
and recall that $|\partial_{\nu}(b_{\Omega}/d^{s-1})| \le C$. Hence, we can apply
\autoref{lemma:bdry-osc-decay} (with $R = 1/2$ and varying $x_0 \in \partial \Omega$). Combining it with the interior regularity results from \cite[Theorem 2.4.3]{FeRo23}, and a covering argument, we deduce the desired result. In order to produce the tail-term in the estimate, we employ a truncation argument in the same way as in the proof of \autoref{cor:bdry-Holder}.
\end{proof}

We end this section with a boundary H\"older regularity estimate for solutions that are defined up to a polynomial and might grow fast at infinity.

\begin{corollary}
\label{cor:bdry-Holder}
Let $L \in \mathcal{L}_s^{\text{hom}}(\lambda,\Lambda)$. Let $k \in \N \cup \{ 0 \}$. Let $\Omega \subset \R^n$ be an open, bounded domain with $\partial \Omega \in C^{2,\gamma}$ for some $\gamma > 0$ and $K \in C^{5+2\gamma}(\mathbb{S}^{n-1})$. Let $f \in C(\Omega \cap B_4)$, $g \in C(\overline{\partial \Omega \cap B_4})$, and $v$ with $v/d^{s-1} \in C(\overline{\Omega})$ be a viscosity solution to
\begin{align*}
\begin{cases}
L v &\overset{k}{=} f ~~ \text{ in } \Omega \cap B_4,\\
v &= 0 ~~ \text{ in } B_4 \setminus \Omega,\\
\partial_{\nu}(v/d^{s-1}) &= g ~~ \text{ on }  \partial \Omega \cap B_4. 
\end{cases}
\end{align*}
Then, there exists $\alpha_0 > 0$ such that if for some $\alpha \in (0,\alpha_0]$ we have $d^{s+1-\alpha}f \in L^{\infty}(\Omega \cap B_4)$, then the following holds true:
If $k = 0$, and $v \in L^1_{2s}(\R^n)$, it holds  $v/d^{s-1} \in C^{\alpha}_{loc}(\overline{\Omega} \cap B_4)$, and
\begin{align*}
\left\Vert \frac{v}{d^{s-1}} \right\Vert_{C^{\alpha}(\overline{\Omega} \cap B_1)} \le c \Big( \left\Vert \frac{v}{d^{s-1}} \right\Vert_{L^{\infty}(\Omega \cap B_4)} &+ \Vert v \Vert_{L^1_{2s}(\R^n \setminus B_4)} + \Vert d^{s+1-\alpha} f \Vert_{L^{\infty}(\Omega \cap B_4)} + \Vert g \Vert_{L^{\infty}( \partial \Omega \cap B_4 )} \Big).
\end{align*}
If $k \in \N$, $K \in C^{k-1+\delta}(\mathbb{S}^{n-1})$, $v \in L^1_{2s+k-1+\delta}(\R^n)$ for some $\delta > 0$, it holds 
$v/d^{s-1} \in C^{\alpha}_{loc}(\overline{\Omega} \cap B_4)$, and
\begin{align*}
\left\Vert \frac{v}{d^{s-1}} \right\Vert_{C^{\alpha}(\overline{\Omega} \cap B_1)} \le c \Big( \left\Vert \frac{v}{d^{s-1}} \right\Vert_{L^{\infty}(\Omega \cap B_4)} &+ \left\Vert v  \right\Vert_{L^{1}_{2s+k-1+\delta}(\R^n \setminus B_4)} + \Vert d^{s+1-\alpha} f \Vert_{L^{\infty}(\Omega \cap B_4)} + \Vert g \Vert_{L^{\infty}( \partial \Omega \cap B_4 )} \Big),
\end{align*}
where $c > 0$ and $\alpha_0$, depend only on $n,s,\lambda,\Lambda,\gamma,k,\delta$, and the $C^{2,\gamma}$ radius of $\Omega$.
\end{corollary}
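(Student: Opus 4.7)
The proof proceeds in two stages, treating $k=0$ and $k\ge 1$ separately. When $k=0$, the condition $Lv \overset{0}{=} f$ is simply $Lv=f$ (by the remark following \autoref{def:L-up-to-a-polynomial}), and the estimate follows from \autoref{thm:bdry-Holder} applied at each boundary point $x_0\in\partial\Omega\cap\overline{B_1}$ with radius $R=2$, combined with the interior bound of \autoref{lemma:interior-regularity} away from the boundary, glued together by a standard covering argument. In each local application the tail $\|v\|_{L^1_{2s}(\R^n\setminus B_2(x_0))}$ is absorbed into $\|v/d^{s-1}\|_{L^\infty(\Omega\cap B_4)}+\|v\|_{L^1_{2s}(\R^n\setminus B_4)}$ using the trivial bound $|v|\le d^{s-1}\|v/d^{s-1}\|_{L^\infty(\Omega)}$ inside $\Omega$ together with the integrability of $d^{s-1}$ near $\partial\Omega$.

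For $k\ge 1$ the function $v$ may fail to lie in $L^1_{2s}(\R^n)$, so \autoref{thm:bdry-Holder} is not directly applicable; the idea is to truncate $v$ and reduce to the case $k=0$. Fix $R_0=8$ and set $w:=v\1_{B_{R_0}}$, which is compactly supported and hence lies in $L^1_{2s}(\R^n)$. By \autoref{def:L-up-to-a-polynomial} and the Taylor expansion of $K(x-y)$ in $x$ about the origin employed in \cite{DDV22} (the same expansion underlies the proof of \autoref{lemma:derivative-up-to-a-polynomial}),
\begin{equation*}
Lw = f + r_{R_0} + p_{R_0} \qquad \text{in } \Omega\cap B_4,
\end{equation*}
in the viscosity sense, where $p_{R_0}\in\cP_{k-1}$ has coefficients $c_\alpha=\frac{(-1)^{|\alpha|}}{\alpha!}\int_{\R^n\setminus B_{R_0}}v(y)\partial^\alpha K(-y)\,dy$, $|\alpha|\le k-1$, and $r_{R_0}$ is the integrated Taylor remainder. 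Using $K\in C^{k-1+\delta}(\mathbb{S}^{n-1})$ and $|\partial^\alpha K(y)|\lesssim|y|^{-n-2s-|\alpha|}$ one verifies
\begin{equation*}
\|p_{R_0}\|_{L^\infty(B_4)}+\|r_{R_0}\|_{L^\infty(\Omega\cap B_4)}\le C\,\|v\|_{L^1_{2s+k-1+\delta}(\R^n\setminus B_4)}.
\end{equation*}

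Since $w=v$ on $\overline{\Omega\cap B_4}$, the quotients $w/d^{s-1}$ and $v/d^{s-1}$ agree there; the Neumann datum on $\partial\Omega\cap B_4$ is unaffected by the truncation outside $B_{R_0}$; and $\|w\|_{L^1_{2s}(\R^n\setminus B_4)}\le C\|v\|_{L^1_{2s+k-1+\delta}(\R^n\setminus B_4)}$ by compact support. Thus $w$ solves, in the $k=0$ sense, $Lw=F$ in $\Omega\cap B_4$ with $F:=f+r_{R_0}+p_{R_0}$ satisfying
\begin{equation*}
\|d^{s+1-\alpha}F\|_{L^\infty(\Omega\cap B_4)}\le \|d^{s+1-\alpha}f\|_{L^\infty(\Omega\cap B_4)}+C\|v\|_{L^1_{2s+k-1+\delta}(\R^n\setminus B_4)}.
\end{equation*}
Applying the already-proved $k=0$ case to $w$ yields the desired H\"older estimate for $v/d^{s-1}$. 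The only real obstacle is the quantitative control of $p_{R_0}$ and $r_{R_0}$ via the weighted $L^1$ tail of $v$; this is a routine computation given the construction of the polynomial in \cite{DDV22} and the sharp decay of the derivatives of $K$, and the passage of the viscosity Neumann condition through the truncation is a minor technicality handled in the spirit of \autoref{lemma:sum-viscosity}.
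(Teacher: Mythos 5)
Your $k=0$ argument is fine and is essentially the paper's: truncate at $B_4$, absorb $L(v\1_{\R^n\setminus B_4})$ into the right-hand side using the $L^1_{2s}$ tail, and apply \autoref{thm:bdry-Holder}. The problem is the $k\ge 1$ case, at the point where you claim
\begin{align*}
\Vert p_{R_0}\Vert_{L^{\infty}(B_4)}+\Vert r_{R_0}\Vert_{L^{\infty}(\Omega\cap B_4)}\le C\,\Vert v\Vert_{L^1_{2s+k-1+\delta}(\R^n\setminus B_4)},
\end{align*}
with $p_{R_0}$ having coefficients $c_\alpha\propto\int_{\R^n\setminus B_{R_0}}v(y)\partial^\alpha K(-y)\,\d y$, $|\alpha|\le k-1$. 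These integrals require $v\in L^1_{2s+|\alpha|}(\R^n)$ to converge, and the hypothesis $v\in L^1_{2s+k-1+\delta}(\R^n)$ is strictly weaker than that for every $|\alpha|\le k-1$ (the weight $(1+|y|)^{-n-2s-(k-1+\delta)}$ decays faster than $(1+|y|)^{-n-2s-|\alpha|}$). In fact the entire point of \autoref{def:L-up-to-a-polynomial}, and of this corollary in the blow-up scheme of Section \ref{sec:higher}, is that the tail of $v$ may make exactly these coefficient integrals diverge (the blow-up functions grow like $|x|^{k+\gamma}$ times $d^{s-1}$); only the Taylor \emph{remainder} of $K(x-y)$ in $x$, which decays like $|y|^{-n-2s-(k-1+\delta)}$, is controlled by the weighted tail. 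So the remainder term $r_{R_0}$ (and, after the difference-quotient argument, the function $h$ in the paper) can be bounded as you say, but the polynomial cannot be bounded directly by any norm of $v$ at infinity, and your reduction to the $k=0$ case collapses there. There is also a secondary issue: at a fixed truncation radius the definition only gives some $f_{R_0}$ with $\Vert f_R-f\Vert_\infty\to0$ as $R\to\infty$, not $f$ itself, so extracting a controlled right-hand side already requires the incremental-quotient argument of \autoref{lemma:interior-regularity} rather than a single Taylor expansion.

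The paper supplies the missing idea as follows: write $Lw=h+p$ with $h$ controlled as above, take an intermediate $C^{1,\gamma}$ domain $D$ with $\Omega\cap B_2\subset D\subset\Omega\cap B_3$, and split $w=w_1+w_2$, where $w_1$ solves $Lw_1=h$ in $D$ with the exterior and boundary data of $w$, and $w_2$ solves the Dirichlet problem $Lw_2=p$ in $D$, $w_2=0$ outside $D$, $w_2/d_D^{s-1}=0$ on $\partial D$. A barrier/maximum-principle argument (\autoref{lemma:supersol}, \autoref{lemma:dist-s-1+eps}) bounds $\Vert w_1/d_D^{s-1}\Vert_{L^\infty}$ by the data, hence also $\Vert w_2/d_D^{s-1}\Vert_{L^\infty}$, and then a compactness/contradiction argument (using \autoref{thm:bdry-Holder}, Arzel\`a--Ascoli and stability, together with the fact that $w_0\equiv0$ cannot solve $Lw_0=p_0$ with $\Vert p_0\Vert_{L^\infty(D)}=1$) yields $\Vert p\Vert_{L^\infty(D)}\le c\,\Vert w_2/d_D^{s-1}\Vert_{L^\infty(D)}$. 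This indirect control of the polynomial is what your proposal is missing; without it, or some substitute, the $k\ge1$ estimate does not follow.
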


\begin{proof}
In case $k = 0$, the proof follows by a truncation argument. Indeed, let us define $w = v \1_{B_4}$ and observe that
\begin{align*}
L w = f - L(v \1_{\R^n \setminus B_4}) =: \tilde{f} ~~ \text{ in } \Omega \cap B_2.
\end{align*}
Moreover, we can estimate 
\begin{align*}
\Vert d^{s+1-\alpha} \tilde{f} \Vert_{L^{\infty}(\Omega \cap B_2)} \le c\Vert d^{s-1 + \alpha} f \Vert_{L^{\infty}(\Omega \cap B_2)} + c \Vert v \Vert_{L^1_{2s}(\R^n \setminus B_4)}.
\end{align*}
Thus, the desired result follows immediately by application of \autoref{thm:bdry-Holder} to $w$, using that $v = 0$ in $B_4 \setminus \Omega$, by assumption.

Let now $k \in \N$. Again, we define $w = v\1_{B_4}$, but this time, since the equation only holds up to a polynomial, we obtain for any $R > 4$
\begin{align*}
L w = f_R - L(v \1_{B_R \setminus B_4}) + p_R =: \tilde{f} ~~ \text{ in } \Omega \cap B_3,
\end{align*}
where $f_R \to f$ in $d^{s+1-\alpha}L^{\infty}(\Omega \cap B_3)$, as $R \to \infty$, and $p_R \in \cP_{k-1}$. As in the proof of \autoref{lemma:interior-regularity} (see also \cite[Lemma 3.6]{AbRo20} and \cite[Lemma 4.8]{Kuk21}), taking difference quotients of order $k-1+\delta$ of the equation for $w$, and using crucially that $K \in C^{k-1+\delta}(\mathbb{S}^{n-1})$, we can find a polynomial $p \in \cP_{\lfloor k-1 + \delta \rfloor}$ and $h$ with $d^{s+1-\alpha} h \in L^{\infty}(\Omega \cap B_3)$ such that
\begin{align*}
\begin{cases}
L w &= h + p ~~ \text{ in } \Omega \cap B_3,\\
w &= 0 ~~\qquad \text{ in } \R^n \setminus (\Omega \cap B_4),
\end{cases}
\end{align*}
and moreover, $h$ satisfies the estimate
\begin{align}
\label{eq:h-estimate}
\Vert d_{\Omega}^{s+1-\alpha} h \Vert_{L^{\infty}(\Omega \cap B_3)} \le C \left( \Vert d_{\Omega}^{s+1-\alpha} f \Vert_{L^{\infty}(\Omega \cap B_3)} + \left\Vert v |\cdot|^{-n-2s-(k-1+\delta)} \right\Vert_{L^{1}(\R^n \setminus B_4 )} \right).
\end{align}
Next, let us take a bounded domain $D \subset \R^n$ with $\partial D \in C^{1,\gamma}$ such that $\Omega \cap B_2 \subset D \subset \Omega \cap B_3$. Moreover, we find $w_1, w_2$ such that $w_1/d_D^{s-1} , w_1/d_D^{s-1} \in C(\overline{D})$ and $w = w_1 + w_2$ satisfying
\begin{align*}
\begin{cases}
L w_1 &= h ~~ \text{ in } D,\\
w_1 &= w ~~ \text{ in } \R^n \setminus D,\\
w_1/d_{D}^{s-1} &= v/d_{D}^{s-1} ~~ \text{ on } \partial D,
\end{cases}
\qquad \text{ and } \qquad
\begin{cases}
L w_2 &= p ~~ \text{ in } D,\\
w_2 &= 0~~ \text{ in } \R^n \setminus D,\\
w_2/d_{D}^{s-1} &= 0 ~~ \text{ on } \partial D.
\end{cases}
\end{align*}
Note that the existence of $w_2 \in L^{\infty}(\R^n)$ follows from \cite[Theorem 3.2.27]{FeRo23}, and we obtain $w_2/d^s_D \in C^{\gamma}(\overline{D})$ from \cite[Theorem 2.7.1]{FeRo23}, which yields $w_2/d_D^{s-1} = d_D (w_2/d_D^s) \in C^{\gamma}(\overline{D})$ since $\partial D \in C^{1,\gamma}$. Then, we can define $w_1 := w - w_2$.
We claim that
\begin{align}
\label{eq:w-1-claim}
\Vert w_1/d_{D}^{s-1} \Vert_{L^{\infty}(D)} \le c \left( \Vert v/d_{\Omega}^{s-1} \Vert_{L^{\infty}(\Omega \cap B_4)} + \Vert d_{\Omega}^{s+1-\alpha} h \Vert_{L^{\infty}(\Omega \cap B_3)} \right).
\end{align}

To see this, let us recall the function $\psi_1$ (with respect to $D$) from \autoref{lemma:supersol}, and observe that by \autoref{lemma:dist-s-1+eps}, we can take it in such a way that
\begin{align}
\label{eq:psi-1-d-omega-compare}
L (\psi_1 + d_{\Omega}^{s-1}) \ge c_0 d_D^{\alpha -s-1} ~~ \text{ in } D
\end{align}
for some $c_0 > 0$. Moreover, recall $\psi_1 / d_D^{s-1} = 1$ on $\partial D$. Then, let us define 
\begin{align*}
\Psi(x) &= c_1 \psi_1(x) \left( \Vert v/d_{D}^{s-1} \Vert_{L^{\infty}(\partial D)} + \Vert d_D^{s+1-\alpha} h \Vert_{L^{\infty}(D)} + \Vert w/d_{\Omega}^{s-1} \Vert_{L^{\infty}(\Omega \cap B_4)} \right) \\
&\quad + c_1 d_{\Omega}^{s-1}(x) \Vert w/d_{\Omega}^{s-1} \Vert_{L^{\infty}(\Omega \cap B_4)},
\end{align*}
where $c_1 := \max\{ c_0^{-1} , 1\}$, and observe that by \eqref{eq:psi-1-d-omega-compare} we have
\begin{align*}
\begin{cases}
L w_1 &\le L \Psi ~~ \qquad \text{ in } D,\\
w_1 &\le \Psi ~~ \qquad ~~ \text{ in } \R^n \setminus D,\\
w_1/d_D^{s-1} &\le \Psi/d_D^{s-1} ~~~ \text{ on } \partial D,
\end{cases}
\end{align*}
which, recalling that $\psi_1 \le c_1 d_D^{s-1}$ in $D$, and $d_D \le d_{\Omega}$, as well as the definition of $D$, imply that
\begin{align*}
\frac{w_1}{d_D^{s-1}} &\le c_2c_1 \left( \Vert v/d_{D}^{s-1} \Vert_{L^{\infty}(\partial D)} + \Vert d_D^{s+1-\alpha} h \Vert_{L^{\infty}(D)} + \Vert w/d_{\Omega}^{s-1} \Vert_{L^{\infty}(\Omega \cap B_4)} \right) + c_2c_1 \frac{d_{\Omega}^{s-1}}{d_D^{s-1}} \Vert w/d_{\Omega}^{s-1} \Vert_{L^{\infty}(\Omega \cap B_4)} \\
&\le c \left( \Vert v/d_{\Omega}^{s-1} \Vert_{L^{\infty}(\partial \Omega \cap B_3)} + \Vert d_{\Omega}^{s+1-\alpha} h \Vert_{L^{\infty}(\Omega \cap B_3)} +  \Vert v/d_{\Omega}^{s-1} \Vert_{L^{\infty}(\Omega \cap B_4)} \right),
\end{align*}
which yields our claim in \eqref{eq:w-1-claim}.
As a direct consequence of \eqref{eq:w-1-claim}, we deduce
\begin{align}
\label{eq:w-2-claim}
\begin{split}
\Vert w_2/d_{D}^{s-1} \Vert_{L^{\infty}(D)} &\le \Vert w/d_{D}^{s-1} \Vert_{L^{\infty}(D)} + \Vert w_1/d_{D}^{s-1} \Vert_{L^{\infty}(D)} \\
&\le c \left( \Vert v/d_{\Omega}^{s-1} \Vert_{L^{\infty}(\Omega \cap B_4)} + \Vert d_{\Omega}^{s+1-\alpha} h \Vert_{L^{\infty}(\Omega \cap B_3)} \right).
\end{split}
\end{align}

Finally, we claim that 
\begin{align}
\label{eq:poly-bounded-sol-claim}
\Vert p \Vert_{L^{\infty}(D)} \le c \Vert w_2/d_{D}^{s-1} \Vert_{L^{\infty}(D)}.
\end{align}
Note that once we show \eqref{eq:poly-bounded-sol-claim}, then the proof is complete after combination of \eqref{eq:poly-bounded-sol-claim}, \eqref{eq:w-2-claim}, \eqref{eq:h-estimate}, and application of the boundary H\"older regularity estimate (see \autoref{thm:bdry-Holder}) to $w$ in $\Omega$, as in the case $k = 0$. We prove \eqref{eq:poly-bounded-sol-claim} by contradiction. Suppose there are sequences $(L_j)_j$, $(w_j)_j$, $(p_j)_j$ with
\begin{align*}
\Vert p_j \Vert_{L^{\infty}(D)} = 1, \qquad \text{ and } \qquad 
\begin{cases}
L_j w_j &= p_j ~~ \text{ in } D,\\
w_j &= 0 ~~~ \text{ in } \R^n \setminus D,\\
w_j/d_{\Omega}^{s-1} &= 0 ~~~ \text{ on } \partial D,\\
\lim_{j \to \infty} \Vert w_j/d^{s-1}_{D} \Vert_{L^{\infty}(D)} &= 0.
\end{cases}
\end{align*}
Then, up to subsequences, it holds $L_{j_m} \rightharpoonup L$, $w_{j_m}/d_{D}^{s-1} \to u_0$ in $L^{\infty}(D)$ for some $u_0 \in L^{\infty}(D)$, $p_{j_m} \to p_0$ in $L^{\infty}(D)$. While the first convergence statement follows from \cite[Lemma 3.7]{AbRo20}, the second convergence statement follows from \autoref{thm:bdry-Holder} and the Arzel\`a-Ascoli theorem, and the third one is immediate from the boundedness of $(p_{j_m})$ in a finite dimensional space.\\
We can now make use of the stability result in \cite[Proposition 2.2.36]{FeRo23}, and deduce that for $w_0 = d_D^{s-1} u_0$, it holds
\begin{align*}
\Vert p_0 \Vert_{L^{\infty}(D)} = 1, \qquad \text{ and } \qquad 
\begin{cases}
L w_0 &= p_0 ~~ \text{ in } D,\\
w_0 &= 0 ~~~ \text{ in } \R^n \setminus D,\\
w_0/d_{D}^{s-1} &= 0 ~~~ \text{ on } \partial D,\\
\Vert w_0/d^{s-1}_{D} \Vert_{L^{\infty}(D)} &= 0.
\end{cases}
\end{align*}
Clearly, $w_0 = 0$ is not a solution to $Lw_0 = p_0$ in $D$, so we have obtained a contradiction, and conclude the proof of \eqref{eq:poly-bounded-sol-claim}.
\end{proof}

\section{Liouville theorem in the half-space}
\label{sec:Liouville}

The proof of our main result (see \autoref{thm:higher-reg}) is based on a blow-up argument. A crucial ingredient in such proof is a suitable Liouville theorem in the half-space. In this section, we will establish such result for nonlocal problems with local Neumann boundary conditions:

\begin{theorem}
\label{thm:Liouville}
Let $L \in \mathcal{L}_s^{\text{hom}}(\lambda,\Lambda)$. Let $k \in \N$, $\gamma \in (0,1)$ with $\gamma \not = s$, and $K \in C^{k-1+\gamma-s +\delta}(\mathbb{S}^{n-1})$ for some $\delta > 0$. Let $u \in C(\R^n)$ be a viscosity solution to 
\begin{align*}
\begin{cases}
L((x_n)_+^{s-1} u) &\overset{k-1+\lceil \gamma -s \rceil}{=} 0 \qquad~~\quad~~ \text{ in } \{ x_n > 0 \},\\
\partial_n u &\quad~~= p ~~~~~~ \qquad\qquad \text{ in } \{ x_n = 0 \},\\
|u(x)| &\quad~~\le C(1 + |x|)^{k+\gamma} ~~~ \forall x \in \{ x_n > 0 \}
\end{cases}
\end{align*}
for some $C > 0$, $p \in \cP_{k-1}$. Then, there exist $a_{\beta} \in \R$ for any $\beta \in (\N \cup \{0\})^n$ with $|\beta| \le k$ such that 
\begin{align*}
u(x) = \sum_{|\beta| \le k} a_{\beta} x_1^{\beta_1} \dots x_n^{\beta_n} ~~ \forall x \in \{ x_n > 0 \}.
\end{align*}
\end{theorem}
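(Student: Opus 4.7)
The plan is a blow-down argument by contradiction, in the spirit of similar nonlocal Liouville-type theorems (cf.\ \cite{RoSe16b,AbRo20}), adapted to the presence of the boundary blow-up factor $(x_n)_+^{s-1}$ and the local Neumann condition. Suppose for contradiction that $u$ is not a polynomial of degree $\le k$ on $\{x_n\ge 0\}$. For $R>0$ let $P_R\in\cP_k$ minimize $\|u-P\|_{L^2(B_R\cap\{x_n>0\})}$ with weight $(x_n)_+^{s-1}$, and set $\theta(R):=\|u-P_R\|_{L^\infty(B_R\cap\{x_n>0\})}$. The growth bound forces $\theta(R)\le CR^{k+\gamma}$, and the assumption that $u$ is not polynomial gives $\theta(R)>0$ for all sufficiently large $R$. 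A standard doubling-type selection then produces a sequence $R_m\to\infty$ with
\[
\theta(R_m)R_m^{-(k+\gamma)}\ge\frac{1}{2}\sup_{r\ge R_m}\theta(r)\,r^{-(k+\gamma)},
\]
along which I define the blow-down $w_m(x):=(u-P_{R_m})(R_m x)/\theta(R_m)$. By construction $\|w_m\|_{L^\infty(B_1\cap\{x_n>0\})}=1$, the near-maximality yields the uniform growth bound $|w_m(x)|\le C(1+|x|)^{k+\gamma}$, and each $w_m$ is $L^2$-orthogonal to $\cP_k$ on $B_1\cap\{x_n>0\}$ with weight $(x_n)_+^{s-1}$.

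I would next verify that each $w_m$ inherits the Liouville structure: $L((x_n)_+^{s-1}w_m)\overset{k-1+\lceil\gamma-s\rceil}{=}0$ in $\{x_n>0\}$ and $\partial_n w_m=\tilde p_m$ on $\{x_n=0\}$ for some $\tilde p_m\in\cP_{k-1}$, where $\tilde p_m$ combines the rescaling of $p$ with the contribution generated by $(x_n)_+^{s-1}P_{R_m}$ (computed explicitly via the scaling of $L$ and the identity $L((x_n)_+^s)=0$ together with its tangential derivatives). Then \autoref{cor:bdry-Holder} gives a uniform $C^\alpha_{loc}(\overline{\{x_n\ge 0\}})$ bound on $(w_m)$, and combining this with the interior regularity of \autoref{lemma:interior-regularity} (and \autoref{remark:int-up-to-poly}) and the uniform tail control coming from the growth bound, Arzel\`a--Ascoli produces a subsequence with $w_m\to w_\infty$ locally uniformly and in $L^1_{2s+k-1+\delta}(\R^n)$, while $\tilde p_m\to\tilde p_\infty\in\cP_{k-1}$. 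The stability result \autoref{lemma:stability} ensures $w_\infty$ satisfies the same Liouville hypotheses; the normalization $\|w_\infty\|_{L^\infty(B_1\cap\{x_n>0\})}=1$, the growth $|w_\infty(x)|\le C(1+|x|)^{k+\gamma}$, and the weighted $L^2$-orthogonality to $\cP_k$ all pass to the limit.

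To close the contradiction, I would iterate the blow-down on $w_\infty$: thanks to the near-maximality chosen in the first step, the rescalings $(w_\infty)_\rho(x):=w_\infty(\rho x)/\rho^{k+\gamma}$ admit a locally uniform limit $w_\ast$ as $\rho\to\infty$ that satisfies the Liouville hypotheses, remains orthogonal to $\cP_k$ and nontrivial in $B_1$, and is additionally homogeneous of degree exactly $k+\gamma$ in $\{x_n\ge 0\}$. Since $\gamma\in(0,1)\setminus\{s\}$, the exponent $k+\gamma$ is not an integer, so $w_\ast$ is not a polynomial. On the other hand, by decomposing a homogeneous solution against the explicit family of homogeneous half-space solutions generated by $(x_n)_+^s$ and its tangential derivatives (which exhaust the polynomial solutions compatible with the equation and the Neumann condition), the orthogonality to $\cP_k$ forces $w_\ast\equiv 0$, contradicting the normalization. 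The main technical obstacle is the delicate passage to the limit that must preserve both the equation-up-to-a-polynomial (handled via \autoref{lemma:derivative-up-to-a-polynomial} applied to rescalings) and the local Neumann condition (handled via \autoref{lemma:stability}); the assumption $\gamma\ne s$ is essential precisely to exclude the borderline homogeneity that would correspond to $(x_n)_+^s$-type solutions.
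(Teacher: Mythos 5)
There is a genuine gap in your argument, and it sits exactly where the real difficulty of the theorem lies: the classification of the blow-down limit. Your scheme reduces the Liouville theorem to the claim that a nontrivial solution $w_\ast$ of the same problem, homogeneous of degree $k+\gamma\notin\N$, cannot exist. But (i) nothing justifies that the second blow-down $(w_\infty)_\rho = w_\infty(\rho\,\cdot)/\rho^{k+\gamma}$ converges (even along a subsequence) to a function that is \emph{exactly} homogeneous of degree $k+\gamma$ --- there is no monotonicity formula here, and the doubling selection only controls growth along one sequence of scales, so the limit could just as well be $0$ (e.g.\ if $w_\infty$ grows strictly slower than $\rho^{k+\gamma}$, say is bounded and non-polynomial, your rescaling annihilates it and no contradiction is produced); moreover the weighted orthogonality to $\cP_k$ on $B_1$ is not inherited by the rescalings, so it cannot be invoked for $w_\ast$; and (ii) even granting a nontrivial homogeneous limit, the final step ``decomposing against the explicit family generated by $(x_n)_+^s$ and its tangential derivatives, which exhaust the solutions compatible with the equation and the Neumann condition'' is precisely the statement to be proved, not an available tool: that family consists of $u$ being a polynomial, i.e.\ of integer homogeneities, and asserting that there are no further homogeneous solutions of non-integer degree $k+\gamma$ is the core of the Liouville theorem. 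For a general anisotropic kernel $K\in C^{k-1+\gamma-s+\delta}(\mathbb{S}^{n-1})$ there is no spherical-harmonics or spectral decomposition in dimension $n\ge 2$ with which such a classification could be carried out, so this step cannot be filled in by a routine computation.

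For comparison, the paper avoids any classification of multi-dimensional homogeneous solutions by a dimension-reduction argument: the scaled boundary H\"older estimate (\autoref{cor:bdry-Holder-half-space}, together with \autoref{lemma:sum-viscosity} to difference solutions) is iterated on tangential incremental quotients $w_{j,\tau}$, $\tau_n=0$, until the growth exponent $k+\gamma-j\alpha$ becomes negative, forcing $u$ to be a polynomial in $x'$ with coefficients $A_\beta(x_n)$ that are one-dimensional; by the homogeneity and symmetry of $K$, the operator acts on such one-dimensional functions as a multiple of the one-dimensional fractional Laplacian, and only there does one use the extension-based expansion (\autoref{lemma:1d-Liouville}) to conclude that each $A_\beta$ is a polynomial. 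Your compactness ingredients (uniform H\"older estimates, tail control, stability via \autoref{lemma:stability}) are sensible and close to what the paper uses in Section 6 for the expansion lemma, but there the Liouville theorem is an external input that kills the blow-up limit; using the same scheme to prove the Liouville theorem itself is circular unless you supply an independent classification of the blow-down, which your proposal does not.
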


In order to prove \autoref{thm:Liouville}, we first establish the following one-dimensional version, which can be proved by combination of the arguments in \cite[Lemma 6.2]{RoSe16} and \cite[Lemma 3.3]{AbRo20}.

\begin{lemma}
\label{lemma:1d-Liouville}
Let $k \in \N$, $\gamma \in (0,1)$ with $\gamma \not=s$, and $u \in C(\R)$ satisfying
\begin{align*}
\begin{cases}
(-\Delta)^s((x_+)^{s-1}u) &\overset{k-1 + \lceil \gamma -s \rceil}{=} 0 ~~  \quad~~\qquad \text{ in } (0,\infty),\\
|u(x)| &\quad~~\le C (1 + |x|)^{k+\gamma} ~~~ \forall x > 0
\end{cases}
\end{align*}
for some $C > 0$. Then, the exist $a_0,a_1,\dots,a_k \in \R$ such that
\begin{align*}
u(x) = \sum_{j = 0}^{k} a_j x^j ~~ \forall x > 0.
\end{align*}
\end{lemma}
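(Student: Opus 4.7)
My plan is to follow the strategy of \cite[Lemma 6.2]{RoSe16} and \cite[Lemma 3.3]{AbRo20}, adapting it to the blow-up exponent $s-1$ and to the ``equation up to a polynomial'' framework. I first set $v(x) := x_+^{s-1}u(x)$. The growth bound on $u$ yields $|v(x)|\le C(1+x)^{s-1+k+\gamma}$ on $(0,\infty)$ and $v\equiv 0$ on $(-\infty,0]$, so $v\in L^1_{2s+m}(\R)$ with $m := k-1+\lceil\gamma-s\rceil$, and by hypothesis $(-\Delta)^s v \overset{m}{=} 0$ in $(0,\infty)$.

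The core of the argument is a blow-down. For $R\ge 1$, I introduce
\[
u_R(x) := R^{-(k+\gamma)}\,u(Rx), \qquad v_R(x) := R^{-(s-1+k+\gamma)}\,v(Rx) \;=\; x_+^{s-1}\,u_R(x).
\]
By the $2s$-homogeneity of $(-\Delta)^s$, $v_R$ still satisfies $(-\Delta)^s v_R \overset{m}{=} 0$ in $(0,\infty)$ with $R$-dependent polynomials of degree at most $m-1$, and the bounds $|u_R(x)|\le C(1+|x|)^{k+\gamma}$ and $\|v_R\|_{L^1_{2s+m}(\R)}\le C$ are uniform in $R\ge 1$. On the fixed interval $(1,2)$, where $x^{s-1}$ is smooth and bounded below, I apply the interior regularity estimate for equations up to a polynomial (\autoref{remark:int-up-to-poly}), iterated as follows: differentiating the equation $m$ times via \autoref{lemma:derivative-up-to-a-polynomial} eliminates the polynomial right-hand side and reduces to a standard nonlocal equation to which \autoref{lemma:interior-regularity}(ii) applies, after which integrating back yields, for every $\ell\in\N$, a uniform bound $\|v_R\|_{C^\ell((1,2))}\le C_\ell$ and hence $\|u_R\|_{C^\ell((1,2))}\le C_\ell$. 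Scaling back, for any small $\eps\in(0,\min\{s,1-s,1-\gamma\})$,
\[
[u]_{C^{k+\gamma+\eps}((R,2R))} \;=\; R^{-\eps}\,[u_R]_{C^{k+\gamma+\eps}((1,2))} \;\le\; C R^{-\eps} \;\longrightarrow\; 0 \qquad\text{as } R\to\infty.
\]

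A telescoping/blow-down argument as in \cite[Lemma 6.2]{RoSe16} then identifies every subsequential $C^k_{\mathrm{loc}}((0,\infty))$-limit of $u_R$ with a polynomial of degree at most $k$, and by tracking the coefficients across dyadic scales forces $u$ to coincide with a single polynomial $P\in\mathcal{P}_{k}$ on $(1,\infty)$. To propagate the identification $u\equiv P$ from the asymptotic regime to all of $(0,\infty)$, I exploit that $v\in C^\infty((0,\infty))$, and indeed real-analytic, by iterated interior Schauder estimates once the equation is reduced to $(-\Delta)^s D^m v = 0$; hence $u = v/x^{s-1}$ is real-analytic on $(0,\infty)$, and the equality $u\equiv P$ on $(1,\infty)$ extends by analytic continuation to the whole half-line.

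The step I expect to be the main technical obstacle is the last one: converting the dyadic Hölder decay into an unambiguous polynomial identification on all of $(0,\infty)$. This requires either a careful uniqueness argument for the blow-down limits (ensuring consistent polynomial coefficients across scales) or a unique-continuation / analytic-continuation principle compatible with the $x_+^{s-1}$ singularity at the origin.
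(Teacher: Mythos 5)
There is a genuine gap at the core of your argument. The uniform interior estimates for $v_R$ on $(1,2)$ and the resulting dyadic decay $[u]_{C^{k+\gamma+\eps}((R,2R))}\le CR^{-\eps}$ are correct (though your route to them is shaky: \autoref{lemma:derivative-up-to-a-polynomial} requires $\partial_i u\in L^1_{2s+k-1+\delta}(\R^n)$, and $v'(x)\sim x^{s-2}u$ is not locally integrable at the origin, so you cannot literally "differentiate the equation $m$ times"; one should instead apply \autoref{lemma:interior-regularity}(ii) together with \autoref{remark:int-up-to-poly} directly). But this information does not force $u$ to be a polynomial: the function $u(x)=x^{k+\gamma}$ satisfies the growth bound, the bound $|u^{(k+1)}(x)|\le Cx^{\gamma-1}$, and the same dyadic seminorm decay, yet is not in $\cP_k$. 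In other words, the estimates you derive are blind to exactly the borderline non-polynomial behavior the Liouville theorem must exclude, and excluding it requires using the \emph{equation} again to classify the blow-down limits (equivalently, to rule out global solutions with non-integer homogeneity $k+\gamma$). Your proposal asserts this classification ("identifies every subsequential limit with a polynomial of degree at most $k$") without proof, and it attributes a "telescoping argument" to \cite[Lemma 6.2]{RoSe16} which is not what that lemma's proof does: its proof is precisely a Caffarelli--Silvestre extension and an expansion in $s$-homogeneous eigenfunctions. So as written the argument is circular at the decisive step; note also that the incremental-quotient scheme used for the $n$-dimensional \autoref{thm:Liouville} cannot rescue you here, since in one dimension there are no tangential directions left to difference in.

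For comparison, the paper's proof of this lemma is the representation argument you bypassed: set $v=(x_+)^{s-1}u$, take its $y^{1-2s}$-harmonic extension $V$, pass to the antiderivative $\tilde V(x,y)=\int_{-\infty}^x V(z,y)\,\d z$ (to absorb the polynomial right-hand side and the $s-1$ singularity), expand $\tilde V=\sum_j a_j\Theta_j(\theta)r^{j+s}$ in the complete orthogonal system on the upper half-plane, and use the growth bound on $\tilde V$ to kill all but finitely many modes; recovering $v=(x_+)^s\sum_j b_jx^j$ and using $|v(x)|\le C(1+|x|)^{k+\gamma-1+s}$ with $\gamma\in(0,1)$ then forces $b_j=0$ for $j\ge k$, i.e.\ $u\in\cP_k$. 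If you want to keep a blow-down structure, you would still need exactly this kind of classification of the limiting global solutions, at which point the extension/expansion argument is doing all the work; your final analytic-continuation step is then unnecessary (and in any case only addresses propagating an identity you have not yet established on $(1,\infty)$).
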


\begin{proof}
In case $k = 1$ and $\gamma < s$, the proof is an application of \cite[Lemma 6.2]{RoSe16} with $u(x) := (x_+)^{s-1}u(x)$, and $\delta = s > 0$, $\beta = s + \gamma \in (0,2s)$.\\
In case $k > 1$ or $\gamma > s$, we have $k-1 + \lceil \gamma -s \rceil \ge 1$, let us define $v(x) = (x_+)^{s-1}u(x)$, let $V : \R \times [0,\infty) \to \R$ be the harmonic extension of $v$ in the sense of \cite[Lemma 3.3]{AbRo20}, and finally define $\tilde{V}(x,y) = \int_{-\infty}^x V(z,y) \d z$. Note that $\tilde{V}$ satisfies (see \cite[Lemma 3.3]{AbRo20})
\begin{align*}
\begin{cases}
\dvg(y^{1-2s}\nabla \tilde{V}(x,y)) &= 0 ~~ \qquad\qquad\qquad\qquad\qquad\qquad\qquad\qquad\qquad\qquad\qquad \text{ in } \R \times (0,\infty),\\
\tilde{V}(x,y) &= v(x) ~~ \qquad\qquad\qquad\qquad\qquad\qquad\qquad\qquad\qquad\qquad ~~ \text{ on } \R \times \{ 0 \},\\
|\tilde{V}(x,y)| &\le C (1 + |x|^{2(k-1 + \lceil \gamma -s \rceil )+1+\gamma+s} + |y|^{(k-1 + \lceil \gamma -s \rceil) + 1 + \gamma + s})  ~~ \text{ in } \R \times (0,\infty).
\end{cases}
\end{align*}
Next, by \cite[Lemma 6.2]{RoSe16} (see also \cite[Theorem 1.10.16]{FeRo23}), we have the representation formula
\begin{align*}
\tilde{V}(x,y) = \tilde{V}(r \cos \theta , r \sin \theta) = \sum_{j = 0}^{\infty} a_j \Theta_j(\theta) r^{j+s}, ~~ \forall  x \in \R,~  y \in [0,\infty), 
\end{align*}
where $a_j \in \R$, and $(\Theta_j)_j$ is a complete orthogonal system in the subspace of even functions in $L^2((0,\pi),(\sin \theta)^{1-2s} \d \theta)$. By the Parseval identity, the bounds on $|\tilde{V}|$ imply 
\begin{align*}
\sum_{j = 0}^{\infty} a_j^2 R^{2+2j} = \int_{\partial B_R \cap \{ y > 0 \}} \tilde{V}(x,y)^2 y^{1-2s} \d \sigma \le C R^{4(k-1+\lceil \gamma -s \rceil)+2+2\gamma+2} = C R^{4(k+\lceil \gamma -s \rceil)+2\gamma}.
\end{align*}
Therefore, it must be $a_j = 0$ for any $j > j_0$, where $j_0 = \min\{ j \in \N : 2 + 2j > 4(k+ \lceil \gamma - s \rceil) +  2\gamma \}$, which implies
\begin{align*}
\tilde{V}(x,y) = \sum_{j = 0}^{j_0} a_j \Theta_j(\theta) r^{j+s} ~~ \forall x \in \R,~  y \in [0,\infty).
\end{align*}
Upon recalling the definition of $V$ and $\tilde{V}$, this implies 
\begin{align*}
v(x) = (x_+)^s \sum_{j = 0}^{j_0 - 1} b_j x^j
\end{align*}
for some $b_j \in \R$, and since $|v(x)| \le C(1 + |x|)^{k+\gamma-1+s}$ and $\gamma \in (0,1)$ by assumption, it must be $b_j = 0$ for any $j \ge k$. Recalling that by definition $u(x) = v(x)(x_+)^{1-s}$, we deduce that $u$ must be a polynomial of degree at most $k$ in $\{ x > 0\}$, as desired.
\end{proof}

Moreover, we will need the following lemma (see also \cite[Proposition 4.3]{Kuk21}):

\begin{lemma}
\label{lemma:large-solution-linear-fct}
Let $L \in \mathcal{L}_s^{\text{hom}}(\lambda,\Lambda)$. Let $k \in \N$, $K \in C^{k-2+\delta}(\mathbb{S}^{n-1})$ for some $\delta > 0$. Let $f \in \cP_k$. Then,
\begin{align*}
L((x_n)_+^{s-1} f) \overset{k-1}{=} 0 ~~ \text{ in } \{ x_n > 0 \}.
\end{align*}
\end{lemma}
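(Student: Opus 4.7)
The proof proceeds by induction on $k\ge 1$, with \autoref{lemma:derivative-up-to-a-polynomial} supplying the reduction mechanism and the translation/scaling symmetries of $u_\beta := (x_n)_+^{s-1}x^\beta$ pinning down the residual term.

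For the base case $k=1$, by linearity it suffices to check $f\in\{1,\,x_n\}\cup\{x_j:1\le j<n\}$. The cases $f=1$ and $f=x_n$ are immediate from the classical identities $L((x_n)_+^{s-1})=0$ and $L((x_n)_+^s)=0$ in $\{x_n>0\}$ (the latter via $(x_n)_+^{s-1}x_n=(x_n)_+^s$). For $f=x_j$ with $j<n$, expanding gives
\[ L((x_n)_+^{s-1} x_j)(x) = x_j\,L((x_n)_+^{s-1})(x) - \text{p.v.}\!\int (x_n+h_n)_+^{s-1}\,h_j\,K(h)\,dh; \]
the first term vanishes, and in spherical coordinates the tail integral reduces (via a Beta-function computation, with the PV absorbing the $r=0$ singularity when $s\ge 1/2$) to a multiple of $x_n^{-s}\int_{\mathbb{S}^{n-1}}\omega_j|\omega_n|^{2s-1}K(\omega)\,d\omega$, which vanishes by the oddness of $\omega_j|\omega_n|^{2s-1}$ under $\omega\mapsto-\omega$ combined with the symmetry $K(-\omega)=K(\omega)$.

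For the inductive step, assume the result for degrees $\le k-1$ and let $|\beta|=k$. If some $\beta_j\ge 1$ with $j<n$, then $\partial_j u_\beta=\beta_j(x_n)_+^{s-1}x^{\beta-e_j}$, for which IH gives $L(\partial_j u_\beta)\overset{k-2}{=}0$; otherwise $\beta=ke_n$ and $\partial_n u_\beta=(s-1+k)(x_n)_+^{s-1}x_n^{k-1}$, again yielding $L(\partial_n u_\beta)\overset{k-2}{=}0$ by IH. The converse direction of \autoref{lemma:derivative-up-to-a-polynomial} (applicable exactly because $K\in C^{k-2+\delta}(\mathbb{S}^{n-1})$, the standing hypothesis) then produces a representative $G$ with
\[ L u_\beta\overset{k-1}{=}G,\qquad \partial_j G=0\quad(\text{for the chosen }j). \]

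The crux of the argument is the final step: showing that $G\in\cP_{k-2}$, so that it is absorbed by $p_R$ in the $\overset{k-1}{=}$ definition and we obtain $L u_\beta\overset{k-1}{=}0$. Here we combine (a) the literal identity $\partial_j G=0$; (b) the translation invariance of $u_\beta$ in every coordinate $x_i$ with $i<n$ and $\beta_i=0$, which descends to $G$ modulo $\cP_{k-2}$ by translating the truncation $u_\beta\mathbf{1}_{B_R}$; and (c) the $(s-1+k)$-homogeneity of $u_\beta$, which gives $G(\lambda x)-\lambda^{k-1-s}G(x)\in\cP_{k-2}$ for every $\lambda>0$ by scaling the truncation. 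Decomposing $G=G_{\mathrm{hom}}+p$ with $p\in\cP_{k-2}$ and $G_{\mathrm{hom}}$ the $(k-1-s)$-homogeneous part, and iterating (a) over admissible derivatives (so that $\partial_{j'}G\in\cP_{k-3}$ for each $j'<n$ with $\beta_{j'}\ge 1$), one deduces $\partial_i G_{\mathrm{hom}}=0$ for every $i<n$, using that $(k-2-s)$-homogeneous functions cannot be polynomials. Hence $G_{\mathrm{hom}}=c\,x_n^{k-1-s}$, and since $k-1-s\notin\N\cup\{0\}$, this belongs to $\cP_{k-2}$ only when $c=0$. The main obstacle is precisely this identification: while the inductive reduction by differentiation is algebraic, ruling out a nontrivial homogeneous residual requires carefully transferring the symmetries of $u_\beta$ through the truncation and polynomial correction, and combining them via the homogeneous decomposition of $G$.
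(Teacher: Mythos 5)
Your inductive step correctly reduces, via the converse direction of \autoref{lemma:derivative-up-to-a-polynomial}, to showing that the representative $G$ with $L u_\beta \overset{k-1}{=} G$ lies in $\cP_{k-2}$, and your ingredients (a)--(c) do (granting the functional-equation manipulations you only sketch) pin $G$ down to the form $G = c\,x_n^{k-1-s} + p$ with $p \in \cP_{k-2}$. But the concluding sentence --- ``since $k-1-s\notin\N\cup\{0\}$, this belongs to $\cP_{k-2}$ only when $c=0$'' --- does not prove $c=0$: it merely restates that a nonzero residual would fail to be a polynomial, i.e.\ it presupposes the membership $G\in\cP_{k-2}$ that you are trying to establish. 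None of (a), (b), (c) can rule out $c\neq 0$, because $x_n^{k-1-s}$ is annihilated by every tangential derivative, is invariant under tangential translations, and has exactly the homogeneity $k-1-s$ dictated by scaling the truncation, so it is compatible with all the constraints you extract. When $\beta_n\ge 1$ the gap is easily closed by choosing $j=n$ in the reduction (then $\partial_n G=0$ exactly, and $c(k-1-s)x_n^{k-2-s}\in\cP_{k-3}$ forces $c=0$), but for purely tangential monomials $\beta=(\beta',0)$ the normal derivative of $u_\beta$ produces $(x_n)_+^{s-2}$-terms outside the class $(x_n)_+^{s-1}\cP_{k-1}$, so this route is unavailable and an additional input is genuinely needed --- for instance a renormalized-moment computation in the spirit of your base case, showing that the finite part of $\mathrm{p.v.}\int (x_n+h_n)_+^{s-1}(h')^{\beta'}K(h)\,dh$ carries no $x_n^{|\beta'|-1-s}$ component. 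This is precisely the step your proposal identifies as ``the crux'' and then does not carry out.

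Two further remarks. First, your base case for $f=x_j$, $j<n$, is correct in outcome but incomplete for $s\ge 1/2$: the reduction to a multiple of $x_n^{-s}\int_{\mathbb{S}^{n-1}}\omega_j|\omega_n|^{2s-1}K(\omega)\,d\omega$ requires that the regularized radial integrals on the two hemispheres $\{\omega_n>0\}$ and $\{\omega_n<0\}$ produce the \emph{same} constant multiple of $|\omega_n|^{2s-1}$; the principal value only removes the divergent part (which integrates to zero against $\omega_j K$ by symmetry), while the equality of the two finite parts is a nontrivial Beta-function/analytic-continuation identity that you do not justify. Second, for comparison, the paper proceeds differently and more economically: for $k=1$ it notes that $(x_n)_+^{s-1}x_i$ is an antiderivative of a function annihilated by $L$, so $L((x_n)_+^{s-1}x_i)$ is a constant, which is then killed by the $s$-homogeneity of $(x_n)_+^{s-1}x_i$ (no kernel computation); for $k\ge 2$ the residual obtained by integrating the induction hypothesis is a constant and is simply absorbed, since constants lie in $\cP_{k-2}$, so no homogeneous decomposition or symmetry-transfer through the truncation is needed at all.
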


\begin{proof}
Let us first give a simple proof in case $k =1$. Then, it suffices to prove that for any $i \in \{ 1, \dots, n \}$
\begin{align*}
L((x_n)_+^{s-1} x_i) = 0 ~~ \text{ in } \{ x_n > 0 \}.
\end{align*}
First, by integrating $x \mapsto (x_n)_+^{s-1}$ in $x_i$, and using that $L((x_n)_+^{s-1}) = 0$, we deduce
\begin{align*}
L((x_n)_+^{s-1} x_i) \equiv c ~~ \text{ in } \{ x_n > 0 \}
\end{align*}
for some constant $c \in \R$.
Then, since $x \mapsto (x_n)_+^{s-1} x_i$ is homogeneous of degree $s$, we deduce that for any $\lambda > 0$ and $x \in \{ x_n > 0 \}$:
\begin{align*}
c = L((x_n)_+^{s-1} x_i)(\lambda x) = \lambda^{-2s} L((\lambda x_n)_+^{s-1} \lambda x_i)(x) = \lambda^{-s} L((x_n)_+^{s-1} x_i)(x) = \lambda^{-s} c.
\end{align*}
This implies that $c = 0$, as desired.\\
For $k \ge 2$, we prove the result by induction. Assume that we know already
\begin{align}
\label{eq:up-to-polynomial-induction}
L((x_n)_+^{s-1}p) \overset{k-2}{=} 0 ~~ \text{ in } \{ x_n > 0\}
\end{align}
for every $p \in \cP_{k-1}$. Now, let $q \in \cP_{k}$. Then, by integrating \eqref{eq:up-to-polynomial-induction} with $p := \partial_i q$ for $i \in \{ 1, \dots, n \}$, by \autoref{lemma:derivative-up-to-a-polynomial} we find that there exists a constant $c \in \R$ such that
\begin{align*}
L((x_n)_+^{s-1}q) \overset{k-1}{=} c ~~ \text{ in } \{ x_n > 0\}.
\end{align*}
Since $c \overset{k-1}{=} 0$ for any $k \ge 2$, we conclude the proof.
\end{proof}

Finally, we state a H\"older regularity estimate in the half-space, which follows from \autoref{cor:bdry-Holder}.

\begin{corollary}
\label{cor:bdry-Holder-half-space}
Let $L \in \mathcal{L}_s^{\text{hom}}(\lambda,\Lambda)$. Let $k \in \N \cup \{ 0 \}$, $\gamma > 0$, and $K \in C^{k-1+\delta}(\mathbb{S}^{n-1})$ for some $\delta > 0$. Let $f \in C(\{ x_n > 0 \} \cap B_2)$, $g \in C(\overline{\{ x_n = 0 \} \cap B_2})$, and $u \in C( \{ x_n \ge 0\} )$ be a viscosity solution to
\begin{align*}
\begin{cases}
L((x_n)_+^{s-1} u) &\overset{k}{=} f ~~ \text{ in } \{ x_n > 0 \} \cap B_2,\\
\partial_n u &= g ~~ \text{ on }  \{ x_n = 0 \} \cap B_2.
\end{cases}
\end{align*}
Then, there exists $\alpha_0 > 0$ such that if  $(x_n)_+^{s+1-\alpha}f \in L^{\infty}(\{ x_n > 0 \} \cap B_2)$ for some $\alpha \in (0,\alpha_0]$, then the following holds true: If $k = 0$ and $(x_n)_+^{s-1} u \in L^1_{2s}(\R^n)$ it holds $u \in C^{\alpha}_{loc}(\{ x_n \ge 0 \}\cap B_2)$, and
\begin{align*}
\left\Vert u \right\Vert_{C^{\alpha}( \{ x_n \ge 0 \} \cap B_1)} \le c \big( \left\Vert u \right\Vert_{L^{\infty}(\{ x_n > 0 \} \cap B_4 )} &+ \Vert (x_n)_+^{s-1} u \Vert_{L^1_{2s}(\R^n \setminus B_4)} \\
&+ \Vert (x_n)_+^{s+1-\alpha} f \Vert_{L^{\infty}(\{ x_n > 0 \} \cap B_2 )} + \Vert g \Vert_{L^{\infty}( \{ x_n = 0 \} \cap B_2 )} \big),
\end{align*}
and if $k \in \N$ and $(x_n)_+^{s-1} u \in L^1_{2s+(k-1+\delta)}(\R^n)$ it holds $u \in C^{\alpha}_{loc}(\{ x_n \ge 0 \}\cap B_2)$, and
\begin{align*}
\left\Vert u \right\Vert_{C^{\alpha}( \{ x_n \ge 0 \} \cap B_1)} \le c \big( \left\Vert u \right\Vert_{L^{\infty}(\{ x_n > 0 \} \cap B_4 )} &+ \big\Vert [(x_n)_+^{s-1} u] |\cdot|^{-n-2s-(k-1+\delta)} \big\Vert_{L^1(\{ x_n > 0 \} \setminus B_4)} \\
&+ \Vert (x_n)_+^{s+1-\alpha} f \Vert_{L^{\infty}(\{ x_n > 0 \} \cap B_2 )}  + \Vert g \Vert_{L^{\infty}( \{ x_n = 0 \} \cap B_2 )} \big),
\end{align*}
where $c > 0$ and $\alpha_0$ depend only on $n,s,\lambda,\Lambda,\gamma,k,\delta$.
\end{corollary}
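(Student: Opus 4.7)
The strategy is to deduce this corollary as a direct specialization of Corollary~\ref{cor:bdry-Holder} to the half-space $\Omega = \{x_n > 0\}$, in which the role of the large solution $v$ is played by $(x_n)_+^{s-1} u$. The plan is as follows.

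First, I would set $v := (x_n)_+^{s-1} u$ (extended by $0$ on $\{x_n \le 0\}$) and observe that the (regularized) distance function to $\partial \{x_n>0\}$ is simply $d(x) = (x_n)_+$, which is $C^\infty$ inside $\{x_n>0\}$ and trivially $C^{1,\gamma}$ up to the boundary for every $\gamma \in (0,1)$. Under this identification one has $v/d^{s-1} = u$, so the hypothesis $u \in C(\{x_n \ge 0\})$ gives $v/d^{s-1} \in C(\overline{\{x_n>0\}})$; the nonlocal equation $L v \overset{k}{=} f$ on $\{x_n>0\} \cap B_2$ and the local Neumann condition $\partial_\nu(v/d^{s-1}) = \partial_n u = g$ on $\{x_n=0\} \cap B_2$ are built into the hypotheses, and $v = 0$ on $B_2 \setminus \{x_n>0\}$ is automatic from $(x_n)_+^{s-1} = 0$ for $x_n \le 0$. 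Thus $v$ fits precisely into the framework of Corollary~\ref{cor:bdry-Holder}.

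The only point requiring some care is that Corollary~\ref{cor:bdry-Holder} is formulated for \emph{bounded} $C^{1,\gamma}$ domains. I would handle this in one of two equivalent ways. Either (a) introduce an auxiliary bounded $C^\infty$ domain $\Omega^*$ satisfying $\Omega^* \cap B_5 = \{x_n>0\} \cap B_5$ and $\Omega^* \subset \{x_n>0\}$, replace $v$ by a suitable modification $\tilde v$ coinciding with $v$ on $B_4$ such that $\tilde v/d_{\Omega^*}^{s-1} \in C(\overline{\Omega^*})$, and absorb the modification outside $B_4$ into the tail norms on $\R^n \setminus B_4$; or (b) inspect the proofs of Corollary~\ref{cor:bdry-Holder}, Theorem~\ref{thm:bdry-Holder}, Lemma~\ref{lemma:bdry-osc-decay} and Lemma~\ref{lemma:bdry-weak-Harnack}, and observe that all the ingredients are genuinely local and depend only on the $C^{1,\gamma}$ radius of $\partial \Omega$ in a neighborhood of the relevant boundary piece $\{x_n=0\} \cap B_4$, which is uniform (and arbitrary) for the half-space.

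Finally, the translation of data and tail terms is transparent: $d^{s+1-\alpha} f = (x_n)_+^{s+1-\alpha} f$, while the $L^1_{2s}(\R^n \setminus B_4)$ norm of $v$ (in the case $k=0$) and the $L^1_{2s+k-1+\delta}(\R^n \setminus B_4)$ norm of $v$ (in the case $k \ge 1$) are precisely the weighted tail norms of $(x_n)_+^{s-1} u$ appearing in the statement. I do not expect a real obstacle here; the content of the result is exactly that the bounded-domain estimate of Corollary~\ref{cor:bdry-Holder} remains valid in the unbounded half-space with the same structure, and both routes (a) and (b) produce the constants $c, \alpha_0$ depending only on $n,s,\lambda,\Lambda,\gamma,k,\delta$ since no geometric quantity of $\Omega$ beyond its (trivial) $C^{1,\gamma}$ structure enters the argument.
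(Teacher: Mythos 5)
Your proposal is correct and is essentially the paper's own argument: the paper proves this corollary in one line by applying \autoref{cor:bdry-Holder} to a bounded $C^{1,\gamma}$ domain $\Omega$ with $\{x_n>0\}\cap B_2\subset\Omega\subset\{x_n>0\}\cap B_4$, which is your route (a). The only (harmless) difference is that no modification $\tilde v$ of $v$ outside $B_4$ is actually needed, since \autoref{cor:bdry-Holder} already measures the far-field of $v=(x_n)_+^{s-1}u$ through the weighted $L^1$ tail norm appearing in the statement.
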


\begin{proof}
The result follows directly from \autoref{cor:bdry-Holder} applied to some domain $\Omega \subset \R^n$ with $\partial \Omega \in C^{1,\gamma}$, which satisfies $\{ x_n > 0 \} \cap B_2 \subset \Omega \subset \{ x_n > 0 \} \cap B_4$.
\end{proof}

With the help of the one-dimensional Liouville theorem in the half-space and the H\"older regularity estimate up to the boundary (see \autoref{cor:bdry-Holder-half-space}), the proof of \autoref{thm:Liouville} follows by a standard procedure, which is explained in detail for instance in \cite[Proof of Theorem 3.10]{AbRo20}.

\begin{proof}[Proof of \autoref{thm:Liouville}]
First, we observe that by scaling \autoref{cor:bdry-Holder-half-space}, we obtain that for any $R \ge 1$:
\begin{align}
\label{eq:Liouville-Holder}
\begin{split}
[ u ]_{C^{\alpha}(B_{R})} \le c R^{-\alpha} \big[ & \Vert u \Vert_{L^{\infty}(B_{4R})} + R^{1+s} \Vert(x_n)_+^{s-1} u |\cdot|^{-n-2s} \Vert_{L^1(\R^n \setminus B_{4R})} \1_{\{k=1 \text{ and } \gamma < s \}} \\
&+ R^{s+k-1+ \lceil \gamma -s \rceil +\eta} \left\Vert [(x_n)_+^{s-1} u] |\cdot|^{-n-2s-(k-2+\lceil \gamma -s \rceil+\eta)} \right\Vert_{L^1(\{ x_n > 0\} \setminus B_{4R})} \1_{\{ k \ge 2 \text{ or } \gamma > s \} } \\
& + R \Vert p \Vert_{L^{\infty}( \{ x_n = 0\} \cap B_{4R})} \big] \le cR^{k + \gamma - \alpha},
\end{split}
\end{align}
where we take $\eta = 1+\gamma - s - \lceil \gamma -s \rceil + \delta$ and used in the last estimate the growth condition on $u$, the fact that $\Vert p \Vert_{L^{\infty}( \{ x_n = 0\} \cap B_{R})} \le c R^{k-1}$, and the following computation  using polar coordinates with $y_n = r \cos \theta$ for some $\theta \in [0,2 \pi)$ (similar to the proof of \autoref{lemma:growth-lemma}), which is slightly different in case ($k = 1$ and $\gamma < s$) and ($k \ge 2$ or $\gamma > s$). In case $k=1$ and $\gamma < s$, we obtain:
\begin{align}
\label{eq:tail-polar-coord}
\begin{split}
R^{1+s} \Vert(x_n)_+^{s-1} u |\cdot|^{-n-2s} \Vert_{L^1(\R^n \setminus B_{4R})} &\le C R^{1+s} \int_{\R^n \setminus B_{4R}} (y_n)_+^{s-1} |y|^{-n-2s+1+\gamma} \d y \\
&\le c R^{1+s} \int_0^{2\pi} \cos(\theta)_+^{s-1} \left( \int_{4R}^{\infty} r^{s-1} r^{-1-2s+1+\gamma}  \d r \right) \d \theta \\
& \le c R^{1+s} R^{\gamma - s } \left( \int_0^{2\pi} \cos(\theta)_+^{s-1} \d \theta \right) \le c R^{1 + \gamma}.
\end{split}
\end{align}
In case ($k \ge 2$ or $\gamma > s$), we obtain, using that $\eta > 1 +\gamma -s - \lceil \gamma -s \rceil$:
\begin{align*}
R^{s+k-1+\lceil \gamma -s \rceil+\eta} \big\Vert [(x_n)_+^{s-1} u] & |\cdot|^{-n-2s-(k-2+\lceil \gamma -s \rceil+\eta)} \big\Vert_{L^1(\{ x_n > 0\} \setminus B_4)} \\
&\le C R^{s+k-1+\lceil \gamma -s \rceil+\eta} \int_{\R^n \setminus B_{4R}} (y_n)_+^{s-1} |y|^{-n-2s-(k-2+\lceil \gamma -s \rceil+\eta)+k+\gamma} \d y \\
&\le c R^{s+k-1+\lceil \gamma -s \rceil+\eta} \int_0^{2\pi} \cos(\theta)_+^{s-1} \left( \int_{4R}^{\infty} r^{s-1} r^{-1-2s+2-\lceil \gamma -s \rceil+\gamma-\eta}  \d r \right) \d \theta \\
& \le c R^{k+s} R^{\gamma - s } \left( \int_0^{2\pi} \cos(\theta)_+^{s-1} \d \theta \right) \le c R^{k + \gamma}.
\end{align*}

Next, let us take any $\tau \in \mathbb{S}^{n-1}$ such that $\tau_n = 0$ and $0 < h < R/2$. We consider the difference quotients
\begin{align*}
w_{1,\tau}(x) = \frac{u(x+ h \tau) - u(x) }{h^{\alpha}}, \qquad p_{1,\tau}(x)= \frac{p(x+ h \tau) - p(x) }{h^{\alpha}}
\end{align*}
and deduce from \eqref{eq:Liouville-Holder} (after applying the estimate to smaller balls of radius comparable to $R$ inside $B_R$) that 
\begin{align*}
\Vert w_{1,\tau} \Vert_{L^{\infty}(B_R)} \le c R^{k+\gamma-\alpha} ~~ \forall R \ge 1.
\end{align*}
Clearly, since $\tau_n = 0$, $w_{1,\tau}$ satisfies in the viscosity sense
\begin{align}
\label{eq:diff-quot-PDE}
\begin{cases}
L((x_n)_+^{s-1} w_{1,\tau}) &\overset{k-1+\lceil \gamma -s \rceil}{=} 0 ~~ \text{ in } \{ x_n > 0 \},\\
\partial_n w_{1,\tau} &\quad~~= p_{1,\tau} ~~~~ \text{ on } \{ x_n =  0 \}.
\end{cases}
\end{align}
Here, we are using that sums of viscosity solutions are again viscosity solutions by \autoref{lemma:sum-viscosity}. Using \eqref{eq:diff-quot-PDE} and also that $|p_{1,\tau}(x)| \le c |x|^{k-1-\alpha}$ since $p \in \cP_{k-1}$, we can apply the previous arguments to $w_{1,\tau}$. Eventually, this implies that $w_{2, \tau}(x) = \frac{w_{1,\tau}(x+h \tau) - w_{1,\tau}(x)}{h^{\alpha}}$ satisfies $\Vert w_{2, \tau} \Vert_{L^{\infty}(B_R)} \le c R^{k+\gamma - 2\alpha}$.
This way, we obtain higher order difference quotients $w_{j,\tau}$, $j \in \N$, and they satisfy
\begin{align*}
\begin{cases}
L((x_n)_+^{s-1} w_{j,\tau}) &\overset{k-1+\lceil \gamma -s \rceil}{=} 0 ~~ \quad~~ \text{ in } \{ x_n > 0 \},\\
\partial_n w_{j,\tau} &\quad~~ = p_{j,\tau} ~~\qquad~ \text{ on } \{ x_n = 0 \},\\
\Vert w_{j,\tau} \Vert_{L^{\infty}(B_R)} &\quad~~\le c R^{k+\gamma-j\alpha} ~~ \forall R \ge 1,\\
\Vert p_{j,\tau} \Vert_{L^{\infty}(B_R)} &\quad~~\le c R^{k-1-j\alpha} ~~ \forall R \ge 1. 
\end{cases}
\end{align*}
Then, taking $j_0 \in \N$ as the smallest number such that $j_0 \alpha > k + \gamma$, and upon taking the limit $R \to \infty$, we deduce that
\begin{align*}
\lim_{R \to \infty} \Vert w_{j_0,\tau} \Vert_{L^{\infty}(B_R)} \le c \lim_{R \to \infty} R^{k+ \gamma - j_0 \alpha} = 0,
\end{align*}
i.e., $w_{j_0 ,\tau} \equiv 0$ in $\R^n$. Thus, $w_{j_0 - 1,\tau}$ is a function that is constant in the $\tau$-direction. Clearly, we can also take difference quotients of $w_{j_0 - 1,\tau}$ in other directions $\tau' \in \mathbb{S}^{n-1}$ with $\tau'_n = 0$, and the same arguments as before apply. Therefore, $w_{j_0 - 1 , \tau}(x) = w_{j_0-1,\tau}(x_n)$ is one-dimensional for any $\tau \in \mathbb{S}^{n-1}$ with $\tau_n = 0$.\\
Unraveling the higher order difference quotients, we get that $w_{j_0-2,\tau}(x) = (V_1(x_n), x') + V_2(x_n)$ for some one-dimensional functions $V_1 : \R \to \R^{n-1}$ and $V_2 : \R \to \R$, and continuing this argument $j_0 - 1$ times, we deduce that $u$ must be a polynomial in $x'$ with coefficients that are one-dimensional functions from $\R \to \R$ in $x_n$.\\ 
Then, by the growth condition on $u$, for any multi-index $\beta \in (\N \cup \{ 0 \})^{n-1}$ with $|\beta| \le k$, we obtain functions $A_{\beta}$ in $x_n$ such that
\begin{align*}
u(x) = \sum_{|\beta| \le k} (x')^{\beta} A_{\beta}(x_n).
\end{align*}
In particular, this implies that $D^{\beta}_{x'} u(x) = c(\beta) A_{\beta}(x_n)$ for any $|\beta| = k$ and some constant $c(\beta) > 0$, where $D^{\beta}_{x'}$ denotes an incremental quotient approximating the partial derivative $\partial^{\beta}_{x'}$ in the $x'$-variables. Therefore, discretely differentiating the equation for $u$, we deduce
\begin{align*}
c(\beta) L((x_n)_+^{s-1}A_{\beta}) (x) = L((x_n)_+^{s-1}D_{x'}^{\beta} u) (x) = L(D^{\beta}_{x'} [(x_n)_+^{s-1} u]) (x) \overset{k-1+\lceil \gamma -s \rceil}{=} 0 ~~ \text{ in } \{ x_n > 0 \}.
\end{align*}
By the growth condition on $u$, it must be $|A_{\beta}(x_n)| \le c (1 + |x_n|)^{k-|\beta|+\gamma} = c(1+ |x|)^{\gamma}$, and since $A_{\beta}$ was also one-dimensional, i.e., $L A_{\beta} = (-\Delta)^s_{\R} A_{\beta}$, we can apply \autoref{lemma:1d-Liouville} to $A_{\beta}$, which yields $A_{\beta}(x_n) = p_{\beta}(x_n)$ for some polynomial $p_{\beta} \in \cP_{k-|\beta|} = \cP_{0}$. 
Next, we recall from \autoref{lemma:large-solution-linear-fct} 
\begin{align*}
L((x_n)_+^{s-1}(x')^{\beta}p_{\beta}(x_n)) \overset{k-1+\lceil \gamma -s \rceil}{=} 0 ~~ \text{ in } \{ x_n > 0\}.
\end{align*}
Thus, repeating the arguments from above, we deduce that, for every $\beta$ with $|\beta| \le k$ it holds
\begin{align*}
\begin{cases}
L((x_n)_+^{s-1}A_{\beta}) &\overset{k-1+\lceil \gamma -s \rceil}{=} 0 ~~ \qquad\qquad~~~ \text{ in } \{ x_n > 0 \},\\
|A_{\beta}(x)| &\quad ~~\le C(1 + |x|)^{k - |\beta| + \gamma} ~~ \forall x \in \{ x_n > 0 \},
\end{cases}
\end{align*}
and hence $A_{\beta}(x_n) = p_{\beta}(x_n)$ for some polynomial $p_{\beta} \in \cP_{k-|\beta|}$. This implies $u(x) = p(x)$ for some polynomial $p$, and by the growth condition on $u$, it must be $p \in \cP_{k}$, as desired.
\end{proof}

\section{Higher order boundary regularity}
\label{sec:higher}

The goal of this section is to prove the desired higher order boundary regularity for nonlocal equations with local Neumann conditions (see \autoref{thm:higher-reg}). The proof goes by a blow-up argument and heavily uses the Liouville theorem in the half-space (see \autoref{thm:Liouville}), as well as the boundary H\"older estimate (see \autoref{thm:bdry-Holder}).

\begin{lemma}
\label{lemma:expansion}
Let $L \in \mathcal{L}_s^{\text{hom}}(\lambda,\Lambda)$. Let $k \in \N$ and $\Omega \subset \R^n$ be an open, bounded domain with $\partial \Omega \in C^{k+1,\gamma}$ for some $\gamma \in (0,1)$ with $\gamma \not= s$, and $K \in C^{2k+2\gamma + 3}(\mathbb{S}^{n-1})$. Let $v \in L^{1}_{2s}(\R^n)$ with $v/d^{s-1} \in C(\overline{\Omega})$ be a viscosity solution to
\begin{align*}
\begin{cases}
L v &= f ~~ \text{ in } \Omega \cap B_{1},\\
v &= 0 ~~ \text{ in } B_1 \setminus \Omega,\\
\partial_{\nu}(v/d^{s-1}) &= g ~~ \text{ on } \partial \Omega \cap B_1.
\end{cases}
\end{align*}

\begin{itemize}
\item[(i)] If $k = 1$ and $\gamma < s$, $f \in C(\Omega \cap B_1)$ with $d^{s-\gamma} f \in L^{\infty}(\Omega \cap B_1)$, $g \in C^{\gamma}(\partial \Omega \cap B_1)$, then for any $x_0 \in \partial \Omega \cap B_{1/2}$ and $x \in \Omega \cap B_{1/2}$ it holds
\begin{align*}
\qquad \qquad  \Bigg| & \frac{v}{d^{s-1}}(x) - \Big(\frac{v}{d^{s-1}}(x_0) - A(x_0) \cdot (x - x_0)\Big)\Bigg| \\
&\le c\left(\left\Vert \frac{v}{d^{s-1}} \right\Vert_{L^{\infty}(\Omega)} + \Vert v \Vert_{L^{\infty}(\R^n \setminus \Omega)} + \Vert d^{s-\gamma} f \Vert_{L^{\infty}(\Omega \cap B_1)} + \Vert g \Vert_{C^{\gamma}(\partial\Omega \cap B_1)} \right)|x-x_0|^{1+\gamma}
\end{align*}
for some $c > 0$, which only depends on $n,s,\lambda,\Lambda,\gamma$, and the $C^{2,\gamma}$ radius of $\Omega$.
If in addition, it holds $g \equiv 0$, then $A(x_0) \cdot \nu_{x_0} = 0$.

\item[(ii)] If $k \ge 2$ or $\gamma >s$, $f \in C^{(k-1)-s+\gamma}(\Omega \cap B_1)$, $g \in C^{k-1 + \gamma}(\partial \Omega \cap B_1)$, then for any $x_0 \in \partial \Omega \cap B_{1/2}$, there is $Q(\cdot ; x_0) \in \cP_{k}$ such that for any $x \in \Omega \cap B_{1/2}$ it holds
\begin{align*}
\qquad \qquad \Bigg| & \frac{v}{d^{s-1}}(x) - Q(x; x_0) \Bigg| \\
&\le c\left(\left\Vert \frac{v}{d^{s-1}} \right\Vert_{L^{\infty}(\Omega)} + \Vert v \Vert_{L^{\infty}(\R^n \setminus \Omega)} + \Vert f \Vert_{C^{(k-1)-s+\gamma}(\Omega \cap B_1)} + \Vert g \Vert_{C^{k-1+\gamma}(\partial\Omega \cap B_1)} \right)|x-x_0|^{k+\gamma}
\end{align*}
for some $c > 0$, which only depends on $n,s,\lambda,\Lambda,\gamma,k$, and the $C^{k+1,\gamma}$ radius of $\Omega$.
\end{itemize}
\end{lemma}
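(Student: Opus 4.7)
The plan is to prove both parts by a single compactness/blow-up argument in the spirit of \cite[Proposition 3.11]{AbRo20}, adapted to handle the boundary blow-up $v \asymp d^{s-1}$. By translation, rotation, and a standard covering/flattening reduction, it suffices to prove the expansion at a single point $x_0 = 0 \in \partial \Omega$ with $\nu_0 = e_n$. Assume by contradiction that for every $c>0$ the estimate fails: then by a Campanato-type argument there exist sequences of solutions $v_m$ on domains $\Omega_m$ (with uniformly bounded $C^{k+1,\gamma}$ radii), data $f_m, g_m$ normalized so that the right-hand side of the claim equals $1$, scales $r_m \searrow 0$, and polynomials $Q_m \in \cP_{k}$ (respectively of the form in (i)) chosen as the $L^\infty$-best approximants of $v_m/d_m^{s-1}$ of the prescribed form on $B_{r_m}$, such that $\theta_m := r_m^{-(k+\gamma)} \|v_m/d_m^{s-1} - Q_m\|_{L^\infty(B_{r_m})} \to \infty$.

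Define the rescaled functions $w_m(x) := (r_m^{k+\gamma}\theta_m)^{-1}\bigl[(v_m/d_m^{s-1})(r_m x) - Q_m(r_m x)\bigr]$ on $\tilde{\Omega}_m := r_m^{-1}\Omega_m$. By the minimality of $Q_m$ and the standard dyadic argument (cf.\ \cite{AbRo20}) one has $\|w_m\|_{L^\infty(B_1)} = 1$ and the growth control $\|w_m\|_{L^\infty(B_R)} \le C R^{k+\gamma}$ for all $R \ge 1$. Setting $\tilde v_m(x) := \tilde d_m^{s-1}(x) w_m(x)$ where $\tilde d_m(x) = r_m^{-1}d_m(r_m x)$, a direct rescaling computation together with \autoref{lemma:sum-viscosity} shows that $\tilde v_m$ solves a nonlocal Neumann problem with rescaled operator $L_m \in \mathcal{L}^{\text{hom}}_s(\lambda,\Lambda,2k+2\gamma+3)$, right-hand side $F_m$, and boundary datum $G_m$. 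Crucially, $F_m$ contains a contribution from $r_m^{-(k+\gamma-2s)}\theta_m^{-1} L(d_m^{s-1} Q_m)$ which, by \autoref{cor:dist-s-1-smooth-domains} (plus \autoref{lemma:large-solution-linear-fct} to absorb the polynomial part into the $\overset{k-1+\lceil\gamma-s\rceil}{=}$ relation), is controlled in $\mathcal{X}$ by $\theta_m^{-1} \to 0$; similarly, the rescaling of $f_m$ and the boundary data $g_m - \partial_\nu Q_m$ yields $\tilde d_m^{s+1-\alpha} F_m \to 0$ and $G_m - (\text{polynomial}) \to 0$ as $m \to \infty$. Applying \autoref{cor:bdry-Holder} at each scale gives uniform $C^\alpha_{\loc}$ bounds on $w_m$ up to $\partial \tilde{\Omega}_m$.

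By Arzelà--Ascoli we extract a subsequence with $w_m \to w_\infty$ locally uniformly on $\{x_n \ge 0\}$, $\tilde{\Omega}_m \to \{x_n > 0\}$, $\tilde d_m \to (x_n)_+$, and $L_m \rightharpoonup L_\infty \in \mathcal{L}^{\text{hom}}_s(\lambda,\Lambda,k-1+\gamma-s+\delta)$. The stability lemma \autoref{lemma:stability} transfers the equations to the limit:
\begin{equation*}
L_\infty\bigl((x_n)_+^{s-1} w_\infty\bigr) \overset{k-1+\lceil \gamma - s\rceil}{=} 0 \quad \text{in } \{x_n > 0\}, \qquad \partial_n w_\infty = p \quad \text{on } \{x_n = 0\},
\end{equation*}
for some $p \in \cP_{k-1}$, with growth $|w_\infty(x)| \le C(1+|x|)^{k+\gamma}$. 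The Liouville theorem \autoref{thm:Liouville} then forces $w_\infty \in \cP_{k}$. But then $Q_m(\cdot) + r_m^{k+\gamma}\theta_m\, w_\infty(\cdot/r_m) \in \cP_k$ approximates $v_m/d_m^{s-1}$ on $B_{r_m}$ strictly better than $Q_m$ (since $\|w_\infty\|_{L^\infty(B_1)} = 1$ implies a fixed reduction factor in $L^\infty(B_1)$), contradicting the minimality of $Q_m$. For part (i) the additional statement $A(0)\cdot\nu_0 = 0$ when $g\equiv 0$ is immediate: the expansion and $v/d^{s-1} \in C^{1,\gamma}$ near $0$ imply $\partial_\nu(v/d^{s-1})(0) = -A(0)\cdot\nu_0$, which must equal $g(0) = 0$.

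The principal obstacle is the interplay between the rescaling and the boundary blow-up: the natural scaling of $v_m$ is $d^{s-1}$-weighted, so one cannot directly invoke the existing Campanato/Liouville machinery of \cite{AbRo20}. The argument above circumvents this by working with $w_m = (v_m/d_m^{s-1} - Q_m)/(r_m^{k+\gamma}\theta_m)$ and passing the equation through $\tilde v_m$; the delicate points are (a) verifying that $L(d_m^{s-1} Q_m)$ has the right H\"older/weighted-$L^\infty$ regularity needed to conclude $F_m \to 0$ after rescaling (this is exactly where the assumption $K \in C^{2k+2\gamma+3}(\mathbb S^{n-1})$ enters via \autoref{cor:dist-s-1-smooth-domains}), and (b) identifying the correct degree of the residual polynomial in the limit equation so that \autoref{thm:Liouville} applies with the sharp growth exponent $k+\gamma$.
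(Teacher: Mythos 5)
Your proposal follows essentially the same scheme as the paper's proof: a contradiction blow-up of $u=v/d^{s-1}$ after subtracting approximating polynomials, control of the rescaled right-hand side through \autoref{cor:dist-s-1-smooth-domains} (and \autoref{lemma:large-solution-linear-fct} for the polynomial residue), compactness via the boundary H\"older estimate \autoref{cor:bdry-Holder}, passage to the limit by \autoref{lemma:stability}, and identification of the limit through the Liouville theorem \autoref{thm:Liouville}. The two places where you deviate are worth comparing. First, you close the argument with $L^{\infty}$-best approximants and a ``strictly better competitor'' contradiction, whereas the paper uses $L^{2}(\Omega_j\cap B_r)$-projections precisely so that the orthogonality relation $\int w_m\,Q(r_m\cdot)=0$ passes to the limit and forces $w\equiv 0$, contradicting the nondegeneracy $\Vert w\Vert_{L^\infty(B_1)}\ge \tfrac12$; your variant is legitimate and arguably cleaner at the last step, but it buys nothing for the harder part of the argument, and it makes the earlier steps slightly more delicate: the growth bound $\Vert w_m\Vert_{L^\infty(B_R)}\le CR^{k+\gamma}$ for all $R\ge 1$ and, more importantly, the coefficient decay $|a^{(\beta)}_m|/\theta_m\to 0$ (which is exactly what makes $\theta_m^{-1}L(d_m^{s-1}Q_m)\to 0$ in $\mathcal X$ after rescaling) do \emph{not} follow from best approximation at the single scale $r_m$ with $\theta_m\to\infty$, as written; one needs the monotone quantity $\theta(r)=\sup_j\sup_{\rho\ge r}\rho^{-k-\gamma}\Vert u_j-Q_{j,\rho}\Vert_{L^\infty(B_\rho)}$, a selection of $r_m$ nearly attaining it, and the cross-scale comparison $\Vert Q_{j,Rr}-Q_{j,r}\Vert_{L^\infty(B_{Rr})}\le c\,\theta(r)(Rr)^{k+\gamma}$ (this is what the paper's $L^2$-projection setup, following \cite{AbRo20}, delivers; your appeal to ``the standard dyadic argument'' is pointing at the right machinery but must be set up with the sup-over-scales definition, not the single-scale one). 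You also leave implicit the weighted tail estimates for $\tilde d_m^{s-1}w_m$ in $L^1_{2s+k-1+\lceil\gamma-s\rceil}$, which the paper verifies explicitly and which are needed both for \autoref{cor:bdry-Holder} and for \autoref{lemma:stability}. Second, your a posteriori derivation of $A(x_0)\cdot\nu_{x_0}=0$ when $g\equiv 0$ (touch $v/d^{s-1}$ from above and below at $x_0$ with the affine expansion $\pm C|x-x_0|^{1+\gamma}$ and use the viscosity Neumann condition) is a nice alternative to the paper's route, which instead reads off $a_n=0$ for the limiting affine function inside the blow-up; both are fine.
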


\begin{proof}
Let us assume without loss of generality that $x_0 = 0 \in \partial \Omega$ with $\partial_{\nu_0} = e_n$. We set $u:= v/d^{s-1}$. \\
We will prove the desired result by a blow-up argument. To do so, we assume by contradiction that for any $j \in \N$ there exist $C^{k+1,\gamma}$ domains $\Omega_j \subset \R^n$, $f_j \in C^{k-1}(\Omega_j \cap B_1)$, $g_j \in C^{k-1+\gamma}(\partial \Omega_j \cap B_1)$, $r_j > 0$, operators $L_j$ with ellipticity constants $\lambda,\Lambda$, and $v_j \in C(\Omega_j) \cap L^{1}_{2s}(\R^n)$ viscosity solutions to 
\begin{align*}
\begin{cases}
L_j v_j &= f_j ~~ \text{ in } \Omega_j \cap B_1,\\
v_j &= 0 ~~~ \text{ in } B_1 \setminus \Omega_j,\\
\partial_{\nu} (v_j/d_j^{s-1}) &= g_j ~~ \text{ on } \partial \Omega_j \cap B_1,
\end{cases}
\end{align*}
such that 
\begin{align*}
|\diam \Omega_j| &+ \Vert u_j \Vert_{L^{\infty}(\Omega_j)} + \Vert v_j \Vert_{L^{\infty}(\R^n \setminus \Omega_j)} +\1_{\{k = 1 \text{ and } \gamma < s \}} \Vert d_j^{s-\gamma} f_j \Vert_{L^{\infty}(\Omega_j \cap B_1)} \\
&+ \1_{\{ k \ge 2 \text{ or } \gamma > s \}}\Vert f_j \Vert_{C^{(k-1)-s+\gamma}(\Omega_j \cap B_1)} + \Vert g_j \Vert_{C^{k-1+\gamma}(\Omega_j \cap B_1)} + \Vert d_{j} \Vert_{C^{k+1,\gamma}(\Omega_{j} \cap B_1)} \le C
\end{align*}
for some $C > 0$, denoted $d_{\Omega_j} = d_j$, and used that $d_j \in C^{k+1,\gamma}$ by \cite[Definition 2.7.5]{FeRo23}. Finally, we assume by contradiction
\begin{align*}
\sup_{j \in \N} \sup_{r > 0} r^{-k-\gamma} \Vert u_j - Q \Vert_{L^{\infty}( \Omega_j \cap B_r )} = \infty ~~ \forall Q \in \mathcal{P}_k.
\end{align*}
Observe that up to a rotation, $r_m^{-1} \Omega_{j_m} \cap B_{r_m^{-1}} \to \{ x_n > 0 \}$. Moreover, we will write $\tilde{d}_{j_m}\1_{r_m^{-1}\Omega_{j_m}} := \tilde{d}_{j_m} =: r_m^{-1} d_{j_m}(r_m \cdot)$ for the (regularized) distance with respect to $r_m^{-1} \Omega_{j_m}$.\\
We consider the $L^2(\Omega_j \cap B_r)$-projections of $u_j$ over $\mathcal{P}_k$, and denote them by $Q_{j,r} \in \mathcal{P}_k$. They satisfy the following properties:
\begin{align*}
\Vert u_j - Q_{j,r} \Vert_{L^2( \Omega_j \cap B_r )} &\le \Vert u_j - Q \Vert_{L^2(\Omega_j \cap B_r )} ~~ \forall Q \in \mathcal{P}_k,\\
\int_{\Omega_j \cap B_r} \left( u_j(x) - Q_{j,r}(x) \right) Q(x) \d x &= 0 \qquad\qquad\qquad\qquad~ \forall Q \in \mathcal{P}_k.
\end{align*}
Next, we introduce 
\begin{align}
\label{eq:theta-def}
\theta(r) := \sup_{j \in \N} \sup_{\rho \ge r} \rho^{-k-\gamma} \Vert u_j - Q_{j,\rho} \Vert_{L^{\infty}(\Omega_j \cap B_r )}.
\end{align}
Observe that $\theta(r) \nearrow \infty$, as $r \searrow 0$. This follows from \cite[Lemma 4.3]{AbRo20} applied with $s = 0$ (note that the proof remains exactly the same in this case).\\
As a consequence, there exist sequences $(r_m)_m$ and $(j_m)_m$ such that
\begin{align}
\label{eq:vm-nondeg}
\frac{\Vert u_{j_m} - Q_{j_m,r_m} \Vert_{L^{\infty}(\Omega_{j_m} \cap B_{r_m} )} }{ r_m^{k+\gamma} \theta(r_m)} \ge \frac{1}{2} ~~ \forall m \in \N.
\end{align}
Let us define for any $m \in \N$,
\begin{align}
\label{eq:wm-def}
w_m(x) = \frac{ u_{j_m}(r_m x) - Q_{j_m,r_m}(r_m x) }{r_m^{k+\gamma} \theta(r_m)},
\end{align}
and observe that by construction, we have
\begin{align}
\label{eq:orthogonality}
\Vert w_m \Vert_{L^{\infty}(r_m^{-1} \Omega_{j_m} \cap B_1 )} \ge \frac{1}{2}, \qquad \int_{r_m^{-1} \Omega_{j_m} \cap B_1 } w_m(x) Q(r_m x) \d x = 0  ~~ \forall m \in \N, ~~ \forall Q \in \mathcal{P}_k.
\end{align}
Next, we claim that
\begin{align}
\label{eq:vm-growth}
\Vert w_m \Vert_{L^{\infty}(r_m^{-1} \Omega_{j_m} \cap B_R )} \le c R^{k+\gamma} ~~ \forall R \ge 1, ~~ \forall m \in \N.
\end{align}
To see this, we estimate for any $R \ge 1$, using the definitions of $\theta(R r_m)$ and $w_m$ (see \eqref{eq:theta-def} and \eqref{eq:wm-def}):
\begin{align}
\label{eq:wm-estimate}
\begin{split}
\Vert w_m \Vert_{L^{\infty}(r_m^{-1}\Omega_{j_m} \cap B_{R} )} &\le \frac{ \Vert u_{j_m} - Q_{j_m,R r_m} \Vert_{L^{\infty}(\Omega_{j_m} \cap B_{R r_m} )} }{r_m^{k+\gamma} \theta(r_m)}  + \frac{ \Vert Q_{j_m,R r_m} - Q_{j_m,r_m} \Vert_{L^{\infty}(\Omega_{j_m} \cap B_{R r_m} )} }{r_m^{k+\gamma} \theta(r_m)} \\
&\le \frac{ (R r_m)^{k+\gamma} \theta(R r_m) }{r_m^{k+\gamma} \theta(r_m)} + \frac{ \Vert Q_{j_m,R r_m} - Q_{j_m,r_m} \Vert_{L^{\infty}(\Omega_{j_m} \cap B_{R r_m} )} }{r_m^{k+\gamma} \theta(r_m)}.
\end{split}
\end{align}
Moreover, it follows that for any $j \in \N$, $r > 0$, and $R \ge 1$:
\begin{align}
\label{eq:Q-difference}
\Vert Q_{j,Rr} - Q_{j,r} \Vert_{L^{\infty}(\Omega_{j} \cap  B_{R r} )} \le c \theta(r) (Rr)^{k+\gamma}.
\end{align}
Indeed, if we write
\begin{align*}
Q_{j,r}(x) = \sum_{|\beta| \le k} a_{j,r}^{(\beta)} x_1^{\beta_1} \cdots x_n^{\beta_n}, ~~ \beta \in \N^n, ~a_{j,r}^{(\beta)} \in \R,
\end{align*}
then by \cite[Lemma A.10]{AbRo20} we have for any $|\alpha| \le k$
\begin{align*}
r^{|\beta|} & |a_{j,r}^{(\beta)} - a_{j,2r}^{(\beta)}| \\
&\le  c \Vert Q_{j,r} - Q_{j,2r} \Vert_{L^{\infty}(\Omega_{j} \cap  B_{r} )} \\
&\le c \Vert u_j - Q_{j,r} \Vert_{L^{\infty}(\Omega_{j} \cap  B_{r} )} + c \Vert u_j - Q_{j,2r} \Vert_{L^{\infty}(\Omega_{j} \cap  B_{2r} )} \\
&\le c \theta(r) r^{k+\gamma} + c \theta(2r) (2r)^{k+\gamma} \le c \theta(r) (2r)^{k+\gamma}.
\end{align*}
By iteration of this inequality, we obtain for any $l \in \N$
\begin{align*}
|a_{j,r}^{(\beta)} - a_{j,2^l r}^{(\beta)}| &\le \sum_{i = 0}^{l-1} |a_{j,2^i r}^{(\beta)} - a_{j,2^{i+1} r}^{(\beta)}| \le c \sum_{i = 0}^{l-1} \theta(2^i r) (2^i r)^{k+\gamma - |\beta|} \\
&\le c \theta(r) r^{k+\gamma - |\beta|} \sum_{i = 0}^{l-1} \frac{\theta(2^i r)}{ \theta(r)} 2^{i(k+\gamma - |\beta|)} \le c \theta(r) (2^l r)^{k + \gamma - |\beta|}.
\end{align*}
This yields for any $R > 1$
\begin{align*}
|a_{j,r}^{(\beta)} - a_{j,R r}^{(\beta)}| \le c \theta(r) (Rr)^{k+\gamma - |\beta|},
\end{align*}
which implies \eqref{eq:Q-difference}.

Thus, combining \eqref{eq:wm-estimate} and \eqref{eq:Q-difference},
\begin{align*}
\Vert w_m \Vert_{L^{\infty}(\Omega_{j_m} \cap B_R )} &\le \frac{ (R r_m)^{k+\gamma} \theta(R r_m) }{r_m^{k+\gamma} \theta(r_m)} + c\frac{(R r_m)^{k+\gamma} \theta(r_m) }{r_m^{k+\gamma} \theta(r_m)} \le c R^{k+\gamma},
\end{align*}
where we used in the last step that $t \mapsto \theta(r)$ is monotone decreasing, proving \eqref{eq:vm-growth}.

Next, using \eqref{eq:vm-growth}, we will estimate the $L^1_{2s+(k+\lceil \gamma - s \rceil -1)}$ norm of $w_m$. 
We have the following estimate:
\begin{align*}
\int_{(\Omega_{j_m} \setminus B_{R r_m}) \cap \{ d_{j_m} \ge \kappa\}} & d_{j_m}^{s-1}(y) |y|^{-n-2s- \lceil \gamma - s \rceil +1+\gamma} \d y \le \kappa^{s-1} \int_{(\Omega_{j_m} \setminus B_{R r_m})} |y|^{-n-s + \gamma - \lceil \gamma - s \rceil} |y|^{1-s} \d y \\
&\le c \kappa^{s-1} \diam(\Omega_{j_m})^{1-s} \int_{\R^n \setminus B_{R r_m}} |y|^{-n-s + \gamma - \lceil \gamma - s \rceil} \le c (R r_m)^{\gamma -s - \lceil \gamma - s \rceil},
\end{align*}
where we used that always $\gamma < s + \lceil \gamma - s \rceil < 0$. Moreover, by a similar computation as in \autoref{lemma:dist-int-est} (with $\gamma := s-1 < 2s + \lceil \gamma - s \rceil -1-\gamma =: \beta$), we have
\begin{align*}
\int_{(\Omega_{j_m} \setminus B_{Rr_m}) \cap \{ d_{j_m} < \kappa \}} d_{j_m}^{s-1}(y) |y|^{-n-2s-\lceil \gamma - s \rceil +1+\gamma} \d y \le c (R r_m)^{\gamma -s-\lceil \gamma - s \rceil }. 
\end{align*}
Thus, altogether, using \eqref{eq:vm-growth} and $\gamma \in (0,1)$ we obtain:
\begin{align}
\label{eq:wm-weighted-l1}
\begin{split}
\Vert \tilde{d}_{j_m}^{s-1} & w_m |\cdot|^{-n-2s-(k+\lceil \gamma - s \rceil -1)} \Vert_{L^1(\R^n \setminus B_R)} \\
&\le c \int_{r_m^{-1} \Omega_{j_m} \setminus B_R} \tilde{d}_{j_m}^{s-1}(y) |y|^{-n-2s-\lceil \gamma - s \rceil +1+\gamma} \d y \\
&\le c r_m^{1-s} \int_{r_m^{-1} \Omega_{j_m} \setminus B_R} d_{j_m}^{s-1}(r_m y) |y|^{-n-2s-\lceil \gamma - s \rceil +1+\gamma} \d y \\
&= c r_m^{s-\gamma+\lceil \gamma - s \rceil } \int_{\Omega_{j_m} \setminus B_{R r_m}} d_{j_m}^{s-1}(y) |y|^{-n-2s-\lceil \gamma - s \rceil +1+\gamma} \d y \\
& \le c r_m^{s-\gamma+\lceil \gamma - s \rceil } \int_{(\Omega_{j_m} \setminus B_R) \cap \{ d_{j_m} \ge \kappa \}} d_{j_m}^{s-1}(y) |y|^{-n-2s-\lceil \gamma - s \rceil +1+\gamma} \d y \\
&\quad + c r_m^{s-\gamma+\lceil \gamma - s \rceil } \int_{(\Omega_{j_m} \setminus B_R) \cap \{ d_{j_m} < \kappa \}} d_{j_m}^{s-1}(y) |y|^{-n-2s-\lceil \gamma - s \rceil +1+\gamma} \d y \\
&\le c r_m^{s-\gamma+\lceil \gamma - s \rceil } (Rr_m)^{\gamma - s-\lceil \gamma - s \rceil } \le R^{\gamma -s-\lceil \gamma - s \rceil } \to 0 ~~ \text{ as } R \to \infty,
\end{split}
\end{align}

Now, we investigate the equation that is satisfied by $w_m$.
We claim that 
\begin{align}
\label{eq:polynomial-coefficients-decay}
\frac{|a_{j_m,r_m}^{(\beta)}|}{\theta(r_m)} \to 0, ~~ \text{ as } m \to \infty ~~ \forall  |\beta| \le k.
\end{align}

Indeed, from the considerations above, we deduce that for any $m,l \in \N$
\begin{align*}
\frac{|a_{j_m,r_m}^{(\beta)} - a_{j_m,2^{l}r_m}^{(\beta)}| }{\theta(r_m)} \le c\sum_{i = 1}^{l} \frac{\theta(2^{l-i}r_m)}{\theta(r_m)} (2^{l-i}r_m)^{k+\gamma - |\beta|}.
\end{align*}
Hence, choosing $l \in \N$ such that $2^l r_m \in [1,2)$, we deduce
\begin{align*}
\frac{|a_{j_m,r_m}^{(\beta)}|}{\theta(r_m)} &\le \frac{|a_{j_m,2^l r_m}^{(\beta)}|}{\theta(r_m)}  + \frac{|a_{j_m,r_m}^{(\beta)} - a_{j_m,2^l r_m}^{(\beta)}|}{\theta(r_m)} \\
&\le c \theta(r_m)^{-1} \left( |a_{j_m,2^l r_m}^{(\beta)}| +  \sum_{i = 1}^l \theta(2^{-i})  (2^{-i})^{k+\gamma - |\beta|} \right) \to 0 ~~ \text{ as } m \to \infty,
\end{align*}
which implies \eqref{eq:polynomial-coefficients-decay}.

Let us now distinguish between the cases ($k = 1$ and $\gamma < s$) and ($k \ge 2$ or $\gamma > s$). In case $k = 1$ and $\gamma < s$, we find that it holds in the viscosity sense
\begin{align}
\label{eq:L-wm-computation}
\begin{split}
\tilde{d}_{j_m}^{s-\gamma} L_{j_m} (\tilde{d}_{j_m}^{s-1} w_m) &= r^{\gamma -s}d_{j_m}^{s-\gamma}(r_m \cdot) r_m^{1-s} L_{j_m} (d_{j_m}^{s-1}(r_m \cdot) w_m)\\
&= d_{j_m}^{s-\gamma}(r_m \cdot) \frac{ L_{j_m}(d_{j_m}^{s-1} u_{j_m}(r_m \cdot)) - L_{j_m}(d_{j_m}^{s-1} Q_{j_m,r_m} (r_m \cdot)) }{r_m^{2s} \theta(r_m)} \\
&= d_{j_m}^{s-\gamma}(r_m \cdot)  \frac{ f_{j_m}(r_m \cdot) - L_{j_m} (d_{j_m}^{s-1} Q_{j_m,r_m})(r_m \cdot) }{\theta(r_m)} ~~ \text{ in } r_m^{-1} \Omega_{j_m} \cap B_{r_m^{-1}}.
\end{split}
\end{align}
Moreover, by \autoref{cor:dist-s-1-smooth-domains}(i), it holds
\begin{align}
\label{eq:application-L-poly-1}
\begin{split}
\Vert d_{j_m}^{s - \gamma} & L_{j_m} (d_{j_m}^{s-1} Q_{j_m,r_m})(r_m \cdot) \Vert_{L^{\infty} (r_m^{-1}\Omega_{j_m} \cap B_{r_m^{-1}})} \\
&= \Vert d_{j_m}^{s-\gamma} L_{j_m} (d_{j_m}^{s-1} Q_{j_m,r_m}) \Vert_{L^{\infty} ( \Omega_{j_m} \cap B_{1})} \le c \sum_{|\beta| \le 1} |a_{j_m,r_m}^{(\beta)}|.
\end{split}
\end{align}
Therefore, recalling $\Vert d_{j_m}^{s-\gamma} f_{j_m} \Vert_{L^{\infty}(\Omega_{j_m} \cap B_1 )} \le C$, and combining \eqref{eq:L-wm-computation}, \eqref{eq:application-L-poly-1}, and \eqref{eq:polynomial-coefficients-decay}, we obtain
\begin{align}
\label{eq:blow-up-f-convergence-0}
\Vert \tilde{d}_{j_m}^{s - \gamma} L_{j_m} (\tilde{d}_{j_m}^{s-1} w_m) \Vert_{L^{\infty}(r_m^{-1} \Omega_{j_m} \cap B_{r_m^{-1}} )} &\le c\frac{ \Vert d_{j_m}^{s-\gamma} f_{j_m} \Vert_{L^{\infty}(\Omega_{j_m} \cap B_1 ) } +  \sum_{|\beta| \le 1} |a_{j_m,r_m}^{(\beta)}| }{\theta(r_m)} \to 0 ~~ \text{ as } m \to \infty.
\end{align}
In case $k \ge 2$ or $\gamma > s$, we first deduce by an argument analogous to \eqref{eq:L-wm-computation} 
\begin{align}
\label{eq:L-wm-computation-2}
L_{j_m} (\tilde{d}_{j_m}^{s-1} w_m) = \frac{ f_{j_m}(r_m \cdot) - L_{j_m} (d_{j_m}^{s-1} Q_{j_m,r_m})(r_m \cdot) }{ r_m^{(k-1) - s + \gamma} \theta(r_m)} ~~ \text{ in } r_m^{-1} \Omega_{j_m} \cap B_{r_m^{-1}}.
\end{align}
Next, using again \autoref{cor:dist-s-1-smooth-domains}(ii), we obtain
\begin{align}
\label{eq:application-L-poly-2}
\begin{split}
r^{-(k-1)+s-\gamma} [ L_{j_m} (d_{j_m}^{s-1} & Q_{j_m,r_m})(r_m \cdot) ]_{C^{k-1-s+\gamma} (r_m^{-1}\Omega_{j_m} \cap B_{r_m^{-1}})} \\
&= [L_{j_m} (d_{j_m}^{s-1} Q_{j_m,r_m}) ]_{C^{k-1-s+\gamma} ( \Omega_{j_m} \cap B_{1})} \le c \sum_{|\beta| \le k} |a_{j_m,r_m}^{(\beta)}|,
\end{split}
\end{align}
in analogy to \eqref{eq:application-L-poly-1}. Finally, recalling 
\begin{align*}
r^{-(k-1)+s-\gamma} [ f_j(r_m \cdot) ]_{C^{(k-1)-s+\gamma}(r_m^{-1}\Omega_{j_m} \cap B_{r_m^{-1}})} = [ f_j ]_{C^{(k-1)-s+\gamma}(\Omega_{j_m} \cap B_{1})} \le C,
\end{align*}
and combining  \eqref{eq:L-wm-computation-2}, \eqref{eq:application-L-poly-2}, and \eqref{eq:polynomial-coefficients-decay}, we obtain
\begin{align*}
[L_{j_m} (\tilde{d}_{j_m}^{s-1} w_m) ]_{C^{k-1-s+\gamma}(r_m^{-1} \Omega_{j_m} \cap B_{r_m^{-1}} )} &\le c\frac{ [ f_j ]_{C^{(k-1)-s+\gamma}(\Omega_{j_m} \cap B_{1})} +  \sum_{|\beta| \le 1} |a_{j_m,r_m}^{(\beta)}| }{\theta(r_m)} \to 0 ~~ \text{ as } m \to \infty.
\end{align*}
Thus, there exists a polynomial $p_m \in \cP_{k-2+\lceil \gamma - s \rceil}$ such that
\begin{align}
\label{eq:blow-up-f-convergence}
|L_{j_m} (\tilde{d}_{j_m}^{s-1} w_m) - p_m| \to 0, ~~ \text{ as } m \to \infty ~~ \text{ in } L^{\infty}_{loc}(\{ x_n > 0 \}).
\end{align}

Next, considering again all values for $\gamma,k$ at the same time, we treat the Neumann boundary condition:
\begin{align*}
\partial_{\nu} w_m &= \frac{\partial_{\nu} u_{j_m}(r_m \cdot) - \partial_{\nu} (Q_{j_m,r_m})(r_m \cdot)}{r_m^{(k-1) + \gamma} \theta(r_m)} = \frac{g_{j_m}(r_m \cdot) -   \partial_{\nu} (Q_{j_m,r_m})(r_m \cdot)}{r_m^{(k-1) + \gamma} \theta(r_m)} ~~ \text{ on } \partial r_m^{-1}\Omega_{j_m} \cap B_{r_m^{-1}}.
\end{align*}
We obtain
\begin{align*}
r_m^{-(k-1)-\gamma} [\partial_{\nu}Q_{j_m,r_m}(r_m \cdot)]_{C^{(k-1)+\gamma}(\partial r_m^{-1} \Omega_{j_m} \cap B_{r_m^{-1}})} = [\partial_{\nu} Q_{j_m,r_m}]_{C^{(k-1)+\gamma}(\partial \Omega_{j_m} \cap B_{1})} \le c \sum_{|\beta| \le k} |a_{j_m,r_m}^{(\beta)}|,
\end{align*}
and using also that  $g_{j_m} \in C^{k-1 + \gamma}(\Omega_{j_m} \cap B_1)$ by the boundary condition, we deduce 
\begin{align*}
[\partial_{\nu} w_m]_{C^{(k-1)+\gamma}(\partial r_m^{-1} \Omega_{j_m} \cap B_{r_m^{-1}})} \le c \frac{[g_{j_m}]_{C^{(k-1)+\gamma}(\partial \Omega_{j_m} \cap B_{1})} + \sum_{|\beta| \le k} |a_{j_m,r_m}^{(\beta)}| }{\theta(r_m)} \le c \theta(r_m)^{-1} \to 0,
\end{align*}
as $m \to \infty$. Consequently, for any $m \in \N$ there exists a polynomial $q_m \in \cP_{k-1}$ such that 
\begin{align}
\label{eq:blow-up-g-convergence}
|\partial_{\nu} w_m(x) - q_m | \le c \frac{|x|^{\gamma}}{\theta(r_m)} \to 0 ~~ \forall x \in \partial r_m^{-1}\Omega_{j_m} \cap B_{r_m^{-1}}.
\end{align}
Finally, we are in a position to apply the stability theorem (see \autoref{lemma:stability}) to $w_m$. The convergence results in \eqref{eq:blow-up-f-convergence-0}, \eqref{eq:blow-up-f-convergence} and \eqref{eq:blow-up-g-convergence} establish the required convergence of the source terms and the Neumann boundary data.\\ Moreover, the operators $L_{j_m}$ converge to an operator $L$ with the same ellipticity constants.
By the boundary H\"older regularity estimate for solutions to the nonlocal Neumann problem (see \autoref{cor:bdry-Holder} applied with $k:=k+\lceil \gamma - s \rceil$, $\delta := 0$, $\Omega:= r_m^{-1}\Omega_{j_m}$, $v := \tilde{d}_{j_m}^{s-1} w_m$, $f:= L_{j_m}(\tilde{d}_{j_m}^{s-1} w_m)$, and $g := \partial_{\nu} w_m$), together with the Arzel\`a-Ascoli theorem, the sequence $(w_m)_m$ converges in $L^{\infty}_{loc}(\{ x_n \ge 0 \} )$ to some $w \in C(\{ x_n \ge 0\} )$. Note that all the quantities on the right hand side of the estimate in \autoref{cor:bdry-Holder} will be bounded uniformly in $k$, due to \eqref{eq:wm-weighted-l1}, \eqref{eq:blow-up-f-convergence-0}, \eqref{eq:blow-up-f-convergence}, and \eqref{eq:blow-up-g-convergence}. Thus, in particular $\tilde{d}_{j_m}^{s-1}(r_m \cdot) w_m \to (x_n)_+^{s-1}w$ locally uniformly in $\{ x_n > 0 \}$. Finally, in order to apply the stability result in \autoref{lemma:stability}, it remains to establish $\tilde{d}_{j_m}^{s-1}(r_m \cdot)w_m \to (x_n)_+^{s-1}w$ in $L^1_{2s + (k+\lceil \gamma - s \rceil-1)}(\R^n)$. To see this, we also observe that by \eqref{eq:vm-growth},
\begin{align}
\label{eq:growth-limit}
|w(x)| &\le C (1 + |x|)^{k + \gamma} ~~ \forall x \in \{ x_n > 0 \}.
\end{align} 
Therefore, using also \eqref{eq:wm-weighted-l1} and a computation based on polar coordinates (along the lines of \eqref{eq:tail-polar-coord}) we obtain since $\gamma < 1$:
\begin{align*}
\int_{\R^n \setminus B_R} & |\tilde{d}_{j_m}^{s-1}(y) w_m(y) - (y_n)_+^{s-1} w(y)| |y|^{-n-2s -(k+\lceil \gamma - s \rceil-1)} \d y \\
&\le C \int_{\R^n \setminus B_R} (y_n)_+^{s-1} |y|^{-n-2s-\lceil \gamma - s \rceil+1+\gamma} \d y + C \int_{(r_m^{-1} \Omega_{j_m}) \setminus B_R} \tilde{d}_{j_m}^{s-1}(y) |y|^{-n-2s-\lceil \gamma - s \rceil+1+\gamma} \d y\\
&\le C R^{\gamma - s-\lceil \gamma - s \rceil} \to 0 ~~ \text{ as } R \to \infty.
\end{align*}

This implies $\tilde{d}_{j_m}^{s-1} w_m \to (x_n)_+^{s-1}w$ in $L^1_{2s+(k+\lceil \gamma - s \rceil-1)}(\R^n)$, by combining it with the locally uniform convergence in $L^{\infty}_{loc}(\{ x_n \ge 0 \})$.

Thus, by stability of viscosity solutions (see \autoref{lemma:stability}), we deduce that in the viscosity sense 
\begin{align*}
\begin{cases}
L((x_n)_+^{s-1} w) &\overset{k-1+\lceil \gamma -s \rceil}{=} 0 ~~ \qquad \text{ in } \{ x_n > 0\},\\
\partial_n w &\quad ~~= p ~~\qquad\quad ~~ \text{ on } \{ x_n = 0 \},
\end{cases}
\end{align*}
where $p \in \cP_{k-1}$ is a polynomial, and moreover, by \eqref{eq:orthogonality}, it must be
\begin{align}
\label{eq:v-nondeg}
\Vert w \Vert_{L^{\infty}(B_1 \cap \{ x_n > 0 \} )} \ge \frac{1}{2}.
\end{align}
An application of the Liouville theorem (see \autoref{thm:Liouville}, using \eqref{eq:growth-limit}) yields now that $w \in \cP_{k}$. Thus, we can choose $Q(x) = w(r_m^{-1}x)$ in \eqref{eq:orthogonality}. This implies that
\begin{align*}
0 = \lim_{m \to \infty} \int_{B_1 \cap r_m^{-1} \Omega_{j_m} } w_m(x) Q(r_m x) \d x = \lim_{m \to \infty} \int_{B_1 \cap r_m^{-1} \Omega_{j_m} } w_m(x)w(x) \d x = \int_{B_1 \cap \{ x_n > 0 \} } w^2(x) \d x,
\end{align*}
where we used in the last step $w_m \to w$ and $r_m^{-1} \Omega_{j_m} \to \{ x_n > 0\}$. This yields $w = 0$, which however contradicts \eqref{eq:v-nondeg}, and thus, we conclude the proof of (ii). \\
Finally, note that if $k = 1$ and $\gamma < s$, then by the Liouville theorem (see \autoref{thm:Liouville}), there exist $a \in \R^n$ and $b \in \R$ such that
\begin{align*}
w(x) = (a,x) + b.
\end{align*}
Moreover, if $g_j \equiv 0$, then also $g \equiv 0$. Thus, it must be $\partial_n w = 0$ in $\{ x_n = 0\}$, which implies $a_n = 0$.
\end{proof}

We are now in a position to prove our main result.

\begin{proof}[Proof of \autoref{thm:higher-reg}]
We define $u:=v/d^{s-1}$. Let us assume that
\begin{align*}
\Vert u \Vert_{L^{\infty}(\R^n)} + \Vert d^{s-\gamma} f \Vert_{L^{\infty}(\Omega \cap B_2)} \1_{\{ 1+\gamma < 2s\}} + \Vert f \Vert_{C^{k-2s+\gamma}(\Omega \cap B_2)} \1_{\{ 1+\gamma > 2s\}} + \Vert g \Vert_{C^{k-1+\gamma}(\partial \Omega \cap B_2)} \le 1.
\end{align*}
First, we claim that
for any $x_0 \in \Omega \cap B_{1/2}$ with $z \in \partial \Omega \cap B_{1/2}$ such that $|x_0 - z| = d(x_0) =: r \le 1$, there exists a polynomial $Q \in \cP_k$ of degree $k$ such that
\begin{align}
\label{eq:expansion-consequence}
[u - Q ]_{C^{k+\gamma}(B_{r/2}(x_0))} \le c
\end{align}
for some constant $c > 0$, depending only on $n,s,\lambda,\Lambda,\gamma,\Omega,k$, where we assume without loss of generality that $\nu_z = e_n$.
Note that this estimate already yields the desired result since it implies
\begin{align*}
[u]_{C^{k+\gamma}(B_{r/2}(x_0))} \le [u - Q]_{C^{k+\gamma}(B_{r/2}(x_0))} + [Q ]_{C^{k+\gamma}(B_{r/2}(x_0))} \le c.
\end{align*}
From here, a covering argument (see \cite[Lemma A.1.4]{FeRo23}) together with H\"older interpolation (recall that $\Vert u \Vert_{L^{\infty}(\R^n)} \le 1$) yields the desired regularity estimate in $\overline{\Omega} \cap B_1$. Note that improving the global $L^{\infty}$ norm to the $L^1_{2s}(\R^n)$ norm, or the $L^1_{k+\gamma}(\R^n \setminus B_2)$ norm, respectively, in the estimate goes by the exact same arguments as in the proofs of \autoref{lemma:interior-regularity} and \autoref{cor:bdry-Holder}.

To see \eqref{eq:expansion-consequence}, let us take $z \in \partial \Omega \cap B_{1/2}$ such that $|x_0 - z| = d(x_0) = r$, and apply \autoref{lemma:expansion} to see that there exists a polynomial $Q \in \cP_k$ such that the function
\begin{align*}
u_r(x) := \frac{u(x_0 + r x) - Q(x_0 + r x)}{r^{k+\gamma}} ~~ \text{ satisfies } ~~ \Vert u_r \Vert_{L^{\infty}(B_R)} \le C R^{k+\gamma} ~~ \forall R \in [1,r^{-1}].
\end{align*}
Moreover, since $\Vert u \Vert_{L^{\infty}(\R^n)} \le 1$, and $Q \in \mathcal{P}_k$, we deduce
\begin{align*}
\Vert u_r \Vert_{L^{\infty}(B_R)} \le C r^{-k-\gamma}(1 + (rR)^k) \le C R^{k+\gamma} ~~ \forall R \ge r^{-1}.
\end{align*}
Together, this implies
\begin{align*}
\int_{\R^n \setminus B_{3/4}} \hspace{-0.4cm} \frac{d^{s-1}(x_0 + r x) |u_r(x)|}{|x|^{n+k+\gamma}} \d x &\le c \int_{\R^n \setminus B_{3/4}} \hspace{-0.4cm} \frac{d^{s-1}(x_0 + r x)}{|x|^{n}} \d x \\
&= c \int_{\R^n \setminus B_{3r/4}} \frac{d^{s-1}(x_0 + x)}{|x|^{n}} \d x \le c (1 + r^{s-1}),
\end{align*}
where we used \autoref{lemma:dist-int-est} with $\gamma := s-1 < 0 =: \beta$. Moreover, we have by the definition of $r$
\begin{align*}
\Vert d^{s-1}(x_0 + r \cdot) u_r \Vert_{L^{\infty}(B_{3/4})} \le c r^{s-1}.
\end{align*}

Now, we apply the interior regularity theory for nonlocal problems (see \autoref{lemma:interior-regularity}). To do so, we distinguish between the cases (i) $k = 1$ and $ k + \gamma = 1 + \gamma \le 2s$ and (ii) $k+\gamma > 2s$. In case (i), we apply \autoref{lemma:interior-regularity}(i) with $\beta = 1+\gamma$, observe that automatically $\gamma < s$, and obtain
\begin{align*}
[d^{s-1} (u - Q)]_{C^{1+\gamma}(B_{r/2}(x_0))} &= [d^{s-1}(x_0 + r \cdot) u_r]_{C^{1+\gamma}(B_{1/2})} \\
&\le c \left\Vert d^{s-1}(x_0 + r \cdot) u_r \right\Vert_{L^{\infty}(B_{3/4})} + c \left\Vert \frac{d^{s-1}(x_0 + r \cdot) u_r}{|x|^{n + 1 + \gamma}} \right\Vert_{L^{1}(\R^n \setminus B_{3/4})} \\
&\quad + c \Vert L(d^{s-1}(x_0 + r \cdot) u_r)\Vert_{L^{\infty}(B_{3/4})} \\
&\le c r^{s-1} + c r^{2s-(1+\gamma)} \Vert L v \Vert_{L^{\infty}(B_{3r/4}(x_0))} \\
&\quad + c r^{2s-(1+\gamma)} \Vert L(d^{s-1} Q)\Vert_{L^{\infty}(B_{3r/4}(x_0))} \\
&\le c r^{s-1} + c r^{s-1}\Vert d^{s-\gamma} f \Vert_{L^{\infty}(B_r(x_0))} + c r^{s - 1} \le c r^{s-1},
\end{align*} 
where we used \autoref{cor:dist-s-1-smooth-domains}(i) and that $d \ge r/4$ in $B_{3r/4}(x_0)$ by construction, and $r \le 1$.\\
In case (ii), we apply \autoref{lemma:interior-regularity}(ii) with $\alpha := k+\gamma -2s > 0$ and obtain
\begin{align*}
[d^{s-1} (u - Q)]_{C^{k+\gamma}(B_{r/2}(x_0))} &= [d^{s-1}(x_0 + r \cdot) u_r]_{C^{k+\gamma}(B_{1/2})} \\
&\le c \left\Vert d^{s-1}(x_0 + r \cdot) u_r \right\Vert_{L^{\infty}(B_{3/4})} + c \left\Vert \frac{d^{s-1}(x_0 + r \cdot) u_r}{|x|^{n + k +\gamma}} \right\Vert_{L^{1}(\R^n \setminus B_{3/4})} \\
&\quad + c [L(d^{s-1}(x_0 + r \cdot) u_r)]_{C^{k+\gamma -2s}(B_{3/4})} \\
&\le c r^{s-1} + c [L v]_{C^{k+\gamma -2 s}(B_{3r/4}(x_0))} + c [L(d^{s-1} Q)]_{C^{k+\gamma -2 s}(B_{3r/4}(x_0))} \\
&\le c r^{s-1} + c r^{s-1} \Vert f \Vert_{C^{k+\gamma -2s}(B_r(x_0))} + c r^{s-1} \le c r^{s-1},
\end{align*}
where we used \autoref{cor:dist-s-1-smooth-domains}(iii) in the second to last step. 

Moreover, using again the $L^{\infty}$ estimate for $u_r$ with $R = 1$, we have
\begin{align*}
\Vert d^{s-1}(u - Q) \Vert_{L^{\infty}(B_{r/2}(x_0))} \le c r^{s-1} \Vert u - Q \Vert_{L^{\infty}(B_{r/2}(x_0))} \le c r^{s + (k-1) + \gamma} \Vert u_r \Vert_{L^{\infty}(B_1)} \le c r^{s+ (k-1) + \gamma},
\end{align*}
and hence by H\"older interpolation, we obtain that for any $\delta \in (0,k+\gamma)$ it holds
\begin{align*}
[d^{s-1} (u - Q)]_{C^{\delta}(B_{r/2}(x_0))} \le c r^{s-1 + k+\gamma - \delta}.
\end{align*}
Therefore, altogether by the product rule
\begin{align}
\label{eq:product-rule-comp}
\begin{split}
[(u - & Q)]_{C^{k+\gamma}(B_{r/2}(x_0))} = [D^k(d^{1-s} d^{s-1}(u - Q)) ]_{C^{\gamma}(B_{r/2}(x_0))} \\
&\le \sum_{|\beta| = k} \sum_{\alpha \le \beta} [(\partial^{\alpha} d^{1-s})(\partial^{\beta - \alpha} d^{s-1} (u - Q) ) ]_{C^{\gamma}(B_{r/2}(x_0))} \\
&\le \sum_{|\beta| = k} \sum_{\alpha \le \beta} \Big( \Vert\partial^{\alpha} d^{1-s}\Vert_{L^{\infty}(B_{r/2}(x_0))}[\partial^{\beta - \alpha} d^{s-1} (u - Q) ]_{C^{\gamma}(B_{r/2}(x_0))} \\
&\qquad\qquad\qquad + \Vert\partial^{\beta - \alpha} d^{s-1} (u - Q) \Vert_{L^{\infty}(B_{r/2}(x_0))} [\partial^{\alpha} d^{1-s}]_{C^{\gamma}(B_{r/2}(x_0))} \Big) \\
&\le c \sum_{|\beta| = k} \sum_{\alpha \le \beta} \Big( r^{1-s-|\alpha|} r^{s-1+k+\gamma-(k-|\alpha|+\gamma)} + r^{s-1+k+\gamma-(k-|\alpha|)} r^{1-s-|\alpha|-\gamma} \Big) \\
&\le c  \sum_{|\beta| = k} \sum_{\alpha \le \beta} \le c,
\end{split}
\end{align}

where we used that $r \le 1$, and the following observation based on the fact that $d \in C^{k+1,\gamma}(\overline{\Omega})$ together with corresponding estimates $|D^j d| \le c_j d^{1-j}$, resp. $|D^j d^{1-s}| \le c_j d^{1-s-j}$ in $\Omega$ for every $j \le k$ (see \cite[Lemma B.0.1]{FeRo23}):
\begin{align}
\label{eq:dist-smooth}
[\partial^{\alpha} d^{1-s}]_{C^{\gamma}(B_{r/2}(x_0))} &\le \Vert D^{|\alpha|+1} d^{1-s} \Vert_{L^{\infty}(B_{r/2}(x_0))} \hspace{-0.1cm} \sup_{x,y \in B_{r/2}(x_0)} \hspace{-0.2cm} |x-y|^{1-\gamma} \le c r^{1-s-|\alpha|-\gamma} ~~ \forall |\alpha| \le k.
\end{align}
This proves our claim \eqref{eq:expansion-consequence}. Note that we can replace the $L^{\infty}$ norm of $u$ in $\R^n \setminus B_2$ by the $L^1_{2s}(\R^n)$ norm via a truncation argument, as in the proof of \autoref{cor:bdry-Holder}. We conclude the proof.
\end{proof}

Finally, we explain how to prove \autoref{thm:higher-reg-half-space}.

\begin{proof}[Proof of \autoref{thm:higher-reg-half-space}]
The result follows immediately from \autoref{thm:higher-reg}, however it remains to prove that the result only requires $K \in C^{k-2s+\gamma}(\mathbb{S}^{n-1})$ if $\Omega = \{ x_n = 0 \}$. First of all, we recall the H\"older estimate (see \autoref{cor:bdry-Holder-half-space}), which holds true without any regularity assumption on $K$ if $k = 0$. Note that for the Liouville theorem (see \autoref{thm:Liouville}), we only require $K \in C^{k-1-s+\gamma+\delta}(\mathbb{S}^{n-1})$ for an arbitrarily small $\delta > 0$ and $k-1-s+\gamma <k-2s+\gamma$.
In \autoref{lemma:expansion}, additional regularity for $K$ is assumed in order to apply \autoref{cor:dist-s-1-smooth-domains}. However, if $\Omega = \{ x_n > 0 \}$, we have
\begin{align*}
L(d^{s-1} Q ) = L((x_n)_+^{s-1} Q) \overset{k-1}{=} 0 ~~ \text{ in } \{ x_n > 0 \}
\end{align*}
for any $Q \in \cP_k$. Hence, in case $k = 1$ and $\gamma < s$, the proof goes through exactly as before, without any restrictions on $K$. If $k \ge 2$ or $\gamma > s$, \eqref{eq:L-wm-computation-2} needs to be interpreted as an equation up to a polynomial, but the rest of the proof remains the same. Moreover, we apply the H\"older estimate (see \autoref{cor:bdry-Holder-half-space}), which would force us to impose $K \in C^{k-1}(\mathbb{S}^{n-1})$ in case $k \ge 2$ or $\gamma > s$. Therefore, in this case, we need to proceed a little different. Indeed, we replace the computation in \eqref{eq:wm-weighted-l1} by the following estimate, based on polar coordinates (see also \eqref{eq:tail-polar-coord}) for $\eta = 1+\gamma-s - \lceil \gamma - s \rceil +\delta$ for some very small $\delta > 0$:
\begin{align*}
\Vert (x_n)_+^{s-1} w_m & |\cdot|^{-n-2s-(k-2 +\lceil \gamma - s \rceil + \eta)} \Vert_{L^1(\R^n \setminus B_R)} \\
& \le c \int_{\R^n \setminus B_R} (x_n)_+^{s-1} |x|^{-n-2s-\lceil \gamma - s \rceil+2+\gamma+\eta} \d x \\
&\le c \int_{0}^{2\pi} \hspace{-0.2cm} \cos(\theta)_+^{s-1} \left( \int_R^{\infty} \hspace{-0.2cm} r^{s-1} r^{-1-2s-\lceil \gamma - s \rceil+2+\gamma+\eta} d r \right) \d \theta \\
&= c \int_{0}^{2\pi} \hspace{-0.2cm} \cos(\theta)_+^{s-1} \hspace{-0.1cm} \left( \int_R^{\infty} \hspace{-0.2cm} r^{-1-\delta} d r \right) \d \theta \le c R^{-\delta} \to 0,
\end{align*}
as $R \to \infty$. Then, we can apply \autoref{cor:bdry-Holder-half-space} with $k:= k-1$ and $\delta := \eta$ and only need to assume that $K \in C^{k-2+\eta}(\mathbb{S}^{n-1})$, which is fine by the same reasoning as for the Liouville theorem above. Moreover, the stability theorem (see \autoref{lemma:stability}) can still be applied since $k-2+\eta \le k-1$ if we choose $\delta < s-\eta$.\\
Finally, the proof of \autoref{thm:higher-reg} relies on an application of the interior regularity result (see \autoref{lemma:interior-regularity}). In case $k = 1$ and $1+\gamma \le 2s$, we apply \autoref{lemma:interior-regularity}(i), so in this case, no regularity assumption on $K$ is required, at all. In case $1+\gamma > 2s$, we apply \autoref{lemma:interior-regularity}(ii) with $\alpha := k+\gamma - 2s$ (and interpret the equation up to a polynomial of degree $k-1$, which is possible due to \autoref{remark:int-up-to-poly}), so in this case, we need to assume only that $K \in C^{k-2s+\gamma}(\mathbb{S}^{n-1})$, as desired.
\end{proof}

\section{Nonlocal equations with local Dirichlet boundary conditions}
\label{sec:dirichlet}

Finally, we give the proof of the boundary regularity for nonlocal equations with Dirichlet boundary conditions (see \autoref{thm:dirichlet}).

\begin{proof}[Proof of \autoref{thm:dirichlet}]
Let us first extend $h$ in such a way that $h \in C^{k+\gamma}(\R^n)$. Then, we define $w := v - d^{s-1} h$ and observe that $w$ solves
\begin{align*}
\begin{cases}
L w &= \tilde{f} ~~ \text{ in } \Omega,\\
w &= 0 ~~ \text{ in } \R^n \setminus \Omega,\\
w/d^{s-1} &= 0 ~~ \text{ on }  \partial \Omega, 
\end{cases}
\end{align*}
where $\tilde{f} := f - L(d^{s-1}h)$. Moreover, for $x_0 \in \Omega$ an application of \autoref{cor:dist-s-1-smooth-domains} yields $|\tilde{f}(x_0)| \le c_1 d^{\gamma-s}(x_0)$ in case $k+\gamma < 1+s$, as well as $[\tilde{f}]_{C^{k-1-s+\gamma}(\overline{\Omega})} \le c_2$ in case $k+\gamma > 1+s$, and also $[\tilde{f}]_{C^{k-2s+\gamma}(B_{d(x_0)/2}(x_0))} \le c_3 d^{s-1}(x_0)$ in case $k+\gamma > 2s$. Note that since $w/d^{s-1} = 0$ on $\partial \Omega$, by the maximum principle (see \autoref{lemma:weak-max-princ-large}) and a barrier argument (see for instance the proof of \cite[Lemma 2.3.9]{FeRo23}, using the barrier from \cite[Lemma 2.3.10]{FeRo23} in case $k+\gamma > 1+s$ and the barrier $\tilde{\psi}$ from the second claim in \autoref{lemma:supersol} in case $k+\gamma < 1+s$) it holds $w \in L^{\infty}(\Omega)$ and
\begin{align}
\label{eq:application-Dirichlet-barrier}
\left\Vert w\right\Vert_{L^{\infty}(\Omega)} \le  C \Vert d^{s-\gamma} \tilde{f} \Vert_{L^{\infty}(\Omega)}.
\end{align}
Thus $w$ is a solution in the setting of \cite{RoSe17,AbRo20}. 
We assume without loss of generality
\begin{align*}
\left\Vert w \right\Vert_{L^{\infty}(\Omega)} +  \Vert d^{s-\gamma} \tilde{f} \Vert_{L^{\infty}(\overline{\Omega})} \1_{ \{ k+\gamma < 1 + s \}}  + \Vert \tilde{f} \Vert_{C^{k-1-s+\gamma}(\Omega)} \1_{\{ k+\gamma > 1+s \}} \le 1.
\end{align*}
Then, \cite{RoSe17,AbRo20} imply that for any $z \in \partial \Omega$ there exists a polynomial $Q_z \in \cP_{k-1}$ such that 
\begin{align*}
|w(x) - Q_z(x) d^s| \le c |x-z|^{k-1+\gamma+s} \le c |x-z|^{k+\gamma} d^{s-1}(x) ~~ \forall x \in B_1(z).
\end{align*}
By adjusting the proof of \cite[Proposition 3.2]{RoSe17} in case $k+\gamma < 1 + s$, or the second part of the proof of \cite[Proposition 4.1]{AbRo20} in case $k+\gamma  > 1 + s$, respectively, according to the slight modification of the upper bound in the previous estimate, we get that for any $x_0 \in \Omega \cap B_1(z)$, denoting $r:= d(x_0)$,
\begin{align}
\label{eq:Dir-expansion}
\Vert w - Q_z d^s \Vert_{L^{\infty}(B_{r/2}(x_0))} \le c r^{k+\gamma+s-1},  \qquad [w - Q_z d^s]_{C^{k+\gamma}(B_{r/2}(x_0))} \le c r^{s-1}.
\end{align}
Indeed, while the first estimate is immediate from the expansion, the second result follows by denoting
\begin{align*}
v_r(x) = r^{-k-\gamma}(u(x_0 + rx) - Q_z(x_0+rx)d^s(x_0+rx)),
\end{align*}
and observing that by the previous estimate and the properties of $\tilde{f}$ it holds
\begin{align*}
\Vert v_r \Vert_{L^{\infty}(B_R)} \le c (1 + r^{s-1}) ~~ \forall R > 0, \qquad 
[\tilde{f}]_{C^{k+\gamma-2s}(B_{r/2}(x_0))} \1_{\{ k + \gamma > 2s \}} \le c r^{s-1}.
\end{align*}
Plugging these findings into the remainder of \cite[Proof of Theorem 1.2]{RoSe17}, \cite[Proof of Theorem 1.4]{AbRo20}, we obtain \eqref{eq:Dir-expansion}. From there we can show, using H\"older interpolation, and also $d \in C^{k+1+\gamma}(\overline{\Omega})$ that for any $\delta \in (0,k+\gamma]$ it holds:
\begin{align*}
[ w - Q_z d^s ]_{C^{\delta}(B_{r/2}(x_0))} \le c r^{k+\gamma+s-1-\delta}, \qquad \Vert d^{1-s} \Vert_{L^{\infty}(B_{r/2}(x_0))} \le c r^{1-s}, \qquad [ d^{1-s}]_{C^{\delta}(B_{r/2}(x_0))} \le c r^{1-s-\delta}.
\end{align*}
Thus, proceeding in a similar way as in the proof of \autoref{thm:higher-reg}, and using \eqref{eq:Dir-expansion} as well as the previous estimate, we obtain
\begin{align*}
\left[\frac{w}{d^{s-1}} - Q_z d \right]_{C^{k+\gamma}(B_{r/2}(x_0))} &= \left[D^k(d^{1-s} (w - Q_z d^s))\right]_{C^{\gamma}(B_{r/2}(x_0))} \\
&\le \sum_{|\beta| = k} \sum_{\alpha \le \beta} \left[(\partial^{\alpha} d^{1-s} ) (\partial^{\beta - \alpha} (w - Q_z d^s) ) \right]_{C^{\gamma}(B_{r/2}(x_0))} \\
&\le \sum_{|\beta| = k} \sum_{\alpha \le \beta} \Big( \left\Vert \partial^{\alpha} d^{1-s} \right\Vert_{L^{\infty}(B_{r/2}(x_0))}  \left[\partial^{\beta - \alpha} (w - Q_z d^s)\right]_{C^{\gamma}(B_{r/2}(x_0))} \\
&\qquad\qquad\qquad + \left[ \partial^{\alpha} d^{1-s} \right]_{C^{\gamma}(B_{r/2}(x_0))}  \left\Vert \partial^{\beta - \alpha} (w - Q_z d^s) \right\Vert_{L^{\infty}(B_{r/2}(x_0))} \Big) \\
&\le c\sum_{|\beta| = k} \sum_{\alpha \le \beta} \Big( r^{1-s-|\alpha|} r^{s-1+|\alpha|} + r^{1-s-|\alpha|-\gamma} r^{\gamma+s-1 + |\alpha|} \Big) \le c.
\end{align*}
From here, by a covering argument, and using the continuity of the extension operator,
\begin{align*}
\left\Vert \frac{v}{d^{s-1}} \right\Vert_{C^{k,\gamma}(\overline{\Omega})} & \le c ( \left\Vert v - d^{s-1}h \right\Vert_{L^{\infty}(\R^n)} + \Vert f \Vert_{C^{k-1-s+\gamma}(\Omega)} + \Vert h \Vert_{C^{k+\gamma}(\partial\Omega)} ) \\
&\le c ( \Vert f \Vert_{C^{k-1-s+\gamma}(\Omega)} + \Vert h \Vert_{C^{k+\gamma}(\partial\Omega)} ).
\end{align*}
\end{proof}

\end{document}